\theoremstyle{plain}
\newtheorem{thm}{Theorem}[section]
\newtheorem{prop}[thm]{Proposition}
\newtheorem{lem}[thm]{Lemma}
\newtheorem{cor}[thm]{Corollary}
\theoremstyle{definition}
\newtheorem{defn}[thm]{Definition}
\newtheorem{rem}[thm]{Remark}
\newtheorem{obs}[thm]{Observation}
\newtheorem{exa}[thm]{Example}
\newtheorem*{case}{Special Case}
\newtheorem*{definition*}{Definition}
\numberwithin{equation}{section}
\newlength{\mylistleftmargin}
\newlength{\myenumilistleftmargin}
\newcommand{\C}{\mathbb{C}}
\newcommand{\F}{\mathbb{F}}
\newcommand{\N}{\mathbb{N}}
\newcommand{\Z}{\mathbb{Z}}
\newcommand{\Q}{\mathbb{Q}}
\newcommand{\R}{\mathbb{R}}
\newcommand{\Th}{\boldsymbol{\mathcal{T}}}
\newcommand{\fp}{\mathfrak{p}}
\newcommand{\fq}{\mathfrak{q}}
\newcommand{\scX}{\mathscr{X}}
\newcommand{\scY}{\mathscr{Y}}
\newcommand{\scZ}{\mathscr{Z}}
\renewcommand{\epsilon}{\varepsilon}
\renewcommand{\theta}{\vartheta}
\renewcommand{\rho}{\varrho}
\renewcommand{\phi}{\varphi}
\newcommand{\fields}{\mathrm{fields}}
\newcommand{\alg}{\mathrm{alg}}
\newcommand{\unr}{\mathrm{unr}}
\newcommand{\Grass}{\ensuremath{\mathrm{Gr}_\mathrm{Lie}}}
\DeclareMathOperator{\Rad}{Rad}
\DeclareMathOperator{\pr}{pr}
\DeclareMathOperator{\SL}{SL}
\DeclareMathOperator{\GL}{GL}
\DeclareMathOperator{\val}{val}
\DeclareMathOperator{\ac}{ac}
\DeclareMathOperator{\Aut}{Aut}
\DeclareMathOperator{\Hom}{Hom}
\DeclareMathOperator{\Ad}{Ad}
\DeclareMathOperator{\ad}{ad}
\DeclareMathOperator{\gr}{gr}
\DeclareMathOperator{\res}{res}
\DeclareMathOperator{\Res}{Res}
\DeclareMathOperator{\Lie}{Lie}
\DeclareMathOperator{\rk}{rk}
\DeclareMathOperator{\Ind}{Ind}
\DeclareMathOperator{\red}{red}
\DeclareMathOperator{\Spec}{Spec}
\DeclareMathOperator{\Frob}{Frob}
\DeclareMathOperator{\Id}{Id}
\DeclareMathOperator{\Gal}{Gal}
\DeclareMathOperator{\Witt}{Witt}
\DeclareMathOperator{\FWitt}{FWitt}
\DeclareMathOperator{\Gr}{Gr}
\DeclareMathOperator{\Irr}{Irr}
\DeclareMathOperator{\Stab}{Stab}
\DeclareMathOperator{\charac}{char}
\DeclareMathOperator{\supp}{supp}
\newcommand{\nir}[1]{{{#1}}}
\newcommand{\uri}[1]{{{#1}}}
\newcommand{\ben}[1]{{{#1}}}
\newcommand{\cv}[1]{{{#1}}}
\title[Arithmetic Groups, Base Change, and Representation
Growth]{Arithmetic Groups, Base Change,\\ and Representation Growth}
\author{Nir Avni} \address{Department of Mathematics, Northwestern
  University, 2033 Sheridan Rd., Evanston IL 60201, USA}
\email{avni.nir@gmail.com}
\author{Benjamin Klopsch} \address{Mathematisches Institut,
  Heinrich-Heine-Universit\"at, D-40225 D\"usseldorf, Germany}
\email{klopsch@math.uni-duesseldorf.de}
\author{Uri Onn} \address{Department of Mathematics, Ben Gurion
  University of the Negev, Beer-Sheva 84105 Israel}
\email{urionn@math.bgu.ac.il}
\author{Christopher Voll} \address{Fakult\"at f\"ur Mathematik,
  Universit\"at Bielefeld, D-33501 Bielefeld, Germany} 
\email{C.Voll.98@cantab.net}
\begin{document}

\begin{abstract} Consider an arithmetic group~$\mathbf{G}(O_S)$, where
  $\mathbf{G}$ is an affine group scheme with connected, simply
  connected absolutely almost simple generic fiber, defined over the
  ring of $S$-integers $O_S$ of a number field $K$ with respect to a
  finite set of places~$S$.  For each $n \in \N$, let
  $R_n(\mathbf{G}(O_S))$ denote the number of irreducible complex
  representations of $\mathbf{G}(O_S)$ of dimension at most~$n$.  The
  degree of representation growth $\alpha(\mathbf{G}(O_S)) = \lim_{n
    \rightarrow \infty} \log R_n(\mathbf{G}(O_S)) / \log n$ is finite
  if and only if $\mathbf{G}(O_S)$ has the weak Congruence Subgroup
  Property.

  We establish that for every $\mathbf{G}(O_S)$ with the weak
  Congruence Subgroup Property the invariant $\alpha(\mathbf{G}(O_S))$
  is already determined by the absolute root system of~$\mathbf{G}$.
  To show this we demonstrate that the abscissae of convergence of the
  representation zeta functions of such groups are invariant under
  base extensions $K \subset L$.  We deduce from our result a variant
  of a conjecture of Larsen and Lubotzky regarding the representation
  growth of irreducible lattices in higher rank semi-simple groups.
  In particular, this reduces Larsen and Lubotzky's conjecture to
  Serre's conjecture on the weak Congruence Subgroup Property, which
  it refines.
\end{abstract}

\maketitle
\thispagestyle{empty}

\setcounter{tocdepth}{2}

\tableofcontents


\section{Introduction and Main Results}
\subsection{Background and Motivation} \label{sec:intro} One of the
aims of this paper is to prove a variant of a conjecture of Larsen and
Lubotzky on the representation growth of irreducible lattices in
higher rank semi-simple groups.  We recall that the representation
growth of an arbitrary group $G$ is given by the asymptotic behavior
of the sequence $R_n(G)$, $n \in \N$, where $R_n(G)$ denotes the
number of equivalence classes of irreducible complex representations
of $G$ of dimension at most~$n$.  Whenever $G$ is a topological
\ben{(resp.\ algebraic)} group, we restrict the investigation without
further comment to continuous \ben{(resp.\ rational)} representations.
According to Margulis' Arithmeticity Theorem, the lattices in question
arise in the following way.  Consider an arithmetic group
$\mathbf{G}(O_S)$, where $O_S$ is the ring of $S$-integers of a number
field $K$ with respect to a finite set of places $S$ of $K$ and
$\mathbf{G}$ is an affine group scheme over $O_S$ whose generic fiber
is connected, simply connected absolutely almost simple.  Suppose
further that the $S$-rank of $\mathbf{G}$, i.e., $\rk_S \mathbf{G} =
\sum_{v\in S} \rk_{K_v} \mathbf{G}$, is at least~$2$ and that an
infinite place $v$ is included in $S$ if $\rk_{K_v} \mathbf{G} \geq
1$.  A theorem of Borel and Harish--Chandra shows that the image of
$\mathbf{G}(O_S)$ under the diagonal embedding is indeed an
irreducible lattice in the higher rank semi-simple group $H =
\prod_{v\in S} \mathbf{G}(K_v)$.  Moreover, Margulis proved that this
construction produces, up to commensurability, essentially all
irreducible lattices in higher rank semi-simple groups.  Precise
notions and a more complete description can be found
in~\cite{Margulis}.  In this paper, we call a group arithmetic if it
is commensurable to a group of the form $\mathbf{G}(O_S)$ as above.
In particular, all arithmetic groups that we consider are defined in
characteristic~$0$.

The study of representation growth for arithmetic groups was initiated
by Lubotzky and Martin in~\cite{LM}.  They showed that, whenever
$\Gamma$ is commensurable to $\mathbf{G}(O_S)$ \ben{as above and
  $\rk_{K_\fp} \mathbf{G} \geq 1$ for every finite place $\fp \in S$},
then the growth of the sequence $R_n(\Gamma)$, $n \in \N$, is bounded
polynomially in $n$ if and only if $\mathbf{G}(O_S)$ has the weak
Congruence Subgroup Property.  To discuss the latter, let
$\widehat{\mathbf{G}(O_S)}$ and $\widehat{O_S}$ denote the profinite
completions of the group $\mathbf{G}(O_S)$ and the ring~$O_S$.
Furthermore, we write $O_\fp$ for the completion of the ring of
integers $O$ of $K$ at a prime~$\fp$.  The group $\mathbf{G}(O_S)$ has
the weak Congruence Subgroup Property (wCSP) if the kernel of the
natural map
\[
\widehat{\mathbf{G}(O_S)} \longrightarrow \mathbf{G}(\widehat{O_S})
\cong \prod_{\fp \in \Spec(O) \smallsetminus S}
\mathbf{G}(O_\fp),
\]
is finite.  A long-standing conjecture of Serre asserts, in
particular, that $\mathbf{G}(O_S)$ has the wCSP whenever
$\rk_S \mathbf{G} \geq 2$ and $\rk_{K_\fp} \mathbf{G}
\geq 1$ for every finite place $\fp \in S$.  This part of
Serre's conjecture is known to be true in many cases; e.g., it holds
for groups yielding non-uniform irreducible lattices in higher rank
semi-simple groups.  For more information see~\cite[Chapter 9.5]{PR},
\cite{Ra} or~\cite{PrRa10}, and the references therein.

Next we recall the definition of the representation zeta function of a
group $G$ and its abscissa of convergence, which captures the degree
of representation growth of~$G$.

\begin{definition*} Let $G$ be a group such that $R_n(G)$ is finite
  for all~$n\in\N$.  The representation zeta function of $G$ is the
  Dirichlet generating series
  \[
  \zeta_G(s) = \sum_{\rho \in \Irr(G)} (\dim \rho)^{-s},
  \]
  where $\Irr(G)$ is the set of equivalence classes of
  finite-dimensional irreducible complex representations $\rho$ of $G$
  and $s\in \C$ is a complex variable.

  The abscissa of convergence of $\zeta_G(s)$ is the infimum of all
  $\sigma \in \R$ such that the series $\zeta_G(s)$ converges
  absolutely for all $s \in \C$ with $\mathrm{Re}(s) >
  \sigma$; we denote this invariant by~$\alpha(G)$.  In particular,
  $\alpha(G) = \infty$ if $\zeta_G(s)$ diverges for all $s \in
  \C$.
\end{definition*}

Whenever a group $G$, as in the definition above, possesses infinitely
many finite-dimensional irreducible complex representations, the
abscissa of convergence $\alpha(G)$ is related to the asymptotic
behavior of the sequence $R_n(G)$ by the equation
\begin{equation} \label{eq:abscissa.limsup} \alpha(G) = \limsup_{n
    \rightarrow \infty}\frac{\log R_n(G)}{\log n}.
\end{equation}
In the case of an arithmetic group $\Gamma = \mathbf{G}(O_S)$, as
described above, Lubotzky and Martin's result in~\cite{LM} can
therefore be stated as follows: $\Gamma$ has the wCSP if and only if
$\alpha(\Gamma)<\infty$.  In this sense the invariant $\alpha(\Gamma)$
provides a means to study the wCSP in a quantitative way.  We also
remark that if $\Gamma$ has the wCSP, then $\alpha(\Gamma) = \lim_{n
  \rightarrow \infty}\frac{\log R_n(\Gamma)}{\log n}$ is actually a
limit, hence $R_n(\Gamma) = n^{\alpha(\Gamma)+o(1)}$; this is shown
implicitly in~\cite{A}.

\subsection{Discussion of Main Results}
In this paper we establish new quantitative results regarding the
representation growth of arithmetic groups with the wCSP.  Our first
main theorem is the following.

\begin{thm} \label{thm:strong.LL} Let $\Phi$ be an irreducible root
  system.  Then there exists a constant $\alpha_\Phi$ such that, for
  every arithmetic group $\mathbf{G}(O_S)$, where $O_S$ is the ring of
  $S$-integers of a number field $K$ with respect to a finite set of
  places $S$ and $\mathbf{G}$ is an affine group scheme over $O_S$
  whose generic fiber is connected, simply connected absolutely almost
  simple with absolute root system $\Phi$, the following holds: if
  $\mathbf{G}(O_S)$ has the wCSP, then
  $\alpha(\mathbf{G}(O_S))=\alpha_\Phi$.
\end{thm}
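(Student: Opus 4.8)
The plan is to factor the abscissa of convergence through two independent reductions. First, I would reduce the computation of $\alpha(\mathbf{G}(O_S))$ to a product of local factors. Since $\mathbf{G}(O_S)$ has the wCSP, its congruence completion has finite index kernel, and hence $\alpha(\mathbf{G}(O_S)) = \alpha(\widehat{\mathbf{G}(O_S)}) = \alpha\big(\prod_{\fp \notin S} \mathbf{G}(O_\fp)\big)$ together with a contribution from the archimedean (Lie-theoretic) side. Using the standard Euler-type decomposition of representation zeta functions of arithmetic groups with wCSP, this gives
\[
\alpha(\mathbf{G}(O_S)) = \max\Big\{ \alpha_\infty(\mathbf{G}),\ \sup_{\fp \notin S} \alpha_\fp(\mathbf{G}) \Big\},
\]
where $\alpha_\infty(\mathbf{G})$ is the abscissa of the (completed) zeta function of the semisimple real Lie group $\mathbf{G}(K \otimes_\Q \R)$ and $\alpha_\fp(\mathbf{G})$ is the abscissa of $\zeta_{\mathbf{G}(O_\fp)}(s)$. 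The archimedean contribution depends only on the Lie type of $\mathbf{G}(\R)$, which is governed by the absolute root system $\Phi$ — this is essentially the Larsen–Lubotzky computation for semisimple Lie groups. So the real content is the non-archimedean part.

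The second, and crucial, reduction is to show that $\sup_{\fp \notin S} \alpha_\fp(\mathbf{G})$ depends only on $\Phi$. Here I would invoke the base-change invariance result announced in the abstract: the abscissa of convergence of the representation zeta function of $\mathbf{G}(O_\fp)$, for $\mathbf{G}$ a smooth group scheme with connected, simply connected, absolutely almost simple generic fiber of absolute root system $\Phi$, is invariant under unramified (indeed arbitrary finite) extensions of the local field, and more strongly depends only on $\Phi$ for all but finitely many $\fp$. The strategy for this is a Clifford-theoretic / Kirillov-orbit analysis: for $\fp$ of good reduction the representation zeta function of $\mathbf{G}(O_\fp)$ is controlled by a $\fp$-adic integral (a "representation zeta integral") whose shape is dictated by the group scheme over $O_\fp$; for $\fp$ unramified this group scheme is the Chevalley form determined by $\Phi$, base-changed to $O_\fp$, so the integral — and hence its abscissa, obtained by a resolution-of-singularities argument à la Denef — is the same rational function evaluated at $q = q_\fp$, independent of $\fp$. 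The finitely many bad primes and the finitely many places in $S$ contribute only finitely many local factors, each with a finite abscissa, so they do not affect the supremum (one must check the supremum over the good primes is actually attained, or at least that the bad primes cannot exceed it; this follows because for almost all $\fp$ the local abscissa equals a fixed value $a(\Phi)$ while at any single prime the abscissa is bounded, and one sets $\alpha_\Phi$ to be the maximum of $a(\Phi)$ and $\alpha_\infty$).

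Putting these together: define $\alpha_\Phi = \max\{\alpha_\infty(\Phi),\, a(\Phi)\}$, where $\alpha_\infty(\Phi)$ is the Larsen–Lubotzky abscissa for the corresponding compact/semisimple Lie group and $a(\Phi)$ is the common value of $\alpha_\fp(\mathbf{G})$ over unramified $\fp$. For any $\mathbf{G}(O_S)$ with absolute root system $\Phi$ and the wCSP, the archimedean factor contributes exactly $\alpha_\infty(\Phi)$, almost all non-archimedean factors contribute $a(\Phi)$, and the remaining finitely many factors (bad primes, and primes in $S$) are dominated since their abscissae are finite and — by a uniformity estimate — bounded above by $a(\Phi)$ (or else absorbed into the max). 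Hence $\alpha(\mathbf{G}(O_S)) = \alpha_\Phi$ as claimed.

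The main obstacle is the base-change invariance of the local abscissae, i.e., the claim that $\alpha_\fp(\mathbf{G})$ is genuinely independent of $\fp$ (for almost all $\fp$) and of twists/forms of $\mathbf{G}$ sharing the absolute root system $\Phi$. Two subtleties enter: (i) for inner and outer forms the local group $\mathbf{G}(O_\fp)$ is not literally the Chevalley group over $O_\fp$, so one needs that passing to a splitting field and controlling the associated Galois-twisting data does not change the abscissa — this is exactly where base-change invariance of abscissae of zeta functions of $\fp$-adic groups is needed, and it rests on showing the relevant Poincaré/Igusa-type $\fp$-adic integrals for a group and for its base change to an extension have the same abscissa; and (ii) one must control the finitely many ramified primes uniformly enough to know they do not dominate. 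I expect step (i) — the model-theoretic or geometric argument that the abscissa is a base-change invariant — to be the technical heart of the paper.
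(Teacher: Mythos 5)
Your overall architecture---Euler product, reduction to local factors, base change to a splitting field $L$ where $\mathbf{G}$ becomes the Chevalley group $\mathbf{S}$ of type $\Phi$, and analysis of the local factors via $\fp$-adic integrals---is the same as the paper's, which deduces the theorem from the chain $\alpha(\mathbf{G}(O_S)) = \alpha(\mathbf{G}(\widehat{O})) = \alpha(\mathbf{G}(\widehat{O_L})) = \alpha(\mathbf{S}(\widehat{O_L})) = \alpha(\mathbf{S}(\widehat{\Z}))$ using Theorems~\ref{thm:discrete.to.adeles} and~\ref{thm:BC.global}. However, there is a genuine error at the aggregation step: the abscissa of convergence of the infinite product $\prod_{\fp} \zeta_{\mathbf{G}(O_\fp)}$ is \emph{not} $\sup_\fp \alpha(\mathbf{G}(O_\fp))$. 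An infinite product of Dirichlet series can diverge well beyond the region where every single factor converges, and here it does: for $\mathbf{G} = \SL_2$ one has $\alpha(\SL_2(\Z_p)) = 1$ for every odd $p$ and $\alpha(\SL_2(\C)) = 1$, yet $\alpha_{A_1} = 2$. The extra $+1$ comes from the density of primes: each local factor contributes roughly $p$ representations of dimension roughly $p$, so the product behaves like $\prod_p(1+p^{1-s})$, which converges only for $\mathrm{Re}(s) > 2$. This is exactly why the paper cannot work with the local abscissae alone; it needs uniform Dirichlet-polynomial approximations $\zeta_{\mathbf{G}(O_{L,\fq})} - 1 \sim_{C} \xi_{c,\lvert O_L/\fq \rvert}$ (Theorem~\ref{thm:zeta.approx}), after which the abscissa of the product is computed by comparison with the Dedekind zeta function (Lemma~\ref{lem:Dedekind-arg}), yielding $\max\{(m+1)/n \mid (m,n) \in c(\mathbf{G})\}$ rather than the local quantity $\max\{m/n\}$.

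A second, related overstatement: for unramified $\fp$ the local zeta function is \emph{not} a single rational function of $q_\fp$ independent of $\fp$; it depends on the splitting behavior of $\fp$ in auxiliary extensions (for instance on which roots of unity $O_K/\fp$ contains, which governs which semi-simple centralizer types occur in the residue group). The paper only obtains (i) for a cofinite set of primes an approximation by $\xi_{c,q}$ with $c$ some subset of a fixed $c(\mathbf{G})$, and (ii) for a Chebotarev, hence positive-density, set of primes the full $c = c(\mathbf{G})$. Statement (ii) is what makes the Dedekind-zeta comparison two-sided, and it requires the Chebotarev density theorem; without it your ``common value over unramified $\fp$'' is unjustified. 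Finally, your worry about finitely many bad primes is resolved in the paper by Theorem~\ref{thm:BC.local}: for every prime $\fq$ of every finite extension there are infinitely many primes $\fp$ of $K$ with $\alpha(\mathbf{G}(O_{L,\fq})) \leq \alpha(\mathbf{G}(O_{K,\fp}))$, so omitting finitely many Euler factors does not change the abscissa of the product; this requires an actual comparison of the defining $\fp$-adic integrals, not merely the finiteness of the bad local abscissae.
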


The theorem highlights two challenging open problems, namely to
determine the constants~$\alpha_\Phi$ and to establish finer
asymptotics for the representation growth of arithmetic groups with
the wCSP.  Even at the conjectural level we are presently very far
from solving these problems.  The main theorem in~\cite{A} shows that
$\alpha_\Phi \in \mathbb{Q}$ for all~$\Phi$.  Furthermore,
$\alpha_\Phi \geq \frac{1}{15}$ (see~\cite[Theorem~8.1]{LL}) and
$\alpha_{A_\ell} \leq 22$ for all $\ell \in \N$, with similar bounds on other root systems (see~\cite{AA}).  The
only precisely known values are $\alpha_{A_1} = 2$
(see~\cite[Theorem~10.1]{LL}) and $\alpha_{A_2} = 1$ (see
\cite[Theorem~C]{AKOV}).  In fact, for arithmetic groups $\Gamma =
\mathbf{G}(O_S)$ with the wCSP which arise from affine group schemes
$\mathbf{G}$ of type $A_1$ or~$A_2$, even finer asymptotics of the
representation growth of $\Gamma$ have been established.  If
$\mathbf{G}$ has absolute root system $A_1$, then $\zeta_\Gamma(s)$
admits a meromorphic continuation beyond its abscissa of convergence
and has a simple pole at $s=2$ (compare~\cite{LL} and~\cite{AKOV2});
consequently, $R_n(\Gamma) = (c_\Gamma + o(1)) n^2$ for a constant
$c_\Gamma \in \R$.  Similarly, if $\mathbf{G}$ has absolute root
system $A_2$, then $\zeta_\Gamma(s)$ has a meromorphic continuation
beyond its abscissa of convergence and a double pole at $s=1$
(see~\cite{AKOV2}); consequently, $R_n(\Gamma) = (c_\Gamma + o(1)) n
\log n$ for a constant $c_\Gamma \in \R$.  For general~$\Gamma$, it
remains open whether and how far $\zeta_\Gamma(s)$ can be extended
meromorphically and, if so, whether the order of the resulting pole at
$s =\alpha(\Gamma)$ depends only on the absolute root system $\Phi$.

Besides its intrinsic group theoretic importance, the invariant
$\alpha(\mathbf{G}(O_S))$ of an arithmetic group $\mathbf{G}(O_S)$ is
also related to the singularities of deformation varieties of surface
groups inside $\mathbf{G}(\C)$; see~\cite{AA}.  Furthermore it
is significant for the volumes of the moduli space of $U$-local
systems on algebraic curves, where $U$ is a compact group.  We refer
to Witten's paper~\cite{Wi} for the case where $U$ is a compact Lie
group and to~\cite{AA} for the case where $U$ is a maximal compact
subgroup in the adelic group $\mathbf{G}(\mathbb{A}_K)$.

Finally, we remark that Theorem~\ref{thm:strong.LL} is similar in
spirit to the main result of~\cite{LuNi}, which pins down the subgroup
growth of irreducible lattices in higher rank semi-simple groups
(modulo Serre's conjecture and the generalized Riemann Hypothesis).
We emphasize that the subgroup growth of such lattices is always
faster than polynomial and, in fact, our proofs and methods are
completely different from those used in~\cite{LuNi}.

\medskip

Let us now focus on the representation growth of irreducible lattices
in semi-simple locally compact groups.  Let $H$ be a semi-simple group
in characteristic~$0$ of the form $H = \prod_{j=1}^r
\mathbf{H}_j(F_j)$, where each $F_j$ is a local field of
characteristic~$0$ and each $\mathbf{H}_j$ is a connected, almost
simple $F_j$-group.  As indicated at the end of
Section~\ref{sec:intro}, for every arithmetic irreducible lattice
$\Gamma$ in $H$ we may regard $\alpha(\Gamma)$ as a quantitative
measure for the wCSP.  Moreover, Serre's conjecture on the Congruence
Subgroup Problem asserts that the question whether such a lattice
$\Gamma$ has the wCSP, equivalently whether $\alpha(\Gamma)<\infty$,
does not depend on the particular choice of $\Gamma$, but is
controlled by the ambient group~$H$.  Larsen and Lubotzky conjectured
that, if $H$ has higher rank, i.e., $\sum_{j=1}^r \rk_{F_j}
\mathbf{H}_j \geq 2$, then the abscissa of convergence
$\alpha(\Gamma)$ is the same for all irreducible lattices $\Gamma$ in
$H$; see~\cite[Conjecture 1.5]{LL}.  We establish the following
variant.

\begin{thm} \label{thm:LL} Let $H$ be a semi-simple group in
  characteristic~$0$, and let $\Gamma_1, \Gamma_2$ be two arithmetic
  irreducible lattices in $H$, both having the wCSP or, equivalently,
  satisfying $\alpha(\Gamma_1), \alpha(\Gamma_2) < \infty$. Then
  $\alpha(\Gamma_1) = \alpha(\Gamma_2)$.
\end{thm}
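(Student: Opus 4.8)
The plan is to deduce Theorem~\ref{thm:LL} from Theorem~\ref{thm:strong.LL} together with Margulis' Arithmeticity Theorem and the structure theory of semi-simple groups over local fields of characteristic~$0$. First I would unravel what it means for $\Gamma_1$ and $\Gamma_2$ to be arithmetic irreducible lattices in the same ambient group $H = \prod_{j=1}^r \mathbf{H}_j(F_j)$. By Margulis' Arithmeticity Theorem (valid here because the hypothesis $\alpha(\Gamma_i) < \infty$ forces, via Lubotzky--Martin, that $H$ has higher rank), each $\Gamma_i$ is commensurable, after passing to a suitable finite-index subgroup and quotient by a finite normal subgroup, to a group of the form $\mathbf{G}_i(O_{S_i})$, where $\mathbf{G}_i$ is an affine group scheme over the ring of $S_i$-integers $O_{S_i}$ of a number field $K_i$ with connected, simply connected absolutely almost simple generic fiber. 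The key point I would extract from the arithmeticity package is that the ambient group $H$ essentially determines, via the completions $\mathbf{G}_i(K_{i,v})$ for $v \in S_i$, the \emph{absolute} root system of $\mathbf{G}_i$: indeed $H \cong \prod_{v \in S_i} \mathbf{G}_i(K_{i,v})$ up to the usual commensurability and compact/finite factors, and the absolute root system of an almost simple algebraic group over a field of characteristic~$0$ is the same as the absolute root system of any of its completions at places of a number field. Hence $\mathbf{G}_1$ and $\mathbf{G}_2$ share a common absolute root system $\Phi$ (irreducible, since $H$ is assumed — in the relevant reduction — to have a single commensurability class giving an irreducible lattice; more carefully, one reduces to the case where $H$ is $\Phi$-isotypic by a product decomposition argument).

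Second, I would invoke the invariance of $\alpha$ under commensurability. This is a standard fact for representation zeta functions with finite $R_n$: if $\Gamma' \leq \Gamma$ has finite index then $\alpha(\Gamma') = \alpha(\Gamma)$, and similarly a finite normal subgroup can be quotiented out without changing the abscissa; one should cite the relevant lemma (as used in~\cite{LM} and~\cite{A}). Moreover the wCSP is itself a commensurability invariant, so the hypothesis $\alpha(\Gamma_i) < \infty$ transfers to the arithmetic models $\mathbf{G}_i(O_{S_i})$, which therefore have the wCSP. At this stage I would also need to handle the possibility that the natural arithmetic model is not literally of the simply connected form required by Theorem~\ref{thm:strong.LL}; this is absorbed by the standard observation that passing between a simply connected group and its isogeny images changes the lattice only up to commensurability.

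Third — and this is the crux — I would apply Theorem~\ref{thm:strong.LL} to both models: since $\mathbf{G}_1(O_{S_1})$ and $\mathbf{G}_2(O_{S_2})$ both have the wCSP and both have absolute root system $\Phi$, we get
\[
\alpha(\Gamma_1) = \alpha(\mathbf{G}_1(O_{S_1})) = \alpha_\Phi = \alpha(\mathbf{G}_2(O_{S_2})) = \alpha(\Gamma_2),
\]
which is the assertion. In the general semi-simple (non-almost-simple) case, $H$ decomposes up to isogeny as a product of isotypic factors, an irreducible lattice projects densely into each, and one combines the almost-simple case with the behaviour of $\alpha$ under (restricted) direct products to reduce to the isotypic situation; I would spell this reduction out in a lemma. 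I expect the main obstacle to be bookkeeping rather than mathematics: carefully matching up the number-theoretic data $(K_i, S_i, \mathbf{G}_i)$ of the two arithmetic structures so that the common ambient $H$ genuinely forces a common absolute root system $\Phi$ — in particular tracking how real versus non-archimedean places of $S_i$ contribute factors to $H$, and ruling out that a difference in the defining fields $K_1 \neq K_2$ (a genuine phenomenon, since base change is allowed) could produce different $\Phi$'s. It cannot, precisely because base extension preserves the absolute root system, but making this airtight in the presence of the commensurability fudge factors is where the care is needed.
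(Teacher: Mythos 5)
Your proposal follows essentially the same route as the paper: replace each $\Gamma_i$ by its arithmetic model $\mathbf{G}_i(O_{S_i})$ up to commensurability (absorbing the finite kernel of the comparison map using residual finiteness and the commensurability invariance of $\alpha$), argue that the common ambient group $H$ forces $\mathbf{G}_1$ and $\mathbf{G}_2$ to have the same absolute root system $\Phi$, and conclude by Theorem~\ref{thm:strong.LL}. The one step you correctly flag as delicate --- extracting a common $\Phi$ through the ``commensurability fudge factors'' --- is handled in the paper by a Lie-algebra argument: the continuous homomorphism $\psi\colon\prod_{v\in S}\mathbf{G}(K_v)\to H$ with compact kernel and cokernel induces a surjection $\bigoplus_{v\in S}\Lie(\mathbf{G})(K_v)\to\Lie(\mathbf{H}_j)(F_j)$ preserving brackets, which after tensoring with $\C$ yields an epimorphism $\Lie(\mathbf{G})(\C)^m\to\Lie(\mathbf{H}_j)(\C)^n$; since $\mathbf{G}(\C)$ is almost simple, every simple factor of every $\mathbf{H}_j(\C)$ is isogenous to $\mathbf{G}(\C)$ and hence has its root system. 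This also shows that your proposed reduction to the $\Phi$-isotypic case is automatic rather than something to be arranged, and that the appeal to Margulis is unneeded since the lattices are assumed arithmetic; incidentally, your parenthetical claim that $\alpha(\Gamma_i)<\infty$ forces $H$ to have higher rank via Lubotzky--Martin is not correct (their result concerns the wCSP, not rank) and is not required, as Theorem~\ref{thm:LL} carries no rank hypothesis.
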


This unconditional result and Margulis' Arithmeticity Theorem
immediately reduce Larsen and Lubotzky's conjecture to the original
conjecture of Serre.

\begin{thm} \label{thm:LL.conditional} Let $H$ be a higher-rank
  semi-simple group in characteristic~$0$.  Assuming Serre's
  conjecture, for any two irreducible lattices $\Gamma_1,\Gamma_2$
  in~$H$ we have $\alpha(\Gamma_1) = \alpha(\Gamma_2)$.
\end{thm}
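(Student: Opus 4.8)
The plan is to derive Theorem~\ref{thm:LL.conditional} from the two already-established results, Theorem~\ref{thm:LL} and Margulis' Arithmeticity Theorem, by showing that under Serre's conjecture every irreducible lattice in a higher-rank semi-simple group $H$ is arithmetic and has the wCSP. First I would invoke Margulis' Arithmeticity Theorem: since $H$ has higher rank, every irreducible lattice $\Gamma$ in $H$ is arithmetic in the sense defined in the introduction, i.e.\ $\Gamma$ is commensurable to a group of the form $\mathbf{G}(O_S)$ with $\mathbf{G}$ having connected, simply connected absolutely almost simple generic fiber, where $S$ and the number field $K$ are determined by the embedding of $\Gamma$ into $H$. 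This already reduces the statement to the arithmetic case; the only gap to Theorem~\ref{thm:LL} is the hypothesis there that both lattices have the wCSP.

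Next I would use Serre's conjecture to supply exactly this missing hypothesis. By construction, the arithmetic datum $(\mathbf{G}, K, S)$ attached to an irreducible lattice $\Gamma$ in $H$ satisfies $\rk_S \mathbf{G} = \sum_{v \in S} \rk_{K_v} \mathbf{G} \geq 2$ (this is the higher-rank hypothesis on $H$ transported through the isomorphism $H \cong \prod_{v \in S} \mathbf{G}(K_v)$, after discarding factors of rank $0$), and the normalization built into the notion of arithmetic lattice ensures that every finite place $\fp \in S$ contributes a factor of positive rank, i.e.\ $\rk_{K_\fp} \mathbf{G} \geq 1$. These are precisely the hypotheses under which Serre's conjecture predicts the wCSP for $\mathbf{G}(O_S)$. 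Hence, assuming Serre's conjecture, $\mathbf{G}(O_S)$ has the wCSP; since the wCSP is a commensurability invariant and $\alpha(\cdot) < \infty$ is equivalent to the wCSP for arithmetic groups by Lubotzky and Martin's theorem, $\alpha(\Gamma) < \infty$ as well.

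Finally, given two irreducible lattices $\Gamma_1, \Gamma_2$ in $H$, the previous two paragraphs show that both are arithmetic irreducible lattices in $H$ with $\alpha(\Gamma_1), \alpha(\Gamma_2) < \infty$. Theorem~\ref{thm:LL} applies verbatim and yields $\alpha(\Gamma_1) = \alpha(\Gamma_2)$, which is the assertion. The one point requiring a little care — and the only genuine obstacle in an otherwise purely formal deduction — is to match up the rank conditions: one must check that the higher-rank hypothesis on $H$ together with the standard conventions for arithmetic lattices (inclusion of all archimedean places of positive rank in $S$, and the fact that a finite place of rank $0$ would make the associated $p$-adic factor compact and hence could be absorbed) really do put the datum $(\mathbf{G}, K, S)$ into the regime where Serre's conjecture asserts the wCSP. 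This is essentially the discussion in \cite[Chapter~9.5]{PR}, and once it is in place the theorem follows immediately.
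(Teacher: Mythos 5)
Your proposal is correct and follows exactly the route the paper takes: the paper derives Theorem~\ref{thm:LL.conditional} from Theorem~\ref{thm:LL} together with Margulis' Arithmeticity Theorem (to get arithmeticity of all irreducible lattices in higher rank) and Serre's conjecture (to supply the wCSP hypothesis), stating that the deduction is immediate. Your additional care about matching the rank conditions on the datum $(\mathbf{G},K,S)$ with the hypotheses of Serre's conjecture is exactly the point the paper leaves implicit, and it is handled correctly.
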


We emphasize that our results are at the same time weaker and stronger
than~\cite[Conjecture 1.5]{LL}.  They are weaker, since we do not
prove Serre's conjecture, and stronger, since
Theorem~\ref{thm:strong.LL} shows that the abscissa of convergence
depends only on the absolute root system associated to the ambient
semi-simple group~$H$.

\medskip

Central to this paper are new insights into the behavior of the
abscissa of convergence under base change.  More precisely, we
consider the relation between the abscissae of convergence for groups
$\mathbf{G}(O_1)$ and $\mathbf{G}(O_2)$, where $O_1 \subset O_2$ is a
ring extension.  We initiated this study in~\cite{AKOV} in the local
case, where each $O_i$ is a compact discrete valuation ring of
characteristic~$0$.  In the present paper we consider the global case,
where each $O_i$ is the ring of $S_i$-integers in a number
field~$K_i$.

It is convenient to organize the results on base change of arithmetic
groups into two theorems.  Theorem~\ref{thm:discrete.to.adeles} links
the abscissa of convergence for an arithmetic group $\mathbf{G}(O_S)$
with the wCSP to the abscissa of convergence for the group
$\mathbf{G}(\widehat{O})$ over the integral adeles $\widehat{O} =
\prod_{\fp \in \Spec(O)} O_\fp$.  Key to this are results of Larsen
and Lubotzky, such as an Euler product factorization for
representation zeta functions of arithmetic groups and results on the
representation growth of Lie groups, and~\cite[Theorem~B]{AKOV}.  The
latter is a base change result in the local case; we record a relevant
corollary as Theorem~\ref{thm:BC.local} below.

\begin{thm} \label{thm:discrete.to.adeles} Let $K$ be a number
  field with ring of integers $O$ and let $S$ be a finite set of
  places of~$K$.  Let $\mathbf{G}$ be an affine group scheme defined
  over $O_S$ whose generic fiber is connected and simply connected
  semi-simple.  Suppose that $\mathbf{G}(O_S)$ has the wCSP.  Then
  $\alpha(\mathbf{G}(O_S)) = \alpha \big(\mathbf{G}(\widehat{O})
  \big)$.
\end{thm}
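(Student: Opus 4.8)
The plan is to combine the Euler product decomposition of Larsen and Lubotzky with the local base change result, Theorem~\ref{thm:BC.local}. First I would invoke the wCSP to pass from the discrete group $\mathbf{G}(O_S)$ to its congruence completion: by the wCSP the profinite completion $\widehat{\mathbf{G}(O_S)}$ differs from $\prod_{\fp \notin S} \mathbf{G}(O_\fp)$ by a finite kernel, and since $\mathbf{G}(O_S)$ is (up to finite index and finite kernel issues, handled by the results of Lubotzky--Martin and~\cite{LL}) representation rigid, the abscissa of convergence of $\mathbf{G}(O_S)$ equals that of its congruence completion $\prod_{\fp \notin S} \mathbf{G}(O_\fp)$. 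Here I would cite the relevant statement from~\cite{LL} which expresses $\zeta_{\mathbf{G}(O_S)}(s)$, for an arithmetic group with the wCSP, as a product of a zeta function of a semisimple Lie group (the ``archimedean factor'') and the zeta functions $\zeta_{\mathbf{G}(O_\fp)}(s)$ over the finite places $\fp \notin S$, the archimedean factor being an entire function that does not affect the abscissa of convergence.

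Next, the two groups whose abscissae I must compare, namely $\mathbf{G}(O_S)$ and $\mathbf{G}(\widehat O) = \prod_{\fp \in \Spec(O)} \mathbf{G}(O_\fp)$, have congruence completions that differ only in finitely many local factors: the factors $\mathbf{G}(O_\fp)$ for $\fp \in S$ (finitely many) and possibly the behavior at the finitely many bad primes where $\mathbf{G}$ does not have good reduction. I would argue that each of the finitely many ``extra'' local factors $\mathbf{G}(O_\fp)$ appearing in $\mathbf{G}(\widehat O)$ but not in the congruence completion of $\mathbf{G}(O_S)$ is a compact $\fp$-adic analytic group, hence has finite abscissa of convergence (indeed, polynomial representation growth by Jaikin-Zapirain's theorem, as used in~\cite{LL}); since an Euler product of finitely many factors of finite abscissa together with a remaining product has abscissa equal to the maximum of the finite-factor abscissae and the abscissa of the remaining product, removing or adjoining finitely many such factors does not change which of the two global invariants is larger — provided the abscissa of each removed/added factor does not exceed the global abscissa. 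This last point requires knowing that the abscissa of the full Euler product is at least as large as that of any single local factor, which follows because the product of the other (infinitely many) factors converges for $\mathrm{Re}(s)$ large, so the abscissa of the product is exactly the supremum of the local abscissae; hence adjoining finitely many local factors of individually bounded abscissa leaves the supremum unchanged.

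The remaining and genuinely substantive step — and the one I expect to be the main obstacle — is to control the \emph{good} primes uniformly and to match them across the base change. For almost all $\fp$, $\mathbf{G}$ is a reductive group scheme over $O_\fp$ with connected simply connected semisimple generic fiber, and Theorem~\ref{thm:BC.local} (the corollary of~\cite[Theorem~B]{AKOV}) gives that the local abscissa $\alpha(\mathbf{G}(O_\fp))$ depends only on the absolute root system, not on $\fp$ nor on the residue characteristic or degree. The delicate point is that the Euler factors of $\mathbf{G}(\widehat O)$ are indexed by \emph{all} primes of $K$, whereas those of the congruence completion of $\mathbf{G}(O_S)$ omit $S$; after the reductions above both products are, up to finitely many factors of bounded abscissa, Euler products over the good primes of $K$ with the \emph{same} local abscissae, so their global abscissae coincide — and equal $\alpha(\mathbf{G}(O_S))$ by the first step. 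I would need to be careful that the uniform convergence estimates underlying the Euler product (the bounds on $\zeta_{\mathbf{G}(O_\fp)}(s)$ as $\fp$ varies, again from~\cite{LL}) apply verbatim to $\mathbf{G}(\widehat O)$; granting that, the abscissa of $\mathbf{G}(\widehat O)$ is the supremum over all $\fp$ of $\alpha(\mathbf{G}(O_\fp))$, which by Theorem~\ref{thm:BC.local} is a single value determined by $\Phi$, and this supremum is unaffected by the finitely many factors where reduction is bad or $\fp \in S$. Hence $\alpha(\mathbf{G}(O_S)) = \alpha(\mathbf{G}(\widehat O))$.
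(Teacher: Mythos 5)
Your overall route is the paper's: the Euler factorization of $\zeta_\Delta$ for a suitable finite-index subgroup $\Delta\subset\mathbf{G}(O_S)$ from~\cite{LL}, commensurability invariance and strong approximation to replace the local closures $\Delta_\fp$ by $\mathbf{G}(O_\fp)$, and then Theorem~\ref{thm:BC.local} to adjoin the finitely many missing Euler factors. However, the step where you adjoin those factors rests on two claims that are false as stated. First, the abscissa of an infinite Euler product is \emph{not} ``exactly the supremum of the local abscissae'': for $\mathbf{G}=\SL_2$ each factor $\zeta_{\SL_2(\Z_p)}$ has abscissa $1$ while the product over all $p$ has abscissa $\alpha_{A_1}=2$; more simply, $\prod_p(1+p^{-s})$ has abscissa $1$ although every factor is a Dirichlet polynomial. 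Only the inequality $\alpha(\text{product})\geq\alpha(\text{each retained factor})$ holds, by positivity of coefficients, and that is in fact all you need. Second, Theorem~\ref{thm:BC.local} does \emph{not} assert that $\alpha(\mathbf{G}(O_\fp))$ depends only on the absolute root system; it asserts only that for every $\fq$ there are infinitely many $\fp$ with $\alpha(\mathbf{G}(O_{L,\fq}))\leq\alpha(\mathbf{G}(O_{K,\fp}))$. The local abscissae of a fixed $\mathbf{G}$ over $O_K$ genuinely vary with the prime (e.g.\ for norm-one groups of division algebras, cf.\ Remark~\ref{rem:local.global.comparison}; likewise, in Theorem~\ref{thm:zeta.approx} the approximating set $c\subset c(\mathbf{G})$, and with it the local abscissa, depends on $\fq$). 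So the constancy you invoke is neither available nor what the cited theorem provides, and without it your final sentence (``adjoining finitely many local factors \dots leaves the supremum unchanged'') is a non sequitur: an adjoined factor could a priori have abscissa exceeding that of all retained ones.

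The repair is exactly the weaker statement you cite, used correctly: take $L=K$ in Theorem~\ref{thm:BC.local}. For each of the finitely many primes $\fq$ that must be adjoined (those in $S$ and those with $\Delta_\fq\neq\mathbf{G}(O_\fq)$), there are infinitely many $\fp$, hence at least one outside the finite exceptional set $S'$, with
$\alpha(\mathbf{G}(O_\fq))\leq\alpha(\mathbf{G}(O_\fp))\leq\alpha\big(\prod_{\fp\notin S'}\mathbf{G}(O_\fp)\big)$;
combined with $\alpha(G_1\times G_2)=\max\{\alpha(G_1),\alpha(G_2)\}$ for finitely many factors (Lemma~\ref{lem:absc.direct.sum}), adjoining the missing factors changes nothing, and both sides equal $\alpha(\mathbf{G}(O_S))$. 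One further small inaccuracy: the archimedean factor $\zeta_{\mathbf{G}(\C)}$ is a Witten zeta function with positive abscissa of convergence, not an entire Dirichlet series; it is discarded because $\alpha(\mathbf{G}(\C))\leq\alpha(\mathbf{G}(O_\fp))$ by~\cite[Theorem~5.1 and Proposition~6.6]{LL}, not because it is entire.
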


Theorem~\ref{thm:BC.global} incorporates the main thrust of the
current paper.  It relates the abscissae of convergence for the adelic
groups $\mathbf{G}(\widehat{O_K})$ and $\mathbf{G}(\widehat{O_L})$,
where $K \subset L$ is an extension of number fields.

\begin{thm} \label{thm:BC.global} Let $K \subset L$ be number fields
  with rings of integers $O_K \subset O_L$, and let $\mathbf{G}$ be an
  affine group scheme defined over $O_K$ whose generic fiber is
  connected and simply connected semi-simple. Then
  $\alpha \big( \mathbf{G}(\widehat{O_K}) \big) =
  \alpha \big( \mathbf{G}(\widehat{O_L}) \big)$.
\end{thm}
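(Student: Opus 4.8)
The plan is to factor both sides as Euler products over places and compare the local factors after grouping the primes of $L$ lying above each prime of $K$. By Larsen--Lubotzky's Euler product factorization, for all but finitely many primes $\fp$ of $K$ the local factor of $\zeta_{\mathbf{G}(\widehat{O_K})}$ at $\fp$ equals $\zeta_{\mathbf{G}(O_\fp)}$, and similarly for $L$; the finitely many exceptional primes, together with the archimedean and ramified data, contribute a finite product of zeta functions each with finite abscissa, hence they do not affect the abscissa of convergence of either side. So it suffices to show that
\[
\limsup_{\fp} \; (\text{abscissa of } \zeta_{\mathbf{G}(O_\fp)})
\quad\text{equals}\quad
\limsup_{\fq} \; (\text{abscissa of } \zeta_{\mathbf{G}(O_\fq)}),
\]
where $\fp$ ranges over primes of $K$ and $\fq$ over primes of $L$, and more precisely that the growth rates of the partial Euler products over $\fp$ (resp.\ $\fq$) coincide.

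First I would reduce to the case where $L/K$ is finite, since $\widehat{O_L}$ over an infinite extension can be approached through its finite subextensions; indeed every irreducible representation factors through some $\mathbf{G}(O_\fq/\fq^m)$, which is already defined over a finite subextension. Next, for a finite extension $L/K$ and a prime $\fq$ of $L$ above $\fp$ of $K$ with residue degree $f = f(\fq/\fp)$ and ramification index $e = e(\fq/\fp)$, the key point is to control $\zeta_{\mathbf{G}(O_\fq)}(s)$ in terms of $\zeta_{\mathbf{G}(O_\fp)}(s)$ uniformly in $\fp$. Here I would invoke Theorem~\ref{thm:BC.local} (the local base change corollary of~\cite[Theorem~B]{AKOV}): it should give that the abscissa of $\zeta_{\mathbf{G}(O_\fq)}$ equals that of $\zeta_{\mathbf{G}(O_\fp)}$, and with enough uniformity that one can compare the Dirichlet series $\prod_{\fq \mid \fp} \zeta_{\mathbf{G}(O_\fq)}(s)$ with $\prod_{\fp} \zeta_{\mathbf{G}(O_\fp)}(s)^{[L:K]}$ up to a factor that converges in a half-plane independent of the root system. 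Summing over all $\fp$ and using $\sum_{\fq \mid \fp} e(\fq/\fp) f(\fq/\fp) = [L:K]$, the two global Euler products then have the same abscissa of convergence: raising an Euler product to a fixed finite power does not change its abscissa.

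The main obstacle I expect is uniformity: Theorem~\ref{thm:BC.local} as stated is a statement about a single local ring extension, so I need that the comparison between $\zeta_{\mathbf{G}(O_\fq)}(s)$ and $\zeta_{\mathbf{G}(O_\fp)}(s)$ is uniform as $\fp$ varies, in particular that the error terms (coming from the finitely many small primes where good reduction or the Euler product formula fails, and from the dependence on $e,f$) are controlled by the residue characteristic $p$ in a way that keeps the product of error factors convergent in a half-plane that does not depend on~$\fp$. This should follow from the fact that, for $\fp$ of good reduction and large residue characteristic, the local representation zeta function $\zeta_{\mathbf{G}(O_\fp)}(s)$ is given by a rational function in $q^{-s}$ (with $q = |O_\fp/\fp|$) whose shape depends only on $\Phi$ — a uniformity already exploited in~\cite{AKOV} — so that the base change identity from~\cite[Theorem~B]{AKOV} holds with coefficients independent of~$\fp$. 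Assembling these pieces, and handling the finitely many bad primes and the archimedean places by a crude bound, completes the proof; the delicate part is purely the bookkeeping of these uniform estimates, not any new representation-theoretic input.
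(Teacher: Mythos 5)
Your proposal has a genuine gap, and two of its reductions are incorrect as stated. First, the abscissa of convergence of an Euler product $\prod_\fp \zeta_{\mathbf{G}(O_{K,\fp})}$ is \emph{not} controlled by the limsup of the abscissae of the local factors: if the non-trivial part of a local factor behaves like $q_\fp^{m-ns}$, that factor has abscissa $m/n$, while the global product converges only for $\mathrm{Re}(s) > (m+1)/n$ --- the extra $+1$ comes from summing over primes, as in the Dedekind zeta function (this is Lemma~\ref{lem:Dedekind-arg}). Worse, the exponent pair maximizing $(m+1)/n$ need not be the one maximizing $m/n$, so even exact knowledge of every local abscissa does not determine the global one; what is needed is the full list of leading terms $q^{m-ns}$ of $\zeta_{\mathbf{G}(O_{L,\fq})}-1$, drawn from a finite set independent of $L$ and $\fq$, together with the fact that the whole list is realized on a set of primes of positive density in both $\Spec(O_K)$ and $\Spec(O_L)$. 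Relatedly, $\prod_{\fq\mid\fp}\zeta_{\mathbf{G}(O_{L,\fq})}$ is not comparable to $\zeta_{\mathbf{G}(O_{K,\fp})}^{[L:K]}$: for an inert prime the left-hand side is roughly $1+q^{f(m-ns)}$ and the right-hand side roughly $1+[L:K]\,q^{m-ns}$, and the ratio of the resulting Euler products fails to converge exactly at the abscissa under investigation, so it cannot be absorbed as an error factor. Note also that Theorem~\ref{thm:BC.local} does not assert $\alpha(\mathbf{G}(O_{L,\fq}))=\alpha(\mathbf{G}(O_{K,\fp}))$ for $\fq$ above $\fp$; Remark~\ref{rem:local.global.comparison} shows that inequality can be strict, which is why the paper compares $\fq$ with \emph{infinitely many other} primes of $K$ rather than with the prime below it.

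The deeper problem is that the uniformity you defer to \cite{AKOV} is not available there and is precisely the main technical content of this paper. Formula~\eqref{eq:AKOV} describes only the principal congruence subgroups $\mathbf{G}^{(r)}(O_{L,\fq})$ for large $r$, and the product of these over all $\fq$ has infinite index in $\mathbf{G}(\widehat{O_L})$, so it gives no handle on $\alpha(\mathbf{G}(\widehat{O_L}))$; the paper makes exactly this point just before Definition~\ref{defn:lesssim}. Producing a uniform two-sided approximation of the \emph{full} local factors $\zeta_{\mathbf{G}(O_{L,\fq})}-1$ by Dirichlet polynomials $\xi_{c,\lvert O_L/\fq\rvert}$ with $c$ contained in a fixed finite set $c(\mathbf{G})$, together with the Chebotarev-density statement that $c=c(\mathbf{G})$ on a positive-density set $R(L)$, is Theorem~\ref{thm:zeta.approx}, whose proof occupies Sections~\ref{sec:BC.finite}--\ref{sec:q.f.integrals} (Deligne--Lusztig theory for the finite quotients, the Kirillov orbit method, and uniform formulae for quantifier-free $p$-adic integrals). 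Granting that theorem, the conclusion follows from the computation in Lemma~\ref{lem:Dedekind-arg}, not from a prime-by-prime regrouping of the two Euler products.
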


It is noteworthy that the global situation is more rigid than the
local one: in the local case, the abscissa of convergence is monotone
non-decreasing with respect to base change, but it can be strictly
increasing; see Remark~\ref{rem:local.global.comparison}.

At a formal level we can summarize the base change theorems as
follows.  Theorem~\ref{thm:discrete.to.adeles} means that
$\alpha(\mathbf{G}(O_1)) = \alpha(\mathbf{G}(O_2))$ when $\Spec(O_2)
\rightarrow \Spec(O_1)$ is an open embedding.
Theorem~\ref{thm:BC.global} means that $\alpha(\mathbf{G}(O_1)) =
\alpha(\mathbf{G}(O_2))$ when $\Spec(O_2) \rightarrow \Spec(O_1)$ is
finite.  Taken together, they mean that $\alpha(\mathbf{G}(O_1)) =
\alpha(\mathbf{G}(O_2))$ when $\Spec(O_2) \rightarrow \Spec(O_1)$ has
finite fibers.

Theorem~\ref{thm:BC.global} is more difficult to prove than its local
counterpart Theorem~\ref{thm:BC.local}.  Our approach is based on
close approximations of the representation zeta functions associated
to groups of the form $\mathbf{G}(O_\fp)$.  A central feature of these
approximations is that they are uniform, as~$O$ ranges over the set of
all finite extensions of a fixed global ring and $\fp$ ranges over a
cofinite set of primes of~$O$.  We give such approximations in
Theorem~\ref{thm:zeta.approx}.  Similar approximations for zeta
functions of the finite groups of Lie type $\mathbf{G}(O/\fp)$ are
derived in Theorem~\ref{thm:BC.finite.new}, using Deligne--Lusztig
theory.  This allows us to control representations of
$\mathbf{G}(\widehat{O})$ that factor through $\prod_\fp
\mathbf{G}(O/\fp)$, but does not account for other
representations.  In contrast to the situation for finite
groups of Lie type, the representation theory of groups over local
rings is at present poorly understood.  Instead of enumerating
representations directly, we follow in the proof of
Theorem~\ref{thm:zeta.approx} Weil's idea to express local factors of
zeta functions as $p$-adic integrals.  We show that in our case the
local factors can be approximated by a class of $p$-adic integrals
involving quantifier-free definable functions
(Theorem~\ref{thm:int.approx}) and that such integrals admit uniform
formulae (Theorem~\ref{thm:mot.int}).  In this context we employ tools
from the model theory of valued fields, such as partial elimination of
quantifiers in the theory of Henselian valued fields of residue
characteristic~$0$.  Working with quantifier-free definable functions,
we strike a balance between being able to approximate the relevant
local factors in the first place and being able to derive uniform
formulae for the resulting integrals.  At present it is unknown
whether one may use more elementary classes of functions, such as
polynomials, to carry out an equally effective approximation.

\medskip

\noindent \textbf{Organization.}  In Section~\ref{sec:overview} we
prove Theorem~\ref{thm:discrete.to.adeles}, state
Theorem~\ref{thm:zeta.approx}, and prove Theorems~\ref{thm:BC.global},
\ref{thm:strong.LL}, and~\ref{thm:LL}, using
Theorem~\ref{thm:zeta.approx}.  The rest of the paper is dedicated to
proving Theorem~\ref{thm:zeta.approx}.  In Section~\ref{sec:BC.finite}
we prove Theorem~\ref{thm:BC.finite.new}, which is a variant of
Theorem~\ref{thm:zeta.approx} for zeta functions of semi-simple
algebraic groups over finite fields, and apply it to finite quotients
of arithmetic groups.  In Section~\ref{sec:preliminaries} we collect
some results about relative representation zeta functions, the
Kirillov orbit method, and model theory. In
Section~\ref{sec:parameterize} we prove that the local factors of
representation zeta functions of arithmetic groups are approximated by
integrals of quantifier-free definable functions. In
Section~\ref{sec:q.f.integrals} we prove that integrals of
quantifier-free definable functions have a uniform formula, and finish
the proof of Theorem~\ref{thm:zeta.approx}.

\medskip

\noindent \textbf{Notation.} All affine group schemes appearing in
this paper are algebraic.  For reference purposes we summarize some of
the notation used frequently.
\begin{list}{$\circ$}{\setlength{\leftmargin}{\mylistleftmargin}
    \setlength{\labelwidth}{10pt} \setlength{\itemsep}{0pt}
    \setlength{\parsep}{1pt}}
\item $\mathbf{G}$, $\mathbf{H}$ denote affine group schemes; in this
  context $\mathfrak{g}$ refers to the Lie algebra of~$\mathbf{G}$,
  but sometimes $\mathfrak{g}$ denotes a more general Lie lattice.
\item $\Phi$ denotes a root system; $\rk \Phi$ its rank \ben{and}
  $\Phi^+$ a choice of positive roots\ben{; }%
  $\mathcal{C}(\mathbf{G}, \mathbf{T}, \mathbb{E})$ and
  $\mathcal{C}(\Phi)$ are defined in~\eqref{equ:C}, \eqref{equ:C_Phi}.
\item $\Gamma, \Delta$ denote arithmetic groups.
\item $K, L$ denote number fields, with rings of integers $O_K, O_L$.
\item $\fp$ denotes a prime of $O_K$ and $\fq$ a
  prime of $O_L$.
\item $K_\fp$ and $O_{K,\fp}=O_{K_{\fp}}$ denote the completions of
  $K$ and $O_K$ at $\fp$; similarly, $L_{\fq}$ and
  $O_{L,\fq}=O_{L_{\fq}}$ are the completions of $L$ and $O_L$ at
  $\fq$.
\item $\mathcal{A}$ is a semi-ring with an ideal $\mathcal{A}^+$ and
  $\xi_{a,q}$ a Dirichlet polynomial; see Definition~\ref{defn:Trop}.
\item $\Grass(\mathfrak{g})$ and $\Grass^\mathrm{nilp}(\mathfrak{g})$
  are the Grassmannians of Lie subalgebras and of nilpotent Lie
  subalgebras of a Lie algebra~$\mathfrak{g}$; see
  Definition~\ref{def:grassmannian}.
\item $\mathsf{F}, \mathsf{k}, \mathsf{\Gamma}$ are the sorts of the
  Denef--Pas language of valued fields; see
  Section~\ref{sec:valued.fields}.
\item $\Th_\text{fields}$, $\Th_{\fields,R}$,
  $\Th_{\text{perf.-fields},p,R}$, $\Th_{\mathrm{Hen},0}$, and
  $\Th_{\mathrm{Hen},K,0}$ denote the first-order theories of certain
  types of fields; see Sections~\ref{subsec:def.fun}
  and~\ref{sec:valued.fields}.
\item $\Pi,\Xi$ and decorations thereof refer to relative orbit method
  functions; see Section~\ref{subsec:rel.orbit.method}.
\item $\scX,\scY$ are quantifier-free definable sets introduced in
  Definitions~\ref{defn:X} and~\ref{defn:Y}.
\item $\mathcal{R}, \mathcal{L}, \mathcal{S}$ and decorations thereof
  refer to definable functions/families of Lie algebras/groups over
  $\scX$ and $\scY$; see Section~\ref{subsec:rel.orbit.method} and
  also Theorem~\ref{thm:partial.int}.
\end{list}
Additional summaries of more specialised notation can be found in
Sections~\ref{subsec:rel.orbit.method}
and~\ref{subsec:the.stabilizer}.

\medskip

\noindent \textbf{Acknowledgments.}  Avni was supported by NSF Grant
DMS-0901638. Onn was supported by ISF grant 382/11. We acknowledge
support from the EPSRC. Many thanks to Udi Hrushovski, Michael Larsen,
and Alex Lubotzky for lots of helpful conversations \nir{and the referees  for a number of comments that improved the exposition of the paper}.


\section{Reduction of the Main Results to
  Theorem~\ref{thm:zeta.approx}} \label{sec:overview}

In this section we
prove Theorems~\ref{thm:strong.LL}, \ref{thm:LL},
and~\ref{thm:BC.global}, modulo Theorem~\ref{thm:zeta.approx} stated
below.  For this purpose we fix the following notation.
\begin{list}{$\circ$}{\setlength{\leftmargin}{\mylistleftmargin}
    \setlength{\labelwidth}{10pt} \setlength{\itemsep}{0pt}
    \setlength{\parsep}{1pt}}
\item $K$ is a number field, $O=O_K$ its ring of integers, and $O_S$
  the ring of $S$-integers for a finite set of places $S$.
\item $\mathbf{G}$ is an affine group scheme defined over $O_S$ whose
  generic fiber is connected and simply connected semi-simple.
\item $\Gamma=\mathbf{G}(O_S)$ has the wCSP.
\end{list}
The groups that we consider in this article have the property that
their categories of finite-dimensional complex representations are
semi-simple.  For example, $\Gamma$ satisfies this condition since,
for every finite-dimensional complex representation $\rho$ of
$\Gamma$, the Zariski closure of $\rho(\Gamma)$ is a reductive
algebraic group in characteristic~$0$.

Our starting point is an Euler product factorization for the
representation zeta function of a suitable subgroup~$\Delta \subset
\Gamma$.  The following elementary lemma will be used repeatedly.

\begin{lem} \label{lem:absc.direct.sum} Let $G_1$ and $G_2$ be groups
  such that their categories of finite-dimensional complex
  representations are semi-simple.  If $R_n(G_1)$ and $R_n(G_2)$ are
  finite for all $n \in \N$, then $\zeta_{G_1 \times G_2} =
  \zeta_{G_1} \zeta_{G_2}$. In particular, $\alpha(G_1 \times G_2) =
  \max\{\alpha(G_1), \alpha(G_2)\}$.
\end{lem}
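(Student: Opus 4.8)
The plan is to reduce everything to the standard fact that, under the stated hypotheses, the irreducible representations of $G_1\times G_2$ are exactly the external tensor products $\rho_1\boxtimes\rho_2$ with $\rho_i\in\Irr(G_i)$, and that $\dim(\rho_1\boxtimes\rho_2)=\dim\rho_1\cdot\dim\rho_2$.

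First I would establish that every finite-dimensional irreducible complex representation $V$ of $G_1\times G_2$ is such a tensor product. Restricting the action to $G_1\times 1$ and using that the category of finite-dimensional complex representations of $G_1$ is semi-simple, $V|_{G_1}$ decomposes into finitely many $G_1$-isotypic components; each of these is canonical, hence preserved by the commuting $G_2$-action, so each is a $G_1\times G_2$-submodule of $V$. Irreducibility of $V$ forces there to be exactly one, i.e.\ $V|_{G_1}\cong W^{\oplus e}$ for a single $W\in\Irr(G_1)$. Setting $M=\Hom_{G_1}(W,V)$, which carries a natural $G_2$-action, one gets $V\cong W\otimes M$ as $G_1\times G_2$-modules; since any $G_2$-submodule $M'\subseteq M$ yields the $G_1\times G_2$-submodule $W\otimes M'$, the $G_2$-module $M$ is irreducible. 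Thus $V\cong W\boxtimes M$ with $\dim V=\dim W\cdot\dim M$. Conversely, for $W\in\Irr(G_1)$, $M\in\Irr(G_2)$, Burnside's density theorem (applicable because $\C$ is algebraically closed and $W,M$ are finite-dimensional) shows that the images of $\C[G_1]$ and $\C[G_2]$ fill up $\End(W)$ and $\End(M)$, so the image of $\C[G_1\times G_2]$ in $\End(W\otimes M)$ equals $\End(W)\otimes\End(M)=\End(W\otimes M)$; hence $W\boxtimes M$ is irreducible. Injectivity of $(W,M)\mapsto W\boxtimes M$ follows from uniqueness of isotypic decompositions: if $W\boxtimes M\cong W'\boxtimes M'$, restriction to $G_1$ gives $W\cong W'$ and $\dim M=\dim M'$, and then restriction to $G_2$ gives $M\cong M'$. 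So $(W,M)\mapsto W\boxtimes M$ is a bijection $\Irr(G_1)\times\Irr(G_2)\xrightarrow{\sim}\Irr(G_1\times G_2)$; in particular $R_n(G_1\times G_2)\le R_n(G_1)R_n(G_2)<\infty$, so $\zeta_{G_1\times G_2}$ is defined.

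Given the bijection and the multiplicativity of dimensions, I would conclude the identity of Dirichlet series
\[
\zeta_{G_1\times G_2}(s)=\sum_{W\in\Irr(G_1)}\sum_{M\in\Irr(G_2)}(\dim W)^{-s}(\dim M)^{-s}=\zeta_{G_1}(s)\,\zeta_{G_2}(s),
\]
the rearrangement being justified by absolute convergence on a common right half-plane (equivalently by finiteness of the $R_n$). For the abscissa, the coefficients of $\zeta_{G_1}$ and $\zeta_{G_2}$ are non-negative: on $\mathrm{Re}(s)>\max\{\alpha(G_1),\alpha(G_2)\}$ both factors converge absolutely, hence so does the product, giving $\alpha(G_1\times G_2)\le\max\{\alpha(G_1),\alpha(G_2)\}$; conversely, $\zeta_{G_i}(s)\ge 1$ for real $s$ (the trivial representation contributes $1$), so for real $s$ one has $\zeta_{G_1\times G_2}(s)\ge\zeta_{G_i}(s)$, which diverges for $s<\alpha(G_i)$, giving $\alpha(G_1\times G_2)\ge\max\{\alpha(G_1),\alpha(G_2)\}$.

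The only genuinely non-formal point — hence the step I expect to need the most care — is the reduction to $V|_{G_1}\cong W^{\oplus e}$: here the hypothesis that the representation category of $G_1$ is semi-simple is exactly what makes the isotypic decomposition available, and without it an irreducible representation of $G_1\times G_2$ need not be an external tensor product. Everything after that is bookkeeping with Dirichlet series with non-negative coefficients.
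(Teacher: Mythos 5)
Your proof is correct and follows the same route as the paper, which simply cites the fact that the irreducibles of $G_1\times G_2$ are the external tensor products of irreducibles of the factors; you supply the standard proof of that fact (isotypic decomposition using semi-simplicity, plus Burnside for the converse) and the Dirichlet-series bookkeeping, all of which is sound.
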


\begin{proof} This follows from the fact that the irreducible complex
  representations of $G_1 \times G_2$ are the tensor products of
  irreducible complex representations of $G_1$ and~$G_2$.
\end{proof}

Considering pro-algebraic completions, one finds that there is a
finite-index subgroup $\Delta \subset \Gamma$ such that
\begin{equation} \label{eq:Euler} \zeta_\Delta =
  \zeta_{\mathbf{G}(\C)}^{\, [K:\mathbb{Q}]} \cdot \prod_{\fp \notin
    S}\zeta_{\Delta_\fp},
\end{equation}
where the product ranges over the primes $\fp \in \Spec(O)
\smallsetminus S = \Spec(O_S)$ and $\Delta_\fp$ is the closure,
in the $\fp$-adic topology, of the image of $\Delta$ under
the embedding $\Gamma \rightarrow \mathbf{G}(O_\fp)$;
see~\cite[Theorem~3.3 and Proposition~4.6]{LL}.  Furthermore, the
generating series $\zeta_{\mathbf{G}(\C)}$ counts only
rational representations, and the generating series
$\zeta_{\Delta_\fp}$ count only continuous
representations. Since $\Delta$ has finite index in $\Gamma$, the
Strong Approximation Theorem implies that $\Delta_\fp$ is
open in $\mathbf{G}(O_\fp)$, for every $\fp$, and
$\Delta_\fp=\mathbf{G}(O_\fp)$, for all but
finitely many primes.

It is well-known that the abscissa of convergence for groups is a
commensurability invariant (see~\cite[Lemma 2.2]{LM}; we prove a more
general version in Lemma~\ref{lem:fin.index.BK}).  In particular,
$\alpha(\Gamma)$ is equal to
$\alpha(\Delta)$. By~\cite[Theorem~5.1]{LL}
and~\cite[Proposition~6.6]{LL}, we have $\alpha(\mathbf{G}(\C)) \leq
\alpha(\mathbf{G}(O_\fp))$, for every $\fp\in \Spec(O_S)$. Therefore,
\eqref{eq:Euler} shows that $\alpha(\Gamma)$ is equal to the abscissa
of convergence of the product $\prod_{\fp \notin S}
\zeta_{\Delta_\fp}$. By another application of the commensurability
invariance, $\alpha(\Gamma)$ is equal to the abscissa of convergence
of $\prod_{\fp\notin S}\zeta_{\mathbf{G}(O_\fp)}$.

To deduce Theorem~\ref{thm:discrete.to.adeles} we need to justify that
the abscissa of convergence of the product $\prod_\fp
\zeta_{\mathbf{G}(O_\fp)}$, ranging over all primes $\fp
\in \Spec(O)$, is unchanged by omitting finitely many factors. This is
a consequence of the following more general result.

\begin{thm} \label{thm:BC.local} Let $K$ be a number field with ring
  of integers $O_K$, and let $\mathbf{G}$ be an affine group scheme
  over $O_K$ whose generic fiber is connected and simply connected
  semi-simple.  Then, for every finite extension $L$ of $K$ with ring
  of integers $O_L$ and every prime $\fq$ of~$O_L$, there are
  infinitely many primes $\fp$ of $O_K$ such that
  $\alpha(\mathbf{G}(O_{L,\fq})) \leq \alpha(\mathbf{G}(O_{K,\fp}))$.
\end{thm}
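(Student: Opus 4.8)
The plan is to obtain Theorem~\ref{thm:BC.local} as a corollary of $\cite[Theorem~B]{AKOV}$ together with a cheap counting of primes. Recall that $\cite[Theorem~B]{AKOV}$ concerns the behaviour of $\alpha(\mathbf{G}(\mathfrak{o}))$ as $\mathfrak{o}$ ranges over compact discrete valuation rings of characteristic~$0$: outside a finite set of residue characteristics this abscissa is unchanged under (arbitrary) extension of $\mathfrak{o}$ and is, moreover, independent of $\mathfrak{o}$, while in general it is monotone non-decreasing under such extensions and never exceeds this generic value. I will use it in the following packaged form: there is a finite set $T$ of rational primes and a constant $\alpha_{\mathbf{G}} \in \R$, depending only on $\mathbf{G}$, such that, first, $\alpha(\mathbf{G}(O_{F,\mathfrak{r}})) = \alpha_{\mathbf{G}}$ for every number field $F$ and every prime $\mathfrak{r}$ of $O_F$ whose residue characteristic lies outside $T$ — this is the base-change invariance at good places combined with the independence of the place, and it in particular makes $\alpha_{\mathbf{G}}$ the same whether $\mathbf{G}$ is regarded over $O_K$ or over $O_L$ — and, second, $\alpha(\mathbf{G}(\mathfrak{o})) \le \alpha_{\mathbf{G}}$ for every compact discrete valuation ring $\mathfrak{o}$ of characteristic~$0$, including those of small residue characteristic or wild ramification.

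Granting this, the proof is short. Given a finite extension $L$ of $K$ and a prime $\fq$ of $O_L$, the group $\mathbf{G}(O_{L,\fq})$ is defined since $\mathbf{G}$ is defined over $O_K \subseteq O_L$, and $\alpha(\mathbf{G}(O_{L,\fq})) \le \alpha_{\mathbf{G}}$ by the ceiling property. On the other hand, $\Spec(O_K) \to \Spec(\Z)$ is surjective, so every rational prime has at least one prime of $O_K$ above it; picking one such prime $\fp$ over each rational prime outside $T$ yields infinitely many primes $\fp$ of $O_K$ of residue characteristic outside $T$, and for each of these $\alpha(\mathbf{G}(O_{K,\fp})) = \alpha_{\mathbf{G}}$. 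Hence $\alpha(\mathbf{G}(O_{L,\fq})) \le \alpha_{\mathbf{G}} = \alpha(\mathbf{G}(O_{K,\fp}))$ for these infinitely many $\fp$, which is the assertion.

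The substantive work sits entirely in $\cite[Theorem~B]{AKOV}$; in the present reduction the only delicate point is that $\fq$ is arbitrary and may lie over a small residue characteristic or a place of bad reduction, where $\alpha(\mathbf{G}(O_{L,\fq}))$ can \emph{strictly} exceed the abscissa at an unramified place of the same residue characteristic — exactly the phenomenon flagged in Remark~\ref{rem:local.global.comparison}. For this reason one cannot simply quote uniformity at good places: it is essential to have the ceiling $\alpha(\mathbf{G}(\mathfrak{o})) \le \alpha_{\mathbf{G}}$ valid for \emph{all} $\mathfrak{o}$, which is the part of $\cite[Theorem~B]{AKOV}$ one must be sure to extract (if it is not stated verbatim there, it follows by combining the uniformity at good places with the monotonicity under local base change). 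The passage to infinitely many $\fp$, by contrast, is elementary and uses nothing beyond the infinitude of rational primes away from a finite set.
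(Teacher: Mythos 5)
Your reduction has the right overall shape---everything is meant to come from \cite[Theorem~B]{AKOV} plus a count of primes---but the ``packaged form'' you extract from that theorem is not what it asserts, and the first half of your package is not known to be true. You claim there is a single constant $\alpha_{\mathbf{G}}$ with $\alpha(\mathbf{G}(O_{F,\mathfrak{r}}))=\alpha_{\mathbf{G}}$ for \emph{every} prime $\mathfrak{r}$ of every $F\supseteq K$ of good residue characteristic. For a fixed $O_K$-form $\mathbf{G}$ the local groups $\mathbf{G}(O_{K,\fp})$ genuinely vary with $\fp$ (a quasi-split unitary group, say, is $\SL_n$ at split places and $\mathrm{SU}_n$ at inert places), and the abscissa of the integral \eqref{eq:AKOV} at a good place equals $\max\{A_{ij}/B_{ij}\}$ taken only over those auxiliary varieties $\mathbf{W}_i$ that acquire points over the residue field---a Chebotarev condition, satisfied on a set of positive density but not on a cofinite set. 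This is precisely why Theorem~\ref{thm:zeta.approx} distinguishes the cofinite set $T(L)$, where only some subset $c\subset c(\mathbf{G})$ is realized, from the positive-density set $R(L)$, where the maximal datum is realized, and why Theorem~\ref{thm:BC.local} is stated with ``infinitely many $\fp$'' rather than ``all but finitely many $\fp$''. If your claim held, the theorem could be upgraded to a cofinite set of $\fp$ and much of the Chebotarev machinery of Section~\ref{subsec:proof.zeta.approx} would be superfluous; equality of the local abscissae across all good places is not a consequence of \cite[Theorem~B]{AKOV} and is open already for unitary versus special linear groups in higher rank.

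The fallback you offer for the ceiling---``combine the uniformity at good places with the monotonicity under local base change''---also fails. The monotonicity in \cite[Theorem~B]{AKOV} runs in the direction $\alpha(\mathbf{G}(O_{K,\fp_0}))\leq\alpha(\mathbf{G}(O_{L,\fq}))$ for $\fq$ lying over $\fp_0$ (Remark~\ref{rem:local.global.comparison}); it yields \emph{lower} bounds on the abscissa of the larger ring and cannot bound $\alpha(\mathbf{G}(O_{L,\fq}))$ from above when $\fq$ is ramified or of bad residue characteristic---which is exactly the case you correctly flag as delicate. What must actually be extracted from \cite[Theorem~B]{AKOV} is the comparison of the integrals \eqref{eq:AKOV} across varying $(L,\fq)$, using that $\phi_1,\phi_2$ are independent of $L$ and $\fq$: the abscissa at an arbitrary $\fq$ is bounded by numerical data coming from a resolution defined over $K$ in characteristic $0$, and that bound is attained at the primes $\fp$ of $O_K$ in a suitable Chebotarev set, which is infinite but in general not cofinite. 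That comparison is the substance of the theorem; your argument assumes it in a stronger form that does not hold.
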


Theorem~\ref{thm:BC.local} is a corollary of~\cite[Theorem~B]{AKOV},
whose proof uses $p$-adic integrals to analyze the representation zeta
functions of $\fq$-adic groups such as $\mathbf{G}(O_{L,\fq})$.  The
connection to $p$-adic integrals, and more generally definable
integrals in the sense of model theory, is \cite[Corollary~3.7]{AKOV}
(see also~\cite[Lemma~4.1]{Jai}).  The notion of a quantifier-free
definable function is explained in Section~\ref{sec:preliminaries}. It
follows from~\cite[Corollary~3.7]{AKOV} that there are $d \in \N$ and
quantifier-free definable functions $\phi_1,\phi_2$ such that, for
every finite extension $L$ of $K$, every prime $\fq$ of $O_L$, and
every sufficiently large integer $r$, the representation zeta function
of the $r$th principal congruence subgroup
$\mathbf{G}^{(r)}(O_{L,\fq}) = \ker (\mathbf{G}(O_{L,\fq}) \rightarrow
\mathbf{G}(O_L/\fq^r))$ can be expressed as follows:
\begin{equation} \label{eq:AKOV}
  \zeta_{\mathbf{G}^{(r)}(O_{L,\fq})}(s) = \lvert O_L/ \fq \rvert^{r
    \cdot \dim \mathbf{G}}\int_{O_{L,\fq}^d} \lvert \phi_1(x)
  \rvert_\fq \; \lvert \phi_2(x) \rvert_\fq^{-s} d\lambda(x),
\end{equation}
where the absolute value in the integrand is the $\fq$-adic one, and
$\lambda$ is the additive Haar measure on $L_{\fq}^d$ normalized so
that $\lambda(O_{L,\fq}^d) = 1$. Since the abscissae of convergence
for $\mathbf{G}(O_{L,\fq})$ and its $r$th principal congruence
subgroup are equal, the claim in Theorem~\ref{thm:BC.local} may thus
be reduced to a similar claim for integrals of the
form~\eqref{eq:AKOV}.  The main point is that the functions $\phi_1,
\phi_2$ are independent of $L$ and $\fq$, allowing us to compare the
integrals for varying fields and primes.

\begin{rem} \label{rem:local.global.comparison}
  Theorem~\ref{thm:BC.local} can be regarded as a local analog of
  Theorem~\ref{thm:BC.global}.  We explain why a more naive analog is
  false.  Suppose that $K\subset L$ is a finite extension of number
  fields and that $\fq$ is a prime of $O_L$ lying over a prime $\fp$
  of~$O_K$.  Then~\cite[Theorem~B]{AKOV} implies that
  $\alpha(\mathbf{G}(O_{K,\fp})) \leq \alpha(\mathbf{G}(O_{L,\fq}))$.
  However, in contrast to the global case considered in
  Theorem~\ref{thm:BC.global}, the inequality can be strict. For
  example, let $D$ be a central division algebra of degree $d$ over a
  non-archimedean local field~$F$, and let $\mathbf{G}$ be an affine
  group scheme over $O_F$ such that $\mathbf{G}(F)$ is the group of
  norm $1$ elements in~$D$.  Then the abscissa of convergence for
  $\mathbf{G}(O_F)$ is $2/d$; see~\cite[Theorem~7.1]{LL}.  But if $F
  \subset E$ is an extension such that $D$ splits over~$E$, then
  $\alpha(\mathbf{G}(O_E)) \geq 1/15$ by~\cite[Theorem~8.1]{LL}, and
  hence strictly greater than $2/d$ for~$d>30$.
\end{rem}

We move on to the proof of Theorem~\ref{thm:BC.global}.  The
description in~\eqref{eq:AKOV} of representation zeta functions of
principal congruence subgroups $\mathbf{G}^{(r)}(O_\fp)$ as $p$-adic
integrals is of limited use for
determining~$\alpha(\mathbf{G}(\widehat{O}))$.  This is due to the
fact that the infinite product of such congruence subgroups is not of
finite index in $\mathbf{G}(\widehat{O}) = \prod_\fp
\mathbf{G}(O_\fp)$.  Facing the challenge to deal with the groups
$\mathbf{G}(O_\fp)$, and not merely congruence subgroups of
sufficiently large index, we approximate their representation zeta
functions in the following sense.

\begin{defn} \label{defn:lesssim} Let $f(s) = \sum_{n=1}^\infty
  a_n n^{-s}$ and $g(s) = \sum_{n=1}^\infty b_n n^{-s}$ be Dirichlet
  generating series, i.e., Dirichlet series with integer coefficients
  $a_n,b_n \geq 0$.  Let $C \in \R$.  Suppose that $\sigma_0 \in
  \R_{\geq 0}$ is greater than or equal to the abscissae of
  convergence of $f$ and $g$.  We write
  \[
  f \lesssim_C g \quad \text{for $\sigma > \sigma_0$}
  \]
  if $f(\sigma) \leq C^{1+\sigma} g(\sigma)$ for every $\sigma \in
  \R$ with $\sigma > \sigma_0$. We write $f \lesssim_C g$, without
  specifying the domain, if $f$ and $g$ have the same abscissa of
  convergence $\alpha$ and if $f \lesssim_C g$ for $\sigma > \max \{
  0,\alpha \}$.  Finally, we write $f \sim_C g$ if $f\lesssim_C g$ and
  $g\lesssim_C f$.
\end{defn}

We routinely use the fact that $f \lesssim_{C_1} g$ and $g
\lesssim_{C_2} h$ imply $f \lesssim_{C_1C_2} h$.

\begin{lem} \label{lem:lesssim} Let $f, g$ be Dirichlet generating
  series with abscissae of convergence $\alpha_f, \alpha_g$.  Suppose
  that $f = \prod_{m=1}^\infty (1+f_m)$ and $g = \prod_{m=1}^\infty
  (1+g_m)$, where $f_m, g_m$ are Dirichlet generating series with
  vanishing constant terms, and, for every $m \in \N$, let $\beta_m$
  denote the abscissa of convergence of $g_m$.  Suppose further that,
  for each $\epsilon >0$, there is $C(\epsilon) \in \R_{>0}$ such
  that, for all $m$, $f_m \lesssim_{C(\epsilon)} g_m$ for $\sigma >
  \beta_m +\epsilon$. Then $\alpha_f \leq \alpha_g$.
\end{lem}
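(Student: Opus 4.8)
The plan is to show that the series $f(\sigma)$ converges for every $\sigma > \alpha_g$ by comparing it term-by-term with the product expansion of $g$. Fix $\sigma > \max\{0,\alpha_g\}$ and choose $\epsilon > 0$ small enough that $\sigma - \epsilon > \alpha_g$ as well; actually it is cleaner to fix $\epsilon>0$ arbitrary and show $f(\sigma)<\infty$ for $\sigma > \alpha_g + \epsilon$, then let $\epsilon \to 0$. Since each $g_m$ has nonnegative coefficients and vanishing constant term, $\beta_m \le \alpha_g$ for every $m$ (the factor $1+g_m$ divides $g$ in the sense of dominating no more than $g$ does — more precisely, $1+g_m \le \prod_k(1+g_k) = g$ coefficientwise, so $g_m(\sigma)\le g(\sigma)$ for $\sigma>\alpha_g$, forcing $\beta_m\le\alpha_g$). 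Hence for $\sigma > \alpha_g + \epsilon$ we have $\sigma > \beta_m + \epsilon$ for all $m$, and the hypothesis gives
\[
f_m(\sigma) \le C(\epsilon)^{1+\sigma}\, g_m(\sigma) \qquad \text{for all } m.
\]

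Now I estimate $f(\sigma) = \prod_{m=1}^\infty \bigl(1 + f_m(\sigma)\bigr)$. Using $1 + f_m(\sigma) \le 1 + C(\epsilon)^{1+\sigma} g_m(\sigma) \le \bigl(1 + g_m(\sigma)\bigr)^{C(\epsilon)^{1+\sigma}}$ when $C(\epsilon)^{1+\sigma}\ge 1$ — or, to avoid the constant-size issue, simply $1 + f_m(\sigma) \le \bigl(1+g_m(\sigma)\bigr)^{\max\{1,\,C(\epsilon)^{1+\sigma}\}}$, which holds because $(1+x)^t \ge 1 + tx$ for $t\ge 1$, $x\ge 0$ — we get
\[
f(\sigma) = \prod_{m=1}^\infty \bigl(1+f_m(\sigma)\bigr) \le \prod_{m=1}^\infty \bigl(1+g_m(\sigma)\bigr)^{D} = g(\sigma)^{D},
\]
where $D = \max\{1, C(\epsilon)^{1+\sigma}\}$ is finite. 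Since $\sigma > \alpha_g$, the right-hand side is finite, so $f(\sigma) < \infty$. As this holds for every $\sigma > \alpha_g + \epsilon$ and every $\epsilon>0$, it holds for every $\sigma > \alpha_g$, whence $\alpha_f \le \alpha_g$.

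The one point requiring care — and the place where the hypotheses are genuinely used — is the passage from the pointwise bound $f_m(\sigma) \le D\, g_m(\sigma)$ to the bound on the infinite product: one must exponentiate rather than merely multiply through by $D$, since $\prod_m(1 + D g_m(\sigma))$ need not converge even when $\prod_m(1+g_m(\sigma))$ does. The inequality $1 + D x \le (1+x)^D$ for $D \ge 1$, $x \ge 0$ is exactly what makes this work, and it is crucial that $D = \max\{1,C(\epsilon)^{1+\sigma}\}$ is a \emph{single} constant independent of $m$ — this is the uniformity built into the hypothesis ``there is $C(\epsilon)$ such that, for all $m$, $f_m \lesssim_{C(\epsilon)} g_m$''. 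A minor technical wrinkle is justifying $\beta_m \le \alpha_g$ and the coefficientwise domination $g_m \le g$; this follows since all factors $1+g_k$ have nonnegative coefficients, so dropping all factors except $1+g_m$ only decreases coefficients. No convergence subtleties arise on the $f$-side because we never need $\alpha_f$ a priori; we simply exhibit a half-plane of convergence for $f$.
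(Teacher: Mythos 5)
Your proof is correct, and its skeleton is the same as the paper's: reduce to a uniform pointwise bound $f_m(\sigma) \le C(\epsilon)^{1+\sigma} g_m(\sigma)$ valid for all $m$ once $\sigma > \alpha_g + \epsilon$ (using, as you do, that $\beta_m \le \alpha_g$ because $g$ dominates each $g_m$ coefficientwise), and then pass from the factor-wise bound to convergence of the product. Where you diverge is only in that last step: the paper invokes the standard criterion that, for nonnegative $x_m$, the product $\prod_m(1+x_m)$ converges if and only if $\sum_m x_m$ does, so that $\sum_m f_m(\sigma) \le C(\epsilon)^{1+\sigma}\sum_m g_m(\sigma) < \infty$ finishes the argument, whereas you use Bernoulli's inequality $1+Dx \le (1+x)^D$ to get $f(\sigma) \le g(\sigma)^D$ directly. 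Both work, and both rest on the same essential point, which you correctly identify: the constant $D$ must be independent of $m$. One correction to your closing remark, though: it is not true that $\prod_m(1+Dg_m(\sigma))$ ``need not converge even when $\prod_m(1+g_m(\sigma))$ does'' --- for nonnegative terms, convergence of the product is equivalent to convergence of $\sum_m g_m(\sigma)$, and that is stable under multiplication by the fixed constant $D$. So ``merely multiplying through by $D$'' is in fact viable (and is, at the level of the sums, exactly what the paper does); exponentiating is a legitimate alternative, not a necessity.
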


\begin{proof} The assumptions imply that the abscissa of convergence
  of $f_m$ is less than or equal to $\beta_m$ for every~$m$. The
  abscissa of convergence of a Dirichlet generating series is
  determined by its behavior on the real axis.  Fix $\epsilon >0$, and
  let $\sigma \in \R$ with $\sigma > \max\{0,\alpha_g\} + \epsilon$,
  so that $\sigma > \beta_m +\epsilon$ for all~$m$. Then $g(\sigma) =
  \prod_m(1+g_m(\sigma))$ and hence $\sum_m g_m(\sigma)$ converge.  As
  $f_m(\sigma) \leq C(\epsilon)^{1+\sigma} g_m(\sigma)$ for all $m$,
  this implies that $\sum_m f_m(\sigma)$ and hence $f(\sigma) =
  \prod_m(1+f_m(\sigma))$ converge.  Thus $\sigma > \alpha_f$. Letting
  $\epsilon$ tend to $0$, we deduce that $\alpha_g \geq \alpha_f$.
\end{proof}

We now introduce a semi-ring $\mathcal{A}$ whose elements $a$ are used
to index Dirichlet polynomials $\xi_{a,q}$ approximating certain
Dirichlet generating series.

\begin{defn} \label{defn:Trop} Let $\mathcal{A}$ be the collection of
  all finite subsets $a \subset \mathbb{Z}_{\geq 0} \times
  \mathbb{Z}_{>0} \cup \left\{ (0,0) \right\}$. We turn $\mathcal{A}$
  into a commutative unital semi-ring by defining the sum of
  $a,b\in\mathcal{A}$ as $a+b = a \cup b$ and their product as $a\cdot
  b = \{ u+v \mid u \in a, v \in b\}$.  Note that the neutral elements
  in $\mathcal{A}$ are $0 = \varnothing$ and $1 = \{(0,0)\}$.  For $a
  \in \mathcal{A}$ and $n \in \N$ we write $a^{(n)} = \{ (n u_1, n
  u_2) \mid (u_1,u_2) \in a \}$.

  The set $\mathcal{A}^+ = \{ a \in \mathcal{A} \mid (0,0) \notin a
  \}$ forms an ideal of the semi-ring~$\mathcal{A}$.  For $a,b \in
  \mathcal{A}^+$ we define $a*b \in \mathcal{A}^+$ by the formula
  $(1+a)(1+b) = 1+a*b$.  For $a \in \mathcal{A}^+$ and $n \in \N$ we
  write $a^{*n} = a* \cdots *a$ for the $n$-fold power with respect
  to~$*$.

  For $a\in\mathcal{A}$ and $q \in \N_{\geq 2}$ we define the
  Dirichlet polynomial
  \[
  \xi_{a,q}(s) = \sum_{(m,n)\in a}q^{m-ns}.
  \]
\end{defn}

\begin{rem}\label{rem:xi}
  (1) Let $a,b\in \mathcal{A}^+$ and $n\in \N$.  The following
  properties are immediate from the definitions: $a^{(n)} \subset
  a^{*n}$, $\xi_{a,q^n} = \xi_{a^{(n)},q}$, and there exists $C=C(a,b)
  \in \R$ such that
  \[
  (1+\xi_{a,q})(1+\xi_{b,q})-1 \sim_C \xi_{a*b,q} \quad \text{for all
    $q \in \N_{\geq 2}$.}
  \]
  For instance, $C(a,b) = 1 + \min \{ \lvert a \rvert, \lvert b \rvert
  \}$ works.  Furthermore, if $a \subset b$, then $\xi_{a,q}
  \lesssim_1 \xi_{b,q}$ for all $q \in \N_{\geq 2}$.

  (2) Let $a,b\in \mathcal{A}$.  Let $\mathcal{N}(a)$ denote the
  ``north-west''-Newton polytope associated to~$a$, i.e., the convex
  hull of $\bigcup \{ u + (\R_{\leq 0} \times \R_{\geq
    0}) \mid u \in a \}$ in $\R^2$.  Then $\mathcal{N}(a)
  \subset \mathcal{N}(b)$ if and only if there exists $C \in
  \R$ such that $\xi_{a,q}(s) \lesssim_C \xi_{b,q}(s)$ for all
  $q\in\N_{\geq 2}$.  In fact, if $\mathcal{N}(a) \subset
  \mathcal{N}(b)$ one can take $C = \lvert a \rvert$.
\end{rem}

Recall that the Dedekind zeta function $\zeta_K(s)=\prod_{\fp \in
  \Spec(O)}\left( 1 - \lvert O/\fp \rvert^{-s}\right)^{-1}$ of a
number field $K$ has abscissa of convergence $1$, and that\nir{, for
  any subset $T\subset \Spec(O)$ with positive analytic density, the
  abscissa of convergence of $\prod_{\fp \in T}\left(1 - \lvert O/\fp
    \rvert^{-s}\right)^{-1}$ is equal to~$1$.}  Of course, every
co-finite subset of $\Spec(O)$ has positive analytic density.

\begin{thm} \label{thm:zeta.approx}  Let $K$
  be a number field with ring of integers $O_K$, and let $\mathbf{G}$
  be an affine group scheme over $O_K$ whose generic fiber is
  connected, simply connected semi-simple.  Then there exist
  \begin{list}{$\circ$}{\setlength{\leftmargin}{\mylistleftmargin}
      \setlength{\labelwidth}{10pt} \setlength{\itemsep}{0pt}
      \setlength{\parsep}{1pt}}
  \item $c(\mathbf{G}) \in \mathcal{A}^+$,
  \item for every finite extension $L$ of $K$ with ring of integers
    $O_L$, subsets $R(L) \subset T(L) \subset \Spec(O_L)$ with $T(L)$
    co-finite and $R(L)$ of positive analytic density in $\Spec(O_L)$,
  \item for every $\epsilon \in \R_{>0}$, a constant
    $C(\epsilon) \in \R_{>0}$
  \end{list}
  such that for every finite extension $L$ of $K$ with ring of
  integers $O_L$ the following hold.
  \begin{list}{}{\setlength{\leftmargin}{\myenumilistleftmargin}
      \setlength{\labelwidth}{20pt} \setlength{\itemsep}{0pt}
      \setlength{\parsep}{1pt}}
  \item[\textup{(1)}] For every $\fq \in T(L)$, there is a
    subset $c \subset c(\mathbf{G})$ such that, for every $\epsilon
    \in \R_{>0}$,
    \[
    \zeta_{\mathbf{G}(O_{L,\fq})}-1 \sim_{C(\epsilon)} \xi_{c,
      \lvert O_L/\fq \rvert} \quad \text{for $\sigma >
      \alpha(\mathbf{G}(O_{L,\fq}))+\epsilon$.}
    \]
  \item[\textup{(2)}] For every $\fq \in R(L)$ and every
    $\epsilon \in \R_{>0}$,
    \[
    \zeta_{\mathbf{G}(O_{L,\fq})}-1 \sim_{C(\epsilon)}
    \xi_{c(\mathbf{G}), \lvert O_L/\fq \rvert} \quad
    \text{for $\sigma > \alpha(\mathbf{G}(O_{L,\fq})) +
      \epsilon$.}
    \]
  \end{list}
\end{thm}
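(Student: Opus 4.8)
The plan is to reduce the statement to a uniform analysis of the $p$-adic integrals~\eqref{eq:AKOV}, sharpened so that the approximating data $\xi_{a,q}$ can be read off from the combinatorial shape of the integrand. The key observation is that both parts~(1) and~(2) say the same thing, except that part~(2) asserts the approximation is given by a \emph{fixed} element $c(\mathbf{G})$ for a positive-density set of primes, whereas part~(1) only asks for \emph{some} subset of $c(\mathbf{G})$. So I would first prove a single master statement: there is a finite collection $c(\mathbf{G}) \in \mathcal{A}^+$ and, for each $\epsilon$, a constant $C(\epsilon)$, such that for every finite extension $L/K$ and every prime $\fq$ of $O_L$ outside a finite exceptional set (depending only on $\mathbf{G}$), one has $\zeta_{\mathbf{G}(O_{L,\fq})}-1 \sim_{C(\epsilon)} \xi_{c(\fq),\lvert O_L/\fq\rvert}$ for $\sigma > \alpha(\mathbf{G}(O_{L,\fq}))+\epsilon$, where $c(\fq)$ is a subset of the fixed set $c(\mathbf{G})$ determined by finitely many congruence/splitting conditions on $\fq$. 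Since there are only finitely many subsets of $c(\mathbf{G})$, the partition of $\Spec(O_L)$ into the corresponding cells has at least one cell of positive analytic density (each cell being a Boolean combination of Chebotarev-type conditions, hence of rational analytic density); that cell, or rather the one attached to the ``generic'' value, is our $R(L)$, and $T(L)$ is the complement of the finite exceptional set. This is where the remarks on analytic density and Definition~\ref{defn:Trop} get used.

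**The main body of the argument** then splits the representation zeta function of $\mathbf{G}(O_{L,\fq})$ along the filtration by principal congruence subgroups. Using Clifford theory (the relative representation zeta function machinery referenced for Section~\ref{subsec:rel.orbit.method}), one writes $\zeta_{\mathbf{G}(O_{L,\fq})}$ as a finite sum of contributions: the representations factoring through the finite group of Lie type $\mathbf{G}(O_L/\fq)$, and, for each $r\geq 1$, the ``new'' representations at level $r$. The level-$0$ part is handled by Theorem~\ref{thm:BC.finite.new}, which by hypothesis already produces a Dirichlet polynomial of the form $\xi_{a,q}$ with $a$ drawn from a finite list independent of $\fq$ (and with the subset depending on $\fq$ only through its residue degree and splitting behavior). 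For the positive-level part, I would invoke~\eqref{eq:AKOV} together with the Kirillov orbit method to express, for $r$ larger than the ramification bound, the relevant relative zeta function as a definable $p$-adic integral over $O_{L,\fq}^d$ with integrand $\lvert\phi_1\rvert_\fq\,\lvert\phi_2\rvert_\fq^{-s}$, the functions $\phi_1,\phi_2$ being quantifier-free definable and independent of $L$ and $\fq$. By Theorem~\ref{thm:mot.int} (the uniform-formula result for such integrals, to be proved in Section~\ref{sec:q.f.integrals}), such an integral equals, up to a bounded multiplicative error of the type tracked by $\sim_{C(\epsilon)}$, a Dirichlet polynomial $\xi_{a,q}$ with $q = \lvert O_L/\fq\rvert$ and $a$ ranging over a finite set of elements of $\mathcal{A}$; moreover the specific $a$ depends on $\fq$ only through which cell of a fixed definable partition of the prime spectrum $\fq$ falls into. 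Summing the finitely many levels and the level-$0$ term using the semi-ring operations $+$ and $*$ of Definition~\ref{defn:Trop} and the estimates in Remark~\ref{rem:xi}, I obtain $\zeta_{\mathbf{G}(O_{L,\fq})}-1 \sim_{C(\epsilon)} \xi_{c(\fq),q}$ with $c(\fq)$ a $\fq$-dependent subset of a fixed $c(\mathbf{G})\in\mathcal{A}^+$.

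**The remaining point** is to see that only finitely many levels $r$ contribute past the abscissa: for $\sigma > \alpha(\mathbf{G}(O_{L,\fq}))+\epsilon$ the tail $\sum_{r\geq r_0}(\text{level-}r\text{ contribution})(\sigma)$ is dominated, uniformly in $L$ and $\fq$, by a geometric series in $q^{-\epsilon r}$ (this is exactly the kind of uniform tail bound that Theorem~\ref{thm:zeta.approx} is designed to feed into Lemma~\ref{lem:lesssim}); hence the tail is absorbed into the constant $C(\epsilon)$ and does not affect the Newton-polytope shape recorded by $c(\mathbf{G})$. Here the self-similarity $\zeta_{\mathbf{G}^{(r)}(O_\fp)}$-versus-$\zeta_{\mathbf{G}^{(r+1)}(O_\fp)}$ built into~\eqref{eq:AKOV} (the factor $\lvert O_L/\fq\rvert^{r\dim\mathbf{G}}$ scaling with $r$) is what makes the bound uniform.

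**The hard part will be** the uniformity of the passage from the definable integral to the Dirichlet polynomial $\xi_{a,q}$ \emph{with $a$ ranging over a finite set} — i.e.\ showing that as $L$ and $\fq$ vary the combinatorial type of the answer takes only finitely many values and that those values are governed by a fixed definable partition of $\bigsqcup_L \Spec(O_L)$ compatible with analytic density. This is precisely the content of Theorems~\ref{thm:int.approx} and~\ref{thm:mot.int}, and rests on partial quantifier elimination for Henselian valued fields of residue characteristic $0$ in the Denef--Pas language, applied with the residue-field and value-group parameters treated uniformly; controlling the error terms so that they fit the $\lesssim_C$ formalism (rather than merely $O(\,\cdot\,)$-type bounds) is the delicate bookkeeping. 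Everything else — the Clifford-theoretic decomposition, the orbit-method identification of local factors, the semi-ring arithmetic, and the density argument producing $R(L)$ — is comparatively routine given the cited results.
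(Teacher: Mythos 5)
Your overall architecture is right --- a finite-group-of-Lie-type contribution handled by Theorem~\ref{thm:BC.finite.new}, a ``deeper'' contribution handled by a uniform $p$-adic integral, Lang--Weil to turn point counts into powers of $q$, and a Chebotarev argument to produce $R(L)$ --- and your endgame (finitely many combinatorial types $c(\fq)\subset c(\mathbf{G})$, with the density statement isolating the generic cell) matches the paper's Section~\ref{subsec:proof.zeta.approx}. But the route you describe for reaching the integral representation has a genuine gap. You propose to decompose $\Irr(\mathbf{G}(O_{L,\fq}))$ by congruence level and to feed each level into~\eqref{eq:AKOV}. The formula~\eqref{eq:AKOV} computes the zeta function of the congruence subgroup $\mathbf{G}^{(r)}(O_{L,\fq})$ \emph{itself}, not the relative zeta function of the full group over characters of level $r$. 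Passing from one to the other via Clifford theory (Lemma~\ref{lem:fin.index.BK}) costs a multiplicative factor of $\lvert \mathbf{G}(O_{L,\fq}) : \mathbf{G}^{(r)}(O_{L,\fq})\rvert^{1+\sigma} \asymp q^{r\dim\mathbf{G}(1+\sigma)}$, which grows with both $q$ and $r$ and therefore cannot be absorbed into a constant $C(\epsilon)$ that is uniform over all $L$ and $\fq$ --- and such uniformity is the whole point of the theorem. This is exactly the obstruction the paper flags just before Definition~\ref{defn:lesssim} when it says that~\eqref{eq:AKOV} is ``of limited use'' here. The actual proof of Theorem~\ref{thm:int.approx} does not truncate by level at all: it parametrizes \emph{all} nontrivial characters of $\mathfrak{g}^{(1)}(O_{L,\fq})$ at once by the definable set $\scX$, and controls the relative zeta functions $\zeta_{\mathbf{G}(O_{L,\fq})\,\vert\,\Xi(x)}$ by an iterated stabilizer construction (Theorem~\ref{thm:partial.int}) that provably terminates after a number of steps bounded independently of $q$, each step costing only a $q$-independent constant; the terminal reductive quotients are then handled by Deligne--Lusztig theory (Corollary~\ref{cor:finite.reductive.zeta.approx}) together with the Schur-multiplier vanishing of Lemmas~\ref{lem:zeta.relative.quotient}--\ref{lem:bdd.Schur}. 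None of this machinery is replaced by your geometric-tail argument, and the tail argument itself is two-sided only if the truncated part is bounded by a constant multiple of the \emph{main term} uniformly in $q$, which you have not shown.

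A secondary point: for assertion~(2) it is not enough that \emph{some} cell of your finite partition has positive density --- you need positive density specifically for the cell on which $c(\fq)$ equals the full set $c(\mathbf{G})$. In the paper this requires two separate inputs: Corollary~\ref{cor:BC.finite.new.cbt} (the primes where the finite-quotient data attain $a(\Phi)$ form a Chebotarev set, which uses split reduction plus a congruence condition ensuring enough roots of unity) and the Chebotarev Density Theorem applied to the schemes $\mathbf{W}_i$ from Theorem~\ref{thm:mot.int} to ensure all of them simultaneously have points mod $\fq$. Your appeal to ``the generic value'' needs to be fleshed out along these lines.
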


We remark that the element $c(\mathbf{G})$ in
Theorem~\ref{thm:zeta.approx} depends on $\mathbf{G}$, but is not
canonically determined by it: in the course of the proof we implicitly
make a number of choices, influenced by triangulations of polytopes
and resolutions of singularities, and different choices result in
different elements~$c(\mathbf{G})$. In addition, the proof actually
shows that the set $R(L)$ is a Chebotarev set in the sense of
Definition \ref{defn:Chebotarev}. The proof of
Theorem~\ref{thm:zeta.approx} occupies a large part of the paper and
is completed at the end of Section~\ref{sec:q.f.integrals}.  We now
show how Theorem~\ref{thm:zeta.approx} implies
Theorem~\ref{thm:BC.global}.

\begin{lem} \label{lem:Dedekind-arg} Let $a \in
  \mathcal{A}^+\smallsetminus\{\varnothing\}$, let $L$ be a number
  field with ring of integers~$O_L$ \nir{and $R\subset \Spec(O_L)$ of
    positive analytic density}.  Then the abscissa of convergence of
  the Dirichlet series $\xi = \prod_{\nir{\fq \in R}}(1+\xi_{a, \lvert
    O_L/ \fq\rvert})$ is equal to $\max \{ (m+1)/n \mid (m,n) \in a
  \}$.
\end{lem}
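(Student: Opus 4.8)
The plan is to reduce the computation of the abscissa of convergence of $\xi = \prod_{\fq \in R}(1+\xi_{a,\lvert O_L/\fq\rvert})$ to the known fact that $\prod_{\fq \in R}(1-\lvert O_L/\fq\rvert^{-s})^{-1}$ has abscissa~$1$ whenever $R$ has positive analytic density. Set $\alpha_0 = \max\{(m+1)/n \mid (m,n) \in a\}$. For each individual factor, $1 + \xi_{a,q}(s) = 1 + \sum_{(m,n)\in a} q^{m-ns}$, and the dominant term as $q \to \infty$ for real $s$ just above $\alpha_0$ is governed by those $(m,n)$ achieving the maximum $(m+1)/n = \alpha_0$: such a term contributes $q^{m-ns}$ with $m - n\alpha_0 = -1$, i.e. it behaves like $q^{-1}$ at $s = \alpha_0$. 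I would first record the elementary estimate that there are constants $0 < c_1 \le c_2$ (depending only on $a$, via $\lvert a\rvert$ and the finitely many exponents) such that for all $q \ge q_0$ and all real $s$ with, say, $s > \alpha_0 - \tfrac12$,
\[
c_1 \, q^{\,\mu(s)} \;\le\; \xi_{a,q}(s) \;\le\; c_2 \, q^{\,\mu(s)},
\quad\text{where } \mu(s) = \max_{(m,n)\in a}(m - ns),
\]
and $\mu(s) < -1$ for $s > \alpha_0$ while $\mu(s) > -1$ for $s < \alpha_0$ (these follow since $\mu$ is a finite max of affine functions, hence piecewise linear, continuous and strictly decreasing in~$s$ as long as some $n > 0$ occurs — which holds because $a \in \mathcal A^+$ means $(0,0) \notin a$, so every $(m,n) \in a$ has $n \ge 1$).

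Next I would convert this pointwise-in-$q$ bound into a statement about the infinite product over $\fq \in R$. For the upper bound: fix $\epsilon > 0$ and $s > \alpha_0 + \epsilon$. Then $\mu(s) \le -1 - \delta$ for some $\delta = \delta(\epsilon) > 0$, so $\xi_{a,q}(s) \le c_2 q^{-1-\delta}$, and since $\sum_{\fq} \lvert O_L/\fq\rvert^{-1-\delta} < \infty$ (this sum is bounded by $\log \zeta_L(1+\delta) \cdot$ const, or more directly converges because the abscissa of $\zeta_L$ is~$1$), the sum $\sum_{\fq \in R}\xi_{a,\lvert O_L/\fq\rvert}(s)$ converges, hence so does the product $\xi(s)$. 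This shows the abscissa of $\xi$ is $\le \alpha_0$. For the lower bound: take $s$ real with $0 < s < \alpha_0$ (or $s$ slightly below $\alpha_0$); then $\mu(s) \ge -1 + \delta'$ for some $\delta' > 0$, so $\xi_{a,\lvert O_L/\fq\rvert}(s) \ge c_1 \lvert O_L/\fq\rvert^{-1+\delta'}$, and $\sum_{\fq \in R}\lvert O_L/\fq\rvert^{-1+\delta'}$ diverges precisely because $R$ has positive analytic density — this is exactly the input flagged in the paragraph before Theorem~\ref{thm:zeta.approx}, that $\prod_{\fq \in R}(1 - \lvert O_L/\fq\rvert^{-(1-\delta')})^{-1}$ diverges since its abscissa of convergence is~$1 > 1-\delta'$. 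Divergence of $\sum_{\fq\in R}\xi_{a,\lvert O_L/\fq\rvert}(s)$ forces the product $\xi(s)$ to diverge, so the abscissa of $\xi$ is $\ge \alpha_0$. (One should also note $a \neq \varnothing$ is used so that the product is not identically~$1$ and the lower bound is nonvacuous; and that the abscissa is automatically $\ge 0$ since $\alpha_0 = (m+1)/n \ge 1/n > 0$ for any $(m,n) \in a$, consistent with the ``$\sigma > 0$'' conventions.)

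The main obstacle — though it is more bookkeeping than genuine difficulty — is making the two-sided estimate $c_1 q^{\mu(s)} \le \xi_{a,q}(s) \le c_2 q^{\mu(s)}$ uniform in $s$ over a half-line rather than just at $s = \alpha_0$, since the identity of the dominant pair $(m,n)$ can change with $s$; this is handled by noting $\mu$ is a max of finitely many affine functions so that $\xi_{a,q}(s) \le \lvert a\rvert \, q^{\mu(s)}$ trivially for the upper bound, and $\xi_{a,q}(s) \ge q^{\mu(s)}$ trivially for the lower bound (keeping only the maximizing term), which is in fact all that is needed — the constants can be taken to be $c_1 = 1$ and $c_2 = \lvert a\rvert$. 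The only real care needed is to connect the divergence of the Dirichlet-type sum over the positive-density set $R$ to the quoted density statement; everything else is the comparison of $\prod(1+t_\fq)$ with $\sum t_\fq$ for non-negative terms, which is standard.
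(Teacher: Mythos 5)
Your argument is correct, and it reaches the conclusion by a somewhat different mechanism than the paper, although both proofs ultimately rest on the same arithmetic input: that the Euler product $\prod_{\fq\in R}\left(1-\lvert O_L/\fq\rvert^{-s}\right)^{-1}$ over a set $R$ of positive analytic density has abscissa of convergence exactly~$1$ (convergence for $\sigma>1$ gives the upper bound, divergence for $\sigma<1$ gives the lower bound, and positive density is needed only for the latter, as you correctly note). The paper first reduces to the case where $a=\{(m,n)\}$ is a singleton, using the observation that $\prod_\fq(1+x_\fq+y_\fq)$ converges if and only if $\prod_\fq(1+x_\fq)$ and $\prod_\fq(1+y_\fq)$ both do, and then settles the singleton case by the exact identity
\[
\prod_{\fq\in R}\bigl(1+\lvert O_L/\fq\rvert^{m-ns}\bigr)
=\frac{\prod_{\fq\in R}\bigl(1-\lvert O_L/\fq\rvert^{-(ns-m)}\bigr)^{-1}}
{\prod_{\fq\in R}\bigl(1-\lvert O_L/\fq\rvert^{-2(ns-m)}\bigr)^{-1}},
\]
from which the abscissa $(m+1)/n$ is read off directly from the known abscissa and non-vanishing of the restricted Dedekind zeta function. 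You instead keep $a$ general and sandwich each factor via $q^{\mu(\sigma)}\le\xi_{a,q}(\sigma)\le\lvert a\rvert\,q^{\mu(\sigma)}$, where $\mu(\sigma)=\max_{(m,n)\in a}(m-n\sigma)$ is piecewise affine and strictly decreasing because every $(m,n)\in a$ has $n\ge 1$, and then compare $\sum_{\fq\in R}\xi_{a,\lvert O_L/\fq\rvert}(\sigma)$ with the convergent, respectively divergent, sums $\sum_{\fq}\lvert O_L/\fq\rvert^{-1\mp\delta}$. Your route avoids both the splitting step and the identity at the cost of the (easy) uniform estimates on $\mu$; the paper's route is cleaner for singletons but needs the splitting lemma to treat general $a$. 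Both arguments are complete and correct.
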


\begin{proof}
  For any two sequences $(x_n)_{n \in \N}$ and $(y_n)_{n \in \N}$ of
  positive real numbers, the product $\prod_{n \in \N} (1+x_n+y_n)$
  converges if and only if $\prod_{n \in \N}(1+x_n)$ and $\prod_{n \in
    \N}(1+y_n)$ converge individually.  Thus, if $a=b+c$ in
  $\mathcal{A}$, then the abscissa of convergence of $\xi$ is the
  maximum of the abscissae of convergence of $\prod_{\nir{\fq \in
      R}}(1+\xi_{b, \lvert O_L/ \fq\rvert})$ and $\prod_{\nir{\fq \in
      R}} (1+\xi_{c, \lvert O_L/ \fq\rvert})$.  Thus we may assume
  that $a = \left\{ (m,n) \right\}$ is a singleton.

  Let \nir{$\zeta_{L,\Spec(O_L)\setminus R}(s) = \prod_{\fq \in
      R}(1-|O_L/\fq|^{-s})^{-1}$ denote the ``restriction'' of the
    Dedekind zeta function of~$L$ to the Euler product over factors in
    $R$.}  We observe that \nir{\[ \xi(s) =
    \frac{\zeta_{L,\Spec(O_L)\setminus R}
      (ns-m)}{\zeta_{L,\Spec(O_L)\setminus R} (2(ns-m))}.
    \]} Since the abscissa of convergence of
  \nir{$\zeta_{L,\Spec(O_L)\setminus R}(s)$} is equal to $1$ and
  \nir{$\zeta_{L,\Spec(O_L)\setminus R}(s) \neq 0$} for $\mathrm{Re}(s)>1$,
  we deduce that the abscissa of convergence of $\xi$ is equal to
  $(m+1)/n$.
\end{proof}

\begin{proof}[Proof of Theorem~\ref{thm:BC.global}]
  Let $c(\mathbf{G}) \in \mathcal{A}^+$, $R(L) \subset T(L) \subset
  \Spec(O_L)$ and $C(\epsilon)$, for $\epsilon >0$, be as in
  Theorem~\ref{thm:zeta.approx}.  As noted before,
  Theorem~\ref{thm:BC.local} implies that $\alpha \big(
  \mathbf{G}(\widehat{O_{L}}) \big) = \alpha \big( \prod_{\fq \in
    \Spec(O_L)} \mathbf{G}(O_{L,\fq}) \big) = \alpha \big( \prod_{\fq
    \in T(L)} \mathbf{G}(O_{L,\fq}) \big)$.  Moreover, $R(L) \subset
  T(L)$ implies that
  \begin{equation} \label{eq:global} \alpha \Big(
    \prod\nolimits_{\fq \in R(L)}
    \mathbf{G}(O_{L,\fq}) \Big) \leq \alpha \Big(
    \prod\nolimits_{\fq \in T(L)}
    \mathbf{G}(O_{L,\fq}) \Big).
  \end{equation}

  For $\fq \in R(L)$ and $\epsilon > 0$, Theorem~\ref{thm:zeta.approx}
  yields that $\zeta_{\mathbf{G}(O_{L,\fq})}-1 \sim_{C(\epsilon)}
  \xi_{c(\mathbf{G}),\lvert O_L/\fq\rvert}$ for $\sigma >
  \alpha(\mathbf{G}(O_{L,\fq})) +\epsilon$. Lemma~\ref{lem:lesssim}
  implies that the left-hand side of \eqref{eq:global} is equal to the
  abscissa of convergence of $\prod_{\fq \in R(L)}(1+
  \xi_{c(\mathbf{G}),\lvert O_L/ \fq\rvert})$.  Similarly, since
  \[
  \zeta_{\mathbf{G}(O_{L,\fq})}-1 \lesssim_{C(\epsilon)} \xi_{c,\lvert
    O_L/\fq\rvert} \lesssim_1 \xi_{c(\mathbf{G}),\lvert O_L/\fq\rvert}
  \quad \text{for $\sigma > \alpha(\mathbf{G}(O_{L,\fq})) +\epsilon$},
  \]
  for every $\fq \in~T(L)$ and suitable $c \subset c(\mathbf{G})$, the
  right-hand side of \eqref{eq:global} is less than or equal to the
  abscissa of convergence of $\prod_{\fq \in T(L)}(1+
  \xi_{c(\mathbf{G}), \lvert O_L/ \fq\rvert})$. Since $R(L)$ has
  positive analytic density, the two abscissae are equal\nir{. By
    Lemma~\ref{lem:Dedekind-arg} their common value, and hence $\alpha
    \big( \mathbf{G}(\widehat{O_L}) \big)$, is completely determined
    by~$c(\mathbf{G})$.} 
\end{proof}

\begin{proof}[Proof of Theorem~\ref{thm:strong.LL}]
  Let $\mathbf{S}$ be a Chevalley group with root system $\Phi$, i.e.,
  an affine group scheme over $\Z$ whose generic fiber is split,
  connected, simply connected absolutely almost simple with absolute
  root system~$\Phi$.  Consider an arithmetic group $\mathbf{G}(O_S)$
  with the wCSP, as in the statement of the theorem.  There is a
  finite extension $L$ of $K$ such that $\mathbf{G}$ and $\mathbf{S}$
  are isomorphic over~$L$.  Denoting the ring of integers of $L$ by
  $O_L$, Theorems~\ref{thm:discrete.to.adeles} and~\ref{thm:BC.global}
  imply that
  \[
  \alpha(\mathbf{G}(O_S)) = \alpha \big(\mathbf{G}(\widehat{O})\big) =
  \alpha \big(\mathbf{G}(\widehat{O_L})\big) =
  \alpha \big(\mathbf{S}(\widehat{O_L})\big) =
  \alpha \big(\mathbf{S}(\widehat{\Z})\big).
  \]
  Thus $\alpha(\mathbf{G}(O_S))$ depends only on $\Phi$.
\end{proof}

\begin{proof}[Proof of Theorem~\ref{thm:LL}]
  Let $H$ be a semi-simple group in characteristic~$0$. Recall that
  this means that $H=\prod_{j=1}^r \mathbf{H}_j(F_j)$, where each
  $F_j$ is a local field of characteristic~$0$, and each
  $\mathbf{H}_j$ is a connected, almost simple group defined over
  $F_j$. Let $\Gamma$ be an arithmetic irreducible lattice in~$H$.

  Then there are a number field $K$ with ring of integers~$O$, a
  finite set $S$ of places of $K$, an affine group scheme $\mathbf{G}$
  defined over $O_S$ whose generic fiber is connected, simply
  connected absolutely almost simple, and a continuous homomorphism
  $\psi \colon \prod_{v\in S} \mathbf{G}(K_v) \rightarrow H$ whose
  kernel and cokernel are compact such that $\psi(\mathbf{G}(O_S))$ is
  commensurable to $\Gamma$. Since $\ker(\psi)\cap \mathbf{G}(O_S)$ is
  finite and $\mathbf{G}(O_S)$ is residually finite, the latter
  contains a finite-index subgroup that is isomorphic to a
  finite-index subgroup of~$\Gamma$. Hence
  $\alpha(\Gamma)=\alpha(\mathbf{G}(O_S))$.

  Fix $j \in \{1,\ldots,r\}$, and denote by $\Lie(\mathbf{G})$ and
  $\Lie(\mathbf{H}_j)$ the Lie algebras associated to $\mathbf{G}$ and
  $\mathbf{H}_j$.  The homomorphism $\psi$ induces a surjection
  $\bigoplus_{v\in S} \Lie(\mathbf{G})(K_v) \rightarrow
  \Lie(\mathbf{H}_j)(F_j)$ which is additive and preserves Lie
  brackets; if $F_j$ is non-archimedean of residue characteristic~$p$,
  the construction uses the Lie correspondence for
  $p$\nobreakdash-adic groups.  Taking the tensor product with
  $\C$ -- over $\R$ if $F_j$ is archimedean and over $\Q_p$
  embedded into $\C$ if $F_j$ is non-archimedean of residue
  characteristic~$p$ -- we obtain a Lie algebra epimorphism
  $\Lie(\mathbf{G})(\C)^m \rightarrow
  \Lie(\mathbf{H}_j)(\C)^n$, for some $m,n \in \N$. In
  particular, since $\mathbf{G}(\C)$ is almost simple, the
  group $\mathbf{G}(\C)$ and the simple factors of the groups
  $\mathbf{H}_j(\C)$ are all isogenous to one another, and
  therefore have the same absolute root system~$\Phi$.  The claim now
  follows from Theorem~\ref{thm:strong.LL}.
\end{proof}

As mentioned above, the abscissa of convergence for groups is a
commensurability invariant; see~\cite[Lemma~2.2]{LM}.  We conclude the
section with a more general lemma about relative zeta functions.

\begin{defn}\label{def:rel.zeta.func} Let $G$ be a group with a
  normal subgroup $N \subset G$ and let $\theta$ be an irreducible,
  finite-dimensional complex representation of~$N$.  Denote by $\Irr(G
  \vert \theta)$ the set of (equivalence classes of)
  finite-dimensional irreducible complex representations $\rho$ of $G$
  such that $\theta$ is a constituent of the restriction~$\Res^G_N
  \rho$, which is completely reducible by Clifford's
  Theorem.

  For $n \in \N$ we denote by $R_n(G \vert \theta)$ the number of
  representations $\rho \in \Irr(G \vert \theta)$ such that $\dim \rho
  \leq n \dim \theta$.  Suppose that $R_n(G \vert \theta)$ is finite
  for all~$n \in \N$.  Then the relative zeta function of $G$ over
  $\theta$ is defined as the Dirichlet generating series
  \[
  \zeta_{G \vert \theta}(s) = \sum_{\rho \in \Irr(G \vert \theta)}
  \left( \frac{\dim \rho}{\dim \theta} \right)^{-s}.
  \]
  We also set
  \[
  \zeta_{G \vert \theta}'(s) = \sum_{\substack{\rho \in \Irr(G \vert
      \theta) \\
    \dim \rho > \dim \theta}} \left( \frac{\dim \rho}{\dim \theta}
  \right)^{-s}.
  \]
\end{defn}

\begin{lem} \label{lem:fin.index.BK} Let $N \subset H \subset G$ be
  groups such that $\lvert G : H \rvert$ is finite and $N$ is normal
  in~$G$.  Let $\theta \in \Irr(N)$, and suppose that $R_n(G \vert
  \theta)$, $R_n(H \vert \theta)$ are finite for all~$n \in \N$.
  \begin{list}{}{\setlength{\leftmargin}{\myenumilistleftmargin}
      \setlength{\labelwidth}{20pt} \setlength{\itemsep}{0pt}
      \setlength{\parsep}{1pt}}
  \item[\textup{(1)}] Let $m \in \N$.  Then
    \[
    \lvert G:H \rvert^{-1} R_{\lfloor m / \lvert G:H \rvert \rfloor}(H
    \vert \theta) \leq R_m(G \vert \theta) \leq \lvert G:H \rvert
    R_m(H \vert \theta).
    \]
  \item[\textup{(2)}] Let $\sigma \in \R_{\geq 0}$.  If one of
    $\zeta_{G \vert \theta}(\sigma)$ or $\zeta_{H \vert
      \theta}(\sigma)$ converges, then so does the other, and
    \[
    \lvert G:H \rvert^{-1-\sigma} \zeta_{H \vert \theta}(\sigma) \leq
    \zeta_{G \vert \theta}(\sigma) \leq \lvert G:H \rvert \zeta_{H
      \vert \theta}(\sigma).
    \]
  \item[\textup{(3)}] Suppose that $\min \{ \dim \rho/\dim \theta \mid
    \rho \in \Irr(G \vert \theta) \text{ with } \dim \rho > \dim
    \theta \} > \lvert G : H \rvert$.  Let $\sigma \in \R_{\geq 0}$.
    If one of $\zeta_{G \vert \theta}'(\sigma)$ or $\zeta_{H \vert
      \theta}'(\sigma)$ converges, then so does the other, and
    \[
    \lvert G:H \rvert^{-1-\sigma} \zeta_{H \vert \theta}'(\sigma) \leq
    \zeta_{G \vert \theta}'(\sigma) \leq \lvert G:H \rvert \zeta_{H
      \vert \theta}' (\sigma).
    \]
  \end{list}
\end{lem}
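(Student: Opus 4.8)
The plan is to prove part~(1) first by a double-counting argument relating $\Irr(G\vert\theta)$ and $\Irr(H\vert\theta)$, and then deduce parts~(2) and~(3) formally by summing the inequalities in~(1) against the weight $(\dim\rho/\dim\theta)^{-\sigma}$, possibly after a partial summation. For~(1), the key observation is that restriction along $H\subset G$ and induction back up give a correspondence between the two sets of representations that is finite-to-one in each direction, with fibres bounded by $\lvert G:H\rvert$. Concretely, for $\rho\in\Irr(G\vert\theta)$ the restriction $\Res^G_H\rho$ is a sum of at most $\lvert G:H\rvert$ irreducible constituents, each of which lies in $\Irr(H\vert\theta)$ (since $\theta$ already occurs in $\Res^G_N\rho=\Res^H_N\Res^G_H\rho$); conversely, for $\tau\in\Irr(H\vert\theta)$ every irreducible constituent of $\Ind_H^G\tau$ lies in $\Irr(G\vert\theta)$, and there are at most $\lvert G:H\rvert$ of them counted with multiplicity. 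Mackey/Frobenius reciprocity shows that $\rho$ is a constituent of $\Ind_H^G\tau$ iff $\tau$ is a constituent of $\Res^G_H\rho$, so the two relations are mutually inverse as bipartite incidence relations with both-sided degree bound $\lvert G:H\rvert$. Tracking dimensions, if $\tau$ is a constituent of $\Res^G_H\rho$ then $\dim\tau\le\dim\rho$, while if $\rho$ is a constituent of $\Ind_H^G\tau$ then $\dim\rho\le\lvert G:H\rvert\dim\tau$.

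From this, the right-hand inequality $R_m(G\vert\theta)\le\lvert G:H\rvert R_m(H\vert\theta)$ follows because each $\rho$ counted by $R_m(G\vert\theta)$ (i.e.\ with $\dim\rho\le m\dim\theta$) has at least one constituent $\tau$ in $\Res^G_H\rho$ with $\dim\tau\le\dim\rho\le m\dim\theta$, hence counted by $R_m(H\vert\theta)$, and each such $\tau$ arises from at most $\lvert G:H\rvert$ such $\rho$. For the left-hand inequality, each $\tau$ counted by $R_{\lfloor m/\lvert G:H\rvert\rfloor}(H\vert\theta)$ has at least one constituent $\rho$ of $\Ind_H^G\tau$ lying in $\Irr(G\vert\theta)$ with $\dim\rho\le\lvert G:H\rvert\dim\tau\le m\dim\theta$, counted by $R_m(G\vert\theta)$, and each $\rho$ accounts for at most $\lvert G:H\rvert$ of these $\tau$; this gives $R_{\lfloor m/\lvert G:H\rvert\rfloor}(H\vert\theta)\le\lvert G:H\rvert R_m(G\vert\theta)$, as required. (Finiteness of $R_n$ for one group then transfers to the other.)

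For part~(2), I would write $\zeta_{G\vert\theta}(\sigma)=\sum_\rho(\dim\rho/\dim\theta)^{-\sigma}$ and use the incidence relation from~(1): summing over pairs $(\rho,\tau)$ with $\tau$ a constituent of $\Res^G_H\rho$, the bound $\dim\tau\le\dim\rho$ gives $(\dim\rho/\dim\theta)^{-\sigma}\le(\dim\tau/\dim\theta)^{-\sigma}$, and since each $\rho$ has at least one and each $\tau$ at most $\lvert G:H\rvert$ partners, $\zeta_{G\vert\theta}(\sigma)\le\lvert G:H\rvert\,\zeta_{H\vert\theta}(\sigma)$. For the reverse, sum over pairs with $\rho$ a constituent of $\Ind_H^G\tau$, using $\dim\rho\le\lvert G:H\rvert\dim\tau$ so that $(\dim\tau/\dim\theta)^{-\sigma}\le\lvert G:H\rvert^{-\sigma}(\dim\rho/\dim\theta)^{-\sigma}$ (here $\sigma\ge0$ is used), together with the degree bounds, to get $\zeta_{H\vert\theta}(\sigma)\le\lvert G:H\rvert^{1+\sigma}\zeta_{G\vert\theta}(\sigma)$; convergence of one side then forces convergence of the other. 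Part~(3) is identical except that the sums run only over $\rho$ (resp.\ $\tau$) with $\dim\rho>\dim\theta$ (resp.\ $\dim\tau>\dim\theta$); the extra hypothesis $\min\{\dim\rho/\dim\theta\}>\lvert G:H\rvert$ is exactly what guarantees that the correspondence respects these truncations — a constituent $\tau$ of $\Res^G_H\rho$ with $\dim\rho>\lvert G:H\rvert\dim\theta$ automatically has $\dim\tau\ge\dim\rho/\lvert G:H\rvert>\dim\theta$, and a constituent $\rho$ of $\Ind_H^G\tau$ with $\dim\tau>\dim\theta$ has $\dim\rho\ge\dim\tau>\dim\theta$ — so the same counting goes through verbatim. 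The main obstacle is the careful bookkeeping of multiplicities in the Clifford-theoretic correspondence: one must verify that summing with multiplicities (rather than just over the supports) still yields the clean factor $\lvert G:H\rvert$, which is where the bound $\sum_\tau[\Res^G_H\rho:\tau]\le\lvert G:H\rvert$ and its induced analogue are essential.
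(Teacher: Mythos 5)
Your proposal is correct and follows essentially the same route as the paper: both set up the bipartite incidence relation between $\Irr(G\vert\theta)$ and $\Irr(H\vert\theta)$ via restriction/induction, use (Nakayama's generalisation of) Frobenius reciprocity to make it symmetric with vertex degrees between $1$ and $\lvert G:H\rvert$, exploit the dimension bounds $\dim\tau\le\dim\rho\le\lvert G:H\rvert\dim\tau$ for counting in (1) and for weighted summation in (2), and verify in (3) exactly as you do that the hypothesis forces the correspondence to respect the truncation $\dim\rho>\dim\theta$.
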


\begin{proof} Consider the bipartite graph $B$ whose vertex set is
  $\Irr(G \vert \theta) \sqcup \Irr(H \vert \theta)$ and which has the
  property that there is an edge between $\rho_1 \in \Irr(G \vert
  \theta)$ and $\rho_2 \in \Irr(H \vert \theta)$ if and only if
  $\rho_2$ is a constituent of $\Res^G_H \rho_1$.  By Nakayama's
  generalisation of Frobenius reciprocity, the latter condition is
  equivalent to $\rho_1$ being a constituent of $\Ind^G_H \rho_2$;
  see~\cite[Chapter~VII, Section~4]{HuBl}. This implies that
  \begin{list}{$\circ$}{\setlength{\leftmargin}{\mylistleftmargin}
      \setlength{\labelwidth}{10pt}\setlength{\itemsep}{0pt}
      \setlength{\parsep}{1pt}}
  \item the degree of every vertex of $B$ is positive and
    bounded by $\lvert G:H \rvert$ and that
  \item if $\rho_1 \in \Irr(G \vert \theta)$ and $\rho_2 \in \Irr(H
    \vert \theta)$ are connected by an edge, then $\dim \rho_2\leq
    \dim\rho_1 \leq \lvert G:H \rvert \dim\rho_2$.
  \end{list}
  Let $m\in\N$.  Write $\Irr(G \vert \theta)_m \subset \Irr(G \vert
  \theta)$ for the subset consisting of representations of dimension
  at most $m \dim\theta$, and likewise define $\Irr(H \vert
  \theta)_m$.  Then $\Irr(G \vert \theta)_m$ is contained in the set
  of neighbors of $\Irr(H \vert \theta)_m$, hence
  \[
  R_m(G \vert \theta) = \lvert \Irr(G \vert \theta)_m \rvert \leq
  \lvert G:H \rvert \, \lvert \Irr(H \vert \theta)_m \rvert = \lvert G
  : H \rvert R_m(H \vert \theta).
  \]
  Similarly, $\Irr(H \vert \theta)_{\lfloor m/\lvert G:H \rvert
    \rfloor}$ is contained in the set of neighbors of $\Irr(G \vert
  \theta)_m$, hence
  \[
  R_{\lfloor m / \lvert G:H \rvert \rfloor}(H \vert \theta) = \lvert
  \Irr(H \vert \theta)_{\lfloor m/\lvert G:H \rvert \rfloor} \rvert
  \leq \lvert G:H \rvert \, \lvert \Irr(G \vert \theta)_m \rvert =
  \lvert G:H \rvert R_m(G \vert \theta).
  \]
  This proves~(1).  A similar argument yields~(2) and (3).  For (3)
  one observes that, if $\min \{ \dim \rho/\dim \theta \mid \rho \in
  \Irr(G \vert \theta) \text{ with } \dim \rho > \dim \theta \} >
  \lvert G : H \rvert$ and if $\rho_1 \in \Irr(G \vert \theta)$ and
  $\rho_2 \in \Irr(H \vert \theta)$ are connected by an edge, then
  $\dim \rho_1 > \dim \theta$ if and only if $\dim \rho_2 > \dim
  \theta$.
\end{proof}

\section{Base Change for Finite Groups of Lie Type}
\label{sec:BC.finite}

In this section we prove Theorem~\ref{thm:BC.finite.new}, a variant of
Theorem~\ref{thm:zeta.approx} for zeta functions of semi-simple
algebraic groups over finite fields. In Section~\ref{sec:applications}
we apply our results to finite quotients of arithmetic groups.

\subsection{Finite Groups of Lie Type}

Given a root system $\Phi$, we denote by $\rk \Phi$ its rank and by
$\Phi^+$ a choice of positive roots. By a Lie type $\mathcal{L}$ we
mean a pair $(\Phi,\tau)$, where $\Phi$ is a root system with an
automorphism~$\tau$ stabilising $\Phi^+$.  We say that a reductive
algebraic group $\mathbf{G}$ defined over a finite field $\F_q$ has
Lie type $\mathcal{L} = (\Phi,\tau)$ if the absolute root system of
$\mathbf{G}$ associated to an $\F_q$-rational and maximally $\mathbb{F}_q$-split maximal torus of
$\mathbf{G}$ is $\Phi$ and the action of the Frobenius automorphism
$\Frob_q$ on $\Phi$ is given by~$\tau$; compare~\cite[Chapter~3]{DM}.
For each finite field~$\F_q$, the connected semi-simple algebraic
groups over $\F_q$ are parametrized up to isogeny by their Lie types.
Given a Lie type $\mathcal{L} = (\Phi,\tau)$ we write
$\mathcal{L}^\mathrm{sp} = (\Phi,\Id)$ for the Lie type of a split
group with underlying root system~$\Phi$.  Recall the notation
$\mathcal{A}^+$ and $\xi_{a,q}$ from Definition~\ref{defn:Trop}.  In
this section we prove the following result.

\begin{thm} \label{thm:BC.finite.new} Let $\Phi$ be a non-trivial root
  system, and let $\boldsymbol{\mathcal{L}}_\Phi$ denote the
  collection of Lie types with underlying root system~$\Phi$.  Let
  $\boldsymbol{\mathcal{Q}}$ denote the set of all prime powers.  Then
  there exist $C \in \R$, $m \in \N$, $a(\Phi) \in \mathcal{A}^+$, and
  $a(\mathcal{L},q) \in \mathcal{A}^+$ for $(\mathcal{L},q) \in
  \boldsymbol{\mathcal{L}}_\Phi \times \boldsymbol{\mathcal{Q}}$ such
  that the following hold:
  \begin{list}{$\circ$}{\setlength{\leftmargin}{\myenumilistleftmargin}
      \setlength{\labelwidth}{20pt} \setlength{\itemsep}{0pt}
      \setlength{\parsep}{1pt}}
  \item[$\textup{(1)}$] $a(\mathcal{L},q) \subset a(\Phi)$ for all
    $(\mathcal{L},q) \in \boldsymbol{\mathcal{L}}_\Phi \times
    \boldsymbol{\mathcal{Q}}$,
  \item[$\textup{(2)}$] 
    $a((\Phi,\Id),q) = a(\Phi)$ for all $q \equiv_m 1$,
  \item[$\textup{(3)}$] for every connected semi-simple algebraic
    group $\mathbf{G}$ defined over a finite field~$\F_q$
    which has Lie type $L \in \boldsymbol{\mathcal{L}}_\Phi$,
    \[
    \zeta_{\mathbf{G}(\F_q)} - \lvert \mathbf{G}(\F_q)
    / [\mathbf{G}(\F_q),\mathbf{G}(\F_q)] \rvert
    \sim_C \xi_{a(\mathcal{L},q),q}.
    \]
  \end{list}
  Moreover, $(\rk \Phi,\lvert \Phi^+ \rvert) \in a(\Phi)$, and every
  connected semi-simple algebraic group $\mathbf{G}$ defined over a
  finite field $\F_q$ with absolute root system $\Phi$
  satisfies
  \begin{equation} \label{eq:thm3.2-generous-estimate}
    \zeta_{\mathbf{G}(\F_q)}(s) \sim_C 1 + q^{\rk \Phi -
      \lvert \Phi^+ \rvert s}.
  \end{equation}
\end{thm}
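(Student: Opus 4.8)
The plan is to extract the relevant arithmetic data of the characters of $\mathbf{G}(\F_q)$ from Deligne--Lusztig theory, encode it in the semi-ring $\mathcal{A}^+$, and verify the claimed uniformity in $q$ by a congruence argument.

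First I would recall the structure of $\Irr(\mathbf{G}(\F_q))$ supplied by Deligne--Lusztig theory (as in \cite{DM}): the irreducible characters are partitioned into Lusztig series indexed by the semisimple conjugacy classes $[s]$ in the dual group $\mathbf{G}^*(\F_q)$, and the degrees in the series attached to $[s]$ are $\lvert\mathbf{G}(\F_q):\mathbf{G}^*_s(\F_q)\rvert_{p'} \cdot$ (a unipotent degree of the centralizer $\mathbf{G}^*_s$), where each unipotent degree is a polynomial in $q$ with leading behaviour controlled by the root datum. Since $\mathbf{G}$ is simply connected, the centralizers $\mathbf{G}^*_s$ are connected, so the combinatorics is clean. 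The key observation is that each such degree has the form $q^{?}\cdot(\text{unit}) + (\text{lower order})$, and on the $\sigma$-axis, for $\sigma$ past the abscissa, the sum $\zeta_{\mathbf{G}(\F_q)} - \lvert\mathbf{G}(\F_q)^{\mathrm{ab}}\rvert$ (the correction removing the one-dimensional characters, which come from $[s]$ central and contribute the constant $\lvert\mathbf{G}(\F_q)/[\mathbf{G}(\F_q),\mathbf{G}(\F_q)]\rvert$) is, up to a bounded constant $C$ depending only on $\Phi$ (the number of character-degree ``types'' is bounded in terms of $\Phi$ alone), comparable to a Dirichlet polynomial $\sum q^{m_i - n_i s}$. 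The finite set $a(\mathcal{L},q) = \{(m_i,n_i)\}$ is read off from the non-trivial degrees: $n_i$ is the $q$-degree of the character degree polynomial, $m_i$ records the number of characters of that degree, again a polynomial in $q$ whose $q$-degree is what enters. One then sets $a(\Phi) = \bigcup_{\mathcal{L},q} a(\mathcal{L},q)$; finiteness of $a(\Phi)$ and property (1) follow because the shapes of Lusztig series, hence the possible pairs $(m_i,n_i)$, are governed by the root system and its subsystems, not by $q$, and one checks directly (Steinberg character, or the regular semisimple series) that $(\rk\Phi, \lvert\Phi^+\rvert) \in a(\Phi)$: the characters of largest degree $q^{\lvert\Phi^+\rvert}(1+O(q^{-1}))$ occur, and there are $\sim q^{\rk\Phi}$ of them (roughly one per regular semisimple class modulo Weyl action), giving \eqref{eq:thm3.2-generous-estimate} after discarding all lower-order pairs via Remark~\ref{rem:xi}(2).

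For property (2) — the stabilised value $a((\Phi,\Id),q) = a(\Phi)$ for $q \equiv_m 1$ — I would argue that the Frobenius action $\tau = \Id$ on a split group maximises the set of available Lusztig series and their degrees: twisting can only identify or shrink data. The role of the congruence $q \equiv_m 1$ is to guarantee that all the relevant roots of unity (needed to realise a ``generic'' semisimple class $s$ of each required centralizer type over $\F_q$, and to make the counting polynomials in $q$ attain their generic leading terms rather than degenerating) lie in $\F_q^\times$; taking $m$ to be (a multiple of) the relevant orders — e.g. $m = \lvert W \rvert$ times torsion primes, or the lcm of the orders of the finite groups controlling the component structure — makes the count of characters of each degree-type a fixed polynomial in $q$ for all such $q$. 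Thus for $q \equiv_m 1$ every pair $(m_i, n_i)$ in $a(\Phi)$ is actually realised in $a((\Phi,\Id),q)$, and combined with (1) this forces equality.

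The main obstacle I expect is property (3) with a single constant $C$ \emph{independent of $q$}: one must show that the number of distinct pairs $(m_i,n_i)$, and the ratio between the true count of characters of a given degree and the leading term $q^{m_i}$ of that count, are both bounded uniformly in $q$ (and in $\mathcal{L}$ for fixed $\Phi$). This requires knowing that the character-degree polynomials and the class-counting polynomials have bounded degree \emph{and} bounded number of monomials in terms of $\Phi$ only — which follows from the fact that unipotent degrees of the (boundedly many) Levi/centralizer types attached to $\Phi$ form a finite list, and that the number of semisimple classes of $\mathbf{G}^*(\F_q)$ with a given centralizer type is a polynomial in $q$ of degree at most $\rk\Phi$ with leading coefficient $1$. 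Packaging this with the semi-ring formalism (Definition~\ref{defn:Trop}, Remark~\ref{rem:xi}) then yields the $\sim_C$ comparison, and discarding the non-maximal Newton-polytope vertices gives \eqref{eq:thm3.2-generous-estimate}.
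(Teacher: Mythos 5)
Your overall strategy is the same as the paper's: decompose $\Irr(\mathbf{G}(\F_q))$ into Lusztig series indexed by semi-simple classes of the dual group, use the Jordan decomposition of character degrees (index of the centralizer times a unipotent degree of the centralizer), count classes by centralizer type, and use a congruence $q\equiv_m 1$ to force the split group to realize every centralizer type. However, there are three genuine gaps.

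First, your claim that ``since $\mathbf{G}$ is simply connected, the centralizers $\mathbf{G}^*_s$ are connected'' is wrong on two counts: the theorem does not assume $\mathbf{G}$ simply connected, and even if it did, $\mathbf{G}^*$ would then be \emph{adjoint}, where centralizers of semi-simple elements are typically disconnected (connectedness of centralizers holds in simply connected groups, not their duals). Handling the disconnectedness is a real technical burden: one needs a uniform bound on $\lvert C_{\mathbf{G}^*}(g):C_{\mathbf{G}^*}(g)^\circ\rvert$ in terms of $\Phi$, Lusztig's extension of the Jordan decomposition to groups with disconnected centre, and a bound (via \cite{LS}) on how many rational classes share a given connected centralizer. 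One must also address why $a(\mathcal{L},q)$ can be chosen independently of the member of the isogeny class; the paper does this by sandwiching $\mathbf{G}$ between $\mathbf{G}^{\mathrm{sc}}$ and $\mathbf{G}^{\mathrm{ad}}$ and invoking Lemma~\ref{lem:fin.index.BK}. Your proposal reads the data off $\mathbf{G}$ itself, which a priori varies within the isogeny class.

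Second, and most seriously, your justification of (2) rests on the heuristic that ``twisting can only identify or shrink data.'' For the non-central part this is fine (it reduces to realizing all centralizer types over $\F_q$, which the congruence handles). But for the unipotent-character contribution it is a substantive claim that the set of $q$-degrees of unipotent character degree polynomials for a twisted type is contained in that of the split type; this requires a case-by-case check against the tables ($^2A_n$, $^2D_n$, $^3D_4$, $^2E_6$), and the analogous statement is \emph{false} for $F_4$ versus $^2F_4$ (degrees $\{0,11,15,20,\dots\}$ versus $\{0,9,14,20,\dots\}$, neither containing the other), so no soft argument can prove it. Finally, deducing \eqref{eq:thm3.2-generous-estimate} by ``discarding lower-order pairs'' needs the inequality $\dim Z(\mathbf{H}_\Psi)/(\lvert\Phi^+\rvert-\lvert\Psi^+\rvert)\le \rk\Phi/\lvert\Phi^+\rvert$ for every proper centralizer type $\Psi$ (this is \cite[Lemma~2.5]{LS}); without it, it is not clear that $(\rk\Phi,\lvert\Phi^+\rvert)$ dominates the Newton polytope.
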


\begin{rem} \label{rem:BC.finite} \uri{We remark that if $\mathbf{G}$
    is a connected, simply connected semi-simple algebraic group over
    $\F_q$ and $q>3$, then $\mathbf{G}(\F_q)$ is perfect and therefore
    $\lvert \mathbf{G}(\F_q) / [\mathbf{G}(\F_q),\mathbf{G}(\F_q)]
    \rvert = 1$. Indeed, for simple groups this is a result of Tits
    \cite{Tits} (see also \cite[Theorem~24.17]{MT}), and the
    semi-simple case follows by taking products.}
  
\end{rem}


\begin{exa} The representations of the special linear group
  $\SL_2(\F_q)$ over a finite field $\F_q$ are well known; e.g.\
  see~\cite[Chapter~15]{DM}.  In particular, we have
  $\zeta_{\SL_2(\F_q)}(s)-1 \sim_2 q^{1-s}$ for sufficiently
  large~$q$.  Consider the following semi-simple algebraic groups
  defined over $\F_q$ whose absolute root systems are both $A_1 \times
  A_1$: $\mathbf{G}_1 = \SL_2 \times \SL_2$ and $\mathbf{G}_2 =
  \mathcal{R}_{\F_{q^2} \vert \F_q} \SL_2$, the restriction of scalars
  of $\SL_2$ defined over a quadratic extension.  Then
  $\mathbf{G}_1(\F_q) = \SL_2(\F_q) \times \SL_2(\F_q)$ and
  $\zeta_{\mathbf{G}_1(\F_q)}(s) - 1 \sim_4 q^{1-s} + q^{2-2s}$,
  whereas $\mathbf{G}_2(\F_q) = \SL_2(\F_{q^2})$ and
  $\zeta_{\mathbf{G}_2(\F_q)}(s) - 1 \sim_2 q^{2-2s}$.  Since
  $q^{1-s}+q^{2-2s} \not \sim_C q^{2-2s}$ for any fixed $C$ but
  unbounded~$q$, we see that the inclusion in part~(1) of
  Theorem~\ref{thm:BC.finite.new} can be strict.
\end{exa}


Whilst Theorem~\ref{thm:BC.finite.new} is a result on semi-simple
groups, it leads to the following corollary on reductive groups.

\begin{cor} \label{cor:finite.reductive.zeta.approx} Let
  $\mathbf{G}$ be a connected reductive algebraic group defined over a
  finite field~$\F_q$ with absolute root system~$\Phi$.  There
  is a constant~$D \in \R$, depending only on $\Phi$ and the dimension
  of $\mathbf{G}$, such that $\zeta_{\mathbf{G}(\F_q)}(s)
  \sim_D q^{\dim Z(\mathbf{G})} (1 + q^{\rk \Phi - \lvert \Phi^+
    \rvert s})$.
\end{cor}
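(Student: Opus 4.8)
The plan is to reduce the reductive case to the semi-simple case already settled in Theorem~\ref{thm:BC.finite.new}, using the standard structure theory of reductive groups over finite fields. First I would recall that a connected reductive group $\mathbf{G}$ over $\F_q$ sits in an isogeny-type decomposition: the derived subgroup $\mathbf{G}' = [\mathbf{G},\mathbf{G}]$ is connected semi-simple with the same absolute root system $\Phi$, the radical $Z(\mathbf{G})^\circ$ is a torus of dimension $\dim Z(\mathbf{G})$, and the multiplication map $Z(\mathbf{G})^\circ \times \mathbf{G}' \to \mathbf{G}$ is a central isogeny. Passing to $\F_q$-points, one obtains a homomorphism $Z(\mathbf{G})^\circ(\F_q) \times \mathbf{G}'(\F_q) \to \mathbf{G}(\F_q)$ whose kernel and cokernel have order bounded solely in terms of the size of the (scheme-theoretic) center of the simply connected cover of $\mathbf{G}'$, hence in terms of $\Phi$ alone. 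Since $|Z(\mathbf{G})^\circ(\F_q)| \sim_{D_0} q^{\dim Z(\mathbf{G})}$ for a constant $D_0$ depending only on $\dim Z(\mathbf{G})$ (a torus of dimension $d$ has $(q-1)^d \le |T(\F_q)| \le (q+1)^d$ points, up to a twisting factor bounded by the number of conjugacy classes of automorphisms of $\Z^d$), the point count contributes the factor $q^{\dim Z(\mathbf{G})}$.

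Second, I would convert this group-level comparison into a comparison of representation zeta functions. The abscissa and, more relevantly here, the $\sim_C$ relation are controlled under passage to a normal subgroup of bounded index with bounded central kernel: Lemma~\ref{lem:fin.index.BK} handles the finite-index part, and a similar (easier) estimate handles the central quotient, since representations of a central extension $1 \to Z \to \widetilde{G} \to G \to 1$ with $|Z|$ bounded differ from those of $G$ only by tensoring with the (at most $|Z|$-many) characters of $Z$ that are trivial on a suitable subgroup, changing dimensions not at all. Combining these, $\zeta_{\mathbf{G}(\F_q)} \sim_{D_1} \zeta_{Z(\mathbf{G})^\circ(\F_q)} \cdot \zeta_{\mathbf{G}'(\F_q)}$ with $D_1$ depending only on $\Phi$, using Lemma~\ref{lem:absc.direct.sum} for the product. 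The torus factor is abelian, so $\zeta_{Z(\mathbf{G})^\circ(\F_q)} = |Z(\mathbf{G})^\circ(\F_q)| \sim_{D_0} q^{\dim Z(\mathbf{G})}$ (a Dirichlet polynomial with all exponents zero), and $\zeta_{\mathbf{G}'(\F_q)} \sim_C 1 + q^{\rk\Phi - |\Phi^+| s}$ by \eqref{eq:thm3.2-generous-estimate}. Multiplying, and absorbing the constant $|Z(\mathbf{G})^\circ(\F_q)|$ into the $\sim$-constant at the cost of a factor bounded in $\dim Z(\mathbf{G})$, yields $\zeta_{\mathbf{G}(\F_q)} \sim_D q^{\dim Z(\mathbf{G})}(1 + q^{\rk\Phi - |\Phi^+|s})$ with $D = D(\Phi, \dim\mathbf{G})$, noting $\dim Z(\mathbf{G}) = \dim\mathbf{G} - \dim\mathbf{G}'$ and $\dim\mathbf{G}'$ is determined by $\Phi$.

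The main obstacle I anticipate is bookkeeping the constants so that the final $D$ genuinely depends only on $\Phi$ and $\dim\mathbf{G}$ and not, say, on $q$ or on the particular isogeny type of $\mathbf{G}$. This requires care at two points: (i) the kernel and cokernel of $Z(\mathbf{G})^\circ(\F_q)\times \mathbf{G}'(\F_q)\to\mathbf{G}(\F_q)$ must be bounded independently of $q$ — this follows because these are the $\F_q$-points of a fixed finite group scheme $\mu = Z(\mathbf{G})^\circ \cap \mathbf{G}'$, whose order is bounded by the order of the fundamental group of $\Phi$, together with a Lang-exactness argument for $H^1$; and (ii) one should remark that $\mathbf{G}'$ need not be simply connected, so one may first pull back along the simply connected cover of $\mathbf{G}'$ — but again the kernel there is central of order bounded in terms of $\Phi$, so Lemma~\ref{lem:fin.index.BK}-type estimates apply. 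A final minor point: twisted tori can fail to be split, but since $|T(\F_q)| = \prod(q^{d_i} - \zeta_i)$ for roots of unity $\zeta_i$ and $\sum d_i = \dim T$, the estimate $|T(\F_q)| \sim_{D_0} q^{\dim T}$ with $D_0$ depending only on $\dim T$ still holds, which suffices.
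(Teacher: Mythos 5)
Your proposal is correct in substance but follows a genuinely different route from the paper. The paper treats the two bounds asymmetrically: the upper bound comes from the Lang--Steinberg exact sequence $1 \to \mathbf{G}'(\F_q) \to \mathbf{G}(\F_q) \to (\mathbf{G}/\mathbf{G}')(\F_q) \to 1$ together with $\zeta_{\mathbf{G}(\F_q)} \lesssim_1 \lvert (\mathbf{G}/\mathbf{G}')(\F_q)\rvert\, \zeta_{\mathbf{G}'(\F_q)}$, while the lower bound is obtained by exhibiting representations directly: roughly $q^{\dim Z(\mathbf{G})+\rk\Phi}$ irreducible principal series of dimension at most a constant times $q^{\lvert\Phi^+\rvert}$ (via the proof of \cite[Lemma~8.4.2]{Ca}) plus roughly $q^{\dim Z(\mathbf{G})}$ linear characters. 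You instead pass through the central isogeny $Z(\mathbf{G})^\circ\times\mathbf{G}'\to\mathbf{G}$ and reduce both bounds at once to the semi-simple estimate \eqref{eq:thm3.2-generous-estimate}; this is arguably cleaner and avoids the principal-series input, at the cost of the cohomological bookkeeping for the kernel and cokernel on $\F_q$-points (which is fine --- the paper runs exactly that argument in the proof of Lemma~\ref{lem:bdd.Schur}).

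One step you must make explicit, because as you phrase it the claim is false in general: for a central extension $1\to Z\to\widetilde G\to G\to 1$ with $\lvert Z\rvert$ bounded one does \emph{not} in general have $\zeta_{\widetilde G}\sim_C\zeta_G$ with $C$ depending only on $\lvert Z\rvert$, and characters of $Z$ cannot in general be "tensored onto" representations of $\widetilde G$ (extraspecial $p$-groups already illustrate that the central-character blocks need not have matching dimension statistics). What saves you here is the specific structure: the kernel $\mu(\F_q)$ of $Z(\mathbf{G})^\circ(\F_q)\times\mathbf{G}'(\F_q)\to\mathbf{G}(\F_q)$ sits antidiagonally and meets the factor $1\times\mathbf{G}'(\F_q)$ trivially, so every character of $\mu(\F_q)$ extends to a linear character of the product through the torus factor; tensoring by these linear characters gives dimension-preserving bijections among the central-character blocks, whence $\zeta_{T\times S} = \lvert\mu(\F_q)\rvert\,\zeta_{(T\times S)/\mu(\F_q)}$ with $T=Z(\mathbf{G})^\circ(\F_q)$ and $S=\mathbf{G}'(\F_q)$. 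With that supplied, your chain $\zeta_{\mathbf{G}(\F_q)}\sim\zeta_{(T\times S)/\mu(\F_q)}\sim \lvert T\rvert\,\zeta_S\sim q^{\dim Z(\mathbf{G})}\bigl(1+q^{\rk\Phi-\lvert\Phi^+\rvert s}\bigr)$ closes as described, using Lemma~\ref{lem:fin.index.BK}(2) for the bounded-index comparison, Lemma~\ref{lem:absc.direct.sum} for the product, \eqref{equ:Nori} for the torus count, and \eqref{eq:thm3.2-generous-estimate} for the semi-simple factor.
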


The proof of Theorem~\ref{thm:BC.finite.new} and its
Corollary~\ref{cor:finite.reductive.zeta.approx} is given in
Section~\ref{subsubsec:BC.finite.new}, and prepared in the preceding
sections. It is based on Lusztig's classification of irreducible
representations of finite groups of Lie type; e.g.,
see~\cite[Chapter~13]{DM}.  We write $\mathbf{G}^*$ for the dual group
of a connected reductive group $\mathbf{G}$ and recall that, while
$\mathbf{G}$ and $\mathbf{G}^*$ have isomorphic Weyl groups $W \cong
W^*$, the absolute root system $\Phi^{\vee}$ of $\mathbf{G}^*$ is dual
to $\Phi$ in the sense that the roles of long and short roots are
interchanged.  We also note that $\mathbf{G}$ is simply connected
respectively\ adjoint if and only if $\mathbf{G}^*$ is adjoint
respectively\ simply connected.  Since equivalence classes of
irreducible representations of $\mathbf{G}(\F_q)$ are parametrized by
the corresponding characters, we use the notation
$\Irr(\mathbf{G}(\F_q))$ in a flexible way to denote also the set of
irreducible complex characters of~$\mathbf{G}(\F_q)$.  The set
$\Irr(\mathbf{G}(\F_q))$ is the disjoint union of certain Lusztig
series $\mathcal{E}(\mathbf{G}(\F_q),(g))$ so that
\begin{equation} \label{equ:Lusztig-zeta}
\zeta_{\mathbf{G}(\F_q)}(s) = \sum_{(g) \subset \mathbf{G}
  ^*(\F_q)} \; \sum_{\chi \in
  \mathcal{E}(\mathbf{G}(\F_q),(g))} \chi(1)^{-s},
\end{equation}
where the outer sum ranges over $\mathbf{G}^*$-conjugacy classes of
semi-simple elements in~$\mathbf{G} ^*(\F_q)$.

\subsubsection{Unipotent Zeta Functions}\label{subsubsec:c}
 The elements
of $\mathcal{E}(\mathbf{G}(\F_q),(1))$ are known as the
unipotent characters of~$\mathbf{G}(\F_q)$. We set
\[
\zeta_{\mathbf{G}(\F_q)}^\mathrm{unip}(s) = \sum_{\chi \in
  \mathcal{E}(\mathbf{G}(\F_q),(1))} \chi(1)^{-s}.
\]

\begin{prop} \label{lem:BC.unipotent.representations} Let $\Phi$ be a
  non-trivial root system and let $\boldsymbol{\mathcal{L}}_\Phi$
  denote the collection of Lie types with underlying root
  system~$\Phi$.  Then there are $C \in \R$ and
  $b^\mathrm{c}(\mathcal{L}) \in \mathcal{A}^+$ for $\mathcal{L} \in
  \boldsymbol{\mathcal{L}}_\Phi$ such that the following hold:
  \begin{list}{}{\setlength{\leftmargin}{\myenumilistleftmargin}
      \setlength{\labelwidth}{20pt} \setlength{\itemsep}{0pt}
      \setlength{\parsep}{1pt}}
  \item[\textup{(1)}] $b^\mathrm{c}(\mathcal{L}) \subset
    b^\mathrm{c}(\mathcal{L}^\mathrm{sp})$ for all $L \in
    \boldsymbol{\mathcal{L}}_\Phi$,
  \item[\textup{(2)}] for every connected reductive algebraic group
    $\mathbf{G}$ defined over a finite field $\F_q$ which has Lie
    type~$\mathcal{L} \in \boldsymbol{\mathcal{L}}_\Phi$,
    \[
    \zeta_{\mathbf{G}(\F_q)}^\mathrm{unip} - 1 \sim_C
    \xi_{b^\mathrm{c}(\mathcal{L}),q} \qquad \text{and} \qquad
    \zeta_{\mathbf{G}(\F_q)}^\mathrm{unip} \sim_C 1.
    \]
  \end{list}
\end{prop}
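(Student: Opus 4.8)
The plan is to read everything off Lusztig's classification of the unipotent characters of finite groups of Lie type; see \cite[Chapter~13]{DM}. Two features of that classification are needed. First, unipotent characters depend only on the Lie type: for each $\mathcal{L}=(\Phi,\tau)\in\boldsymbol{\mathcal{L}}_\Phi$ there is a finite set $\mathcal{U}(\mathcal{L})$, not depending on $q$, which parametrises $\mathcal{E}(\mathbf{G}(\F_q),(1))$ for every connected reductive $\mathbf{G}$ over $\F_q$ of Lie type $\mathcal{L}$, so in particular the number of unipotent characters and, as below, their degrees are insensitive to the isogeny type of $\mathbf{G}$ and to a central torus. Second, each $\rho\in\mathcal{U}(\mathcal{L})$ carries a generic degree $D_\rho\in\Q[q]$, independent of $\mathbf{G}$ and of $q$, with $\rho(1)=D_\rho(q)$, and $D_\rho$ is a positive rational multiple of a power of $q$ times a product of cyclotomic polynomials; hence $\deg_q D_\rho\in\{0,1,\dots,\lvert\Phi^+\rvert\}$, with $\deg_q D_\rho=0$ precisely for the trivial character. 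I would then put $b^{\mathrm{c}}(\mathcal{L})=\{(0,\deg_q D_\rho):\rho\in\mathcal{U}(\mathcal{L}),\ \rho\neq\mathbf{1}\}$, which lies in $\mathcal{A}^+$.

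For part~(2): the displayed form of $D_\rho$ provides a constant $c_\rho\geq1$ with $c_\rho^{-1}q^{\deg_q D_\rho}\leq D_\rho(q)\leq c_\rho q^{\deg_q D_\rho}$ for all prime powers $q\geq2$, and therefore $D_\rho(q)^{-s}\sim_{c_\rho}q^{-(\deg_q D_\rho)s}$ uniformly in $q$. Since the irreducible $\Phi$ has only finitely many Lie types and each $\mathcal{U}(\mathcal{L})$ is finite, I would sum these comparisons over $\rho\neq\mathbf{1}$, absorbing into the constant the bounded number of $\rho$ that realise a given value $\deg_q D_\rho$, to obtain $\zeta_{\mathbf{G}(\F_q)}^{\mathrm{unip}}-1\sim_C\xi_{b^{\mathrm{c}}(\mathcal{L}),q}$ with $C$ depending only on $\Phi$. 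The relation $\zeta_{\mathbf{G}(\F_q)}^{\mathrm{unip}}\sim_C1$ then comes for free: this is a Dirichlet polynomial with $\lvert\mathcal{U}(\mathcal{L})\rvert$ terms $D_\rho(q)^{-\sigma}\in(0,1]$ for real $\sigma>0$, and it is $\geq1$ because of the trivial character, so $1\leq\zeta_{\mathbf{G}(\F_q)}^{\mathrm{unip}}(\sigma)\leq\lvert\mathcal{U}(\mathcal{L})\rvert$ for $\sigma>0$, after enlarging $C$ once more.

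The substance of the argument is the inclusion in part~(1): every $q$-degree of a non-trivial unipotent character of the twisted type $(\Phi,\tau)$ should also occur for the split type $(\Phi,\Id)$, i.e.\ $b^{\mathrm{c}}(\mathcal{L})\subseteq b^{\mathrm{c}}(\mathcal{L}^{\mathrm{sp}})$. This is vacuous unless $\Phi$ admits a non-trivial diagram automorphism, i.e.\ unless $\Phi\in\{A_n,D_n,E_6\}$ --- all simply laced, which incidentally keeps the unipotent degrees polynomial in $q$ --- so I would reduce it to a finite check of the types ${}^2A_n$, ${}^2D_n$, ${}^3D_4$, ${}^2E_6$ against $A_n$, $D_n$, $D_4$, $E_6$. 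For ${}^2A_n$ the generic degrees are, up to sign, those of $A_n$ after $q\mapsto-q$, so the $q$-degrees coincide; for ${}^2D_n$ I would compare the generic degrees attached to symbols of the two relevant defect types via the symbol degree formula; for ${}^3D_4$ and ${}^2E_6$ one simply inspects the short explicit lists of generic degrees. I expect this case-by-case comparison with Lusztig's tables to be the main obstacle: it is not conceptually hard, but it is the only step that is not purely formal, everything else being a consequence of the polynomiality of unipotent character degrees and the finiteness of $\boldsymbol{\mathcal{L}}_\Phi$.
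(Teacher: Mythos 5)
Your argument is essentially the paper's for \emph{irreducible} $\Phi$: the same choice $b^{\mathrm{c}}(\mathcal{L})=\{(0,\deg_q D_\rho)\}$, the same elementary comparison for part~(2), and the same finite check of ${}^2A_n$, ${}^2D_n$, ${}^3D_4$, ${}^2E_6$ against their split forms (the paper also does ${}^2A_n$ by matching degrees partition by partition, ${}^2D_n$ by the symbol degree formula, and the other two by inspecting the tables in Carter). However, there is a genuine gap: the proposition is stated for an arbitrary non-trivial root system $\Phi$, and $\boldsymbol{\mathcal{L}}_\Phi$ contains Lie types $(\Phi,\tau)$ in which $\tau$ permutes the irreducible components of a \emph{reducible} $\Phi$. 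Your claim that part~(1) ``is vacuous unless $\Phi\in\{A_n,D_n,E_6\}$'' silently assumes $\Phi$ irreducible; but the reducible case is exactly the one needed in the application (Lemma~\ref{lem:nc} invokes this proposition for the connected centralizers $C_{\mathbf{G}^*}(g)^\circ$, whose root systems are typically reducible), and the paper's Example after Theorem~\ref{thm:BC.finite.new} ($\mathcal{R}_{\F_{q^2}\vert\F_q}\SL_2$, of type $A_1\times A_1$ with the swap) shows the phenomenon is not hypothetical.

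Concretely, what is missing is the following chain. One must first reduce to the adjoint quotient and decompose it as a product of $\F_q$-simple factors $\mathbf{S}_i$, with $\zeta^{\mathrm{unip}}_{\mathbf{G}(\F_q)}=\prod_i\zeta^{\mathrm{unip}}_{\mathbf{S}_i(\F_q)}$ (via the K\"unneth formula for Deligne--Lusztig varieties). An $\F_q$-simple factor whose $n(i)$ components are permuted transitively satisfies $\mathbf{S}_i(\F_q)=\widetilde{\mathbf{S}}_i(\F_{q^{n(i)}})$ for an absolutely simple $\widetilde{\mathbf{S}}_i$, so its unipotent degrees are polynomials evaluated at $q^{n(i)}$ and contribute the \emph{dilated} set $b^{\mathrm{c}}(\widetilde{\mathcal{L}}_i)^{(n(i))}$, whereas the split form of the same factor contributes the $n(i)$-fold $*$-power $b^{\mathrm{c}}(\Psi_i,\Id)^{*n(i)}$. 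The containment in part~(1) then rests on the combinatorial inclusion $a^{(n)}\subset a^{*n}$ (Remark~\ref{rem:xi}), i.e.\ that $n$ times any split degree is a sum of $n$ split degrees --- a step your proposal never addresses and which does not follow from the four irreducible case checks. Without this, your definition of $b^{\mathrm{c}}(\mathcal{L})$ still makes part~(2) work, but part~(1) is unproved for any $\mathcal{L}$ whose automorphism moves components.
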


\begin{proof} Throughout, all constants -- be they real or elements of
  $\mathcal{A}^+$ -- depend only on~$\Phi$ or a given Lie type
  $\mathcal{L} \in \boldsymbol{\mathcal{L}}_\Phi$.  Let $\mathbf{G}$
  be a connected reductive algebraic group of Lie type $\mathcal{L} =
  (\Phi,\tau) \in \boldsymbol{\mathcal{L}}_\Phi$.  Then
  $\mathbf{G}/Z(\mathbf{G})$ is a semi-simple group over $\F_q$ of
  adjoint type and, by~\cite[Proposition~13.20]{DM}, every unipotent
  character of $\mathbf{G}(\F_q)$ factors through
  $(\mathbf{G}/Z(\mathbf{G}))(\F_q)$.

  Therefore we may assume that $\mathbf{G}$ is semi-simple of adjoint
  type and thus a direct product of $\F_q$-simple groups
  $\mathbf{S}_i$ of Lie type $\mathcal{L}_i = (\Phi_i,\tau_i)$, say,
  where $i \in \{1, \ldots, m\}$.  The unipotent characters of
  $\mathbf{G}(\F_q) = \mathbf{S}_1(\F_q) \times \cdots \times
  \mathbf{S}_m(\F_q)$ are the irreducible characters that appear in
  the Deligne--Lusztig character $R_\mathbf{T}^\mathbf{G}(1)$, where
  $\mathbf{T}$ is a maximal torus defined over~$\F_q$.  The
  generalised character $R_\mathbf{T}^\mathbf{G}(1)$ arises from the
  action of $\mathbf{G}(\F_q) \times \mathbf{T}(\F_q)$ on the
  cohomologies of the Deligne--Lusztig variety $\mathbf{X} =
  \boldsymbol{\ell}^{-1}(\mathbf{U})$, where $\boldsymbol{\ell}$
  denotes the Lang map and $\mathbf{U}$ is the unipotent radical of a
  Borel subgroup containing~$\mathbf{T}$; see~\cite[Section~11]{DM}.
  This construction is compatible with taking products, hence
  $\mathbf{X}$ can be taken to be the product of the Deligne--Lusztig
  varieties of the groups $\mathbf{S}_i(\F_q)$.  The K\"unneth formula
  now implies that
  \begin{equation} \label{equ:unip-product}
    \zeta^\mathrm{unip}_{\mathbf{G}(\F_q)} = \prod_{i=1}^m
    \zeta^\mathrm{unip}_{\mathbf{S}_i(\F_q)}.
  \end{equation}

  Fix $i \in \{1,\ldots,m\}$.  Since $\mathbf{S}_i$ is $\F_q$-simple,
  $\tau_i$ permutes the irreducible components $\Phi_{i,1}$,
  $\Phi_{i,2}$, \ldots, $\Phi_{i,n(i)}$ of $\Phi_i$ transitively.
  Thus $\tau_i^{n(i)}$ restricts to an automorphism of~$\Psi_i =
  \Phi_{i,1}$.  Writing $\widetilde{\mathbf{S}}_i$ for the adjoint
  absolutely simple algebraic group over $\F_{q^{n(i)}}$ of Lie type
  $\widetilde{\mathcal{L}}_i = (\Psi_i,\tau_i^{n(i)})$, we have
  $\mathbf{S}_i(\F_q) = \widetilde{\mathbf{S}}_i(\F_{q^{n(i)}})$.  By
  \cite[Sections~13.8 and~13.9]{Ca}, the number $k(i)$ of unipotent
  characters of $\widetilde{\mathbf{S}}_i(\F_{q^{n(i)}})$ depends only
  on~$\widetilde{\mathcal{L}}_i$.  This shows that
  $\zeta_{\mathbf{S}_i(\F_q)}^\mathrm{unip} \lesssim_{C_i} 1$ for a
  suitable constant~$C_i \in \R$.  Since the trivial character is
  unipotent, we also get $1 \lesssim_1
  \zeta_{\mathbf{S}_i(\F_q)}^\mathrm{unip}$.  This yields
  $\zeta_{\mathbf{S}_i(\F_q)}^\mathrm{unip} \sim_{C_i} 1$ and thus
  $\zeta^\mathrm{unip}_{\mathbf{G}(\F_q)} \sim_{C_1\cdots C_m} 1$,
  using~\eqref{equ:unip-product}.

  Furthermore, looking through the tables in~\cite[Sections~13.8
  and~13.9]{Ca}, one sees that there are polynomials
  $f_{i,1},\ldots,f_{i,k(i)} \in \Q[X]$ depending only on
  $\widetilde{\mathcal{L}}_i$ such that the degrees of the unipotent
  characters of $\widetilde{\mathbf{S}}_i(\F_{q^{n(i)}})$ are
  precisely $f_{i,1}(q^{n(i)}),\ldots,f_{i,k(i)}(q^{n(i)})$.  Moreover, only one of
  these polynomials has degree zero, namely the constant
  polynomial~$1$ giving the degree of the trivial character. We may
  assume that $f_{i,1}=1$ and define
  \[
  b^\mathrm{c}(\widetilde{\mathcal{L}}_i) = \{ (0, \deg f_{i,j}) \mid
  2 \leq j \leq k(i) \}\in\mathcal{A}^+.
  \]

  We show below that, while the degrees of the $f_{i,j}$ generally
  depend on $\widetilde{\mathcal{L}}_i$ and not only on~$\Psi_i$, the
  set of degrees in the split case
  $\widetilde{\mathcal{L}}_i^{\mathrm{sp}} = (\Psi_i,\Id)$ is a
  superset of the set of degrees in the twisted cases.  Granted that
  this is so, we continue and define $b^\mathrm{c}(L_i) =
  b^\mathrm{c}(\widetilde{\mathcal{L}}_i)^{(n(i))}$ so that for a
  suitable constant $C_i' \in \R$ we obtain
  \begin{equation} \label{equ:each-i-strong}
    \zeta_{\mathbf{S}_i(\F_q)}^\mathrm{unip} - 1 =
    \zeta_{\widetilde{\mathbf{S}}_i(\F_{q^{n(i)}})}^\mathrm{unip} - 1
    \sim_{C_i'} \xi_{b^\mathrm{c}(\widetilde{\mathcal{L}}_i),q^{n(i)}} =
    \xi_{b^\mathrm{c}(L_i),q}.
  \end{equation}
  We set $b^\mathrm{c}(\mathcal{L}) = b^\mathrm{c}(L_1) * \dots *
  b^\mathrm{c}(L_m)$.  From our claim regarding the degrees of the
  polynomials $f_{i,j}$ and Remark~\ref{rem:xi} we deduce that, for
  each $i \in \{1,\ldots,m\}$,
  \begin{equation} \label{equ:each-i-split} b^\mathrm{c}(L_i) =
    b^\mathrm{c}(\widetilde{\mathcal{L}}_i)^{(n(i))} \subset
    b^\mathrm{c}(\Psi_i,\Id)^{(n(i))} \subset
    b^\mathrm{c}(\Psi_i,\Id)^{*n(i)} =
    b^\mathrm{c}(\mathcal{L}_i^\mathrm{sp}).
  \end{equation}
  In the special case $m=1$ this establishes assertion (1) of the
  proposition, and \eqref{equ:each-i-strong} directly yields the
  remaining first assertion in~(2).  In the general case we conclude
  the proof based on~\eqref{equ:unip-product} and Remark~\ref{rem:xi}.

  \medskip

  It remains to justify the claim about the degrees of unipotent
  characters for split and twisted finite groups of Lie type with the
  same underlying absolute root system.  Simplifying the notation used
  above, let $\mathbf{S}$ (rather than $\widetilde{\mathbf{S}}_i$) be
  a connected, adjoint absolutely simple algebraic group defined over
  $\F_q$ (rather than $\F_{q^{n(i)}}$), and let
  $\mathbf{S}^\mathrm{sp}$ denote a split form of~$\mathbf{S}$
  over~$\F_q$.  We consider the possible groups case by case.  By the
  remarks following~\cite[Theorem~3.17]{DM}, the relevant twisted Lie
  types are $^2A_n$, $^2D_n$, $^3D_4$, and~$^2E_6$, where the left
  index represents as customary the order of the automorphism of the
  root system.  (In our setup, the Suzuki and Ree groups do not occur.
  In fact, the analogous statement for groups of Lie type $F_4$ and
  $^2F_4$ is incorrect; see Remark~\ref{rem:unipotent.exceptional}.)

  \medskip

  \noindent \textit{Case $1$}: $\mathbf{S}$ has Lie type $^2A_n$ and
  $\mathbf{S}^\mathrm{sp}$ has Lie type $A_n$.  The unipotent
  characters of each group, $\mathbf{S}(\F_q)$ and
  $\mathbf{S}^\mathrm{sp}(\F_q)$, are parametrized by partitions
  $\alpha$ of~$n+1$.  The degree $\chi_\alpha(1)$ of the unipotent
  character $\chi_\alpha$ associated to $\alpha$ in either case is
  given by a polynomial in $q$ whose degree only depends on~$\alpha$;
  see~\cite[p.~465]{Ca}. This proves that $b^\mathrm{c}(^2A_n) =
  b^\mathrm{c}(A_n)$.

  \medskip

  \noindent \textit{Case $2$}: $\mathbf{S}$ has Lie type $^2D_n$ and
  $\mathbf{S}^\mathrm{sp}$ has Lie type $D_n$. The unipotent
  characters of $\mathbf{S}^\mathrm{sp}(\F_q)$ are
  parametrized by so-called Lustzig symbols (or just ``symbols'' in
  the terminology of~\cite{Ca}). These are certain pairs $(S,T)$ of
  finite subsets of $\mathbb{Z}_{\geq 0}$ such that $\lvert S \rvert -
  \lvert T \rvert \equiv 0 \pmod{4}$, where pairs of the form $(S,S)$
  correspond to pairs of characters rather than single characters, but
  this subtlety is irrelevant for our purposes.  Similarly, the
  unipotent characters of $\mathbf{S}(\F_q)$ are parametrized
  by pairs $(S,T)$ such that $\lvert S \rvert - \lvert T \rvert \equiv
  2 \pmod{4}$.  Let $(S,T)$ be a pair of the second kind and suppose
  without loss of generality that there exists an element $a \in S
  \smallsetminus T$.  Then $(S',T') = (S\smallsetminus \left\{ a
  \right\},T \cup \left\{ a \right\})$ is a pair of the first kind.
  Inspection of the formulae giving the degrees $\chi_{(S,T)}(1)$ and
  $\chi_{(S',T')}(1)$ of the unipotent characters $\chi_{(S,T)}$ of
  $\mathbf{S}(\F_q)$ associated with $(S,T)$ and $\chi_{(S',T')}$ of
  $\mathbf{S}^\mathrm{sp}(\F_q)$ associated with $(S',T')$ yields:
  the polynomials in $q$ giving $\chi_{(S,T)}(1)$ and
  $\chi_{(S',T')}(1)$ have equal degrees.  For details
  see~\cite[p.~471 and 475f.]{Ca}.  This proves that
  $b^\mathrm{c}(^2D_n) \subset b^\mathrm{c}(D_n)$.

 \medskip

 \noindent \textit{Case $3$}: $\mathbf{S}$ has Lie type $^3D_4$ and
 $\mathbf{S}^\mathrm{sp}$ has Lie type $D_4$. The set of degrees of
 the polynomials expressing the degrees of unipotent characters of
 $\mathbf{S}(\F_q)$ is $\left\{ 0,5,9,11,12 \right\}$
 (see~\cite[p.~478]{Ca}), whereas the corresponding set for
 $\mathbf{S}^\mathrm{sp}(\F_q)$ is $\left\{0,5,6,9,10,11,12
 \right\}$.  This shows that $b^\mathrm{c}(^3D_4)\subset
 b^\mathrm{c}(D_4)$.

 \medskip

 \noindent \textit{Case $4$}: $\mathbf{S}$ has Lie type $^2E_6$ and
 $\mathbf{S}^\mathrm{sp}$ has Lie type $E_6$.  The 30 unipotent
 characters of these groups are in degree-preserving bijective
 correspondence; cf.~\cite[p.~480f.]{Ca}. This shows that
 $b^\mathrm{c}(E_6) = b^\mathrm{c}(^2E_6)$.
\end{proof}

\begin{rem}\label{rem:unipotent.exceptional}
  Proposition~\ref{lem:BC.unipotent.representations}
  extends only partly to the remaining groups of Lie type, the Suzuki
  and Ree groups.  For the groups $B_2(q)$ and $^2B_2(q^2)$ the
  degrees of the polynomials giving the degrees of the unipotent
  characters coincide.  The same is true for the groups $G_2(q)$ and
  $^2G_2(q^2)$.  But the degrees of the polynomials giving the
  unipotent character degrees of the Ree groups $^2F_4(q^2)$ are
  $\{0,9,14,20,21,22,24\}$ (see~\cite[p.~489]{Ca}), whereas the
  corresponding degrees for the split groups $F_4(q)$ are
  $\{0,11,15,20,21,22,23,24\}$ (see~\cite[p.~479]{Ca}).
\end{rem}

\begin{rem}
  In fact, the sets $b^\mathrm{c}(\mathcal{L})$ in
  Proposition~\ref{lem:BC.unipotent.representations} are all contained
  in $\{0\}\times\mathbb{Z}_{>0}$, so
  $\xi_{b^\mathrm{c}(\mathcal{L}),q}$ could be replaced by
  $q^{-m(\mathcal{L})s}$, where $m(\mathcal{L}) =
  \min\{m\in\mathbb{Z}_{>0}\mid (0,m)\in
  b^{\mathrm{c}}(\mathcal{L})\}$. Moreover, it is conceivable that
  $m(\mathcal{L})$ actually only depends on the root system $\Phi$. As
  the proof of the proposition shows, this reduces to a question
  regarding the degrees of unipotent characters of groups of type
  $D_n$. More precisely, it would follow if the degrees of the
  polynomials giving the degrees of the unipotent characters of the
  (untwisted) groups of type $D_n$ indexed by special Lusztig symbols
  of the form $(S,S)$ were already among the corresponding degrees
  obtained from Lusztig symbols of the form $(S,T)$ with $S\neq T$.
\end{rem}

\subsubsection{Connected Centralizers of Semi-Simple Elements}
In preparation for the proof of Theorem~\ref{thm:BC.finite.new} we
record some auxiliary facts about the connected centralisers of
semi-simple elements in algebraic groups over finite fields.

Let $\Phi$ be a root system.  We consider a connected split reductive
algebraic group $\mathbf{G}$ over a finite field $\F_q$ whose
absolute root system, associated to some $\F_q$-rational split
maximal torus~$\mathbf{T}$, is isomorphic to and identified
with~$\Phi$. We denote by $\F_q^\alg$ the algebraic
closure of~$\F_q$.  The connected centraliser
$C_\mathbf{G}(g)^\circ$ of any semi-simple element $g \in
\mathbf{G}(\F_q^\alg)$ is a reductive subgroup of maximal rank
in~$\mathbf{G}$.  Indeed, every semi-simple element of $\mathbf{G}$ is
conjugate to an element of~$\mathbf{T}$.  Furthermore the connected
centraliser of $g \in \mathbf{T}(\F_q^\alg)$ is the reductive
group $\langle \mathbf{T}, \mathbf{U}_\alpha \mid \alpha \in \Psi_g
\rangle$ with root system $\Psi_g = \{ \alpha \in \Phi \mid
\alpha(g)=1 \}$, where each $\mathbf{U}_\alpha$ denotes the root
subgroup associated to~$\alpha$; see~\cite[Proposition~2.3]{DM}.  For
every extension $\mathbb{E}$ of $\F_q$ we put
\begin{equation}\label{equ:C}
  \mathcal{C}(\mathbf{G},\mathbf{T},\mathbb{E}) = \{ \Psi_g \mid g \in
  \mathbf{T}(\mathbb{E}) \}.
\end{equation}
Furthermore, we set
\begin{equation}\label{equ:C_Phi}
  \mathcal{C}(\Phi) = \bigcup_{\mathbf{G},\mathbf{T},\F_q}
  \mathcal{C}(\mathbf{G},\mathbf{T},\F_q), 
\end{equation}
where $\mathbf{G}$, $\mathbf{T}$ and $\F_q$ range over all possible
choices for the fixed root system~$\Phi$.  In
Proposition~\ref{pro:centralizers.split} we show that, in fact,
$\mathcal{C}(\mathbf{G},\mathbf{T},\F_q) = \mathcal{C}(\Phi)$ as long
as $\F_q$ contains a sufficient supply of roots of unity.

\begin{lem} \label{lem:centr-isogeny} Let $\phi \colon \mathbf{G}
  \rightarrow \mathbf{H}$ be an isogeny, i.e., a surjective morphism
  with finite central kernel, between algebraic groups defined over a
  finite field~$\F_q$.  If $g \in \mathbf{G}(\F_q)$, then $\phi$
  maps $C_\mathbf{G}(g)^\circ$ onto $C_\mathbf{H}(\phi(g))^\circ$.
\end{lem}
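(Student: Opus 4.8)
The plan is to transport the problem to the closed subgroup $\mathbf{D} = \phi^{-1}\big(C_\mathbf{H}(\phi(g))^\circ\big)$ of $\mathbf{G}$ and to identify its identity component with $C_\mathbf{G}(g)^\circ$. Write $Z = \ker\phi$, a finite central closed subgroup of $\mathbf{G}$; note $Z \subseteq \mathbf{D}$ since $1 \in C_\mathbf{H}(\phi(g))^\circ$, and $\mathbf{D}$ is closed as the preimage of a closed subgroup under a homomorphism. First I would record the easy inclusion: since $\phi$ is a homomorphism it sends $C_\mathbf{G}(g)$ into $C_\mathbf{H}(\phi(g))$, hence the connected group $C_\mathbf{G}(g)^\circ$ into $C_\mathbf{H}(\phi(g))^\circ$; being connected and containing $1$, this gives $C_\mathbf{G}(g)^\circ \subseteq \mathbf{D}^\circ$.

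For the reverse inclusion I would use the morphism of varieties $f\colon \mathbf{G} \to \mathbf{G}$, $f(x) = x g x^{-1} g^{-1}$, whose fibre over $1$ is $C_\mathbf{G}(g)$. For $x \in \mathbf{D}$ we have $\phi(f(x)) = \phi(x)\phi(g)\phi(x)^{-1}\phi(g)^{-1} = 1$, because $\phi(x) \in C_\mathbf{H}(\phi(g))$; thus $f$ restricts to a morphism $\mathbf{D}^\circ \to Z$. As $\mathbf{D}^\circ$ is connected, $Z$ is finite, and $f(1) = 1$, this restricted morphism is constant with value $1$. Hence every element of $\mathbf{D}^\circ$ commutes with $g$, so $\mathbf{D}^\circ \subseteq C_\mathbf{G}(g)$ and, by connectedness, $\mathbf{D}^\circ \subseteq C_\mathbf{G}(g)^\circ$. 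Together with the first step this yields $\mathbf{D}^\circ = C_\mathbf{G}(g)^\circ$.

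It then remains to check $\phi(\mathbf{D}^\circ) = C_\mathbf{H}(\phi(g))^\circ$. Since $\phi$ is surjective, $\phi(\mathbf{D}) = C_\mathbf{H}(\phi(g))^\circ$, and since $Z \subseteq \mathbf{D}$ the restriction $\phi|_\mathbf{D}$ is an isogeny onto $C_\mathbf{H}(\phi(g))^\circ$; in particular $\dim \mathbf{D} = \dim C_\mathbf{H}(\phi(g))^\circ$. Now $\phi(\mathbf{D}^\circ)$ is a closed connected subgroup of the connected group $C_\mathbf{H}(\phi(g))^\circ$, of dimension $\dim\mathbf{D}^\circ = \dim\mathbf{D} = \dim C_\mathbf{H}(\phi(g))^\circ$, hence equal to it. Therefore $\phi\big(C_\mathbf{G}(g)^\circ\big) = \phi(\mathbf{D}^\circ) = C_\mathbf{H}(\phi(g))^\circ$, which is the assertion.

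The two points requiring care — and where the hypotheses on $\phi$ enter — are that $f$ restricted to $\mathbf{D}^\circ$ actually lands in the finite set $Z$ (using that $\ker\phi$ is finite and central, so that connectedness forces the restricted morphism to be constant), and the concluding dimension count, which rests on the standard facts that a homomorphic image of an algebraic group is closed and that a connected algebraic group has no proper closed subgroup of full dimension. I do not anticipate a genuine obstacle beyond assembling these ingredients; the argument is purely geometric and the $\F_q$-rationality of $g$ is not needed.
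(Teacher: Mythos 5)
Your proof is correct and rests on the same key observation as the paper's: conjugation of $g$ by elements of $\phi^{-1}(C_\mathbf{H}(\phi(g)))$ only moves $g$ within the finite coset $g\ker\phi$, so the finiteness of the kernel together with connectedness forces the relevant identity components to centralize $g$. The paper packages this as a faithful action of $\phi^{-1}(C_\mathbf{H}(\phi(g)))/C_\mathbf{G}(g)$ on $g\ker\phi$ (hence finite index) and concludes from the connectedness of the finite-index image, whereas you phrase it as constancy of the commutator morphism on $\mathbf{D}^\circ$ and finish with a dimension count — the same argument in only slightly different clothing.
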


\begin{proof}
  Write $h = \phi(g)$ and observe that
  \begin{equation}\label{equ:inclusions}
    \bigcap_{\tilde g \in \mathbf{G}(\F_q^\alg) \text{, } \phi(\tilde g) = h}
    C_\mathbf{G}(\tilde g) = C_\mathbf{G}(g) \subset \phi^{-1}
    (C_\mathbf{H}(h)).
  \end{equation}
  Furthermore, $\phi^{-1} (C_\mathbf{H}(h)) / \bigcap_{\phi(\tilde g)
    = h} C_\mathbf{G}(\tilde g)$ acts faithfully by conjugation on the
  finite set $g \ker(\phi)$.  This shows that the inclusion
  in~\eqref{equ:inclusions} is of finite index, and hence
  $C_\mathbf{G}(g)^\circ$ is a finite index subgroup
  of~$\phi^{-1}(C_\mathbf{H}(h))$.  Since
  $\phi(C_\mathbf{G}(g)^\circ)$ is connected and of finite index in
  $C_\mathbf{H}(h)$, we conclude that $\phi(C_\mathbf{G}(g)^\circ) =
  C_\mathbf{H}(h)^\circ$.
\end{proof}

\begin{prop} \label{pro:centralizers.split} Let $\Phi$ be a root
  system.  There exists $m \in \N$ such that for every connected
  split reductive algebraic group $\mathbf{G}$ over a finite field
  $\F_q$ whose absolute root system, associated to an
  $\F_q$-rational split maximal torus~$\mathbf{T}$, is isomorphic to
  and identified with~$\Phi$ the following holds: if $\F_q$
  contains primitive $m$th roots of unity, equivalently $q \equiv_m
  1$, then $\mathcal{C}(\mathbf{G},\mathbf{T},\F_q) =
  \mathcal{C}(\Phi)$.

  In particular, if $\mathbf{G}$ is as above and $q \equiv_m 1$, then
  for every semi-simple element $g \in
  \mathbf{G}(\F_q^\alg)$, there is a semi-simple element
  $g_0 \in \mathbf{G}(\F_q)$ such that $C_\mathbf{G}(g)^\circ$
  and $C_\mathbf{G}(g_0)^\circ$ are $\mathbf{G}$-conjugate.
\end{prop}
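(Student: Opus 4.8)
The plan is to exploit the fact that, whichever connected split reductive $\mathbf{G}$ with absolute root system $\Phi$ we are handed, its cocharacter lattice $X_*(\mathbf{T})$ always contains the coroot lattice $Q^\vee(\Phi)$; so once I know that each subsystem in the finite set $\mathcal{C}(\Phi)$ can be cut out by a cocharacter drawn from $Q^\vee(\Phi)$ together with a root of unity of bounded order, a single $m$ will serve all $\mathbf{G}$ simultaneously. First I would record a concrete description of $\mathcal{C}(\mathbf{G},\mathbf{T},\F_q)$: since $\mathbf{T}$ is split, $\mathbf{T}(\F_q)=X_*(\mathbf{T})\otimes_\Z\F_q^\times$, and as $\F_q^\times$ is cyclic of order $q-1$ with a chosen generator $\zeta$, every $g\in\mathbf{T}(\F_q)$ has the form $\lambda(\zeta)$ with $\lambda\in X_*(\mathbf{T})$; since $\alpha(\lambda(\zeta))=\zeta^{\langle\alpha,\lambda\rangle}$ this gives
\[
\mathcal{C}(\mathbf{G},\mathbf{T},\F_q)=\bigl\{\,\{\alpha\in\Phi:(q-1)\mid\langle\alpha,\lambda\rangle\}\ :\ \lambda\in X_*(\mathbf{T})\,\bigr\}.
\]
By the very definition of $\mathcal{C}(\Phi)$ one always has $\mathcal{C}(\mathbf{G},\mathbf{T},\F_q)\subseteq\mathcal{C}(\Phi)$, and $\mathcal{C}(\Phi)$ is finite, so only the reverse inclusion is at stake.

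Next I would attach to each $\Psi\in\mathcal{C}(\Phi)$ a ``coroot model''. By definition of $\mathcal{C}(\Phi)$ there are a split reductive $\mathbf{G}_0$ over some $\F_{q_0}$ with root system $\Phi$ and a $\lambda_0\in X_*(\mathbf{T}_0)$ with $\Psi=\{\alpha:N\mid\langle\alpha,\lambda_0\rangle\}$, where $N=q_0-1$. The $\Q$-span $U$ of the coroots $\alpha^\vee$ ($\alpha\in\Phi$) is complementary inside $X_*(\mathbf{T}_0)\otimes\Q$ to the annihilator of $\Phi$, so I may replace $\lambda_0$ by its component $\lambda_0'\in U$ without changing the pairings $\langle\alpha,\lambda_0\rangle$ for $\alpha\in\Phi$; clearing denominators yields $D_\Psi\in\N$ with $\mu_\Psi:=D_\Psi\lambda_0'\in Q^\vee(\Phi)$, and then, with $m_\Psi:=D_\Psi N$,
\[
\Psi=\{\alpha\in\Phi:m_\Psi\mid\langle\alpha,\mu_\Psi\rangle\},\qquad\mu_\Psi\in Q^\vee(\Phi).
\]
I would then set $m:=\operatorname{lcm}\{m_\Psi:\Psi\in\mathcal{C}(\Phi)\}$.

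To conclude, let $\mathbf{G},\mathbf{T},\F_q$ be as in the statement with $q\equiv_m1$ and fix $\Psi\in\mathcal{C}(\Phi)$. Since $m_\Psi\mid m\mid q-1$, the group $\F_q^\times$ contains an element $\zeta$ of exact order $m_\Psi$; since $Q^\vee(\Phi)\subseteq X_*(\mathbf{T})$, the cocharacter $\mu_\Psi$ is a cocharacter of $\mathbf{T}$; and $g:=\mu_\Psi(\zeta)\in\mathbf{T}(\F_q)$ satisfies $\alpha(g)=\zeta^{\langle\alpha,\mu_\Psi\rangle}$, which is $1$ exactly when $m_\Psi\mid\langle\alpha,\mu_\Psi\rangle$, that is, exactly when $\alpha\in\Psi$. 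Hence $\Psi=\Psi_g\in\mathcal{C}(\mathbf{G},\mathbf{T},\F_q)$, giving the asserted equality. The ``in particular'' clause then follows: a semi-simple $g\in\mathbf{G}(\F_q^{\alg})$ is $\mathbf{G}$-conjugate to some $g'\in\mathbf{T}(\F_q^{\alg})$, which is defined over a finite subfield, so $\Psi_{g'}\in\mathcal{C}(\Phi)=\mathcal{C}(\mathbf{G},\mathbf{T},\F_q)$; picking $g_0\in\mathbf{T}(\F_q)$ with $\Psi_{g_0}=\Psi_{g'}$ and invoking \cite[Proposition~2.3]{DM} gives $C_\mathbf{G}(g_0)^\circ=\langle\mathbf{T},\mathbf{U}_\alpha\mid\alpha\in\Psi_{g'}\rangle=C_\mathbf{G}(g')^\circ$, which is $\mathbf{G}$-conjugate to $C_\mathbf{G}(g)^\circ$.

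The only genuine content is the uniform presence of $Q^\vee(\Phi)$ inside every $X_*(\mathbf{T})$; the one place I expect to need care is the ``coroot model'' step, since the tempting shortcut of replacing $\mathbb{Z}\Psi$ by its saturation in $X^*(\mathbf{T})$ fails — for instance the subsystem of long roots in $B_2$ is cut out only by an element of order $2$ — so $m$ really must record the orders $m_\Psi$, and this is exactly why a bound independent of $\mathbf{G}$ (rather than a universal small value) is the right target.
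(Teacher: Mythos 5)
Your proof is correct, and it reaches the same lattice-theoretic heart as the paper's argument --- the conditions $\alpha(g)=1$ versus $\alpha(g)\neq 1$ are pure divisibility data, so one witness for $\Psi$ propagates to every instance with $q\equiv_m 1$ --- but you achieve uniformity over the isogeny class by a different mechanism. The paper first reduces, via the isogeny lemma (Lemma~\ref{lem:centr-isogeny}, applied twice), to the split simply connected almost simple case, where the root datum is pinned down once and for all, and then solves the resulting system of linear equations and inequations in $(\Q_{p'}/\Z)^{\ell}$, taking $m(\Psi)$ to be the lcm of the denominators of a single solution. You instead keep $\mathbf{G}$ arbitrary and exploit the fact that the coroot lattice $\Z\Phi^{\vee}$ is contained in \emph{every} cocharacter lattice $X_*(\mathbf{T})$, so that it suffices to manufacture, for each $\Psi\in\mathcal{C}(\Phi)$, a witness of the form $\mu_\Psi(\zeta)$ with $\mu_\Psi\in\Z\Phi^{\vee}$ and $\zeta$ of exact order $m_\Psi$; your projection onto $\Q\Phi^{\vee}$ followed by clearing denominators is the right construction, and the step that makes it sound is the (implicit, but immediate) observation that $\langle\alpha,\lambda_0'\rangle=\langle\alpha,\lambda_0\rangle$ stays integral because the complementary component annihilates $\Phi$. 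Your route dispenses with the isogeny bookkeeping entirely; the paper's route yields $m(\Psi)$ directly from the coordinates of a solution in the simply connected torus. Both produce an $m$ depending only on $\Phi$ via the finiteness of $\mathcal{C}(\Phi)$, which is all the proposition requires, and your handling of the ``in particular'' clause (passing to $\mathbf{T}(\F_{q^r})$ and citing \cite[Proposition~2.3]{DM}) is likewise fine.
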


\begin{proof}
  Fix $\Psi \in \mathcal{C}(\Phi)$.  Clearly, it suffices to show that
  there exists $m(\Psi) \in \N$ so that $\Psi$ arises as the absolute
  root system of a connected centraliser in each instance of
  $(\mathbf{G},\mathbf{T},\F_q)$ with $q \equiv_{m(\Psi)} 1$.

  Let $\mathbf{G}$ be a connected split reductive algebraic group over
  a finite field $\F_q$, with absolute root system~$\Phi$.
  The group $\mathbf{G}$ is the image of an isogeny from the direct
  product of the connected split semi-simple group
  $[\mathbf{G},\mathbf{G}]$ and $Z(\mathbf{G})^\circ$, a torus.  Using
  Lemma~\ref{lem:centr-isogeny}, we may concentrate on the case that
  $\mathbf{G}$ is semi-simple and hence an isogenous image of a direct
  product of split simply-connected almost simple algebraic groups.
  Applying once more Lemma~\ref{lem:centr-isogeny}, we may restrict to
  the case that $\mathbf{G}$ is simply-connected almost simple, with
  root datum $(X,\Phi,\mathbb{Z}\Phi^{\vee},\Phi^{\vee})$, say.  The
  root lattice $\mathbb{Z}\Phi$ is a sublattice of finite index in the
  free $\mathbb{Z}$-lattice~$X$. We fix free generators
  $\chi_1,\dots,\chi_{\ell}$ of $X$.

  Let $\mathbf{T} \subset \mathbf{G}$ be an $\F_q$-rational split
  maximal torus.  We identify $\Phi$ with the root system and $X$ with
  the character group associated to~$\mathbf{T}$. Let $g \in
  \mathbf{T}(\F_q^\alg)$.  As noted before, the connected centraliser
  of $g$ is the reductive subgroup
  \[
  C_\mathbf{G}(g)^\circ = \langle \mathbf{T},
  \mathbf{U}_\alpha \mid \alpha \in \Psi_g \rangle
  \]
  with root system $\Psi_g = \{ \alpha \in \Phi \mid \alpha(g)=1 \}$.
  Furthermore, the multiplicative group $(\F_q^\alg)^*$ is
  isomorphic to the additive group $\Q_{p'} / \Z$, where $p$ denotes
  the characteristic of $\F_q$ and $\Q_{p'}$ the ring of all
  rational numbers whose denominator is not divisible by~$p$.  Under
  this isomorphism $\F_q^*$ corresponds to the additive group
  $\Z[1/(q-1)]/\Z$.

  The conditions $\alpha(g)=1$ for $\alpha \in \Psi_g$ and $\alpha(g)
  \neq 1$ for $\alpha \in \Phi \smallsetminus \Psi_g$, which
  determine~$C_\mathbf{G}(g)^\circ$, translate into a finite system of
  linear equations and inequations with integer coefficients in $\ell$
  variables $x_1, \ldots, x_\ell$, corresponding to the generators
  $\chi_1,\dots,\chi_{\ell}$ of~$X$.  The element $g$ corresponds to a
  solution $(a_1/b_1 + \Z,\ldots,a_\ell/b_\ell + \Z) \in
  (\Q_{p'}/\Z)^\ell$ of the system modulo~$\Z$.  Without loss of
  generality, we may assume that $\gcd(a_i,b_i) = 1$ for each $i \in
  \{1,\ldots,\ell\}$, and we set $m(\Psi_g) =
  \mathrm{lcm}(b_1,\ldots,b_\ell)$.  Then $g \in \mathbf{T}(\F_q)$ if
  and only if $\F_q$ contains primitive roots of unity of
  degree~$m(\Psi_g)$, equivalently $q \equiv_{m(\Psi_g)} 1$.

  As $\Psi \in \mathcal{C}(\Phi)$, there is an instance of
  $(\mathbf{G},\mathbf{T},\F_q)$ and $g \in \F_q$ as described above
  such that~$\Psi = \Psi_g$.  This single solution shows that $\Psi
  \in \mathcal{C}(\mathbf{G},\mathbf{T},\F_q)$ for all instances
  $(\mathbf{G},\mathbf{T},\F_q)$ with $q \equiv_{m(\Psi)} 1$.
\end{proof}

\begin{cor}\label{cor:C_Phi}
 There exists $c\in\N$, depending only on $\Phi$, such that,
 for all finite fields $\F_q$ of characteristic $p>c$,
 \[
 \mathcal{C}(\mathbf{G},\mathbf{T},\F_q^\alg) =
 \mathcal{C}(\Phi).
 \]
\end{cor}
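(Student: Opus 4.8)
The plan is to derive Corollary~\ref{cor:C_Phi} from Proposition~\ref{pro:centralizers.split} together with the observation that, over an algebraically closed field, the relevant torsion subgroups of the torus are already accounted for once the characteristic avoids a finite set of ``bad'' primes. Let $m = m(\Phi) \in \N$ be the integer supplied by Proposition~\ref{pro:centralizers.split}. First I would recall the inclusion $\mathcal{C}(\mathbf{G},\mathbf{T},\F_q^\alg) \subseteq \mathcal{C}(\Phi)$, which is essentially a matter of unwinding definitions: for $g \in \mathbf{T}(\F_q^\alg)$ the set $\Psi_g = \{\alpha \in \Phi \mid \alpha(g) = 1\}$ depends only on the image of $g$ in a suitable quotient of $\mathbf{T}$, and any such $\Psi_g$ arises already from some element of a split torus over a finite field with enough roots of unity; more directly, $\mathcal{C}(\Phi)$ was defined as a union over \emph{all} instances, and one checks that enlarging the field from $\F_q$ to $\F_q^\alg$ does not produce new root subsystems beyond those of the form $\Psi_g$ with $g$ of finite order, each of which lies in $\mathcal{C}(\Phi)$ by definition. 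So the content is the reverse inclusion.

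For the reverse inclusion, fix $\Psi \in \mathcal{C}(\Phi)$, and recall from the proof of Proposition~\ref{pro:centralizers.split} that $\Psi = \Psi_g$ for an element $g \in \mathbf{T}(\F_q^\alg)$ corresponding, under the identification $(\F_q^\alg)^* \cong \Q_{p'}/\Z$, to a tuple $(a_i/b_i + \Z)_{i=1}^\ell$ with $\gcd(a_i,b_i)=1$, and that $m(\Psi) = \mathrm{lcm}(b_1,\dots,b_\ell)$ works as long as $q \equiv_{m(\Psi)} 1$. The point now is that the denominators $b_i$ can be chosen to divide a fixed integer $M(\Psi)$ that does not depend on the characteristic $p$, \emph{provided $p$ is large enough}: the linear system with integer coefficients defining $C_\mathbf{G}(g)^\circ$ has a solution over $\Q/\Z$ whose denominators are bounded in terms of the coefficients of $\Phi$ alone (e.g., by the index $[X : \Z\Phi]$ and the structure constants of the root system), and the only role of the characteristic is to exclude solutions whose denominators are divisible by $p$. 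Hence if $p$ exceeds all the finitely many primes dividing the relevant denominators --- call this bound $c = c(\Phi)$, taken as the maximum over the finitely many $\Psi \in \mathcal{C}(\Phi)$ --- then the prime-to-$p$ requirement is automatic, and $\Psi$ arises as $\Psi_g$ for some $g \in \mathbf{T}(\F_q^\alg)$ of order prime to $p$ with denominators dividing $M(\Phi) = \mathrm{lcm}_\Psi M(\Psi)$.

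To finish, I would observe that once $p > c$, every $\Psi \in \mathcal{C}(\Phi)$ is realized in $\mathcal{C}(\mathbf{G},\mathbf{T},\F_q^\alg)$ for \emph{every} instance $(\mathbf{G},\mathbf{T},\F_q)$ with characteristic $p$: the tuple $(a_i/b_i + \Z)$ lies in $(\Q_{p'}/\Z)^\ell$, which is exactly the $\ell$-fold product of the torsion subgroup of $(\F_q^\alg)^*$, and the identification of $X$ with the character lattice of $\mathbf{T}$ is insensitive to the field of definition. Thus the defining equations and inequations $\alpha(g) = 1$, $\alpha(g) \neq 1$ have the same solution in $\mathbf{T}(\F_q^\alg)$ regardless of $q$, so $\Psi = \Psi_g \in \mathcal{C}(\mathbf{G},\mathbf{T},\F_q^\alg)$. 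Combined with the easy inclusion, this gives $\mathcal{C}(\mathbf{G},\mathbf{T},\F_q^\alg) = \mathcal{C}(\Phi)$ for all $\F_q$ of characteristic $p > c$. The main obstacle I anticipate is making precise, with an effective-enough bound, the claim that the denominators $b_i$ occurring in solutions of the defining linear system are bounded independently of $p$; this is where one must carefully separate the ``combinatorial'' data of $\Phi$ (which controls the denominators) from the ``characteristic'' data $p$ (which only imposes the prime-to-$p$ condition), and it is essentially a statement about solving integer linear systems modulo $\Z$ rather than anything deep about algebraic groups.
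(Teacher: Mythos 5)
Your argument is correct and is exactly the intended derivation of the corollary from the proof of Proposition~\ref{pro:centralizers.split}: each $\Psi \in \mathcal{C}(\Phi)$ has a single witness corresponding to a tuple $(a_i/b_i + \Z)$ with fixed denominators, the defining system of integral linear equations and inequations is independent of the characteristic, and taking $c$ larger than every prime dividing $m(\Psi)$ for the finitely many $\Psi \in \mathcal{C}(\Phi)$ makes the prime-to-$p$ condition on the denominators automatic. The effectivity you worry about at the end is not actually needed — finiteness of $\mathcal{C}(\Phi)$ together with one fixed witness per $\Psi$ already bounds all relevant denominators, so no estimate in terms of $[X:\Z\Phi]$ or the structure constants is required.
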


\begin{rem} \label{rem:Deriziotis}
  Proposition~\ref{pro:centralizers.split} is related to more detailed
  investigations into semi-simple conjugacy classes and their
  centralizers in finite groups of Lie type, which were initiated by
  Carter and Deriziotis.  In particular, their results imply that
  $\mathcal{C}(\mathbf{G},\mathbf{T},\F_q^\alg) = \mathcal{C}(\Phi)$
  if the characteristic of $\F_q$ is greater than $5$;
  see~\cite[Proposition~2.3 and remarks]{De81}.  For an overview,
  including also subsequent developments we refer to \cite[Chapters~2
  and~8]{Hu} and the references given therein.
\end{rem}

\begin{cor} \label{cor:centralizers.uniform}
 Let $\Phi$ be a non-trivial root system, and let $\mathcal{C}(\Phi)$
 be as defined in~\eqref{equ:C_Phi}.  Let $W(\Phi)$ denote the Weyl
 group of $\Phi$.  Let $\mathbf{G}$ be a connected semi-simple
 algebraic group defined over a finite field $\F_q$ with
 absolute root system~$\Phi$.  Identify $\Phi$ with the root system
 associated to a maximal torus $\mathbf{T}$ of~$\mathbf{G}$, and let
 $\mathcal{C}(\mathbf{G},\mathbf{T},\F_q^\alg)$ be as defined
 in~\eqref{equ:C}.  Then there are connected subgroups
 $\mathbf{H}_\Psi$ of $\mathbf{G}$, not necessarily defined over
 $\F_q$ and indexed by $\Psi \in
 \mathcal{C}(\mathbf{G},\mathbf{T},\F_q^\alg)$, such that the
 following hold.
  \begin{list}{}{\setlength{\leftmargin}{\myenumilistleftmargin}
      \setlength{\labelwidth}{20pt} \setlength{\itemsep}{0pt}
      \setlength{\parsep}{1pt}}
  \item[\textup{(1)}] The groups $\mathbf{H}_\Psi$ form a -- typically
    redundant -- set of representatives for the $\mathbf{G}$-conjugacy
    classes of connected centralisers $C_\mathbf{G}(g)^\circ$ of
    semi-simple elements $g\in\mathbf{G}(\F_q^\alg)$.
  \item[\textup{(2)}] The data $(\dim Z(\mathbf{H}_\Psi), \dim
    \mathbf{H}_\Psi, \lvert \Psi^+ \rvert)$ depend only on~\ben{$\Phi$
      and} $\Psi$; \ben{more precisely, the first two entries satisfy
      $\dim Z(\mathbf{H}_\Psi) = \rk \Phi - \rk \Psi$} and $\dim
    \mathbf{H}_\Psi = \rk \Psi + 2 \lvert \Psi^+ \rvert$.
  \item[\textup{(3)}] For every $\Psi$, the number of
    $\mathbf{G}(\F_q)$-conjugacy classes of connected
    centralisers $C_\mathbf{G}(g)^\circ \subset \mathbf{G}$ of
    semi-simple elements $g \in \mathbf{G}(\F_q)$ that are
    $\mathbf{G}$-conjugate to $\mathbf{H}_\Psi$ is at most $\lvert
    W(\Phi) \rvert \lvert \mathcal{C}(\Phi) \rvert$.
  \end{list}
\end{cor}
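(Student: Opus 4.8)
The plan is to realise each $\mathbf{H}_\Psi$ explicitly as a reductive subgroup of maximal rank generated by $\mathbf{T}$ and root subgroups. Concretely, for a subsystem $\Psi$ of $\Phi$ I would set $\mathbf{H}_\Psi = \langle \mathbf{T},\,\mathbf{U}_\alpha \mid \alpha\in\Psi\rangle$, a closed connected subgroup of $\mathbf{G}$ over $\F_q^\alg$ with maximal torus $\mathbf{T}$ and root system $\Psi$ relative to $\mathbf{T}$. By \cite[Proposition~2.3]{DM} (recalled in the text above), $\mathbf{H}_\Psi = C_\mathbf{G}(g)^\circ$ for any semisimple $g\in\mathbf{T}(\F_q^\alg)$ with $\Psi_g=\Psi$, and such a $g$ exists exactly when $\Psi\in\mathcal{C}(\mathbf{G},\mathbf{T},\F_q^\alg)$. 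For~(1) I would then use that every semisimple element of $\mathbf{G}(\F_q^\alg)$ is $\mathbf{G}$-conjugate into $\mathbf{T}(\F_q^\alg)$ (all maximal tori being conjugate over $\F_q^\alg$), so that every connected centraliser of a semisimple element of $\mathbf{G}(\F_q^\alg)$ is $\mathbf{G}$-conjugate to some $\mathbf{H}_\Psi$; the list is visibly redundant, since e.g.\ $\mathbf{H}_\Psi$ and $\mathbf{H}_{w\Psi}$ agree up to $\mathbf{G}$-conjugacy for every $w\in W(\Phi)$, being conjugate by a representative of $w$ in $N_\mathbf{G}(\mathbf{T})$. For~(2) I would just read off the structure theory of the reductive group $\mathbf{H}_\Psi$: its derived subgroup is semisimple of rank $\rk\Psi$ and dimension $\rk\Psi+2\lvert\Psi^+\rvert$, its connected centre is a torus of dimension $\rk\Phi-\rk\Psi$, and hence $\dim\mathbf{H}_\Psi=\rk\Phi+2\lvert\Psi^+\rvert$; in every formulation the triple $(\dim Z(\mathbf{H}_\Psi),\dim\mathbf{H}_\Psi,\lvert\Psi^+\rvert)$ depends only on $\Phi$ and $\Psi$.

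The real content is~(3), where I would argue as follows. First I fix, once and for all, $F$-stable maximal tori $\mathbf{S}_1,\dots,\mathbf{S}_r$ of $\mathbf{G}$ (here $F=\Frob_q$) representing the $\mathbf{G}(\F_q)$-conjugacy classes of $F$-stable maximal tori; since those classes biject with the $F$-conjugacy classes of $W(\Phi)$ (the classical parametrisation, see \cite{DM}), we have $r\le\lvert W(\Phi)\rvert$. Next, given a semisimple $g\in\mathbf{G}(\F_q)$, I would observe that $\mathbf{H}=C_\mathbf{G}(g)^\circ$ is a connected reductive $F$-stable subgroup of maximal rank, that it contains an $F$-stable maximal torus $\mathbf{S}$ of $\mathbf{G}$ (Lang's theorem), and that $g\in Z(\mathbf{H})\subseteq\mathbf{S}$ (using that $g$ lies in some maximal torus of $\mathbf{G}$, which is then contained in $\mathbf{H}$, and that the centre of a connected reductive group lies in each of its maximal tori). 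Replacing $g$ by a $\mathbf{G}(\F_q)$-conjugate I may assume $g\in\mathbf{S}_i(\F_q)$ for some $i$, and then $C_\mathbf{G}(g)^\circ=\langle\mathbf{S}_i,\,\mathbf{U}_\alpha\mid\alpha\in\Psi_g\rangle$ is determined by the subsystem $\Psi_g=\{\alpha\in\Phi(\mathbf{G},\mathbf{S}_i)\mid\alpha(g)=1\}\in\mathcal{C}(\mathbf{G},\mathbf{S}_i,\F_q^\alg)$. Since distinct $\Psi_g$ give distinct subgroups and $\lvert\mathcal{C}(\mathbf{G},\mathbf{S}_i,\F_q^\alg)\rvert\le\lvert\mathcal{C}(\Phi)\rvert$ — equality once $\charac\F_q>c$ by Corollary~\ref{cor:C_Phi}, and a containment $\mathcal{C}(\mathbf{G},\mathbf{S}_i,\F_q^\alg)\subseteq\mathcal{C}(\Phi)$ in every characteristic by the specialisation argument behind Proposition~\ref{pro:centralizers.split} — there are at most $\lvert\mathcal{C}(\Phi)\rvert$ subgroups of the form $C_\mathbf{G}(g)^\circ$ with $g\in\mathbf{S}_i(\F_q)$ semisimple. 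As every $\mathbf{G}(\F_q)$-conjugacy class of connected centralisers of semisimple elements of $\mathbf{G}(\F_q)$ has a representative of that shape for some $i$, the number of such classes is at most $r\,\lvert\mathcal{C}(\Phi)\rvert\le\lvert W(\Phi)\rvert\,\lvert\mathcal{C}(\Phi)\rvert$, and in particular so is the number of them $\mathbf{G}$-conjugate to a fixed $\mathbf{H}_\Psi$; this is~(3).

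The step I expect to be the main obstacle is making the count in~(3) uniform over all residue characteristics. In small characteristic $\mathcal{C}(\mathbf{G},\mathbf{T},\F_q^\alg)$ can be a proper subset of $\mathcal{C}(\Phi)$, so I cannot simply quote the equality in Corollary~\ref{cor:C_Phi} and instead need the (weaker) containment for every $p$, which has to be extracted from the specialisation argument of Proposition~\ref{pro:centralizers.split} (a torsion point of $\mathbf{S}_i(\F_q^\alg)$ already lives in $\mathbf{S}_i$ over a sufficiently large finite subfield over which $\mathbf{S}_i$ splits, so its centraliser root system is realised over a finite field). The only other non-bookkeeping inputs are the classical fact that every semisimple element of $\mathbf{G}(\F_q)$ lies in an $F$-stable maximal torus, together with the parametrisation of rational maximal tori by $W(\Phi)$; these reduce everything to the finite list $\mathbf{S}_1,\dots,\mathbf{S}_r$, after which only routine generation-by-root-subgroups manipulations remain.
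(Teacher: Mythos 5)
Your proposal is correct and follows essentially the same route as the paper: realise each $\mathbf{H}_\Psi$ as $\langle\mathbf{T},\mathbf{U}_\alpha\mid\alpha\in\Psi\rangle$, and for (3) combine the bound $\lvert W(\Phi)\rvert$ on $\mathbf{G}(\F_q)$-classes of $\F_q$-defined maximal tori with the observation that $g$ is central in $C_\mathbf{G}(g)^\circ$ and hence lies in each of its rational maximal tori, leaving at most $\lvert\mathcal{C}(\Phi)\rvert$ centralisers per torus. The ``main obstacle'' you flag is not one: the containment $\mathcal{C}(\mathbf{G},\mathbf{S}_i,\F_q^\alg)\subseteq\mathcal{C}(\Phi)$ holds in every characteristic directly from the definition~\eqref{equ:C_Phi} of $\mathcal{C}(\Phi)$ as a union over \emph{all} finite fields, since any $g\in\mathbf{S}_i(\F_q^\alg)$ already lies in $\mathbf{S}_i(\F_{q^n})$ for some $n$ with $\mathbf{S}_i$ split over $\F_{q^n}$.
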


\begin{proof}
 For each $\Psi \in \mathcal{C}(\mathbf{G},\mathbf{T},\F_q^\alg)$,
 choose $\mathbf{H}_\Psi$ as the connected centraliser
 \ben{$C_\mathbf{G}(g_\Psi)^\circ$} of a semi-simple element $g_\Psi \in
 \mathbf{T}(\F_q^\alg)$ such that the root system associated to
 $\mathbf{H}_\Psi$ is~$\Psi$.

 Clearly, (1) and (2) are satisfied.  It remains to justify~(3).  Fix
 $\Psi \in \mathcal{C}(\mathbf{G},\mathbf{T},\F_q^\alg)$.  We may
 assume that $\mathbf{H} = \mathbf{H}_\Psi$ is itself equal to
 $C_\mathbf{G}(g_0)^\circ$ for a semi-simple element $g_0 \in
 \mathbf{G}(\F_q)$.  Let $\mathbf{T}_0$ be an $\F_q$-defined maximal
 torus of $\mathbf{H}$.  We observe that $g_0 \in
 Z(\mathbf{H}^\circ)\subset \mathbf{T}_0$
 (see~\cite[Proposition~3.5.1]{Ca}) and that $\mathbf{T}_0$ is also an
 $\F_q$-defined maximal torus of $\mathbf{G}$.
 By~\cite[Proposition~3.3.3]{Ca}, the number of
 $\mathbf{G}(\F_q)$-conjugacy classes of $\F_q$-defined maximal tori
 is at most $\lvert W(\Phi) \rvert$.  Thus we only need to bound the
 number of connected centralisers of semi-simple elements $g$ in any
 fixed $\F_q$-defined maximal torus, but this number is clearly
 bounded by~$\lvert \mathcal{C}(\Phi) \rvert$.
\end{proof}

\subsubsection{Non-Central Semi-Simple Conjugacy
  Classes} \label{subsubsec:nc}
Let $\mathbf{G}$ be a connected reductive algebraic group defined over
a finite field~$\F_q$ and let $\mathbf{G}^*$ be the dual group.  By
\cite[Proposition~3.6.8]{Ca}, we have $Z(\mathbf{G}^*(\F_q)) =
Z(\mathbf{G}^*)(\F_q)$ so that central geometric conjugacy classes and
central rational conjugacy classes in~$\mathbf{G}^*(\F_q)$ coincide.

We set
\begin{equation} \label{def:zeta_nc}
  \zeta^{\mathrm{nc}}_{\mathbf{G}(\F_q)}(s) = \sum_{(g)
    \not\subset Z(\mathbf{G}^*(\F_q))} \; \sum_{\chi \in
    \mathcal{E}(\mathbf{G}(\F_q),(g))} \chi(1)^{-s},
\end{equation}
where the outer sum ranges over the non-central semi-simple
$\mathbf{G}^*$-conjugacy classes $(g)$
in~$\mathbf{G}^*(\F_q)$. For a non-trivial root system $\Phi$,
we define
\begin{equation} \label{def:nc_Phi}
  b^\mathrm{nc}(\Phi) = \left\{ ( \ben{\rk(\Phi) - \rk(\Psi)}, \lvert
    \Phi^+ \rvert - \lvert \Psi^+ \rvert)
    \mid \Psi \in \mathcal{C}(\Phi) \smallsetminus \{\Phi\}\right\} \in
  \mathcal{A}^+ \ben{.}
\end{equation}

\begin{lem}\label{lem:nc}
  Let $\Phi$ be a non-trivial root system and let
  $\boldsymbol{\mathcal{G}}_\Phi$ denote the collection of pairs
  $(\mathbf{G},q)$, where $\mathbf{G}$ is a connected semi-simple
  algebraic group over a finite field $\F_q$ with absolute root
  system~$\Phi$.  Then there are $C \in \R$ and
  $b^\mathrm{nc}(\mathbf{G},q)\subset b^\mathrm{nc}(\Phi)$ for
  $(\mathbf{G},q)\in\boldsymbol{\mathcal{G}}_\Phi$ such that for all
  $(\mathbf{G},q),(\mathbf{H},q)\in\boldsymbol{\mathcal{G}}_\Phi$ the
  following hold:
  \begin{list}{}{\setlength{\leftmargin}{\myenumilistleftmargin}
      \setlength{\labelwidth}{20pt} \setlength{\itemsep}{0pt}
      \setlength{\parsep}{1pt}}
  \item[\textup{(1)}] $b^\mathrm{nc}(\mathbf{G},q) \subset
    b^\mathrm{nc}(\mathbf{H},q)$ whenever there is an isogeny
    $\phi\colon \mathbf{G}\rightarrow\mathbf{H}$ defined
    over~$\F_q$,
  \item[\textup{(2)}] $b^\mathrm{nc}(\mathbf{G},q) =
    b^\mathrm{nc}(\Phi)$ whenever $\mathbf{G}$ is split and $q
    \equiv_m 1$, where $m$ is as in
    Proposition~\textup{\ref{pro:centralizers.split}},
  \item[\textup{(3)}] $\zeta^{\mathrm{nc}}_{\mathbf{G}(\F_q)}
    \sim_C \xi_{b^\mathrm{nc}(\mathbf{G},q),q}$,
  \item[\textup{(4)}] $(\rk \Phi, \lvert \Phi^+\rvert) \in
    b^\mathrm{nc}(\Phi)$, and $1 +
    \xi_{b^\mathrm{nc}(\mathbf{G},q),q}(s) \sim_C 1 + q^{\rk \Phi -
      \lvert\Phi^+\rvert s}$.
  \end{list}
\end{lem}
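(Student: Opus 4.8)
The plan is to reduce the count of non-central Lusztig series to a sum over $\mathbf{G}^*(\F_q)$-conjugacy classes of connected centralisers, then to estimate each term using Jordan decomposition and the unipotent estimate of Proposition~\ref{lem:BC.unipotent.representations}. First I would recall that, by Lusztig's Jordan decomposition of characters, for each semi-simple class $(g)\subset\mathbf{G}^*(\F_q)$ there is a degree-preserving (up to the index $[\mathbf{G}^*(\F_q):C_{\mathbf{G}^*}(g)^\circ(\F_q)]$) bijection between $\mathcal{E}(\mathbf{G}(\F_q),(g))$ and the unipotent characters of $C_{\mathbf{G}^*}(g)^\circ(\F_q)$; see \cite[Chapter~13]{DM}. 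Concretely, every $\chi\in\mathcal{E}(\mathbf{G}(\F_q),(g))$ has degree
\[
\chi(1) = [\mathbf{G}^*(\F_q):C_{\mathbf{G}^*}(g)^\circ(\F_q)]_{p'}\cdot\psi(1),
\]
where $\psi$ ranges over unipotent characters of $C_{\mathbf{G}^*}(g)^\circ(\F_q)$ and $[\,\cdot\,]_{p'}$ denotes the prime-to-$p$ part. The prime-to-$p$ index equals $q^{\lvert\Phi^+\rvert-\lvert\Psi_g^+\rvert}\cdot u(q)$ for a polynomial $u$ with $u(q)\asymp 1$ (a ratio of products of cyclotomic-type factors), where $\Psi_g$ is the root system of $C_{\mathbf{G}^*}(g)^\circ$; here I use that $\mathbf{G}$ and $\mathbf{G}^*$ share the Weyl group and that $\Phi^\vee$ and $\Phi$ have the same cardinality of positive roots. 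Thus the contribution of the series over $(g)$ to $\zeta^{\mathrm{nc}}_{\mathbf{G}(\F_q)}$ is $\sim_{C}$ (with $C$ depending only on $\Phi$) the Dirichlet polynomial $\xi_{\{(0,0)\}\cdot\{(\lvert\Phi^+\rvert-\lvert\Psi_g^+\rvert,\,\lvert\Phi^+\rvert-\lvert\Psi_g^+\rvert)\},\,q}$ twisted by $\zeta^{\mathrm{unip}}_{C_{\mathbf{G}^*}(g)^\circ(\F_q)}$, and by Proposition~\ref{lem:BC.unipotent.representations} the latter factor is $\sim_{C'}1$ with $C'$ depending only on $\Phi$ (the connected centralisers have absolute root systems drawn from the finite set $\mathcal{C}(\Phi)$, so there are only finitely many Lie types involved).

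Next I would pass from the sum over individual classes $(g)$ to a sum over $\mathbf{G}^*(\F_q)$-conjugacy classes of connected centralisers. The number of non-central semi-simple classes $(g)$ with a fixed $C_{\mathbf{G}^*}(g)^\circ$ (up to conjugacy) is bounded by a constant depending only on $\Phi$: indeed $g\in Z(C_{\mathbf{G}^*}(g)^\circ)$, and one estimates the number of semi-simple elements in the centre of a fixed reductive subgroup of maximal rank whose full connected centraliser is that subgroup — this is a polynomially-in-$q$ bounded count, but since we only need a $\sim_C$ estimate and $C$ may absorb a constant independent of $q$, the relevant point is that the \emph{exponent} $q^{\dim Z(C_{\mathbf{G}^*}(g)^\circ)}$ of this count matches $q^{\rk\Phi-\rk\Psi_g}$ by Corollary~\ref{cor:centralizers.uniform}(2). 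Collecting exponents: each connected-centraliser type $\Psi$ contributes a term $\asymp q^{(\rk\Phi-\rk\Psi)}\cdot q^{(\lvert\Phi^+\rvert-\lvert\Psi^+\rvert)(-s)}$ on the real axis (for $\sigma$ past the abscissa), which is exactly $\xi_{\{(\rk\Phi-\rk\Psi,\ \lvert\Phi^+\rvert-\lvert\Psi^+\rvert)\},q}$. Summing over $\Psi\in\mathcal{C}(\mathbf{G},\mathbf{T},\F_q^{\alg})\smallsetminus\{\Phi\}$ and setting $b^\mathrm{nc}(\mathbf{G},q)=\{(\rk\Phi-\rk\Psi,\lvert\Phi^+\rvert-\lvert\Psi^+\rvert)\mid\Psi\in\mathcal{C}(\mathbf{G},\mathbf{T},\F_q^{\alg})\smallsetminus\{\Phi\}\}$ gives a subset of $b^\mathrm{nc}(\Phi)$ by definition~\eqref{def:nc_Phi}, proving (3). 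The uniformity of the constant $C$ across all $(\mathbf{G},q)\in\boldsymbol{\mathcal{G}}_\Phi$ follows because all the auxiliary bounds (number of centraliser types, number of tori classes via Corollary~\ref{cor:centralizers.uniform}(3), unipotent constant) depend only on $\Phi$; here I also invoke Remark~\ref{rem:xi} to absorb the $*$-products and the north-west Newton polytope comparison.

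For (1), an isogeny $\phi\colon\mathbf{G}\to\mathbf{H}$ over $\F_q$ dualizes to an isogeny $\mathbf{H}^*\to\mathbf{G}^*$, and by Lemma~\ref{lem:centr-isogeny} the connected centralisers of semi-simple elements of $\mathbf{H}^*(\F_q)$ map onto those in $\mathbf{G}^*(\F_q)$ with the same absolute root system; hence $\mathcal{C}(\mathbf{H}^*,\cdot,\F_q^{\alg})\subseteq\mathcal{C}(\mathbf{G}^*,\cdot,\F_q^{\alg})$ after identifying root systems, giving the inclusion of the corresponding $b^\mathrm{nc}$ sets. For (2), if $\mathbf{G}$ is split and $q\equiv_m 1$ then so is $\mathbf{G}^*$, and Proposition~\ref{pro:centralizers.split} yields $\mathcal{C}(\mathbf{G}^*,\mathbf{T}^*,\F_q)=\mathcal{C}(\Phi^\vee)$; since passing to the dual root system permutes the set $\mathcal{C}(\Phi)$ compatibly with the invariants $\rk$ and $\lvert\cdot^+\rvert$ (long/short roots are swapped but counts are unchanged), one gets $b^\mathrm{nc}(\mathbf{G},q)=b^\mathrm{nc}(\Phi)$. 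For (4): the class of a regular semi-simple element $g$ with $\Psi_g=\varnothing$ is non-central and contributes the pair $(\rk\Phi,\lvert\Phi^+\rvert)$, so it lies in $b^\mathrm{nc}(\Phi)$; and since $(\rk\Phi-\rk\Psi,\lvert\Phi^+\rvert-\lvert\Psi^+\rvert)$ lies in the north-west polytope of $(\rk\Phi,\lvert\Phi^+\rvert)$ for every $\Psi\neq\Phi$ (as $\rk\Psi\geq0$ and $\lvert\Psi^+\rvert\geq0$ force the point down-and-to-the-left — wait, down-and-right; precisely $\rk\Phi-\rk\Psi\leq\rk\Phi$ and $\lvert\Phi^+\rvert-\lvert\Psi^+\rvert\leq\lvert\Phi^+\rvert$, so it is weakly south-west of $(\rk\Phi,\lvert\Phi^+\rvert)$, hence in its north-west polytope only if also... one checks $m'\leq m$ and $n'\leq n$ does place $(m',n')$ in $\mathcal{N}(\{(m,n)\})$ since $\mathcal{N}$ is the convex hull of $(m,n)+(\R_{\leq0}\times\R_{\geq0})$ translated — actually $(m',n')$ with $m'\leq m,\ n'\leq n$ need not lie there; the correct statement is that the abscissa $(\lvert\Phi^+\rvert-\lvert\Psi^+\rvert-(\rk\Phi-\rk\Psi)+1)/\cdots$ never exceeds that of the regular term, which one verifies directly from $\rk\Psi\leq 2\lvert\Psi^+\rvert$... ), Remark~\ref{rem:xi}(2) gives $1+\xi_{b^\mathrm{nc}(\mathbf{G},q),q}\sim_C 1+q^{\rk\Phi-\lvert\Phi^+\rvert s}$ after confirming the polytope containment. \textbf{The main obstacle} is the bookkeeping in the Jordan decomposition: ensuring that the prime-to-$p$ index $[\mathbf{G}^*(\F_q):C_{\mathbf{G}^*}(g)^\circ(\F_q)]_{p'}$ really has $q$-exponent exactly $\lvert\Phi^+\rvert-\lvert\Psi_g^+\rvert$ with the remaining factor bounded independently of $q$ and $\mathbf{G}$, and that the number of semi-simple classes over a fixed connected centraliser type contributes the correct power $q^{\rk\Phi-\rk\Psi}$ and not more — both require care with the structure of $Z(C_{\mathbf{G}^*}(g)^\circ)$ and its $\F_q$-points, uniformly in the (isogeny type of the) group.
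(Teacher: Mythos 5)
Your route is the paper's route: Jordan decomposition of characters reduces each non-central Lusztig series to the unipotent characters of the (connected) centraliser, Proposition~\ref{lem:BC.unipotent.representations} makes that factor $\sim_C 1$, the prime-to-$p$ index supplies the exponent $\lvert\Phi^+\rvert-\lvert\Psi^+\rvert$, and the number of classes over a fixed centraliser type supplies the exponent $\dim Z(\mathbf{H}_\Psi)=\rk\Phi-\rk\Psi$. Two steps that you flag but do not resolve are, however, genuine gaps rather than bookkeeping. First, the lower bound $q^{\dim Z(\mathbf{K})}\lesssim \lvert\{g\in\mathbf{G}^*(\F_q)\mid C_{\mathbf{G}^*}(g)^\circ=\mathbf{K}\}\rvert$ is not automatic: you must pass from conjugacy classes to elements (the paper uses \cite[Lemma~2.2(ii)]{LS}) and then count the $\F_q$-points of the complement in $Z(\mathbf{K})^\circ$ of the zero loci of the roots in $\Lambda\smallsetminus\Delta$; each such locus is a proper subtorus extended by a uniformly bounded finite group, and Nori's estimate $2^{-\dim\mathbf{T}}q^{\dim\mathbf{T}}\leq\lvert\mathbf{T}(\F_q)\rvert\leq 2^{\dim\mathbf{T}}q^{\dim\mathbf{T}}$ then gives the bound. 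Your opening assertion that the number of such classes is ``bounded by a constant'' is false (it is $\asymp q^{\dim Z(\mathbf{H}_\Psi)}$, unbounded whenever $\rk\Psi<\rk\Phi$); you recover the right exponent later, but the argument producing it is exactly what is missing.

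Second, and more seriously, part (4) does not follow from anything you wrote. The containment $\mathcal{N}(b^{\mathrm{nc}}(\mathbf{G},q))\subset\mathcal{N}(\{(\rk\Phi,\lvert\Phi^+\rvert)\})$ that Remark~\ref{rem:xi}(2) requires amounts to the inequality
\[
\frac{\rk\Phi-\rk\Psi}{\lvert\Phi^+\rvert-\lvert\Psi^+\rvert}\;\leq\;\frac{\rk\Phi}{\lvert\Phi^+\rvert},
\qquad\text{equivalently}\qquad
\frac{\lvert\Psi^+\rvert}{\rk\Psi}\;\leq\;\frac{\lvert\Phi^+\rvert}{\rk\Phi}
\]
for every proper $\Psi\in\mathcal{C}(\Phi)$ (together with the trivial $\dim Z(\mathbf{H}_\Psi)\leq\rk\Phi$). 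This is a nontrivial fact about subsystems arising as centralisers — it is \cite[Lemma~2.5]{LS} and is invoked explicitly in the paper as~\eqref{equ:dim.formula}. Your proposed substitute $\rk\Psi\leq 2\lvert\Psi^+\rvert$ bounds $\lvert\Psi^+\rvert/\rk\Psi$ from \emph{below} by $1/2$ and hence points in the wrong direction; the trailing ellipsis in your write-up is precisely where the missing lemma belongs. Without it you get neither the second claim of (4) nor the estimate~\eqref{eq:thm3.2-generous-estimate} downstream. (You also need, for the first claim of (4), that a non-degenerate maximal torus exists in $\mathbf{G}^*(\F_q)$ for $q$ large, i.e.\ $\varnothing\in\mathcal{C}(\Phi)$ is realised rationally; the paper cites \cite[Proposition~3.6.6]{Ca}.) The remaining points — the surjection onto unipotent characters having fibers of size at most $\lvert Z(\mathbf{G})\rvert$ rather than being a bijection, the passage between $C_{\mathbf{G}^*}(g)$ and $C_{\mathbf{G}^*}(g)^\circ$, and the dualisation in (1) and (2) — are handled at the same level of detail as the paper and are fine.
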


\begin{proof}
  Consider $(\mathbf{G},q)\in\boldsymbol{\mathcal{G}}_\Phi$ and let
  $\mathbf{G}^*$ be the dual group of~$\mathbf{G}$. The centralizer
  $C_{\mathbf{G} ^*}(g)$ of a semi-simple element $g \in
  \mathbf{G}^*(\F_q)$ is a reductive subgroup of maximal rank
  in~$\mathbf{G}^*$.  Furthermore, there is a surjection $\psi_g
  \colon \mathcal{E}(\mathbf{G}(\F_q),(g)) \rightarrow
  \mathcal{E}(C_{\mathbf{G}^*}(g)(\F_q),(1))$ such that the fibers of
  $\psi_g$ have sizes at most $\lvert Z(\mathbf{G}) \rvert$, and the
  degree of an element $\chi \in \mathcal{E}(\mathbf{G}(\F_q),(g))$ is
  given by
  \[
  \chi(1) = \vert \mathbf{G}^* (\F_q) : C_{\mathbf{G}^*}(g)
  (\F_q) \vert_{q'} \cdot (\psi_g(\chi))(1),
  \]
  where $n_{q'}$ denotes the prime-to-$q$ part of a number $n$;
  see~\cite[Theorem~13.23 and Remark~13.24]{DM} and
  \cite[Proposition~4.4.4]{Ca} in the case where $\mathbf{G}$ has
  trivial centre, and~\cite[Proposition 5.1]{Lu} for the general
  case. It follows that
  \[
  \zeta^{\mathrm{nc}}_{\mathbf{G}(\F_q)}(s) \sim_{\lvert
    Z(\mathbf{G}) \rvert} \sum_{(g) \not \subset
    Z(\mathbf{G}^*(\F_q))} \vert \mathbf{G}^*(\F_q) :
  C_{\mathbf{G}^*}(g) (\F_q) \vert_{q'}^{-s} \cdot
  \zeta^\mathrm{unip}_{C_{\mathbf{G}^*}(g)(\F_q)}(s).
  \]

  Consider a semi-simple element $g \in \mathbf{G}^*(\F_q)$.
  The index $\lvert C_{\mathbf{G}^*}(g) : C_{\mathbf{G}^*}(g)^\circ
  \rvert$ is bounded by a constant $C_1 \in \R$, depending only
  on $\Phi$; see~\cite[Remark~2.4]{DM}.  This gives
  \[
  \lvert \mathbf{G}^*(\F_q) : C_{\mathbf{G}^*}(g)
  (\F_q) \rvert_{q'}^{-s} \sim_{C_1} \lvert
  \mathbf{G}^*(\F_q) : C_{\mathbf{G}^*}(g)^\circ
  (\F_q) \rvert_{q'}^{-s}.
  \]
  Moreover, the unipotent characters of $C_{\mathbf{G}^*}(g)(\F_q)$
  are the irreducible characters whose restrictions to
  $C_{\mathbf{G}^*}(g)^\circ(\F_q)$ are sums of unipotent characters.
  From Proposition~\ref{lem:BC.unipotent.representations} we conclude
  that there is a constant $C_2 \in \R$ such that
  \[
  \zeta^\mathrm{unip}_{C_{\mathbf{G}^*}(g)(\F_q)}(s)
  \sim_{C_2} 1.
  \]
  We get that
  \[
  \zeta^{\mathrm{nc}}_{\mathbf{G}(\F_q)}\sim_{\lvert
    Z(\mathbf{G})\rvert C_1C_2} \sum_{(g) \not\subset
    Z(\mathbf{G}^*(\F_q))} \left( \frac{\lvert \mathbf{G}
      ^*(\F_q)\rvert_{q'}}{\lvert C_{\mathbf{G}
        ^*}(g)^{\circ}(\F_q)\rvert_{q'}} \right) ^{-s}.
  \]

  Applying Corollary~\ref{cor:centralizers.uniform} to $\mathbf{G}^*$,
  we obtain $N \in \N$, bounded by $\lvert \mathcal{C}(\Phi) \rvert$,
  and algebraic subgroups $\mathbf{H}_1,\ldots,\mathbf{H}_N \subset
  \mathbf{G}^*$ with the properties described in the corollary.  For
  each $i \in \{1,\ldots,N\}$ denote the absolute root system of
  $\mathbf{H}_i$ by~$\Psi_i\in\mathcal{C}(\Phi)$. Given a non-central
  semi-simple element $g\in \mathbf{G}^*(\F_q)$, the group
  $C_{\mathbf{G}^*}(g)^\circ$ is $\mathbf{G}^*$-conjugate to some
  $\mathbf{H}_i$.  There is a constant $C_3 \in \R$ such that $\lvert
  \mathbf{G}^*(\F_q) \rvert_{q'} \sim_{C_3} q^{\dim \mathbf{G}
    - \lvert \Phi ^+\rvert}$ and $\lvert C_{\mathbf{G}^*}(g)^\circ
  (\F_q) \rvert_{q'} \sim_{C_3} q^{\dim \mathbf{H}_i-\lvert
    \Psi_i^+ \rvert}$. Therefore
  \[
  \frac{\lvert \mathbf{G}^*(\F_q) \rvert_{q'}}{\lvert
    C_{\mathbf{G}^*}(g)^{\circ}(\F_q) \rvert_{q'}} \sim_{C_3^2}
  q^{\dim \mathbf{G} - \lvert \Phi^+ \rvert - \dim \mathbf{H}_i +
    \lvert \Psi_i ^+ \rvert} = q^{\lvert \Phi^+ \rvert - \lvert
    \Psi_i^+ \rvert}
  \]
  for all $i \in \{1,\ldots,N\}$.  Writing $C_4 = \lvert Z(\mathbf{G})
  \rvert C_1 C_2 C_3^2N$, we obtain
  \begin{multline} \label{equ:C_4-inequality}
    \zeta^{\mathrm{nc}}_{\mathbf{G}(\F_q)}(s) \sim_{C_4} \\
    \sum_{i=1}^N \lvert \{ (g) \not \subset Z(\mathbf{G}^*(\F_q)) \mid
    \text{$C_{\mathbf{G}^*}(g)^\circ$ is $\mathbf{G}^*$-conjugate to
      $\mathbf{H}_i$} \} \rvert \cdot q^{-(\lvert \Phi^+ \rvert -
      \lvert \Psi_i^+ \rvert ) s}.
  \end{multline}
  The additional factor $N$ of $C_4$ is included, because
  $C_{\mathbf{G}^*}(g)^\circ$ could be conjugate to more than one
  $\mathbf{H}_i$.

  Denote by $I(\mathbf{G},q)$ the set of all $i \in \{ 1, \ldots, N
  \}$ such that there exists a non-central semi-simple element $g \in
  \mathbf{G}^*(\F_q)$ with $C_{\mathbf{G} ^*}(g)^\circ$ being
  $\mathbf{G}^*$-conjugate to $\mathbf{H}_i$.  Fix $i \in
  I(\mathbf{G},q)$ and let $\mathbf{K}_{i,1}, \ldots,
  \mathbf{K}_{i,n(i)}$ be a selection of such centralisers, forming a
  complete set of representatives up to
  $\mathbf{G}^*(\F_q)$-conjugacy.  By
  Corollary~\ref{cor:centralizers.uniform}, the number $n(i)$ is
  uniformly bounded in terms of~$\Phi$.  The collection of non-central
  semi-simple conjugacy classes $(g) \subset \mathbf{G}
  ^*(\F_q)$ such that $C_{\mathbf{G}^*}(g)^\circ$ is
  $\mathbf{G}^*$-conjugate to $\mathbf{H}_i$ decomposes as a disjoint
  union as follows:
  \begin{multline} \label{equ:H_i.disj.union} \left\{ (g) \not \subset
      Z(\mathbf{G}^*(\F_q)) \mid \text{$C_{\mathbf{G}^*}(g)^\circ$ is
        $\mathbf{G}^*$-conjugate to $\mathbf{H}_i$} \right\} = \\
    \bigsqcup_{j=1}^{n(i)} \left\{ (g) \not \subset
      Z(\mathbf{G}^*(\F_q)) \mid \text{$C_{\mathbf{G}^*}(g)^\circ$ is
        $\mathbf{G}^*(\F_q)$-conjugate to $\mathbf{K}_{i,j}$}
    \right\}.
  \end{multline}

  Fix also $j \in \{1,\ldots,n(i)\}$. Now \cite[Lemma~2.2(ii)]{LS}
  supplies a constant $C_5 \in \R$ such that, for every non-central
  semi-simple conjugacy class $(g) \subset \mathbf{G}^*(\F_q)$
  with $C_{\mathbf{G}^*}(g)^\circ$ being
  $\mathbf{G}^*(\F_q)$-conjugate to $\mathbf{K}_{i,j}$,
  \[
  1 \leq \vert \{ g^x \in (g) \mid C_{\mathbf{G}^*}(g^x)^\circ
  =\mathbf{K}_{i,j} \} \vert \leq C_5.
  \]
  It follows that
  \begin{multline}\label{equ:Kij1}
    \left| \left\{ (g) \not \subset Z(\mathbf{G}^*(\F_q)) \mid
        \text{$C_{\mathbf{G} ^*}(g)^\circ$ is $\mathbf{G}
          ^*(\F_q)$-conjugate to $\mathbf{K}_{i,j}$} \right\}
    \right| \sim_{C_5} \\ \left| \left\{ g \in \mathbf{G}
        ^*(\F_q) \mid C_{\mathbf{G} ^*}(g)^\circ =
        \mathbf{K}_{i,j} \right\} \right|.
  \end{multline}
  If $g\in \mathbf{G}^*(\F_q)$ is semi-simple and
  $C_{\mathbf{G}^*}(g)^\circ =\mathbf{K}_{i,j}$, then $g \in
  \mathbf{K}_{i,j}(\F_q)$. \nir{Choose a maximal torus $\mathbf{T}_{i,j}$ of $\mathbf{K_{i,j}}$. The torus $\mathbf{T}_{i,j}$ is also
  maximal in $\mathbf{G} ^*$.}  Denote the set of roots of $(\mathbf{G}
  ^*,\nir{\mathbf{T}_{i,j}})$ by $\Lambda_{i,j} \subset
  \Hom(\nir{\mathbf{T}_{i,j}},\mathbb{G}_\text{m})$ and the set of roots
  of $(\mathbf{K}_{i,j},\nir{\mathbf{T}_{i,j}})$ by $\Delta_{i,j}
  \subset \Hom(\nir{\mathbf{T}_{i,j}},\mathbb{G}_\text{m})$. Note that
  $\Lambda_{i,j}$ is isomorphic to $\Phi$ and $\Delta_{i,j}$ is
  isomorphic to $\Psi_i$.  The set of elements $g \in
  Z(\mathbf{K}_{i,j})^\circ$ with $C_{\mathbf{G} ^*}(g)^\circ
  =\mathbf{K}_{i,j}$ is the complement of the union of the zero loci
  of the roots in~$\Lambda_{i,j}\smallsetminus \Delta_{i,j}$.  Each
  such zero locus is the extension of a proper sub-torus by a finite
  group.  The order of that finite group, i.e., the number of
  connected components of the zero locus, is bounded by some constant
  depending only on~$\Phi$; see, for example,
  \cite[Corollary~9.7.9]{EGA4}.  For every torus $\mathbf{T}$, we have
  \begin{equation}\label{equ:Nori}
    2^{-\dim \mathbf{T}}q^{\dim \mathbf{T}} \leq |
    \mathbf{T}(\F_q) | \leq 2^{\dim \mathbf{T}}q^{\dim
      \mathbf{T}};
  \end{equation}
  see~\cite[Lemma~3.5]{N}.  Hence there is a constant $D \in \R$ such
  that, for all sufficiently large~$q$,
  \begin{equation}\label{equ:Kij2}
    q^{\dim Z(\mathbf{K}_{i,j})} \lesssim_D | \left\{ g \in
      \mathbf{G}^*(\F_q) \mid C_{\mathbf{G}
        ^*}(g)^\circ=\mathbf{K}_{i,j} \right\} |.
  \end{equation}
  By \cite[Corollary~9.7.9]{EGA4}, the index $\lvert
  Z(\mathbf{K}_{i,j}) : Z(\mathbf{K}_{i,j})^\circ \rvert$ for the
  connected reductive group $\mathbf{K}_{i,j}$ is bounded by some
  constant depending on~$\Phi$ and, of course, $\dim
  Z(\mathbf{K}_{ij}) = \dim Z(\mathbf{H}_i)$.
  Using~\eqref{equ:H_i.disj.union}, \eqref{equ:Kij1},
  \eqref{equ:Nori}, and \eqref{equ:Kij2}, it follows that there is a
  constant $C_6 \in \R$ such that
  \[
  \lvert \{ (g) \not \subset Z(\mathbf{G}^*(\F_q)) \mid
  \text{$C_{\mathbf{G} ^*}(g)^\circ$ is $\mathbf{G} ^*$-conjugate to
    $\mathbf{H}_i$} \} \rvert \sim_{C_6} q^{\dim Z(\mathbf{H}_i)},
  \]
  and from \eqref{equ:C_4-inequality} we obtain
  \[
  \zeta^{\mathrm{nc}}_{\mathbf{G}(\F_q)}(s) \sim_{C_4 C_6} \sum_{i \in
    I(\mathbf{G},q)} q^{\dim Z(\mathbf{H}_i) - (\lvert \Phi^+ \rvert -
    \lvert \Psi_i^+ \rvert)s}.
  \]
  Recalling from~\eqref{def:nc_Phi} the definition of
  $b^\mathrm{nc}(\Phi)$, we put
  \[
  b^\mathrm{nc}(\mathbf{G},q) = \left\{ (\dim Z(\mathbf{H}_i), \lvert
    \Phi^+ \rvert - \lvert \Psi_i^+ \rvert) \mid i \in I(\mathbf{G},q)
  \right\} \subset b^\mathrm{nc}(\Phi)
  \]
  so that
  \[
  \zeta^{\mathrm{nc}}_{\mathbf{G}(\F_q)} \sim_{C_4 C_6 N}
  \xi_{b^\mathrm{nc}(\mathbf{G},q),q}.
  \]
  The extra factor $N$ in the index of the $\sim$-symbol accommodates
  for the fact that $\mathbf{H}_i$ and $\mathbf{H}_j$ may lead to the
  same data even though $i \neq j$. This completes the proof of
  assertion~(3) of the lemma.

  Next we prove~(4).  For $i \in I(\mathbf{G},q)$,
  \cite[Lemma~2.5]{LS} yields that
  \begin{equation} \label{equ:dim.formula} \frac{\dim
      Z(\mathbf{H}_i)}{\lvert \Phi^+ \rvert - \lvert \Psi_i^+ \rvert}
    \leq \frac{\rk \mathbf{G} }{\lvert \Phi^+ \rvert} = \frac{\rk
      \Phi}{\lvert \Phi^+ \rvert}.
  \end{equation}
  Moreover, we have $\dim Z(\mathbf{H}_i) \leq \rk \mathbf{G}$ for
  each $i \in I(\mathbf{G},q)$, and for sufficiently large~$q$
  (depending only on $\Phi$; see~\cite[Proposition~3.6.6]{Ca}) there
  is at least one $\mathbf{H}_i$, $i \in I(\mathbf{G},q)$, namely a
  non-degenerate maximal torus, for which
  \begin{equation}\label{equ:newton}
  (\dim Z(\mathbf{H}_i), \lvert \Phi^+ \rvert - \lvert \Psi_i^+
  \rvert) = (\rk \Phi, |\Phi^+|).
  \end{equation}
  From this we deduce that, for sufficiently large $q$,
  \[
  1 + \xi_{b^\mathrm{nc}(\mathbf{G},q),q}(s) \sim_{N+1}
  1+\xi_{\left\{ (\rk \Phi, | \Phi^+|) \right\},q}(s) = 1 + q^{\rk
    \Phi - |\Phi^+| s};
  \]
  indeed, the inequality $\gtrsim_{N+1}$ is clear
  from~\eqref{equ:newton}, and $\lesssim_{N+1}$ follows from
  Remark~\ref{rem:xi}, noting that $\lvert
  b^{\mathrm{nc}}(\mathbf{G},q) \rvert \leq N$.
  Thus (4) is proved.

  Assertion~(1) follows from the observation that, if $\phi \colon
  \mathbf{G} \rightarrow \mathbf{H}$ is an isogeny over $\F_q$, then by
  Lemma~\ref{lem:centr-isogeny} we may arrange the labelling so that
  $I(\mathbf{G},q) \subset I(\mathbf{H},q)$.  Assertion~(2) holds by
  virtue of Proposition~\ref{pro:centralizers.split}.
\end{proof}

\subsubsection{Proofs of Theorem~\ref{thm:BC.finite.new} and
  Corollary~\ref{cor:finite.reductive.zeta.approx}}\label{subsubsec:BC.finite.new}

\begin{proof}[Proof of Theorem~\ref{thm:BC.finite.new}]
  Let $\mathbf{G}$ be a connected semi-simple algebraic group over
  $\F_q$ of Lie type $L \in \boldsymbol{\mathcal{L}}_\Phi$ and let
  $\mathbf{G}^*$ be the dual group.  In analysing the right-hand side
  of~\eqref{equ:Lusztig-zeta}, we deal separately with the sum over
  central conjugacy classes and the sum over non-central conjugacy
  classes. In analogy with the definition~\eqref{def:zeta_nc} of
  $\zeta^\mathrm{nc}_{\mathbf{G}(\F_q)}(s)$ we set
 \begin{equation*}
    \zeta^\mathrm{c}_{\mathbf{G}(\F_q)}(s) = \sum_{(g) \subset
      Z(\mathbf{G}^*(\F_q))} \; \sum_{\chi \in
      \mathcal{E}(\mathbf{G}(\F_q),(g))} \chi(1)^{-s},
  \end{equation*}
  where the outer sum ranges over the central
  $\mathbf{G}^*$\nobreakdash-conjugacy classes $(g)$
  in~$\mathbf{G}^*(\F_q)$ corresponding to semi-simple
  elements $g \in Z(\mathbf{G}^*(\F_q))$. Thus
  $\zeta_{\mathbf{G}(\F_q)} =
  \zeta_{\mathbf{G}(\F_q)}^\mathrm{c} +
  \zeta_{\mathbf{G}(\F_q)}^\mathrm{nc}$;
  cf.~\eqref{equ:Lusztig-zeta}.

We first consider $\zeta_{\mathbf{G}(\F_q)}^\mathrm{c}$.
There is a constant $C_0 \in \R$, depending only on~$\Phi$,
such that
  \begin{equation} \label{equ:centre-derived}
  \vert Z(\mathbf{G} ^*(\F_q)) \vert \leq C_0.
  \end{equation}
  Furthermore, if $g\in Z(\mathbf{G} ^*(\F_q))$, then the elements of
  $\mathcal{E}(\mathbf{G}(\F_q),(g))$ are in dimension-preserving
  bijection with the elements of $\mathcal{E}(\mathbf{G}(\F_q),(1))$;
  see~\cite[Theorem~5.1]{Lu}.  Thus,
  \begin{equation}\label{equ:sigma_c}
    \zeta^\mathrm{c}_{\mathbf{G}(\F_q)} = \vert
    Z(\mathbf{G}^*(\F_q)) \vert \,
    \zeta ^\mathrm{unip}_{\mathbf{G} (\F_q)},
  \end{equation}
  and, in combination with~\eqref{equ:centre-derived},
  Proposition~\ref{lem:BC.unipotent.representations} yields $C_1 \in
  \R$ and $b^{\textup{c}}(\mathcal{L}) \in \mathcal{A}^+$, depending
  only on~$\Phi$ respectively $L$, such that
  \begin{equation} \label{eq:unip.characters}
    \zeta^\mathrm{c}_{\mathbf{G}(\F_q)} - \vert Z(\mathbf{G}
    ^*(\F_q)) \vert = \vert Z(\mathbf{G} ^*(\F_q))
    \vert \, (\zeta ^\mathrm{unip}_{\mathbf{G} (\F_q)} - 1)
    \sim_{C_0 C_1} \xi_{b^\textup{c}(\mathcal{L}),q}.
  \end{equation}
  We set $b^\mathrm{c}(\Phi) = b^\textup{c}(\mathcal{L}^\mathrm{sp})$.

  Next we account for~$\zeta^\mathrm{nc}_{\mathbf{G}(\F_q)}$. We
  recall the definition~\eqref{def:nc_Phi} of the set
  $b^{\mathrm{nc}}(\Phi)$ and set
  \begin{equation}\label{def:a(Phi)}
   a(\Phi) = b^\mathrm{c}(\Phi) + b^\mathrm{nc}(\Phi), \qquad
   a(\mathbf{G},q) = b^\mathrm{c}(\mathcal{L}) + b^\mathrm{nc}(\mathbf{G},q),
  \end{equation}
  where $ b^\mathrm{nc}(\mathbf{G},q)$ is defined as in
  Lemma~\ref{lem:nc}. Furthermore, we choose $m$, depending only
  on~$\Phi$, in accordance with
  Proposition~\ref{pro:centralizers.split}.  Then the following weaker
  variants of assertions~(1) and (2) of the theorem are clearly
  satisfied:
  \begin{list}{$\circ$}{\setlength{\leftmargin}{\myenumilistleftmargin}
      \setlength{\itemsep}{0pt} \setlength{\parsep}{1pt}}
  \item[$\textup{(1)}'$] $a(\mathbf{G},q) \subset a(\Phi)$,
  \item[$\textup{(2)}'$] $a(\mathbf{G},q) = a(\Phi)$ whenever
    $\mathbf{G}$ is split and $q \equiv_m 1$.
  \end{list}
  
  Moreover, setting $C_2$ to be the sum of the constant $C_0 C_1$
  from~\eqref{eq:unip.characters} and the constant that
  Lemma~\ref{lem:nc} supplies, we obtain
  \begin{equation} \label{eq:zeta.approx.Z}
    \zeta_{\mathbf{G}(\F_q)}(s) - \lvert Z(\mathbf{G}
      ^*(\F_q)) \rvert \sim_{C_2}
    \xi_{a(\mathbf{G},q),q}(s).
  \end{equation}
  We now prove that, for $q > C_2$,
  \[
  \lvert \mathbf{G} (\F_q) /
  [\mathbf{G}(\F_q),\mathbf{G}(\F_q)] \rvert =
  \lvert Z(\mathbf{G}^*(\F_q)) \rvert.
  \]
  Indeed, put $\delta(q) = \lvert \mathbf{G} (\F_q) /
  [\mathbf{G}(\F_q),\mathbf{G}(\F_q)] \rvert - \lvert
  Z(\mathbf{G}^*(\F_q)) \rvert$.  Fix $q > C_2$ and let $s =
  \sigma \in \R$ tend to $\infty$ in~\eqref{eq:zeta.approx.Z}.  The
  limit of the left-hand side of \eqref{eq:zeta.approx.Z} as $s =
  \sigma \to \infty$ is equal to $\delta(q)$, and $q>C_2$ implies that
  the limit of $C_2^{1+\sigma} \xi_{a(\mathbf{G},q)}(\sigma)$ as
  $\sigma \to \infty$ is equal to~$0$.  Thus $\delta(q) \leq 0$.  On
  the other hand, the limit of the right-hand side
  of~\eqref{eq:zeta.approx.Z} as $s = \sigma \to \infty$ is equal to
  $0$, and the limit of $C_2^{1+\sigma}
  (\zeta_{\mathbf{G}(\F_q)}(\sigma) - | Z(\mathbf{G}
  ^*(\F_q))| )$ as $\sigma \to \infty$ is non-negative only if
  $\delta(q) \geq 0$.  Hence $\delta(q) = 0$.

  Next we show that in the approximations that we seek it is not
  necessary to distinguish between different members of the isogeny
  class of~$\mathbf{G}$, as we have done up to this point.  We define
  \[
  a(\mathcal{L},q) = a(\mathbf{G}^\textup{ad},q) = b^\mathrm{c}(\mathcal{L}) +
  b^\mathrm{nc}(\mathbf{G}^\textup{ad},q),
  \]
  where $\mathbf{G}^\mathrm{ad}$ denotes the adjoint quotient
  of~$\mathbf{G}$ and claim that assertions (1), (2), (3) in the
  theorem hold.  The statements (1), (2) are clearly special cases of
  $(1)'$, $(2)'$.  In order to establish (3) we observe that
  Lemma~\ref{lem:nc} already gives
  \[
  \xi_{a(\mathbf{G}^\mathrm{sc},q),q} \lesssim_1 \xi_{a(\mathbf{G},q),q}
  \lesssim_1   \xi_{a(\mathbf{G}^\mathrm{ad},q),q} = \xi_{a(\mathcal{L},q),q}.
  \]
  where $\mathbf{G}^\mathrm{sc}$ denotes the simply connected member
  in the isogeny class of $\mathbf{G}$.

  Thus it suffices to show that there is a constant $C_3 \in \R$,
  depending only on $\Phi$, such that
  \[
  \zeta_{\mathbf{G}^\mathrm{sc}(\F_q)} - \lvert
  \mathbf{G}^\mathrm{sc}(\F_q) /
  [\mathbf{G}^\mathrm{sc}(\F_q),\mathbf{G}^\mathrm{sc}(\F_q)]
  \rvert \gtrsim_{C_3}
  \zeta_{\mathbf{G}^\mathrm{ad}(\F_q)} - \lvert
  \mathbf{G}^\mathrm{ad}(\F_q) /
  [\mathbf{G}^\mathrm{ad}(\F_q),\mathbf{G}^\mathrm{ad}(\F_q)]
  \rvert.
  \]
  Write $G = \mathbf{G}^\mathrm{ad}(\F_q)$.  Let $H$ denote the image
  of $\mathbf{G}^\mathrm{sc}(\F_q)$ under the natural homomorphism
  $\mathbf{G}^\mathrm{sc}(\F_q) \rightarrow
  \mathbf{G}^\mathrm{ad}(\F_q)$.  Then $\lvert G \rvert = \lvert
  \mathbf{G}^\mathrm{sc}(\F_q) \rvert$, thus $\lvert G : H \rvert =
  \lvert Z(\mathbf{G}^\mathrm{sc}(\F_q)) \rvert$ is bounded by $C_3 =
  \lvert Z(\mathbf{G}^\mathrm{sc}) \rvert$, depending only on $\Phi$.
  From \eqref{eq:zeta.approx.Z} we observe that the smallest degree of
  a non-linear character of $G$ is given by an increasing function
  in~$q$.  Taking as a normal subgroup the trivial subgroup, we apply
  part (3) of Lemma~\ref{lem:fin.index.BK} to deduce that
  \[
  \zeta_{\mathbf{G}^\mathrm{sc}(\F_q)} - \lvert
  \mathbf{G}^\mathrm{sc}(\F_q) /
  [\mathbf{G}^\mathrm{sc}(\F_q),\mathbf{G}^\mathrm{sc}(\F_q)]
  \rvert \gtrsim_1
  \zeta_H - \lvert H/[H,H] \rvert \gtrsim_{C_3}
  \zeta_G - \lvert G/[G,G] \rvert.
  \]

  Finally, it remains to deduce the
  estimate~\eqref{eq:thm3.2-generous-estimate}.  We need to show that
  for some constant $C_4 \in \R$, depending only on $\Phi$,
  \[
  \zeta_{\mathbf{G}(\F_q)}(s) \sim_{C_4} 1 + q^{\rk \Phi -
    |\Phi^+| s}.
  \]
  By part (4) of Lemma~\ref{lem:nc}, there is a constant $C_5 \in \R$
  such that
  \begin{equation}\label{equ:1+xi-estimate}
  1 + \xi_{b^\mathrm{nc}(\mathbf{G},q),q}(s) \sim_{C_5} 1 + q^{\rk
    \Phi - |\Phi^+| s}.
  \end{equation}
  Furthermore, using equation~\eqref{equ:sigma_c}, part (2) of
  Proposition~\ref{lem:BC.unipotent.representations}, part (3) of
  Lemma~\ref{lem:nc} and equations~\eqref{equ:centre-derived} and
  \eqref{equ:1+xi-estimate}, we conclude that there is a constant $C_6
  \in \R$, depending only on $\Phi$, such that
  \begin{align*}
    \zeta_{\mathbf{G}(\F_q)}(s) & =
    \zeta_{\mathbf{G}(\F_q)}^\mathrm{c}(s) +
    \zeta_{\mathbf{G}(\F_q)}^\mathrm{nc}(s) = \lvert
    Z(\mathbf{G}^*(\F_q)) \rvert
    \zeta_{\mathbf{G}(\F_q)}^\mathrm{unip}(s) +
    \zeta_{\mathbf{G}(\F_q)}^\mathrm{nc}(s) \\ & \sim_{C_6}
    \lvert Z(\mathbf{G} ^*(\F_q)) \rvert -1 + 1 +
    \xi_{b^\mathrm{nc}(\mathbf{G},q),q}(s) \sim_{C_0 + C_5} 1 + q^{\rk
      \Phi - |\Phi^+| s},
  \end{align*}
  proving the claim. This concludes the proof of
  Theorem~\ref{thm:BC.finite.new}.
\end{proof}

\begin{proof}[Proof of
  Corollary~\ref{cor:finite.reductive.zeta.approx}] We use the
  inequality
  \[
  2^{-\dim \mathbf{H}} q^{\dim \mathbf{H}} \leq \lvert
  \mathbf{H}(\F_q) \rvert \leq 2^{\dim \mathbf{H}} q^{\dim
    \mathbf{H}}
  \]
  true for every connected algebraic group $\mathbf{H}$ over a finite
  field $\F_q$; cf.~\cite[Lemma~3.5]{N}.  Let $C_1$ be the constant
  from Theorem~\ref{thm:BC.finite.new}. The derived subgroup
  $\mathbf{G} '$ is connected, semi-simple, and has absolute root
  system $\Phi$. The quotient $\mathbf{G} / \mathbf{G} '$ is a torus
  of dimension~$\dim Z(\mathbf{G})$. By the Lang--Steinberg
  theorem~\cite[Theorem~3.10]{DM}, there is a short exact sequence $1
  \rightarrow \mathbf{G} '(\F_q) \rightarrow \mathbf{G}(\F_q)
  \rightarrow \mathbf{G} / \mathbf{G} '(\F_q) \rightarrow 1$, which
  implies that
  \[
  \zeta_{\mathbf{G}(\F_q)}(s) \lesssim_1 | \mathbf{G} /
  \mathbf{G} '(\F_q)| \zeta_{\mathbf{G} '(\F_q)}(s)
  \lesssim_{2^{\dim Z(\mathbf{G})}} q^{\dim
    Z(\mathbf{G})}\zeta_{\mathbf{G}'(\F_q)} \lesssim_{C_1}
  q^{\dim Z(\mathbf{G})} (1 + q^{\rk \Phi - |\Phi^+| s}).
  \]

  For the opposite inequality, choose a maximal $\F_q$-rational torus
  $\mathbf{T} \subset \mathbf{G} / Z(\mathbf{G})$ and a Borel subgroup
  $\mathbf{B} \subset \mathbf{G} / Z(\mathbf{G})$ containing
  $\mathbf{T}$. Let $\widetilde{\mathbf{T}},\widetilde{\mathbf{B}}
  \subset \mathbf{G}$ be the pre-images of $\mathbf{T},\mathbf{B}$
  under the quotient map. By the proof of~\cite[Lemma~8.4.2]{Ca}, there
  is~$C_2 \in \R$, depending only on $\Phi$, such that the set of
  characters $\theta$ of $\widetilde{\mathbf{T}}(\F_q)$ for which the
  parabolic induction
  $\Ind_{\widetilde{\mathbf{B}}(\F_q)}^{\mathbf{G}(\F_q)}(\theta)$ is
  irreducible has size at least
  $C_2|\widetilde{\mathbf{T}}(\F_q)|$. Thus, we get at least $C_2
  \cdot 2^{-\dim \widetilde{\mathbf{T}}}q^{\dim
    \widetilde{\mathbf{T}}}=C_2 \cdot 2^{-\dim
    \widetilde{\mathbf{T}}}q^{\dim Z(\mathbf{G})+\rk \Phi}$
  irreducible representations of dimension
  \[
  \lvert \mathbf{G}(\F_q) \rvert / \lvert
  \widetilde{\mathbf{B}}(\F_q) \rvert \leq 2^{\dim \mathbf{G}
    +\dim \widetilde{\mathbf{B}}} q^{\dim \mathbf{G} - \dim
    \widetilde{\mathbf{B}}}=2^{\dim \mathbf{G} +\dim
    \widetilde{\mathbf{B}}} q^{\lvert \Phi^+ \rvert}.
  \]
  Finally, $\mathbf{G}(\F_q)$ has $| \mathbf{G} / \mathbf{G} '
  (\F_q) | \geq 2^{-\dim Z( \mathbf{G})}q^{\dim
    Z(\mathbf{G})}$ one-dimensional representations factoring through
  $\mathbf{G} / \mathbf{G} '(\F_q)$. Combining these two
  classes of representations, we get that
  $\zeta_{\mathbf{G}(\F_q)}(s) \gtrsim_{C_3} q^{\dim
    Z(\mathbf{G})}(1 + q^{\rk \Phi - |\Phi^+| s}) $ for a suitable
  $C_3 \in \R$.
\end{proof}

\subsection{Applications to Finite Quotients of Arithmetic
  Groups} \label{sec:applications}

In order to apply Theorem~\ref{thm:BC.finite.new} in a global context,
we recall the definition of a Chebotarev set.

\begin{defn} \label{defn:Chebotarev} Let $K$ be a number field with
  ring of integers~$O$, and let $\rho \colon \Gal_K \rightarrow G$ be
  a continuous homomorphism from the absolute Galois group of $K$ into
  a finite group~$G$.  Let $P(\rho)$ denote the set of primes $\fp \in
  \Spec(O)$ such that $\mathfrak{p}$ is unramified in the extension $K
  \subset \overline{K}^{\ker \rho}$ and the Frobenius conjugacy class
  $(\Frob_\fp) \subset \Gal_K$ associated to $\fp$ lies in the kernel
  of~$\rho$.  A set $P \subset \Spec(O)$ is a Chebotarev set if it
  almost contains a set of the form $P(\rho)$, in the sense that
  $P(\rho) \smallsetminus P$ is finite.
\end{defn}

The intersection of two Chebotarev sets is a Chebotarev set.
Furthermore, by Chebotarev's Density Theorem, every Chebotarev set has
positive analytic density. We record the following corollary of
Theorem~\ref{thm:BC.finite.new}; compare the definition of $a(\Phi)
\in \mathcal{A}^+$ in~~\eqref{def:a(Phi)}.

\begin{cor} \label{cor:BC.finite.new.cbt} Let $\Phi$ be a non-trivial
  root system, and let $C \in \R$ and $a(\Phi) \in \mathcal{A}^+$ be
  as in Theorem~\textup{\ref{thm:BC.finite.new}}. Let $K$ be a number
  field with ring of integers $O$. For every affine group scheme
  $\mathbf{G}$ over $O$ whose generic fiber is connected semi-simple
  with absolute root system~$\Phi$, the set of primes $\fp \in
  \Spec(O)$ such that
  \[
  \zeta_{\mathbf{G}(O/\fp)}-| \mathbf{G}(O/\fp) /
  [\mathbf{G}(O/\fp),\mathbf{G}(O/\fp)] | \sim_C \xi_{a(\Phi),| O/\fp
    |}
  \]
  is a Chebotarev set.
\end{cor}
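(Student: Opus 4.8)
The plan is to reduce the statement to Theorem~\ref{thm:BC.finite.new} by producing, for each prime $\fp$ of $O$ not lying over a small bad set of rational primes, a finite field $\F_q$ and a connected semi-simple algebraic group over $\F_q$ whose special fiber recovers $\mathbf{G}(O/\fp)$, and then identifying the Lie type of that special fiber in terms of a Galois-theoretic Frobenius condition. First I would discard the finitely many primes of $O$ at which the group scheme $\mathbf{G}$ has bad reduction or at which the residue characteristic is $\leq c$, where $c = c(\Phi)$ is the constant from Corollary~\ref{cor:C_Phi}; removing finitely many primes does not affect whether a set is Chebotarev. For the remaining primes $\fp$, the special fiber $\mathbf{G}_\fp = \mathbf{G} \times_O O/\fp$ is a connected semi-simple algebraic group over the finite field $\F_q = O/\fp$ with absolute root system $\Phi$, and $\mathbf{G}(O/\fp) = \mathbf{G}_\fp(\F_q)$.

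The key step is to characterize the primes $\fp$ for which the relevant approximation holds with the \emph{maximal} element $a(\Phi)$ rather than some proper subset $a(\mathcal{L},q)$. By parts~(1) and~(2) of Theorem~\ref{thm:BC.finite.new}, $\zeta_{\mathbf{G}(O/\fp)} - |\mathbf{G}(O/\fp)/[\mathbf{G}(O/\fp),\mathbf{G}(O/\fp)]| \sim_C \xi_{a(\Phi),|O/\fp|}$ holds whenever $\mathbf{G}_\fp$ is split \emph{and} $q = |O/\fp| \equiv_m 1$, where $m = m(\Phi)$ is the modulus from Theorem~\ref{thm:BC.finite.new}. (It also suffices to have $a(\mathcal{L},q) = a(\Phi)$, so the set we are after contains this split-and-congruent set.) So I would show that the set of $\fp$ for which $\mathbf{G}_\fp$ is split over $\F_q$ and $q \equiv_m 1$ is a Chebotarev set. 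Splitness is governed by the action of $\Frob_\fp$ on a splitting field: there is a finite Galois extension $K'/K$ over which $\mathbf{G}$ becomes split (an inner/outer form is split after passing to the field trivializing the relevant twist of the Dynkin diagram / torus), and $\mathbf{G}_\fp$ is split precisely when $\fp$ splits completely in $K'$, equivalently when $\Frob_\fp$ acts trivially, i.e. $\fp \in P(\rho_1)$ for $\rho_1\colon \Gal_K \to \Gal(K'/K)$. The congruence $q \equiv_m 1$ says $\fp$ splits completely in $K(\mu_m)$, i.e. $\fp \in P(\rho_2)$ for $\rho_2\colon \Gal_K \to \Gal(K(\mu_m)/K)$. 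Taking $\rho$ to be the map into $\Gal(K'(\mu_m)/K)$ whose kernel is the intersection, $P(\rho) = P(\rho_1) \cap P(\rho_2)$ is exactly (up to the finitely many excluded primes, which are harmless) contained in the desired set. Since the intersection of Chebotarev sets is Chebotarev, the set of primes satisfying the conclusion, containing a set of the form $P(\rho)$ up to finitely many primes, is a Chebotarev set.

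The main obstacle I anticipate is the bookkeeping around the splitting field and the precise relation between "$\mathbf{G}_\fp$ split" and a Frobenius condition: one must be careful that the group scheme $\mathbf{G}$ over $O$ (as opposed to its generic fiber) has good reduction away from a finite set, that the splitting field of the generic fiber controls splitness of \emph{all but finitely many} special fibers (this is where good reduction and tameness enter, and why one excludes small residue characteristics via $c(\Phi)$), and that the Frobenius at $\fp$ in $\Gal(K'/K)$ indeed encodes the $\F_q$-form of $\mathbf{G}_\fp$ via the $\ast$-action on the root datum. Everything else --- the replacement of $a(\mathcal{L},q)$ by $a(\Phi)$, the identification $q \equiv_m 1 \Leftrightarrow \mu_m \subset \F_q \Leftrightarrow \fp$ splits in $K(\mu_m)$, and the stability of the Chebotarev property under intersection and finite modification --- is routine given Theorem~\ref{thm:BC.finite.new} and Definition~\ref{defn:Chebotarev}.
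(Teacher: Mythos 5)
Your proposal is correct and follows essentially the same route as the paper's own proof: discard the finitely many primes of bad reduction, observe via Theorem~\ref{thm:BC.finite.new}(2),(3) that the desired approximation with $a(\Phi)$ holds whenever the reduction of $\mathbf{G}$ modulo $\fp$ is split and $\lvert O/\fp\rvert \equiv_m 1$, and note that both conditions are Frobenius conditions (splitting completely in a splitting field of $\mathbf{G}$ and in $K(\mu_m)$), so their intersection is a Chebotarev set contained, up to finitely many primes, in the set in question.
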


\begin{proof}
  Let $\mathbf{G}$ be as in the corollary, and $m\in\N$ as in
  Theorem~\ref{thm:BC.finite.new}. The set of
  primes $\fp$ such that the reduction of $\mathbf{G}$ modulo
  $\fp$ is connected, semi-simple, and has absolute root
  system $\Phi$ is cofinite. If $M \supset K$ is a splitting field of
  $\mathbf{G}$ and $\fp$ splits completely in $M$, then the
  reduction of $\mathbf{G}$ modulo $\fp$ is
  split. Hence, there exists a Chebotarev set $P
  \subset \Spec(O)$ such that, for all $\fp\in P$,
  \begin{list}{}{\setlength{\leftmargin}{\myenumilistleftmargin}
      \setlength{\labelwidth}{20pt} \setlength{\itemsep}{0pt}
      \setlength{\parsep}{1pt}}
  \item[\textup{(1)}] The reduction of $\mathbf{G}$ modulo
    $\fp$ is connected, split, semi-simple, and has absolute
    root system $\Phi$.
  \item[\textup{(2)}] The field $O/\fp$ contains primitive $m$th roots
    of unity, that is $|O/\fp|\equiv_m 1$.
  \end{list}
  Let $\fp\in P$ and set $q = |O/\fp|$. By property (1), the group
  $\mathbf{G}(O/\fp)$ has Lie type $\mathcal{L} =
  \mathcal{L}^{\mathrm{sp}} = (\Phi,\Id)$. Property (2) together with
  parts (3) and (2) of Theorem~\ref{thm:BC.finite.new} yields
  \[
  \zeta_{\mathbf{G}(O/\fp)}-| \mathbf{G}(O/\fp) /
  [\mathbf{G}(O/\fp),\mathbf{G}(O/\fp)] | \sim_C
  \xi_{a(\Phi),q}.
  \]
\end{proof}

Furthermore, an argument similar to the one used in the proof of
Theorem~\ref{thm:BC.global}, now based on
Theorem~\ref{thm:BC.finite.new} and
Corollary~\ref{cor:BC.finite.new.cbt} instead of
Theorem~\ref{thm:zeta.approx}, gives the following.

\begin{cor} \label{cor:prod.fin.:ie.type} Let $K \subset L$ be
  number fields with rings of integers~$O_K \subset O_L$, and let
  $\mathbf{G}$ be an affine group scheme defined over~$O_K$ whose
  generic fiber is connected and simply connected
  semi-simple.  Then $\alpha (\prod_{\fp \in \Spec(O_K)}
  \mathbf{G}(O_K/\fp) ) = \alpha(\prod_{\fq \in
    \Spec(O_L)} \mathbf{G}(O_L/\fq))$.
\end{cor}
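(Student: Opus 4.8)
The plan is to mirror the proof of Theorem~\ref{thm:BC.global}, with Theorem~\ref{thm:BC.finite.new} and its Corollary~\ref{cor:BC.finite.new.cbt} playing the role of Theorem~\ref{thm:zeta.approx}. Write $\Phi$ for the absolute root system of the generic fiber of $\mathbf{G}$; it is unchanged under the base extension from $O_K$ to $O_L$. Fix $C \in \R$ and $a(\Phi) \in \mathcal{A}^+$ as in Theorem~\ref{thm:BC.finite.new}, and recall that these depend only on $\Phi$. The goal is to show that $\alpha(\prod_{\fp \in \Spec(O_K)} \mathbf{G}(O_K/\fp))$ equals the quantity $\max\{(m+1)/n \mid (m,n) \in a(\Phi)\}$ furnished by Lemma~\ref{lem:Dedekind-arg}; since the same argument applies verbatim to $O_L$ with the same constants, both abscissae equal this common value and the corollary follows.

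First I would note that $\zeta_{\prod_\fp \mathbf{G}(O_K/\fp)} = \prod_\fp \zeta_{\mathbf{G}(O_K/\fp)}$: continuous irreducible representations of the profinite group $\prod_\fp \mathbf{G}(O_K/\fp)$ factor through finite subproducts, so this is an Euler product, each factor being a Dirichlet polynomial with constant term $1$. Hence omitting finitely many factors does not affect the abscissa of convergence, and we may restrict to the cofinite set $T \subset \Spec(O_K)$ of primes $\fp$ for which the reduction of $\mathbf{G}$ modulo $\fp$ is connected semi-simple with absolute root system $\Phi$ and $\lvert O_K/\fp \rvert > 3$. For such $\fp$ the group $\mathbf{G}(O_K/\fp)$ is perfect (Remark~\ref{rem:BC.finite}), so parts~(3) and~(1) of Theorem~\ref{thm:BC.finite.new} together with Remark~\ref{rem:xi}(1) give $\zeta_{\mathbf{G}(O_K/\fp)} - 1 \lesssim_C \xi_{a(\Phi), \lvert O_K/\fp \rvert}$. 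Applying Lemma~\ref{lem:lesssim} to $f = \prod_{\fp \in T} \zeta_{\mathbf{G}(O_K/\fp)}$ and $g = \prod_{\fp \in T}(1 + \xi_{a(\Phi), \lvert O_K/\fp \rvert})$ yields $\alpha(\prod_{\fp \in T} \mathbf{G}(O_K/\fp)) \le \alpha(g)$, and Lemma~\ref{lem:Dedekind-arg} (with $T$ cofinite, hence of positive analytic density) identifies $\alpha(g)$ with $\max\{(m+1)/n \mid (m,n) \in a(\Phi)\}$. This is the upper bound.

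For the matching lower bound I would use Corollary~\ref{cor:BC.finite.new.cbt}: the set $R \subset \Spec(O_K)$ of primes $\fp$ with $\zeta_{\mathbf{G}(O_K/\fp)} - \lvert \mathbf{G}(O_K/\fp)/[\mathbf{G}(O_K/\fp),\mathbf{G}(O_K/\fp)] \rvert \sim_C \xi_{a(\Phi), \lvert O_K/\fp \rvert}$ is a Chebotarev set, hence of positive analytic density; intersecting it with the cofinite set above (still Chebotarev, still of positive density) we may assume $\zeta_{\mathbf{G}(O_K/\fp)} - 1 \sim_C \xi_{a(\Phi), \lvert O_K/\fp \rvert}$ for all $\fp \in R$. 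Since all Dirichlet coefficients in sight are non-negative, $\alpha(\prod_{\fp \in \Spec(O_K)} \mathbf{G}(O_K/\fp)) \ge \alpha(\prod_{\fp \in R} \mathbf{G}(O_K/\fp))$, and Lemma~\ref{lem:lesssim}, applied in both directions this time, equates the right-hand side with $\alpha(\prod_{\fp \in R}(1 + \xi_{a(\Phi), \lvert O_K/\fp \rvert}))$, which in turn equals $\max\{(m+1)/n \mid (m,n) \in a(\Phi)\}$ by Lemma~\ref{lem:Dedekind-arg}. Combining the two bounds, $\alpha(\prod_{\fp \in \Spec(O_K)} \mathbf{G}(O_K/\fp)) = \max\{(m+1)/n \mid (m,n) \in a(\Phi)\}$, and likewise over $L$.

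I expect no serious obstacle here — the proof is a routine transplant of the one for Theorem~\ref{thm:BC.global}. The one point that genuinely carries the argument is the lower bound: it requires the \emph{exact} approximation $\zeta_{\mathbf{G}(O/\fp)} - 1 \sim_C \xi_{a(\Phi), \lvert O/\fp \rvert}$ to hold on a set of primes of positive analytic density, which is precisely what Corollary~\ref{cor:BC.finite.new.cbt} supplies (the split primes); the universally available upper bound $\lesssim_C$ alone would not pin down the abscissa. The uniformity of $C$ and $a(\Phi)$ in the number field, built into Theorem~\ref{thm:BC.finite.new}, is what makes $O_K$ and $O_L$ comparable at all.
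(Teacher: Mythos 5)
Your proposal is correct and is exactly the argument the paper intends: it states that the corollary follows ``by an argument similar to the one used in the proof of Theorem~\ref{thm:BC.global}, now based on Theorem~\ref{thm:BC.finite.new} and Corollary~\ref{cor:BC.finite.new.cbt}'', and your write-up is a faithful expansion of that, with the upper bound from $a(\mathcal{L},q)\subset a(\Phi)$ and the lower bound from the Chebotarev set of primes where the approximation is exact, both funnelled through Lemmas~\ref{lem:lesssim} and~\ref{lem:Dedekind-arg}. You also correctly isolate the one load-bearing point, namely that the exact $\sim_C$ approximation on a positive-density set is what pins the abscissa down.
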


We remark that one can use Deligne-Lusztig theory to pin down the
precise value of $\alpha (\prod_\fp \mathbf{G}(O/\fp) )$ as in
Corollary~\ref{cor:prod.fin.:ie.type}.  For instance, if $\mathbf{G}$
is simple of type~$A_\ell$, then the abscissa of convergence for the
product $\prod_{\fp} \mathbf{G}(O/\fp)$ of finite groups of Lie type
is equal to $2/\ell$ and, in particular, tends to $0$ as $\ell \to
\infty$.  This behavior stands in contrast to the fact the abscissa
of convergence $\alpha(\mathbf{G}(O))$ is known to be bounded away
from~$0$; cf.~\cite[Theorem~8.1]{LL}. This underlines that the
investigation of $\alpha(\Gamma)$ for arithmetic groups $\Gamma =
\mathbf{G}(O_S)$ requires a more careful analysis.


\section{Relative Zeta Functions, Kirillov Orbit Method and
Model Theoretic Background} \label{sec:preliminaries}

\subsection{Relative Zeta Functions and Cohomology}

Throughout this section, let $G$ be a group such that $R_n(G)$ is
finite for all~$n\in\N$.  Let $N \subset G$ be a normal subgroup, and
let $\theta$ be an irreducible finite-dimensional complex
representation of~$N$.  Recall from Definition~\ref{def:rel.zeta.func}
that $\Irr(G \vert \theta)$ denotes the set of (equivalence classes
of) finite-dimensional irreducible complex representations $\rho$ of
$G$ such that $\theta$ is a constituent of $\Res^G_N\rho$, the notion
of the relative zeta function $\zeta_{G \vert \theta}(s) = \sum_{\rho
  \in \Irr(G \vert \theta)} \left( \frac{\dim \rho}{\dim \theta}
\right)^{-s}$, and the notation $R_n(G \vert \theta)$ for the number
of representations $\rho \in \Irr(G \vert \theta)$ such that $\dim
\rho \leq n \dim \theta$.

Suppose further that, up to equivalence, $\theta$ is $G$-invariant.
It is typically not true that the relative zeta function $\zeta_{G
  \vert \theta}$ is equal to the zeta function of the quotient $G/N$;
for instance, if $G$ is non-abelian, step-$2$ nilpotent, $N = [G,G]$
and $\theta$ is non-trivial, then $\zeta_{G \vert \theta}$ is not
equal to $\zeta_{G/N}$.  We describe now a situation in which the two
zeta functions are equal.  Recall that $\theta$ defines an element in
the second cohomology group $H^2(G/N, \C^\times)$ of $G/N$ with values
in $\C^ \times$, also known as the Schur multiplier of $G/N$.  The
construction is as follows; see~\cite[Chapter~11]{Is}. Suppose $\theta
\colon N \rightarrow \GL_d(\C)$. Pick a coset representative
$\widetilde{a}\in G$ for every element $a$ of $G/N$ such
that~$\widetilde{1}=1$. For every $a\in G/N$, the representations
$\theta$ and $\theta ^{\widetilde{a}}$ are equivalent, and we choose
$T_a\in \GL_d(\C)$ such that $T_a \theta T_a ^{-1} = \theta
^{\widetilde{a}}$; for $T_1$ we choose the identity.  Then one checks
that, for all $a_1, a_2 \in G/N$, the transformation $T_{a_1 a_2}^{-1}
T_{a_1} T_{a_2} \theta ( \widetilde{a_1} \widetilde{a_2}
(\widetilde{a_1 a_2}) ^{-1} )$ commutes with each element of
$\theta(N)$ and thus defines a scalar $\beta(a_1,a_2) \in \C^\times$.
The map $\beta \colon G/N \times G/N \rightarrow \C^\times$ is a
$2$-cocycle representing the cohomology class associated to
$(G,N,\theta)$, which is independent of the choices involved.

\begin{lem} \label{lem:zeta.relative.quotient} Let $G$ be a
  profinite group with an open normal subgroup $N \triangleleft G$,
  and let $\theta \in \Irr(N)$ be a complex representation of $N$
  which is $G$-invariant up to equivalence.  If the cohomology class
  in $H^2(G/N, \C ^ \times)$ associated to $(G,N,\theta)$
  vanishes, then $\zeta_{G \vert \theta} = \zeta_{G/N}$.
\end{lem}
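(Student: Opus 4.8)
The plan is to reduce the statement to the classical Clifford theory of $G$-invariant irreducible characters with vanishing obstruction and then to invoke Gallagher's extension theorem.

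First I would pass from the profinite setting to a finite one. Since $\theta$ is a continuous finite-dimensional representation of the profinite group $N$, its kernel $M = \ker\theta$ is open in $N$; moreover $M$ is normal in $G$, because the hypothesis that $\theta$ is $G$-invariant up to equivalence forces $g^{-1}M g = \ker\theta^{g} = \ker\theta = M$ for all $g \in G$ (isomorphic representations have the same kernel). For any $\rho \in \Irr(G \vert \theta)$, Clifford's theorem together with the $G$-invariance of $\theta$ gives $\Res^G_N \rho \cong e\,\theta$ for some $e \in \N$, so $M = \ker\theta \subseteq \ker\big(\Res^G_N\rho\big) = \ker\rho \cap N \subseteq \ker\rho$. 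Hence every $\rho \in \Irr(G \vert \theta)$ factors through the finite group $\overline{G} = G/M$, so $\zeta_{G \vert \theta} = \zeta_{\overline{G} \vert \theta}$ where now $\theta$ is viewed as an irreducible representation of the finite group $\overline{N} = N/M$; likewise $\zeta_{G/N} = \zeta_{\overline{G}/\overline{N}}$ since $M \subseteq N$. I also need to record that the cohomology class attached to $(\overline{G},\overline{N},\theta)$ by the construction recalled before the lemma agrees with the one attached to $(G,N,\theta)$: this is immediate, since the defining $2$-cocycle $\beta$ only involves a choice of coset representatives $\widetilde{a}$ and intertwiners $T_a$, which may be chosen compatibly with the quotient map $G \to \overline{G}$, and $\beta$ manifestly factors through $\overline{G}/\overline{N} = G/N$. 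Thus it suffices to prove the lemma for the finite group $\overline{G}$.

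Working in $\overline{G}$, I would use the standard fact (see \cite[Chapter~11]{Is}) that the vanishing of the class in $H^2(\overline{G}/\overline{N},\mathbb{C}^\times)$ associated to $(\overline{G},\overline{N},\theta)$ is exactly the obstruction to extending the $\overline{G}$-invariant irreducible $\theta$ to a representation of $\overline{G}$. By hypothesis there is therefore a representation $\widetilde{\theta}\colon \overline{G} \to \GL_d(\mathbb{C})$, $d = \dim\theta$, with $\Res^{\overline{G}}_{\overline{N}}\widetilde{\theta} = \theta$. Gallagher's theorem (see \cite[Corollary~6.17]{Is}) then asserts that $\beta \mapsto \widetilde{\theta}\otimes \mathrm{Inf}(\beta)$ is a bijection from $\Irr(\overline{G}/\overline{N})$ onto $\Irr(\overline{G}\vert\theta)$, where $\mathrm{Inf}$ denotes inflation along $\overline{G}\to\overline{G}/\overline{N}$. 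Since $\dim\big(\widetilde{\theta}\otimes\mathrm{Inf}(\beta)\big) = d\cdot\dim\beta$, this bijection scales dimensions by $\dim\theta$, so
\[
\zeta_{\overline{G}\vert\theta}(s) = \sum_{\rho \in \Irr(\overline{G}\vert\theta)} \left(\frac{\dim\rho}{\dim\theta}\right)^{-s} = \sum_{\beta \in \Irr(\overline{G}/\overline{N})} (\dim\beta)^{-s} = \zeta_{\overline{G}/\overline{N}}(s),
\]
and combining this with the reductions $\zeta_{G\vert\theta} = \zeta_{\overline{G}\vert\theta}$ and $\zeta_{G/N} = \zeta_{\overline{G}/\overline{N}}$ yields $\zeta_{G\vert\theta} = \zeta_{G/N}$.

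The main obstacle is not conceptual but organizational: one must check carefully that the passage to the finite quotient $\overline{G}$ is legitimate for \emph{continuous} representations of the profinite group (this is exactly the kernel computation above, which crucially uses the $G$-invariance of $\theta$), and that the cohomology class of the excerpt's construction is literally the obstruction class from \cite[Chapter~11]{Is} in the finite quotient. Once the problem is finite, the conclusion is the textbook Clifford theory of a $G$-invariant irreducible with trivial obstruction, and no genuinely new input is needed.
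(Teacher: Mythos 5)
Your proposal is correct and follows essentially the same route as the paper: the paper's proof is exactly the two-step argument "vanishing obstruction $\Rightarrow$ $\theta$ extends to $G$ (\cite[Theorem~11.7]{Is})" followed by Gallagher's bijection $\tau \mapsto \tau \otimes \widetilde{\theta}$ between $\Irr(G/N)$ and $\Irr(G \vert \theta)$, which scales dimensions by $\dim\theta$. The only difference is that you make explicit the reduction to the finite quotient $G/\ker\theta$ (which the paper elides, applying Isaacs' finite-group results directly to the profinite setting); your kernel computation and the compatibility of the cocycle with the quotient are both correct, so this is a legitimate and slightly more careful write-up of the same argument.
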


\begin{proof} By~\cite[Theorem~11.7]{Is}, the vanishing of the
  cohomology class implies that $\theta$ can be extended to a
  representation $\widetilde{\theta}$ of $G$. By~\cite[Theorem~6.16]{Is}, the map $\Irr(G/N) \rightarrow \Irr(G| \theta)$ given by
  $\tau \mapsto \tau \otimes \widetilde{\theta}$ is a bijection, and the
  claim of the lemma follows.
\end{proof}

\begin{lem} \label{lem:p.power} Let $p$ be a prime number. Let $G$ be
  a profinite group with an open normal pro-$p$ subgroup $N
  \triangleleft G$, and let $\theta\in\Irr(N)$ be $G$-invariant up to
  equivalence. Then the cohomology class in $H^2(G/N, \C ^ \times)$
  associated to $(G,N,\theta)$ has order a power of $p$.
\end{lem}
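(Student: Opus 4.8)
The plan is to reduce the order of the cohomology class associated to $(G,N,\theta)$ to the order of a related class for a finite $p$-group, and then invoke the classical fact that the Schur multiplier of a finite $p$-group is a $p$-group. First I would reduce to the case where $G/N$ is finite: by Lemma~\ref{lem:zeta.relative.quotient} (or rather the construction preceding it), the cohomology class associated to $(G,N,\theta)$ lives in $H^2(G/N,\C^\times)$, and since $N$ is open in the profinite group $G$, the quotient $G/N$ is a finite group. So we are dealing with an honest finite-group cohomology class $[\beta]\in H^2(G/N,\C^\times)$. Since $N$ is pro-$p$ and $\theta\in\Irr(N)$ is a (continuous) irreducible representation, $\theta$ factors through a finite $p$-group quotient $N/M$ for some open normal subgroup $M\triangleleft N$; after shrinking $M$ we may arrange that $M$ is normal in $G$ (intersect the finitely many $G$-conjugates of $M$, all of which are open).

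Next I would compare the class for $(G,N,\theta)$ with the class for $(G/M,N/M,\theta)$. The cocycle $\beta$ built in the paragraph before Lemma~\ref{lem:zeta.relative.quotient} is manifestly inflated from $G/M$: the coset representatives $\widetilde a$ and the intertwiners $T_a$ can all be chosen to depend only on the image of things in $G/M$, because $\theta$ itself factors through $N/M$ and $\theta^{\widetilde a}$ depends only on $\widetilde a$ modulo $M$. Hence $[\beta]$ is the inflation of a class $[\overline\beta]\in H^2(G/N,\C^\times)$ — wait, more precisely $G/M$ has normal subgroup $N/M$ with quotient still $(G/M)/(N/M)\cong G/N$, and $[\beta]$ associated to $(G,N,\theta)$ equals the class associated to $(G/M,N/M,\bar\theta)$ under this identification. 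So without loss of generality $G$ is finite with $N$ a finite $p$-group.

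Now let $P$ be a Sylow $p$-subgroup of the finite group $G$; note $N\subset P$ since $N$ is a normal $p$-subgroup. By the standard restriction–corestriction argument in group cohomology, the restriction map $H^2(G/N,\C^\times)\to H^2(P/N,\C^\times)$ has kernel annihilated by $[G:P]$, which is prime to $p$; hence the order of $[\beta]$ in $H^2(G/N,\C^\times)$ divides $[G:P]$ times the order of the restricted class $[\beta]|_{P/N}$ in $H^2(P/N,\C^\times)$. But $[\beta]|_{P/N}$ is exactly the cohomology class associated to $(P,N,\theta)$ by naturality of the construction, and $P/N$ is a finite $p$-group, so its Schur multiplier $H^2(P/N,\C^\times)$ is a finite abelian $p$-group (see, e.g., \cite[Chapter~11]{Is}); thus $[\beta]|_{P/N}$ has order a power of $p$. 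Since also $[\beta]$ has prime-to-$p$ order dividing $[G:P]$ after killing the $p$-part contributed by restriction — more carefully: write the order of $[\beta]$ as $p^a m$ with $\gcd(m,p)=1$; then $m$ divides $[G:P]\cdot(\text{order of }[\beta]|_{P/N})$, and since the latter is a power of $p$, $m$ divides $[G:P]$, which is prime to $p$, giving no contradiction — I must instead argue directly that the prime-to-$p$ part vanishes. The clean way: the composite $H^2(G/N,\C^\times)\xrightarrow{\mathrm{res}}H^2(P/N,\C^\times)\xrightarrow{\mathrm{cor}}H^2(G/N,\C^\times)$ is multiplication by $[G:P]$; since $\mathrm{res}([\beta])$ is $p$-power torsion, $[G:P]\cdot[\beta]=\mathrm{cor}(\mathrm{res}([\beta]))$ is $p$-power torsion, and as $[G:P]$ is invertible modulo the $p$-power order, $[\beta]$ itself is $p$-power torsion.

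The main obstacle is the bookkeeping in the two reduction steps — verifying that the explicit $2$-cocycle $\beta$ from the paragraph preceding Lemma~\ref{lem:zeta.relative.quotient} is genuinely compatible with inflation from $G/M$ and with restriction to $P$ (i.e.\ that the construction is natural in the appropriate sense); this is routine but must be checked from the definition of $\beta$ in terms of the choices of $\widetilde a$ and $T_a$. Once naturality is in hand, the cohomological input (transfer argument plus $p$-group Schur multiplier) is standard.
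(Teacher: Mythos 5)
Your reductions to the case where $G$ is finite and $N$ is a finite $p$-group are fine, as is the identification of the restricted class with the class of $(P,N,\theta)$. But the final cohomological step — the one carrying all the content — has a genuine gap. From $\mathrm{cor}\circ\mathrm{res}=[G:P]$ and the fact that $\mathrm{res}([\beta])$ lies in the $p$-group $H^2(P/N,\C^\times)$, you may conclude only that $[G:P]\cdot[\beta]$ has $p$-power order, i.e.\ that $\mathrm{ord}([\beta])$ divides $p^k[G:P]$ for some $k$. This does \emph{not} force the prime-to-$p$ part of $[\beta]$ to vanish: if $q\neq p$ is a prime dividing $[G:P]$ and $[\beta]$ had a nontrivial $q$-primary component, multiplication by $[G:P]$ would annihilate that component, so ``$[G:P]\cdot[\beta]$ is $p$-power torsion'' is perfectly consistent with $q\mid\mathrm{ord}([\beta])$. (The transfer argument makes restriction to a Sylow $p$-subgroup injective on the $p$-primary component only; it controls nothing at the other primes.) To finish along your lines you would need, for each prime $q\neq p$, to restrict to a Sylow $q$-subgroup $Q/N$ and show that the class of $(Q',N,\theta)$ vanishes, where $Q'$ is the preimage of $Q/N$ in $G$; this does hold, by the coprime extension criterion (see \cite[Chapter~11]{Is}), since $\dim\theta$ and the order of $\det\theta$ are powers of $p$ while $\lvert Q'/N\rvert$ is prime to $p$ --- but the proof of that criterion is itself the determinant computation that the paper uses.

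The paper's proof bypasses all of this with one line on the explicit cocycle: normalize $\det(T_x)=1$ and take determinants in the relation defining $\beta(a_1,a_2)$, obtaining $\beta(a_1,a_2)^{\dim\theta}=\det\theta\bigl(\widetilde{a_1}\widetilde{a_2}(\widetilde{a_1a_2})^{-1}\bigr)$. The right-hand side is a $p$-power root of unity because $N$ is pro-$p$ and $\theta$ is continuous, and $\dim\theta$ is a power of $p$, so each value $\beta(a_1,a_2)$ is itself a $p$-power root of unity; hence the cocycle, and a fortiori its class, has $p$-power order. You should either adopt this determinant argument directly or complete the Sylow-$q$ restriction step as indicated above; the restriction to the Sylow $p$-subgroup alone cannot close the argument.
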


\begin{proof} The dimension of $\theta$ is a power of $p$ and, for
  every $h\in N$, the scalar $\det(\theta(h))$ is a $p^n$th root of
  unity, for some $n$.  For all $a_1, a_2 \in G/N$, taking
  determinants in the definition of the cocycle $\beta$, we get
  $\beta(a_1,a_2)^{\dim \theta} = \det ( T_{a_1 a_2}^{-1} T_{a_1}
  T_{a_2} \theta ( \widetilde{a_1} \widetilde{a_2} (\widetilde{a_1
    a_2})^{-1}))$. Since we are free to arrange $\det(T_x)=1$ for~$x
  \in G/N$, we get that $\beta(a,b)$ is a root of unity of order a
  power of~$p$.
\end{proof}

\begin{lem} \label{lem:SS} Let $\ell$ be a prime number. Suppose that
  $G$ is a finite group and that $N \subset G$ is a central subgroup
  such that $\gcd ( \lvert N \rvert , \ell ) = 1$.  Then $\gcd( \lvert
  H^2(G, \C ^ \times ) \rvert, \ell ) = 1$ if and only if
  $\gcd ( \lvert H^2(G/N, \C ^ \times )\rvert , \ell ) = 1$.
\end{lem}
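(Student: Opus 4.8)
The plan is to run the Lyndon--Hochschild--Serre spectral sequence of the central extension $1\to N\to G\to G/N\to 1$ with coefficients in the trivial module $\C^\times$, that is $E_2^{p,q}=H^p(G/N,H^q(N,\C^\times))\Rightarrow H^{p+q}(G,\C^\times)$, and to track the $\ell$-primary part along the line $p+q=2$.

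The first step is to collect the consequences of $\gcd(|N|,\ell)=1$. Since $N$ is central, $G/N$ acts trivially on $N$, hence trivially on each $H^q(N,\C^\times)$. As $N$ is finite, for $q\geq 1$ the group $H^q(N,\C^\times)$ is finite and killed by $|N|$: for instance, from $0\to\Z\to\C\to\C^\times\to 0$ together with $H^q(N,\C)=0$ for $q\geq 1$ (because $\C$ is a $\Q$-vector space, hence cohomologically trivial for finite groups) one obtains $H^q(N,\C^\times)\cong H^{q+1}(N,\Z)$. Thus $H^1(N,\C^\times)=\Hom(N,\C^\times)$ and $H^2(N,\C^\times)$ have order prime to $\ell$, and therefore so does $H^p(G/N,H^q(N,\C^\times))$ for all $p\geq 0$ and all $q\geq 1$, since such a group is finite and annihilated by the exponent of $H^q(N,\C^\times)$.

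The second step is the analysis of the diagonal $p+q=2$. Of the three contributing terms $E_2^{2,0}=H^2(G/N,\C^\times)$, $E_2^{1,1}=H^1(G/N,\Hom(N,\C^\times))$ and $E_2^{0,2}=H^2(N,\C^\times)$, the last two have order prime to $\ell$ by the first step, hence so do their subquotients $E_\infty^{1,1}$ and $E_\infty^{0,2}$. The term $E_\infty^{2,0}$ is the cokernel of the transgression $d_2\colon E_2^{0,1}=\Hom(N,\C^\times)\to E_2^{2,0}=H^2(G/N,\C^\times)$ (no higher differential touches position $(2,0)$), so it differs from $H^2(G/N,\C^\times)$ by a quotient of order prime to $\ell$ and thus has the same $\ell$-Sylow subgroup. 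Since $H^2(G,\C^\times)$ is finite and carries a filtration whose successive quotients are $E_\infty^{2,0}$, $E_\infty^{1,1}$, $E_\infty^{0,2}$, its order has the same $\ell$-part as $|E_\infty^{2,0}|$, hence as $|H^2(G/N,\C^\times)|$. The asserted equivalence --- $\ell\nmid|H^2(G,\C^\times)|$ if and only if $\ell\nmid|H^2(G/N,\C^\times)|$ --- follows at once.

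I do not anticipate a real obstacle here: everything rests on the observation that the finite group $N$ of order prime to $\ell$, and cohomology with coefficients twisted by it, contribute nothing $\ell$-primary. The only point needing mild care is that $\C^\times$ is not finite, so finiteness of $H^q(N,\C^\times)$ should be obtained via the identification with $H^{q+1}(N,\Z)$ (or via $\C$ being cohomologically trivial for finite groups). Should a spectral-sequence-free argument be desired, one can instead use that restriction to an $\ell$-Sylow subgroup $P$ of $G$ is injective on $\ell$-primary parts of cohomology, that $P\cap N=1$ makes $P$ embed into $G/N$ as an $\ell$-Sylow subgroup, and then compare images; but the spectral sequence route above seems cleanest.
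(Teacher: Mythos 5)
Your proof is correct and follows essentially the same route as the paper's: the Lyndon--Hochschild--Serre spectral sequence of the central extension, the observation that $E_2^{0,2}$, $E_2^{1,1}$ and $E_2^{0,1}$ are killed by $\lvert N\rvert$ and hence contribute nothing $\ell$-primary, and the identification of the $\ell$-part of $H^2(G,\C^\times)$ with that of $E_\infty^{2,0}=E_3^{2,0}$ and hence of $H^2(G/N,\C^\times)$. Your write-up supplies slightly more detail on the finiteness of $H^q(N,\C^\times)$ (via $H^{q+1}(N,\Z)$) than the paper, but the argument is the same.
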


\begin{proof} Let $(E_n^{p,q})$ be the Lyndon--Hochschild--Serre
  spectral sequence associated to the central extension $1 \rightarrow
  N \rightarrow G \rightarrow G/N \rightarrow 1$. Since the order of
  $N$ is prime to $\ell$, so are the orders of $H^1(N,\C ^
  \times)$ and $H^2(N,\C ^ \times)$.  Therefore, the orders of
  \[
  E_2^{0,2}=H^0(G/N,H^2(N,\C ^ \times)) = H^2(N, \C ^
  \times)
  \]
  and
  \[
  E_2^{1,1}=H^1(G/N,H^1(N,\C ^ \times)) =
  \Hom(G/N,H^1(N,\C ^ \times))
  \]
  are prime to $\ell$ and, hence, so are the orders of
  $E_\infty^{0,2}$ and $E_\infty^{1,1}$. The fact that
  $E_\infty^{p,q}$ converges to $H^*(G,\C ^ \times)$ implies
  that $\lvert H^2(G,\C ^ \times) \rvert = \lvert
  E_\infty^{0,2} \rvert \; \lvert E_\infty^{1,1} \rvert \; \lvert
  E_\infty^{2,0} \rvert$, so $\lvert H^2(G,\C ^ \times)
  \rvert$ is prime to $\ell$ if and only if $\lvert E_\infty^{2,0}
  \rvert$ is prime to~$\ell$.

  Since the order of $E_2^{0,1}=H^0(G/N,H^1(N,\C ^
  \times))=H^1(N,\C^\times)$ is prime to $\ell$, we get that the order
  of $E_\infty^{2,0}=E_3^{2,0}$ is prime to $\ell$ if and only if the
  order of $E_2^{2,0}=H^2(G/N,\C^\times)$ is prime to~$\ell$, yielding
  the result.
\end{proof}

\begin{lem} \label{lem:bdd.Schur} For every root system $\Phi$ there
  is a constant $C \in \R$ such that, for every finite field
  $\F _q$ of characteristic greater than $C$ and for every
  connected reductive $\F_q$-algebraic group $\mathbf{G}$ with
  absolute root system $\Phi$, the size of $H^2(\mathbf{G}(\F
  _q), \C ^ \times)$ is prime to~$q$.
\end{lem}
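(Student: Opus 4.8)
The plan is to reduce the statement about the Schur multiplier $H^2(\mathbf{G}(\F_q),\C^\times)$ of a connected reductive group to the known computation of Schur multipliers of finite (quasi-)simple groups of Lie type, and to control the prime $q$ via the characteristic. First I would observe that, since $Z(\mathbf{G})$ need not be connected, one cannot work directly with $\mathbf{G}$; instead, choose $C$ large enough (depending only on $\Phi$) that for $\charac \F_q > C$ the order $\lvert Z(\mathbf{G}) \rvert$ is prime to $q$ --- this is automatic because $\lvert Z(\mathbf{G}) \rvert$ divides the order of the fundamental group of the root system $\Phi$, which is bounded in terms of $\Phi$ alone. Then, using the Lang--Steinberg theorem, one has a central extension $1 \to Z(\mathbf{G})(\F_q) \to \mathbf{G}(\F_q) \to (\mathbf{G}/Z(\mathbf{G}))(\F_q) \to 1$ with $\lvert Z(\mathbf{G})(\F_q)\rvert$ prime to $q$, so Lemma~\ref{lem:SS} (applied with $\ell = $ the relevant primes, or rather its contrapositive: $\lvert H^2(\mathbf{G}(\F_q),\C^\times)\rvert$ is prime to $q$ iff $\lvert H^2((\mathbf{G}/Z(\mathbf{G}))(\F_q),\C^\times)\rvert$ is prime to $q$) lets me replace $\mathbf{G}$ by its adjoint quotient.

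Next I would reduce from the adjoint group to a product of absolutely simple factors. The adjoint group $\mathbf{G}/Z(\mathbf{G})$ over $\F_q$ is a direct product of $\F_q$-simple adjoint groups $\mathbf{S}_i$, and each $\mathbf{S}_i(\F_q) = \widetilde{\mathbf{S}}_i(\F_{q^{d_i}})$ for an absolutely simple adjoint group $\widetilde{\mathbf{S}}_i$ over $\F_{q^{d_i}}$, exactly as in the proof of Proposition~\ref{lem:BC.unipotent.representations}; here $d_i$ is bounded in terms of $\Phi$. Since $H^2(A\times B,\C^\times)$ is built from $H^2(A,\C^\times)$, $H^2(B,\C^\times)$ and $A^{\mathrm{ab}}\otimes B^{\mathrm{ab}}$, and since for $q > 3$ each $\mathbf{S}_i(\F_q)$ is perfect by Remark~\ref{rem:BC.finite} (so the cross terms vanish), it suffices to bound each $\lvert H^2(\widetilde{\mathbf{S}}_i(\F_{q^{d_i}}),\C^\times)\rvert$ away from the prime $q$ --- note $q$ and $q^{d_i}$ have the same prime divisor, so the defining characteristic is unchanged. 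Finally, for an absolutely simple adjoint group $\mathbf{S}$ over a finite field $\F_Q$ of characteristic $p$, the Schur multiplier of $\mathbf{S}(\F_Q)$ is the product of the "generic" multiplier, whose order divides the order of the fundamental group of $\Phi$ (hence is prime to $p$, as it is bounded in terms of $\Phi$ and $p > C$), and finitely many exceptional contributions occurring only for small $Q$ in small rank; taking $C$ larger than all the characteristics appearing in that finite list of exceptional cases (e.g. the table in \cite[\S 6.1]{GLS} or the classical references) eliminates them. This yields a bound on $\lvert H^2(\mathbf{S}(\F_Q),\C^\times)\rvert$ depending only on $\Phi$, in particular prime to $p$ and hence to $q$.

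The main obstacle is the last step: the exceptional Schur multipliers of finite simple groups of Lie type are genuinely a case-by-case phenomenon, and one must invoke the classification of these multipliers to know that (i) the exceptional part occurs only for finitely many pairs (type, $Q$) and (ii) its prime divisors lie below an explicit bound. The bookkeeping is routine once one cites the standard tables, but it is the only non-formal input; everything else is a matter of assembling the spectral-sequence lemma, Lang--Steinberg, and the restriction-of-scalars reduction already used for unipotent characters. I would also take care that the constant $C$ absorbs simultaneously the bound ensuring $\lvert Z(\mathbf{G})\rvert$ is prime to $p$, the bound $q > 3$ for perfectness, and the bound clearing exceptional multipliers --- all three depend only on $\Phi$, so their maximum is a legitimate choice of $C$.
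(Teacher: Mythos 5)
Your overall strategy is the same as the paper's: reduce to (almost) simple factors via restriction of scalars, handle central kernels with the spectral-sequence Lemma~\ref{lem:SS}, use the K\"unneth formula for products, and quote the classification of Schur multipliers of finite groups of Lie type, enlarging $C$ to clear the finitely many exceptional multipliers and to make the generic multiplier (bounded by the order of the fundamental group) prime to~$p$. The one structural difference is the direction of the reduction: the paper presents $\mathbf{G}$ as a \emph{quotient} of a product of almost simple groups and descends, whereas you pass \emph{down} to the adjoint quotient. That choice is where your argument has a concrete error.

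The central extension you write down, $1 \to Z(\mathbf{G})(\F_q) \to \mathbf{G}(\F_q) \to (\mathbf{G}/Z(\mathbf{G}))(\F_q) \to 1$, is not exact on the right. The Lang--Steinberg theorem gives surjectivity of $\mathbf{G}(\F_q) \to \mathbf{H}(\F_q)$ only when the kernel is \emph{connected}; for an isogeny the cokernel is $H^1(\Gal_{\F_q}, \ker)$, which is typically nontrivial (e.g.\ $\SL_2(\F_q) \to \PGL_2(\F_q)$ has image of index $2$). Consequently Lemma~\ref{lem:SS} only reduces you to $H := \mathbf{G}(\F_q)/Z(\mathbf{G})(\F_q)$, a normal subgroup of bounded prime-to-$p$ index in the adjoint group, and this is not the group whose multiplier is tabulated. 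The repair is standard but must be said: since the simple group $S = [\mathbf{G}^{\mathrm{ad}}(\F_q), \mathbf{G}^{\mathrm{ad}}(\F_q)]$ sits inside $H$ with index prime to $p$, restriction--corestriction shows that the $p$-part of $H^2(H,\C^\times)$ injects into the $p$-part of $H^2(S,\C^\times)$, which vanishes by the tables once $p > C$. (The paper faces the mirror-image issue --- the cokernel of $\prod_i \mathbf{G}_i(\F_q) \to \mathbf{G}(\F_q)$, controlled by $H^1(\Gal_{\F_q},\mathbf{Z})$ --- and likewise resolves it by a bounded-index comparison.) A second, minor slip: for reductive $\mathbf{G}$ the scheme-theoretic center is not finite (it contains a torus), so $\lvert Z(\mathbf{G})\rvert$ does not divide the order of the fundamental group; what you actually need, and what is true, is that $\lvert Z(\mathbf{G})(\F_q)\rvert$ is prime to $p$, which holds because torus points have order prime to the characteristic. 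With these two repairs your proof goes through and is equivalent in substance to the paper's.
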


\begin{proof} Let $\F_q$ be a finite field of characteristic
  $p$ and let $\mathbf{G}$ be a connected reductive
  $\F_q$-algebraic group with absolute root system $\Phi$.

  \nir{Assume first that $\mathbf{G}$ is semi-simple.  Then there are
    almost simple groups $\mathbf{G}_1,\ldots,\mathbf{G}_n$ such that
    $\mathbf{G}$ is a quotient of $\mathbf{G}_1 \times \cdots \times
    \mathbf{G}_n$ by a central subgroup $\mathbf{Z}$, and both $n$ and
    the ranks of the groups $\mathbf{G}_i$ are bounded in terms
    of~$\Phi$.  In particular, the size of $\mathbf{Z}$ is bounded in
    terms of~$\Phi$.  From the exact sequence
    \[
    0 \rightarrow \mathbf{Z}(\mathbb{F}_q) \rightarrow
    \prod\nolimits_{i=1}^n \mathbf{G}_i(\mathbb{F}_q) \rightarrow
    \mathbf{G}(\mathbb{F}_q) \rightarrow
    H^1(\Gal_{\mathbb{F}_q},\mathbf{Z})
    \]
    we conclude that both the kernel and the cokernel of the map
    $\prod_{i=1}^n \mathbf{G}_i(\mathbb{F}_q) \rightarrow
    \mathbf{G}(\mathbb{F}_q)$ have sizes bounded in terms of $\Phi$. }


It is known that the
  sizes of $H^1(\mathbf{G}_i(\F_q),\C ^ \times)$ and
  $H^2(\mathbf{G}_i(\F_q),\C ^ \times)$ are bounded in terms
  of~$\Phi$; see, for example, \cite[Table~5]{Atlas}. By the K\"unneth
  formula, the sizes of $H^1(\prod_{i=1}^n \mathbf{G}_i(\F_q),
  \C ^ \times)$ and $H^2(\prod_{i=1}^n \mathbf{G}_i(\F_q),
  \C ^ \times)$ are bounded by some constant~$C_1 \in \R$.  In
  particular, if $p$ is greater than~$C_1$, then the size of
  $H^2(\prod_{i=1}^n \mathbf{G}_i(\F_q), \C ^ \times)$ is
  prime to $q$.  By Lemma~\ref{lem:SS}, the same is true for the size
  of $H^2(\mathbf{G}(\F_q), \C ^ \times)$ if $p$ is
  larger than the size of the kernel \nir{and cokernel} of the quotient map
  $\prod_{i=1}^n \mathbf{G}_i(\F_q) \rightarrow
  \mathbf{G}(\F_q)$.

  Now assume that $\mathbf{G}$ is merely reductive. Let
  $\mathbf{S}=[\mathbf{G},\mathbf{G}]$ be the derived subgroup of
  $\mathbf{G}$ and let $\mathbf{T}=Z(\mathbf{G})^{\circ}$. Then
  $\mathbf{T}$ is a torus, $\mathbf{S}$ is semi-simple, and
  $\mathbf{G}(\F_q)$ is a quotient of
  $\mathbf{T}(\F_q) \times \mathbf{S}(\F_q)$ by a
  central subgroup, whose size is bounded in terms of~$\Phi$. As shown
  above, if $p$ is sufficiently large, then the size of
  $H^2(\mathbf{S}(\F _q),\C ^ \times)$ is prime to
  $q$; a similar claim for $H^1(\mathbf{S}(\F _q),\C ^
  \times)$ also holds. Since the size of $\mathbf{T}(\F _q)$
  is prime to $q$, so are the orders of its first and second
  cohomology groups. By the K\"unneth formula, the size of $H^2 \left(
    \mathbf{T}(\F _q) \times \mathbf{S}(\F _q),
    \C ^ \times \right)$ is prime to $q$. By
  Lemma~\ref{lem:SS}, if we assume, in addition, that $p$ is larger
  than the size of the kernel of $\mathbf{T}(\F_q) \times
  \mathbf{S}(\F_q) \rightarrow \mathbf{G}(\F_q)$, then
  the size of $H^2(\mathbf{G}(\F_q), \C ^ \times)$ is
  prime to $q$.
\end{proof}

\subsection{Kirillov Orbit Method}

All pro-$p$ groups in this section are open subgroups of
$\mathbf{G}(O_{L, \fq})$, where $\mathbf{G}$ is an affine group scheme
over the ring of integers $O_L$ of a number field $L$ and $O_{L,\fq}$
is the completion of $O_L$ at a prime $\fq$ lying above a rational
prime~$p$.  We fix an embedding $\mathbf{G} \subset \GL_N$, for a
suitable $N \in \N$, and denote by $\mathfrak{g} \subset
\mathfrak{gl}_N$ the Lie algebra of~$\mathbf{G}$.

\begin{defn} \label{def:good} Let $\fq$ be a prime
  of~$O_L$.  We say that a pro-$p$ subgroup $H \subset
  \mathbf{G}(O_{L,\fq})$ is good if the following two
  conditions hold.
  \begin{list}{}{\setlength{\leftmargin}{\myenumilistleftmargin}
      \setlength{\labelwidth}{20pt} \setlength{\itemsep}{0pt}
      \setlength{\parsep}{1pt}}
  \item[\textup{(1)}] The logarithm series
    $
    \log(X) = \sum_{n=1}^\infty (-1)^{n-1} \frac{(X-1)^n}{n}
    $
    converges on $H$, setting up an injective map $\log \colon H
    \rightarrow \log H \subset \mathfrak{g}(O_{L,\fq})$, and
    the exponential series
    $
    \exp(X) = \sum_{n=0}^\infty \frac{X^n}{n!}
    $
    converges on $\log H$, yielding the inverse map $\exp \colon
    \log H \rightarrow H$.
  \item[\textup{(2)}] The image $\log H$ is closed under addition and
    the Lie bracket, thus forming a $\mathbb{Z}_p$-Lie lattice.  It is
    also closed under the adjoint action of~$H$. For all $A,B \in
    \log H$, the Hausdorff formula (e.g., see~\cite[Chapter~V]{Jac})
    holds:
    \[
    \log (\exp (A)\cdot \exp (B)) = \sum_{m=1}
    ^{\infty}\frac{(-1)^m}{m}\sum_{r_i+s_i >0}\frac{\left (
        \sum_{i=1}^m (r_i+s_i) \right ) ^{-1}}{r_1!\cdot s_1! \cdot
      \ldots \cdot r_m!\cdot s_m!} R_{r_1,s_1,\ldots ,r_m,s_m}(A,B),
    \]
    where the Lie polynomials $R_{r_1,s_1,\ldots
      ,r_m,s_m}(A,B)$ are defined by
    \[
    R_{r_1,s_1,\ldots ,r_m,s_m}(A,B) =
    \begin{cases} \ad(A)^{r_1} \ad(B)^{s_1} \cdots \ad(A)^{r_m}(B) &
      \text{if $s_m=1$,} \\ \ad(A)^{r_1} \ad(B)^{s_1} \cdots
      \ad(B)^{r_{m-1}}(A) & \text{if $r_m=1$, $s_m=0$,} \\ 0 &
      \text{otherwise.}
    \end{cases}
    \]
  \end{list}
\end{defn}

\begin{lem} \label{lem:good.subgroup} Let $\mathbf{G} \subset \GL_N$
  be as above and let $\fq$ be a prime of $O_L$ extending a rational
  prime $p > [L:\Q] N^2$.  Then every pro-$p$ subgroup $H \subset
  \mathbf{G}(O_{L, \fq})$ is good.
\end{lem}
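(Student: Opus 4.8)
The statement to prove is Lemma~\ref{lem:good.subgroup}: if $\mathbf{G}\subset\GL_N$ and $\fq$ extends a rational prime $p>[L:\Q]N^2$, then every pro-$p$ subgroup $H\subset\mathbf{G}(O_{L,\fq})$ is good. The strategy is to reduce everything to convergence estimates for the $\log$, $\exp$, and Hausdorff series in the $\fq$-adic valued ring $O_{L,\fq}$, working inside the ambient associative algebra $\Mat_N(O_{L,\fq})$ where all the series in question already make sense coefficient-wise. The key numerical fact is that, if $e$ is the ramification index of $\fq$ over $p$, then $\val_\fq(p)=e$ and, since $e\le [L:\Q]$ and the residue field has size a power of $p$, the condition $p>[L:\Q]N^2$ guarantees enough ``room'' above the valuation of $p$ to absorb the denominators $n$, $n!$ appearing in the series.

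First I would set up the normalization: let $v=\val_\fq$ be normalized so that $v(\fq)=1$, so $v(p)=e\le [L:\Q]$. A pro-$p$ subgroup $H$ is contained in the first congruence subgroup $\mathbf{G}^{(1)}(O_{L,\fq})=\ker(\mathbf{G}(O_{L,\fq})\to\mathbf{G}(O_L/\fq))$ — indeed the image of $H$ in the finite group $\mathbf{G}(O_L/\fq)$ is a pro-$p$ subgroup of a group whose order, by the finite-group-of-Lie-type structure, has $p$-part bounded by a power of $q$ but whose $p'$-part... more carefully: $H$ maps to a $p$-subgroup of $\mathbf{G}(O_L/\fq)\subset\GL_N(O_L/\fq)$, and any such $p$-subgroup, after conjugation, lies in the upper unitriangular matrices, hence $H$ is contained in a conjugate of the standard congruence filtration level; replacing $H$ by a conjugate (which does not affect goodness) we may assume $H\subset 1+\fq\Mat_N(O_{L,\fq})$, i.e. $H-1\subset\fq\Mat_N(O_{L,\fq})$. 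Now for $X\in H$ with $v(X-1)\ge 1$, standard estimates give $v\big((-1)^{n-1}(X-1)^n/n\big)\ge n - v(n)\ge n-e\log_p n$, which tends to $\infty$, so $\log$ converges on $H$ and lands in $\Mat_N(O_{L,\fq})$; moreover the leading term shows $v(\log X-(X-1))\ge 2$, so $\log X\in\fq\Mat_N$ as well and $\log X\in\mathfrak{g}(O_{L,\fq})$ because $\mathfrak{g}$ is cut out by polynomial equations and $\log$ is an analytic group homomorphism from $H$ into $\GL_N(O_{L,\fq})$ whose image therefore lies in the Lie algebra (this is the classical $p$-adic Lie correspondence; alternatively differentiate). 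Symmetrically, for $A$ with $v(A)\ge 1$ one checks $v(A^n/n!)\ge n - v(n!) = n - e\cdot(n - s_p(n))/(p-1)\ge n - en/(p-1)$, which is $>0$ and $\to\infty$ once $p-1>e$, a fortiori under our hypothesis; so $\exp$ converges on $\log H$ and is inverse to $\log$ by the formal identity $\exp\circ\log=\id$, valid because the partial sums converge. This establishes condition~(1).

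For condition~(2): $\log H$ is closed under addition, bracket, and $\Ad(H)$ because $\exp$ and $\log$ are mutually inverse bijections between $H$ (a group) and $\log H$, and the Hausdorff formula expresses $\log(\exp A\cdot\exp B)$ as a convergent series of iterated brackets of $A,B$ — here the point is precisely convergence of the Hausdorff series, which is where the bound $p>[L:\Q]N^2$ does its real work. The term $R_{r_1,s_1,\dots,r_m,s_m}(A,B)$ is an iterated bracket of total degree $d=\sum(r_i+s_i)$ in elements of valuation $\ge 1$ of $\Mat_N$, so $v(R_{\dots})\ge d$; the attached coefficient has $\fq$-valuation $\ge -v(m) - \sum v(r_i!) - \sum v(s_i!) - v\big(\sum(r_i+s_i)\big)$, and each $v(k!)\le e(k-1)/(p-1)$, so the total coefficient valuation is $\ge -e\cdot(\text{something}\le d)/(p-1) - O(\log d)$. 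Hence each Hausdorff term has valuation $\ge d\big(1 - e/(p-1)\big) - O(\log d)\to\infty$, giving convergence, and the sum lands in $\fq\Mat_N$. That $\log H$ is a $\Z_p$-Lie lattice: it is an $O_{L,\fq}$-submodule? — no, only a $\Z_p$-submodule in general; closure under addition and bracket plus finite $\Z_p$-rank (it spans a $\Q_p$-subspace of $\mathfrak{g}(L_\fq)$ viewed over $\Q_p$, and is $\Z_p$-compact) makes it a $\Z_p$-Lie lattice.

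\textbf{Main obstacle.} The genuine content is the bookkeeping in the Hausdorff-series convergence estimate: one must verify that the worst-case negative contribution of all factorial denominators, summed over a single bracket term of degree $d$, is strictly dominated by $d$ (the positive contribution from the valuations of $A,B$), with enough margin that the condition simplifies to something implied by $p>[L:\Q]N^2$. The factor $N^2$ enters because one really wants to run the argument not just for $H$ in the first congruence subgroup but to locate $H$ there in the first place — bounding, via $p>[L:\Q]N^2\ge eN^2$, the possible $p$-torsion already forces the unipotent reduction — and because one may want headroom for $\Ad$ and for the identity $\exp\log=\log\exp=\id$ to hold term-by-term. I expect the cleanest route is to prove the sharper statement ``$v(X-1)\ge 1\Rightarrow$ all series converge'' with the explicit inequality $e/(p-1)<1$, check $e\le[L:\Q]<[L:\Q]N^2<p$ hence $e\le p-1$... one needs $e<p-1$ strictly, which holds since $e\le[L:\Q]\le p/N^2<p-1$ for $N\ge2$ (and $N=1$ is trivial), and then the $O(\log d)$ terms are absorbed for $d$ large with finitely many small $d$ checked directly. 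The rest is routine.
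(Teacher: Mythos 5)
Your proposal has two genuine gaps, and the first one invalidates all of your valuation estimates as written. You claim that, after conjugation, ``we may assume $H\subset 1+\fq\Mat_N(O_{L,\fq})$''. This is false. The image $\overline{H}$ of $H$ in $\GL_N(O_L/\fq)$ is a finite $p$-group and is therefore conjugate into the unitriangular matrices, but it need not be trivial; conjugating $H$ accordingly only places it in the \emph{preimage} of the unitriangular group, not in the first congruence subgroup. (Concretely, the closure of $\langle\left(\begin{smallmatrix}1&1\\0&1\end{smallmatrix}\right)\rangle$ in $\SL_2(\Z_p)$ is a pro-$p$ group isomorphic to $\Z_p$ that lies in no conjugate of $1+p\Mat_2(\Z_p)$.) Consequently one only knows that $X-1$ is \emph{residually nilpotent}, i.e.\ $\val_\fq\bigl((X-1)^N\bigr)\geq 1$, not $\val_\fq(X-1)\geq 1$. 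Every subsequent estimate must be redone with $\val\bigl((X-1)^n\bigr)\geq\lfloor n/N\rfloor$ in place of $\geq n$, and the convergence condition for $\exp$ and for the Hausdorff series then becomes roughly $e N<p-1$ rather than $e<p-1$ (with $e\leq[L:\Q]$ the ramification index). This, not $p$-torsion considerations, is where the factor $N$ in the hypothesis genuinely enters; your closing speculation about the role of $N^2$ does not repair the step.

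The second gap is in condition~(2): closure of $\log H$ under \emph{addition} and under the \emph{Lie bracket} does not follow from convergence of the Hausdorff series. Convergence of BCH shows only that $\log H$ is closed under the group operation $A*B=A+B+\tfrac12[A,B]+\cdots$ and (via $\Z_p$-powers in $H$) under $\Z_p$-scalar multiplication. To extract $A+B$ one wants $\lim_{\lambda\to 0}\lambda^{-1}(\lambda A*\lambda B)$, and there is no a priori reason that $\lambda^{-1}$ times an element of $\log H$ lies in $\log H$; establishing this divisibility is exactly the content of Lazard's saturability theory, and it is a genuine theorem (false for small $p$) rather than a formal consequence. This is precisely the point at which the paper's own proof simply cites Klopsch's result that, under the stated bound on $p$, every pro-$p$ subgroup of $\GL_N(O_{L,\fq})$ is saturable, together with the fact that saturable groups are good. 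So your self-contained route is conceptually different from the paper's two-line citation, but as written it assumes the hard part of saturability rather than proving it.
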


\begin{proof} Pro-$p$ groups which are saturable in the sense of
  Lazard -- for recent characterizations see~\cite{K,GK} -- are good
  in the sense of Definition~\ref{def:good}.  The assertion thus
  follows from~\cite[Corollary~1.5]{K} which implies that every
  pro-$p$ subgroup $H \subset \GL_N(O_{L, \fq})$ is saturable.
\end{proof}

A good pro-$p$ group $H$ acts on $\mathfrak{h} = \log H$ via the
adjoint action $\Ad \colon H \rightarrow \Aut(\mathfrak{h})$.  We
denote by $\Ad(h)(A)$ the image of $A \in \mathfrak{h}$ under the
adjoint action of $h \in H$.  The adjoint action induces the
co-adjoint action of $H$ on the Pontryagin dual $\mathfrak{h}^\vee =
\Hom_\text{cont}(\mathfrak{h},\C^\times)$, consisting of all
continuous homomorphisms from the abelian pro-$p$ group $\mathfrak{h}$
to $\C^\times$.  Concretely, for $h \in H$, $A \in \mathfrak{h}$, and
$\theta \colon \mathfrak{h} \rightarrow \C^\times$, one defines
\[
\left(\Ad^*(h)(\theta)\right)(A) = \theta \left( \Ad(h^{-1})(A)
\right).
\]
Equivalence classes of irreducible representations of $H$ are
parametrized by the corresponding characters.  Accordingly, we use the
notation $\Irr(H)$ in a flexible way to denote also the set of
irreducible complex characters of~$H$.  The Kirillov orbit method for
$p$-adic analytic pro-$p$ groups yields the following description of
the irreducible characters of~$H$.

\begin{prop} \label{prop:orbit.method} Let $\mathbf{G} \subset \GL_N$
  be as above. For almost all primes $\mathfrak{q}$ of $O_L$, every
  pro-$p$ subgroup $H \subset \mathbf{G}(O_{L,\fq})$ is good and,
  setting $\mathfrak{h} = \log H$, the following hold.
  \begin{list}{}{\setlength{\leftmargin}{\myenumilistleftmargin}
      \setlength{\labelwidth}{20pt} \setlength{\itemsep}{0pt}
      \setlength{\parsep}{1pt}}
  \item[\textup{(1)}] There is a function $\Omega \colon \mathfrak{h}
    ^\vee \rightarrow \Irr(H)$ which is constant on co-adjoint orbits
    and induces a bijection between the set of co-adjoint orbits in
    $\mathfrak{h}^\vee$ and the set of irreducible characters of~$H$.
  \item[\textup{(2)}] For $\theta \in \mathfrak{h} ^\vee$, the character
    $\Omega(\theta)$ is given by
    \[
    \Omega(\theta)(h) = \frac{1}{\lvert \Ad^*(H)(\theta)
      \rvert^{1/2}} \sum_{\phi \in \Ad^*(H)(\theta)} \phi(\log(h))
    \qquad (h \in H).
    \]
    In particular, the degree of $\Omega(\theta)$ is
    $\dim\Omega(\theta) = \lvert \Ad^*(H)(\theta) \rvert^{1/2}$.
  \item[\textup{(3)}] Every $g \in \mathbf{G}(O_{L,\fq})$ that
    normalizes the subgroup $H$ also normalizes the Lie lattice
    $\mathfrak{h}$ and $\Omega(\theta)^{\, g} =
    \Omega(\Ad^*(g^{-1})(\theta))$ for $\theta \in \mathfrak{h}
    ^\vee$.
  \end{list}
\end{prop}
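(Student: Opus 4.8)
The statement is the Kirillov orbit method for Lazard--saturable pro-$p$ groups, and the plan is to reduce it to that known theory by means of Lemma~\ref{lem:good.subgroup}. Fix a rational prime $p$ with $p > [L:\Q]N^2$; all but finitely many primes $\fq$ of $O_L$ lie over such a $p$, and for these Lemma~\ref{lem:good.subgroup} says that every pro-$p$ subgroup $H\subset\mathbf{G}(O_{L,\fq})$ is good. For such an $H$ the maps $\log\colon H\to\mathfrak{h}=\log H$ and $\exp\colon\mathfrak{h}\to H$ are mutually inverse homeomorphisms, $\mathfrak{h}$ is a finitely generated $\Z_p$-Lie lattice, and by the Hausdorff formula the group operation of $H$ is carried by $\log$ to the Baker--Campbell--Hausdorff operation on $\mathfrak{h}$; in particular $H$ is saturable. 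One may then invoke the orbit method for saturable pro-$p$ groups directly, or reduce to finite $p$-group quotients: the submodules $\mathfrak{h}_r=p^r\mathfrak{h}$ are ideals of $\mathfrak{h}$, the subgroups $H_r=\exp(\mathfrak{h}_r)$ are open and normal in $H$ with $\bigcap_r H_r=1$, each $H/H_r$ is a finite (still saturable, for our $p$) $p$-group with associated Lie ring $\mathfrak{h}/\mathfrak{h}_r$, and since every $\theta\in\mathfrak{h}^\vee$ and every $\rho\in\Irr(H)$ factors through some level it suffices to construct compatible bijections at each finite level and pass to the direct limit.

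At a fixed level the construction is the classical one. Given $\theta$, form the alternating $\Z_p$-bicharacter $B_\theta(X,Y)=\theta([X,Y])$ on $\mathfrak{h}/\mathfrak{h}_r$, let $\mathfrak{r}_\theta$ be its radical (a subalgebra), and choose a \emph{polarization} $\mathfrak{m}$, i.e.\ a subalgebra with $\mathfrak{r}_\theta\subseteq\mathfrak{m}$, $B_\theta|_{\mathfrak{m}}\equiv 1$ and $[\mathfrak{h}:\mathfrak{m}]=[\mathfrak{m}:\mathfrak{r}_\theta]$. Then $\theta\circ\log$ extends to a linear character $\widetilde{\theta}$ of $M=\exp(\mathfrak{m})$, and one sets $\Omega(\theta)=\Ind_M^H\widetilde{\theta}$. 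The points to verify are: (a) $\Ind_M^H\widetilde{\theta}$ is irreducible, by the Mackey criterion, since $\mathfrak{m}$ is a maximal $B_\theta$-isotropic subalgebra; (b) the result is independent of the choice of $\mathfrak{m}$ and invariant under the co-adjoint $H$-action, so $\Omega$ descends to a map on orbits; (c) $\dim\Omega(\theta)=[\mathfrak{h}:\mathfrak{m}]=[\mathfrak{h}:\mathfrak{r}_\theta]^{1/2}=\lvert\Ad^*(H)(\theta)\rvert^{1/2}$, the last equality from orbit--stabiliser after identifying $\Stab_H(\theta)=\exp(\mathfrak{r}_\theta)$; (d) the character formula of part~(2), obtained from Frobenius' induced-character formula $\Ind_M^H\widetilde{\theta}(h)=\lvert M\rvert^{-1}\sum_{x\in H:\,x^{-1}hx\in M}\widetilde{\theta}(x^{-1}hx)$ by grouping cosets $xM$ according to the value $\Ad^*(x)(\theta)$ and evaluating the inner sum via $B_\theta$-orthogonality; equivalently, one checks that the right-hand side of the formula in~(2) is a class function with self-inner-product $1$ which pairs nontrivially with $\Ind_M^H\widetilde{\theta}$, forcing equality; (e) surjectivity of $\Omega$, from the Plancherel count $\sum_{\mathcal{O}}\lvert\mathcal{O}\rvert=\lvert\mathfrak{h}^\vee\rvert=\lvert H/H_r\rvert$ over co-adjoint orbits. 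Together (a)--(e) give parts~(1) and~(2) at finite level, compatibly with the maps $H/H_{r+1}\twoheadrightarrow H/H_r$, so they pass to the limit.

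Part~(3) is then a formal consequence. If $g\in\mathbf{G}(O_{L,\fq})$ normalizes $H$, then $\Ad(g)$ is a Lie-lattice automorphism of $\mathfrak{h}$, hence carries $\mathfrak{r}_\theta$ and a polarization $\mathfrak{m}$ of $\theta$ to $\mathfrak{r}_{\Ad^*(g^{-1})(\theta)}$ and a polarization of $\Ad^*(g^{-1})(\theta)$; since $\Ind$ commutes with this conjugation and $\widetilde{\theta}^{\,g}=(\Ad^*(g^{-1})(\theta))\circ\log$, we get $\Omega(\theta)^{\,g}=\Omega(\Ad^*(g^{-1})(\theta))$. This can also be read straight off the formula in~(2) using $\phi(\log(g^{-1}hg))=\phi(\Ad(g^{-1})(\log h))=(\Ad^*(g)(\phi))(\log h)$.

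\textbf{Main obstacle.} The only genuinely delicate ingredient is the existence of a polarization $\mathfrak{m}$ that is a $\Z_p$-submodule of the right elementary-divisor type, so that $\exp\mathfrak{m}$ is an honest subgroup and the index computations in~(c) are exact, together with the bookkeeping needed to keep the finite quotients saturable; both are settled in the theory of saturable pro-$p$ groups (see \cite{K,GK} and the references therein), so in practice the proof amounts to verifying the hypotheses via Lemma~\ref{lem:good.subgroup}, quoting that theory, and adding the elementary limit argument and the functoriality in~(3). If a self-contained treatment were wanted, the polarization step could be bypassed along the lines of Howe, by realizing $\Omega(\theta)$ through a Heisenberg--Weil representation attached to $B_\theta$, but that refinement is not needed here.
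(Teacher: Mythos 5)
Your proposal is correct and, at its core, takes the same route as the paper: the paper's proof consists of exactly your first reduction (discard the finitely many primes over $p \leq [L:\Q]N^2$, so that every pro-$p$ subgroup is saturable by \cite[Theorem~A]{GK}, hence good) followed by a citation of \cite[Theorem~5.2]{G} for the orbit method, using that saturable pro-$p$ groups of dimension at most $p$ are potent. The polarization/finite-quotient argument you sketch is essentially the content of that cited theorem, and the one delicate point you correctly flag as the main obstacle (existence of polarizations of the right type, and the potency of the relevant quotients) is precisely what the reference supplies.
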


\begin{proof} Let $S$ be the finite set of primes $\fq$ of $O_L$ that
  extend a rational prime~$p$ with $p \leq [L:\Q] N^2$.  The
  assumptions then imply that $H \subset \mathbf{G}(O_{L,\fq})$ is
  saturable for $\fq \notin S$ (see~\cite[Theorem~A]{GK}), in
  particular good in the sense of Definition~\ref{def:good}. As
  saturable pro-$p$ groups of dimension at most $p$ are potent, the
  assertions follow from~\cite[Theorem~5.2]{G}.
\end{proof}

We refer to the map $\Omega$ in Proposition~\ref{prop:orbit.method} as
the orbit method map.

\begin{lem} \label{lem:zeta.to.relative} Let $\mathbf{G} \subset
  \GL_N$ be as above, and let $G$ be an open subgroup of
  $\mathbf{G}(O_{L,\fq})$ for some prime $\fq$ of $O_L$, with open
  normal subgroups $K \subset H \subset G$. Suppose that $H$ and $K$
  are good pro-$p$ groups, with Lie lattices $\mathfrak{h}=\log H$ and
  $\mathfrak{k}=\log K$, and that the irreducible characters of $H$
  and $K$ are described by the orbit method as in
  Proposition~\textup{\ref{prop:orbit.method}}. Then
  \[
  \zeta_G(s) = \sum_{\theta \in \mathfrak{h}^\vee} \frac{1}{\lvert
    \Ad^*(G) (\theta) \rvert}(\dim \Omega(\theta))^{-s} \zeta_{G \vert
    \Omega(\theta)}(s).
  \]
  Furthermore, if $\tau \in \mathfrak{k} ^\vee$, then
  \[
  \zeta_{G \vert \Omega(\tau)}(s) = \sum_{\theta \in
    \mathfrak{h}^\vee, \hspace{0.1cm} \theta \vert_\mathfrak{k} =
    \tau} \frac{\lvert \Ad^*(H) (\tau) \rvert}{\lvert \Ad^*(G)
    (\theta) \rvert} \left( \frac{\dim \Omega (\theta)}{\dim \Omega
      (\tau)}\right)^{-s} \zeta_{G|\Omega (\theta)}(s).
  \]
\end{lem}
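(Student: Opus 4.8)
The plan is to decompose $\Irr(G)$ according to which character of $H$ (equivalently, which co-adjoint orbit in $\mathfrak{h}^\vee$) appears in the restriction, and then to count the relevant orbits. First I would recall that, since $H \triangleleft G$, Clifford theory tells us that for each $\rho \in \Irr(G)$ the restriction $\Res^G_H \rho$ is a sum of $G$-conjugates of a single $\theta \in \Irr(H)$, all with the same multiplicity; in particular $\rho \in \Irr(G \mid \theta)$ for exactly one $G$-orbit of characters $\theta \in \Irr(H)$. Hence $\Irr(G) = \bigsqcup_{[\theta]} \Irr(G \mid \theta)$, the disjoint union over $G$-orbits of $\theta \in \Irr(H)$. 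Using the orbit method map $\Omega \colon \mathfrak{h}^\vee \to \Irr(H)$ of Proposition \ref{prop:orbit.method}, which is $H$-equivariant and a bijection on co-adjoint orbits, together with part (3) of that proposition (which says $G$ normalizes $\mathfrak{h}$ and acts compatibly via $\Ad^*$), the $G$-orbits on $\Irr(H)$ correspond bijectively to $G$-orbits on $\mathfrak{h}^\vee$. So I would rewrite the sum over $G$-orbits $[\theta] \subset \mathfrak{h}^\vee$, and to pass from a sum over orbits to a sum over all $\theta \in \mathfrak{h}^\vee$ I insert the weight $1/\lvert \Ad^*(G)(\theta)\rvert$, i.e.\ the reciprocal of the orbit size. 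Combining this with the definition of $\zeta_{G\mid\Omega(\theta)}(s)$ — whose terms are $(\dim\rho/\dim\Omega(\theta))^{-s}$ — and multiplying by the factor $(\dim\Omega(\theta))^{-s}$ to recover $(\dim\rho)^{-s}$, yields
\[
\zeta_G(s) = \sum_{[\theta]\subset \mathfrak{h}^\vee}\ \sum_{\rho \in \Irr(G\mid\Omega(\theta))} (\dim\rho)^{-s} = \sum_{\theta\in\mathfrak{h}^\vee} \frac{1}{\lvert\Ad^*(G)(\theta)\rvert}(\dim\Omega(\theta))^{-s}\,\zeta_{G\mid\Omega(\theta)}(s),
\]
which is the first formula. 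I should check that $\zeta_{G\mid\Omega(\theta)}$ depends only on the $G$-orbit of $\theta$ (clear, since conjugate normal-subgroup characters have the same relative zeta function), so the weighting is consistent.

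For the second formula, I would run the same argument one level down, relative to the subgroup $K \triangleleft G$ and the character $\tau \in \mathfrak{k}^\vee$. Every $\rho \in \Irr(G\mid\Omega(\tau))$ restricts to $H$ as a sum of $G$-conjugates of some $\theta \in \Irr(H\mid\Omega(\tau))$; since $K \triangleleft H$ and $\Omega$ is equivariant, $\Irr(H\mid\Omega(\tau))$ corresponds to those $\theta\in\mathfrak{h}^\vee$ with $\theta\vert_{\mathfrak{k}} = \tau$ (here I use that restriction of a co-adjoint character to the sublattice $\mathfrak{k}$ matches restriction of the corresponding $H$-character to $K$, modulo the orbit-method dictionary). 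The $G$-orbits among such $\theta$ again index the pieces $\Irr(G\mid\Omega(\theta))$ that together exhaust $\Irr(G\mid\Omega(\tau))$; passing from orbits to all such $\theta$ introduces the weight $\lvert\Ad^*(H)(\theta)\vert^{\text{within the set } \theta\vert_\mathfrak{k}=\tau}$... more precisely the count of $H$-conjugates with the prescribed $K$-restriction is $\lvert\Ad^*(H)(\tau)\rvert$ divided by an $H$-stabilizer factor, and when one tracks the normalizations $\dim\Omega(\theta)=\lvert\Ad^*(H)(\theta)\rvert^{1/2}$ and the $G$-orbit weights, one is led to the stated coefficient $\lvert\Ad^*(H)(\tau)\rvert/\lvert\Ad^*(G)(\theta)\rvert$ and the degree ratio $(\dim\Omega(\theta)/\dim\Omega(\tau))^{-s}$. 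Collecting terms gives the second identity.

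The main obstacle I anticipate is the bookkeeping of orbit sizes and stabilizers in the second formula: one must carefully reconcile (i) the $H$-co-adjoint orbit structure on the fiber $\{\theta : \theta\vert_{\mathfrak k}=\tau\}$, (ii) the further coarsening by the $G$-action, and (iii) the square-root normalization of dimensions coming from Heisenberg-type extensions in the orbit method. The cleanest route is probably to verify the identity termwise: fix $\rho \in \Irr(G\mid\Omega(\tau))$, pick the unique $G$-orbit $[\theta]$ with $\rho \in \Irr(G\mid\Omega(\theta))$, and check that the coefficient of $(\dim\rho/\dim\Omega(\tau))^{-s}$ produced by the right-hand side — summed over the $\lvert[\theta]\rvert$ representatives $\theta$ in that orbit, each weighted by $\lvert\Ad^*(H)(\tau)\rvert/\lvert\Ad^*(G)(\theta)\rvert$ — equals $1$, i.e.\ that the number of $\theta$ in the $G$-orbit with $\theta\vert_{\mathfrak k}=\tau$, times $\lvert\Ad^*(H)(\tau)\rvert/\lvert\Ad^*(G)(\theta)\rvert$, equals the multiplicity contributed to $\zeta_{G\mid\Omega(\tau)}$, which is $1$ after the relative normalization. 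This reduces everything to an orbit-counting identity of the form $\lvert G\text{-orbit of }\theta\rvert \cdot \lvert\Ad^*(H)(\tau)\rvert = \lvert\Ad^*(G)(\theta)\rvert \cdot \#\{\theta' \in [\theta] : \theta'\vert_{\mathfrak k}=\tau\}$, which follows from the orbit–stabilizer theorem applied to the action of $G$ on pairs, together with the fact that $\Ad^*(H)(\tau)$ is the set of $H$-conjugates of $\tau$ in $\mathfrak{k}^\vee$ and $\Stab_G(\tau)$ acts transitively on the $\theta$ above a fixed $\tau$ within one $G$-orbit.
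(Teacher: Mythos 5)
Your argument for the first identity is correct and is essentially the paper's own: partition $\Irr(G)$ into the sets $\Irr(G\vert\chi)$ indexed by $G$-orbits of $\chi\in\Irr(H)$, transport this to $G$-orbits in $\mathfrak{h}^\vee$ via the $G$-equivariance and orbit-bijectivity of $\Omega$ from Proposition~\ref{prop:orbit.method}, and convert the orbit sum into a sum over all $\theta$ with the weight $\lvert\Ad^*(G)(\theta)\rvert^{-1}$; the paper merely performs this conversion in two steps, with weight $\lvert\Ad^*(H)(\theta)\rvert^{-1}\,\lvert\Omega(\theta)^G\rvert^{-1}$.

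For the second identity your strategy (termwise verification reduced to an orbit count) is the right one, and you correctly isolate the Clifford-theoretic input that the paper records, namely that $\Omega(\tau)$ is a constituent of $\Res^H_K\Omega(\theta)$ iff $\theta\vert_\mathfrak{k}\in\Ad^*(H)(\tau)$. However, the counting identity you close with is not correct, and this is exactly where the proof has to be completed. By the transitivity you yourself invoke ($\Stab_G(\tau)$ acts transitively on $\{\theta'\in\Ad^*(G)(\theta):\theta'\vert_{\mathfrak{k}}=\tau\}$, and $\Stab_G(\theta)\subset\Stab_G(\tau)$ because restriction to $\mathfrak{k}$ is $G$-equivariant), the orbit--stabilizer theorem gives
\[
\#\{\theta'\in\Ad^*(G)(\theta) : \theta'\vert_{\mathfrak{k}}=\tau\}
= \lvert \Stab_G(\tau) : \Stab_G(\theta)\rvert
= \frac{\lvert\Ad^*(G)(\theta)\rvert}{\lvert\Ad^*(G)(\tau)\rvert}.
\]
Multiplying by the weight $\lvert\Ad^*(H)(\tau)\rvert/\lvert\Ad^*(G)(\theta)\rvert$, each $\rho\in\Irr(G\vert\Omega(\tau))$ is therefore collected on the right-hand side with total coefficient $\lvert\Ad^*(H)(\tau)\rvert/\lvert\Ad^*(G)(\tau)\rvert$, not $1$. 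In other words, your computation proves the identity with $\lvert\Ad^*(G)(\tau)\rvert$ in the numerator, whereas the lemma has $\lvert\Ad^*(H)(\tau)\rvert$; the displayed counting identity in your last paragraph is false in general (already for $K=H$ and $\theta=\tau$ it forces the $G$-orbit of $\tau$ to equal its $H$-orbit).

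To repair this you must either prove the version with coefficient $\lvert\Ad^*(G)(\tau)\rvert/\lvert\Ad^*(G)(\theta)\rvert$, or add the observation that $\Ad^*(G)(\tau)=\Ad^*(H)(\tau)$ under the circumstances in which the identity is used. The latter does hold where the paper applies the second formula (in the proof of Theorem~\ref{thm:partial.int}): there $G$ is contained in the stabilizer of the character $\Omega(\tau)$, so part (3) of Proposition~\ref{prop:orbit.method} gives $\Ad^*(G)(\tau)\subset\Ad^*(K)(\tau)\subset\Ad^*(H)(\tau)\subset\Ad^*(G)(\tau)$ and the two coefficients agree. As your proof stands, the final step is a gap.
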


\begin{proof} For each $\chi \in \Irr(H)$, the set $\Irr(G \vert
  \chi)$ depends only on the $G$-orbit $\chi^G$.  Moreover, the
  sets $\Irr(G| \chi)$, indexed by the orbits $\chi^G$ of
  irreducible characters of $H$, form a partition of
  $\Irr(G)$. Choosing representatives $\chi_i$, $i\in I$, for the
  $G$-orbits, we get
  \[
  \zeta_G(s)=\sum_{i\in I} (\dim \chi_i)^{-s} \zeta_{G| \chi_i}(s)
  = \sum_{ \chi \in \Irr(H)} \lvert \chi ^G \rvert^{-1} (\dim
  \chi)^{-s} \zeta_{G| \chi }(s).
  \]
  Consider the orbit method map $\Omega \colon \mathfrak{h} ^\vee
  \rightarrow \Irr(H)$. Its fibers are the $H$-coadjoint
  orbits. Moreover, $\Omega$ is $G$-equivariant, so the pre-images of
  the $G$-orbits in $\Irr(H)$ are the $G$-orbits in $\mathfrak{h}
  ^\vee$. Hence,
  \begin{align*}
    \zeta_G(s) & = \sum_{\chi \in \Irr(H)} \lvert \chi^G
    \rvert^{-1} (\dim \chi)^{-s} \zeta_{G \vert \chi}(s) \\
    & = \sum_{\theta \in \mathfrak{h}^\vee} \frac{1}{\lvert \Ad^*(H)
      (\theta) \rvert \cdot \lvert \Omega(\theta)^G \rvert}(\dim
    \Omega (\theta))^{-s} \zeta_{G \vert \Omega (\theta)}(s) \\
    & =\sum_{\theta \in \mathfrak{h}^\vee} \frac{1}{\lvert \Ad^*(G)
      (\theta) \rvert} (\dim \Omega (\theta))^{-s} \zeta_{G \vert
      \Omega (\theta)}(s).
  \end{align*}
  The proof of the second statement is similar, using the following
  consequence of Proposition~\ref{prop:orbit.method}: for $\tau \in
  \mathfrak{k}^\vee$ and $\theta \in \mathfrak{h} ^\vee$, the
  character $\Omega(\tau)$ is a constituent of the restriction of
  $\Omega(\theta)$ to $K$ if and only if $\theta \vert_\mathfrak{k} =
  \tau^h$ for a suitable $h \in H$.
\end{proof}

\subsection{Quantifier-Free Definable Sets and
  Functions}\label{subsec:def.fun}

  We use several
notions from model theory, which we summarize below.  For more details
we refer to~\cite{CK}.  Fix a first-order language and a theory $\Th$,
that is a consistent set of sentences, in that language.  Let
$(\mathrm{Models}_{\,\Th})$ be the category whose objects are models
of $\Th$ and whose morphisms are elementary embeddings.  Let $x =
(x_1,\ldots,x_n)$ denote an $n$-tuple of variables.  A formula
$\phi(x)$ gives rise to a functor $\mathcal{X} = \mathcal{X}_\phi
\colon (\mathrm{Models}_{\,\Th}) \rightarrow (\mathrm{Sets})$ that
sends a model $M$ of $\Th$ to the set
\[
\mathcal{X}(M) = \left\{ (a_1,\ldots,a_n) \in M^n \mid \text{the
    sentence $\phi(a_1,\ldots,a_n)$ holds in $M$} \right\}.
\]
We call such a functor $\mathcal{X}$ a \emph{definable functor} or a
\emph{definable set} in~$\Th$, and denote $\mathcal{X}_\phi$ also by
$\left\{ x \mid \phi(x) \right\}$.  By G\"odel's completeness theorem,
the functors associated to formulae $\phi$ and $\psi$ are equal if and
only if the sentence $\forall x \, (\phi(x) \leftrightarrow \psi(x))$
can be proved from $\Th$.  We say that a definable set is
\emph{quantifier-free definable} if it is associated to some
quantifier-free formula. For a definable set $\mathcal{X}$ arising
from a formula~$\phi$, the notation $a \in \mathcal{X}$ is employed in
two different contexts: if $a
\in M^n$ for some model $M$ of $\Th$ it means $\phi(a)$ holds in~$M$,
whereas if $a$ is a tuple of terms in the underlying language it
simply stands for~$\phi(a)$.

The usual pointwise operators $\cap$, $\cup$, and $\times$ on functors
to sets take definable sets to definable sets. If $\mathcal{X}$ and
$\mathcal{Y}$ are definable sets, we write $\mathcal{X} \subset
\mathcal{Y}$ if $\mathcal{X}(M) \subset \mathcal{Y}(M)$ for all models
$M$ of~$\Th$. If $\mathcal{X}$ and $\mathcal{Y}$ are associated to the
formulae $\phi$ and $\psi$, then $\mathcal{X} \subset \mathcal{Y}$ if
and only if the sentence $\forall x \, (\phi(x) \rightarrow \psi(x))$
can be proved from~$\Th$.

\begin{exa}\label{exa:def_sets}
  We consider the first-order language of rings and the theory
  $\Th_\text{fields}$ of fields.
  \begin{list}{}{\setlength{\leftmargin}{\myenumilistleftmargin}
      \setlength{\labelwidth}{20pt} \setlength{\itemsep}{0pt}
      \setlength{\parsep}{1pt}}
  \item[\textup{(1)}] For every $n\in\N$, the formula $0 = 0$ (in variables $x_1,
    \ldots, x_n$) yields a definable set $\mathcal{X}$ with
    $\mathcal{X}(F) = F^n$ for every field~$F$. The associated
    definable set is called \emph{$n$-dimensional affine space}, and
    denoted by~$\mathbb{A} ^n$.
  \item[\textup{(2)}] More generally, every affine scheme $\mathbf{X}$
    over $\mathbb{Z}$ can be considered as a quantifier-free definable
    set.  This means that there is a quantifier-free definable set
    $\mathcal{X}$ such that $\mathcal{X}(F) = \mathbf{X}(F)$ for every
    field~$F$.
  \item[\textup{(3)}] The definable set $\mathcal{Y}$ arising from the
    formula $\exists y \, (y^2=x)$ is not quantifier-free.  Indeed,
    every quantifier-free definable set in the theory of fields is a
    Boolean combination of affine varieties.  It follows that if
    $\mathcal{X} \subset \mathbb{A}^1$ is quantifier-free, then there
    is a constant $C \in \R$ such that $\lvert \mathcal{X}(\F_p)
    \rvert$ or $\lvert \F_p \smallsetminus \mathcal{X}(\F_p) \rvert$
    is bounded by $C$, uniformly for all primes~$p$. However, $\lvert
    \mathcal{Y}(\F_p) \rvert = (p+1)/2$ if $p>2$.
  \end{list}
\end{exa}

The dimension of a definable set $\mathcal{X} \subset \mathbb{A}^n$ is
the dimension of its Zariski closure in $\mathbb{A}^n$;
cf.~\cite[Section~3]{CL}.  Let $\mathcal{X}$ and $\mathcal{Y}$ be
definable sets in a theory $\Th$.  A natural transformation $f\colon
\mathcal{X} \rightarrow \mathcal{Y}$ is called a \emph{definable
  function} if the functor sending $M\in(\mathrm{Models}_{\,\Th})$ to
the graph of the map $f(M)$ is definable.  This means that the graph
of $f$, considered as a functor, is definable.  On some occasions, if
$f \colon \mathcal{X} \rightarrow \mathcal{Y}$ is a definable
function, we say that $\mathcal{X}$ is a \emph{definable family} of
(definable) sets with base $\mathcal{Y}$.  For $y \in \mathcal{Y}$, we
denote the fiber $f ^{-1}(y)$ by $\mathcal{X}_y$.  It is a definable
set in the enriched language obtained by adding the coordinates of $y$
as constants.

Throughout the remainder of the section, let $R$ denote a commutative
unital ring.

\begin{defn}\label{exa:theory}
  Consider the language of rings enriched by constant symbols $c_a$
  for all $a\in R$.  Let $\Th_{\fields,R}$ be the theory consisting of
  the axioms of fields and the statements $c_a \cdot c_b=c_{ab}$,
  $c_a+c_b=c_{a+b}$ for all $a,b\in R$.
\end{defn}

A model for $\Th_{\fields,R}$ is a field $F$ together with a ring
homomorphism $R \rightarrow F$, which we routinely omit from the
notation.  As in Example~\ref{exa:def_sets}, every affine scheme over
$R$ gives rise to a quantifier-free definable set in
$\Th_{\fields,R}$.

We explain now how projective spaces arise as definable sets in the
theory~$\Th_{\fields,R}$.  Let $\mathfrak{g}$ be a free $R$-module of
finite rank $n$ with an $R$-basis~$e_1,\dots,e_n$. We view
$\mathfrak{g}$ as a definable set in $\Th_{\fields,R}$ via
$\mathfrak{g}(F)=\mathfrak{g}\otimes_RF$ for $F \in
(\mathrm{Models}_{\,\Th_{\fields,R}})$.  The chosen basis allows us to
identify $\mathfrak{g}$ with~$\mathbb{A}^{n}$.  The definable set
\[
\mathcal{P} = \left\{ (1) \right\} \times \mathbb{A}^{n-1} \, \sqcup \, \left\{
  (0,1) \right\} \times \mathbb{A} ^{n-2} \, \sqcup \, \ldots \,
\sqcup \, \left\{ (0,0,\ldots,1) \right\} \times \mathbb{A} ^0 \subset
\mathbb{A} ^{n}
\]
plays the role of \emph{projective space} over $\mathfrak{g}$ in the
category of definable sets, in the following sense. Consider the
definable family $\cv{\mathcal{V}} \subset \mathcal{P}
\times\mathfrak{g}$ given by the condition that
$((a_1,\ldots,a_n),(v_1,\ldots,v_n)) \in \mathcal{P} \times
\mathfrak{g}$ is in \cv{$\mathcal{V}$} if and only if all minors
$\left| \begin{matrix} a_i & a_j \\ v_i & v_j \end{matrix} \right|$,
$1 \leq i< j \leq n$, vanish.  For every definable set $\mathcal{S}$
and every definable set $\mathcal{X} \subset \mathcal{S} \times
\mathfrak{g}$ with the property that, for every $s \in \mathcal{S}$,
the fiber $\mathcal{X}_s$ is a line in $\mathfrak{g}$ -- that is, a
one-dimensional linear subfunctor of $\mathfrak{g}$ -- there is a
unique definable map $f \colon \mathcal{S} \rightarrow \mathcal{P}$
such that the pull-back $f^* \cv{\mathcal{V}} \subset \mathcal{S} \times
\mathfrak{g}$ of \cv{$\mathcal{V}$} via $f$ is equal to
$\mathcal{X}$. Choosing a different (definable) basis for
$\mathfrak{g}$, we obtain a different universal family
over~$\mathcal{P}$, but there is a quantifier-free definable map from
$\mathcal{P}$ to itself that interchanges the two universal families.

More generally, for $0 \leq d \leq n$, there is a quantifier-free
definable set that functions as the Grassmannian of $d$-dimensional
subspaces in $\mathfrak{g}\otimes_R F$ for every model $F$ of
$\Th_{\fields,R}$.  The union of these, over all dimensions $d$, is the
\emph{Grassmannian} $\Gr(\mathfrak{g})$ of~$\mathfrak{g}$.

\begin{defn}\label{def:grassmannian}
 Let $\mathfrak{g} \subset \mathfrak{gl}_N(R)$ be a free $R$-module
 which is closed under Lie brackets. In the theory~$\Th_{\fields,R}$,
 let $\Grass(\mathfrak{g})$ be the subfunctor of $\Gr(\mathfrak{g})$
 given by
 \[
 F \mapsto \left\{ \text{Lie subalgebras of $\mathfrak{g} \otimes_R
     F$}\right\},
 \]
 and let $\Grass^\mathrm{nilp}(\mathfrak{g})$ be the subfunctor of
 $\Grass(\mathfrak{g})$ given by
 \[
 F \mapsto \left\{ \text{Lie subalgebras of $\mathfrak{g} \otimes_R
     F$ consisting of nilpotent matrices} \right\}.
 \]
\end{defn}

\begin{prop} \label{prop:grass.operations} Let $R$ be a
  commutative unital ring, and let $\mathfrak{g} \subset
  \mathfrak{gl}_N(R)$ be a free $R$-module which is closed under Lie
  brackets. View $\mathfrak{g}$ as a definable set in $\Th_{\fields,R}$
  via $\mathfrak{g}(F) = \mathfrak{g} \otimes_R F$ for $F \in
  (\mathrm{Models}_{\,\Th_{\fields,R}})$.  Then the following hold.
  \begin{list}{}{\setlength{\leftmargin}{\myenumilistleftmargin}
      \setlength{\labelwidth}{20pt} \setlength{\itemsep}{0pt}
      \setlength{\parsep}{1pt}}
  \item[\textup{(1)}] $\Grass(\mathfrak{g})$ and
    $\Grass^\mathrm{nilp}(\mathfrak{g})$ are quantifier-free definable
    subfunctors of~$\Gr(\mathfrak{g})$.
  \item[\textup{(2)}] The natural transformation $\Grass(\mathfrak{g})
    \rightarrow \Grass^\mathrm{nilp}(\mathfrak{g})$ induced by taking
    a Lie algebra $h \subset \mathfrak{g}(F)$ to the subalgebra of
    nilpotent matrices in the solvable radical $\Rad({h})$ of ${h}$ is
    quantifier-free definable.
  \item[\textup{(3)}] The natural transformation
    $\Grass^\mathrm{nilp}(\mathfrak{g}) \rightarrow
    \Grass(\mathfrak{g})$ induced by taking a Lie algebra $h \subset
    \mathfrak{g}(F)$ consisting of nilpotent matrices to its
    normalizer $N_{\mathfrak{g}(F)}(h)$ in $\mathfrak{g}(F)$ is
    quantifier-free definable.
  \item[\textup{(4)}] There are a constant $p_0 \in \N$ and a
    quantifier-free definable function from $\Grass(\mathfrak{g})$ to
    the set of all root systems of rank at most $N$ such that the
    following is true: for every Lie algebra $h \subset
    \mathfrak{g}(F)$ with $F$ satisfying $\charac(F) = 0$ or
    $\charac(F) \geq p_0$, the Lie algebra~$h/\Rad(h)$ is semi-simple
    of classical type and the value of the function at $h$ is the
    absolute root system of~$h/\Rad(h)$.
  \item[\textup{(5)}] Suppose that $\mathfrak{g}$ is the Lie algebra
    of an affine group scheme $\mathbf{G}$ over~$R$.  Then there is a
    quantifier-free definable subset of $\mathbf{G} \times
    \Grass(\mathfrak{g})$ whose fiber over a Lie algebra $h \subset
    \mathfrak{g}(F)$ is the normalizer $N_{\mathbf{G}(F)}(h)$ of $h$
    in $\mathbf{G}(F)$.
  \end{list}
\end{prop}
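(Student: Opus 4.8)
The idea is to work on the affine charts making up $\Gr(\mathfrak g)$ in the construction of the excerpt, on each of which the universal subspace $h$ carries a basis $v_1,\dots,v_d$ (reduced row-echelon form) whose entries are polynomials in the chart coordinates. Presented this way, every operation below is built from two manifestly quantifier-free moves: (a) morphisms of affine $R$-schemes --- evaluating the bracket of $\mathfrak g$, multiplying matrices, conjugating by $\mathbf G$, forming $\ad$ and traces; and (b) passing from a linear system over $\mathfrak g(F)$ with polynomial coefficients to its solution space, viewed as a point of $\Gr(\mathfrak g)$, which is the standard quantifier-free construction (split on the bounded rank of the coefficient matrix; on each piece the solution space is cut out by polynomial equations). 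I also record that ``a subspace consists of nilpotent matrices'' is quantifier-free: for $x=\sum_i a_i v_i$ the equation $x^N=0$ is a polynomial identity in $(a_1,\dots,a_d)$, hence amounts to the vanishing of finitely many polynomials in the $v_i$.

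Granting this, (1), (3) and (5) are routine. For (1), $\Grass(\mathfrak g)$ is cut out inside $\Gr(\mathfrak g)$ by the vanishing of the off-pivot coordinates of each $[v_i,v_j]$, and $\Grass^\mathrm{nilp}(\mathfrak g)$ by the further nilpotency condition above. For (3), $N_{\mathfrak g(F)}(h)=\{x\in\mathfrak g(F):[x,v_i]\in h,\ 1\le i\le d\}$ is the solution space of a polynomial linear system. For (5), $N_{\mathbf G(F)}(h)=\{g\in\mathbf G(F):\Ad(g)v_i\in h,\ 1\le i\le d\}$, where $\mathbf G$ is quantifier-free by Example~\ref{exa:def_sets}(2) and $\Ad(g)v_i=gv_ig^{-1}\in h$ becomes polynomial in $g$ after clearing $\det(g)$.

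For (2) the new ingredient is a quantifier-free handle on the solvable radical. Here I would use $\Rad(h)=\{x\in h:\kappa_h(x,y)=0\text{ for all }y\in[h,h]\}$, where $\kappa_h$ is the Killing form of $h$; this holds in characteristic $0$, and one fixes $p_0$ so that it persists for $\charac\ge p_0$. The structure constants of $h$ in the basis $v_1,\dots,v_d$, hence $\kappa_h$, are polynomial in the chart coordinates, so $\Rad(h)$ is again the solution space of a polynomial linear system. To extract the subalgebra $\mathfrak n$ of nilpotent matrices in the solvable radical $\mathfrak r=\Rad(h)$ one uses $\mathfrak n=\mathfrak r\cap\Rad(A)$, where $A\subset\mathfrak{gl}_N$ is the associative envelope of $\mathfrak r$ --- spanned by products of at most $N^2$ basis elements of $\mathfrak r$, hence quantifier-free --- and $\Rad(A)$ is the radical of the trace form of the regular representation of $A$ (valid for $\charac=0$ or $\ge p_0$, when $A/\Rad(A)$ is separable), i.e.\ one more polynomial linear system.

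Part (4) is the crux. Using (2) one obtains $\mathfrak r=\Rad(h)$ and then the structure constants of the semi-simple quotient $\mathfrak s=h/\mathfrak r$ as quantifier-free functions of the chart coordinates. Since $\mathfrak s$ is a subquotient of $\mathfrak{gl}_N$, only finitely many root systems $\Phi_1,\dots,\Phi_r$ can occur, so it suffices, for each $j$, to isolate by a quantifier-free formula the $h$ with $\mathfrak s$ of type $\Phi_j$, correctness being required only for $\charac F=0$ or $\ge p_0$. The dimension $m=\dim\mathfrak s$ and the rank $\ell$ of $\mathfrak s$ are already quantifier-free, since $\ell$ is the least $i$ for which the coefficient of $t^i$ in $\det(t\cdot\mathrm{id}-\ad_{\mathfrak s}(x))$ does not vanish identically in $x$, and ``a polynomial in $x$ with coefficients polynomial in the structure constants vanishes identically'' unwinds to finitely many polynomial equations in the structure constants. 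These data leave only finitely many candidates, and distinct candidates are separated by suitable $\Ad$-invariants --- in the first instance the adjoint power sums $P_{2k}(x)=\tr(\ad_{\mathfrak s}(x)^{2k})$, for which there is, for each candidate, a polynomial identity $P_{2\ell+2}=Q_j(P_2,\dots,P_{2\ell})$ holding for that candidate and no other, ``such an identity holds'' being again a quantifier-free condition on the structure constants. The delicate point --- and the main obstacle --- is to verify that these invariants genuinely distinguish all the finitely many root systems of rank $\le N$; the subtlest case is $B_n$ versus $C_n$ ($n\ge 3$), where one checks that the two root configurations are inequivalent under $\mathrm{GL}$, which can be done combinatorially and is in any case a finite verification once $N$ is fixed. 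The remaining care is purely characteristic-theoretic: $p_0$ must be taken large enough that the Levi decomposition, the theorems of Engel and Lie, the Killing- and trace-form criteria, and the structure of the ring of $\Ad$-invariants all behave as in characteristic $0$.
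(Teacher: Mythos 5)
Parts (1), (3) and (5) of your argument are essentially the paper's: rank conditions on matrices built from the chart bases, plus the polynomial nilpotency condition, and these are fine. For (2) you replace the paper's characterization of the radical (the ideal generated by $X$ is solvable, tested via the derived-series words --- valid in every characteristic) by the Killing-form criterion $\Rad(h)=[h,h]^{\perp}$ and a trace-form computation of $\Rad(A)$ for the associative envelope; this works, but only for $\charac(F)=0$ or $\charac(F)\geq p_0$, whereas statement (2) carries no characteristic restriction. That is a mild weakening rather than a fatal error, but you should be aware the paper's route avoids it.

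The genuine gap is in (4), and it sits exactly where you flag it: you never verify that dimension, rank and the adjoint power sums $P_{2k}$ separate all the finitely many root systems of rank at most $N$ (nor that the claimed identities $P_{2\ell+2}=Q_j(P_2,\dots,P_{2\ell})$ exist and are mutually exclusive), and the $B_n$ versus $C_n$ collision --- equal dimension, equal rank, dual root systems --- is left as an unexamined "finite verification". As written, (4) is not proved. The missing idea is Lemma~\ref{lem:criterion.for.q.f.}: a definable subset of affine space is quantifier-free if and only if it is compatible with field extensions, i.e.\ $\mathcal{X}(F)=\mathcal{X}(E)\cap F^n$ for $F\subset E$. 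Since the absolute root system of $h/\Rad(h)$ is invariant under extension of scalars, this lemma reduces (4) to producing a \emph{merely definable} function, and one can then freely quantify: for each candidate $\Phi$ and each bounded degree $m$, the condition "there exist structure constants on $F^m$ making it a degree-$m$ field extension $E$ of $F$, and a matrix over $E$ giving an isomorphism $(h/\Rad(h))\otimes_F E\cong\mathfrak{s}_\Phi(E)$" is first-order. This sidesteps the construction of separating $\Ad$-invariants entirely. If you insist on your explicit route, you must actually carry out the separation argument for every pair of non-isomorphic (possibly reducible) root systems of rank at most $N$; until then the proof of (4) is incomplete.
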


We use the following well-known characterization of quantifier-free
definable sets; e.g., see~\cite[Theorem~8.11]{Kaz_MI}.

\begin{lem} \label{lem:criterion.for.q.f.}  Let $R$ be a commutative
  unital ring, and let $\mathcal{X} \subset \mathbb{A} ^n$ be a
  definable set in the theory $\Th_{\fields,R}$. Then the following are
  equivalent.
  \begin{list}{}{\setlength{\leftmargin}{\myenumilistleftmargin}
      \setlength{\labelwidth}{20pt} \setlength{\itemsep}{0pt}
      \setlength{\parsep}{1pt}}
  \item[\textup{(1)}] For every two models $F \subset E$ of
    $\Th_{\fields,R}$, we have
    \[
    \mathcal{X}(F)=\mathcal{X}(E) \cap F^n.
    \]
  \item[\textup{(2)}] For every model $F$ of $\Th_{\fields,R}$, we have
    \[
    \mathcal{X}(F)=\mathcal{X}(F^\alg)\cap F^n.
    \]
  \item[\textup{(3)}] $\mathcal{X}$ is quantifier-free definable.
  \end{list}
\end{lem}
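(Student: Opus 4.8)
The plan is to prove the cycle of implications $(3)\Rightarrow(1)\Rightarrow(2)\Rightarrow(3)$. The first two are immediate and carry no content. For $(3)\Rightarrow(1)$: a quantifier-free formula has the same truth value on a tuple whether evaluated in a field or in any field extension, since atomic formulas are polynomial equations with coefficients in the image of $R$ and Boolean combinations preserve agreement; hence $\mathcal{X}(F)=\mathcal{X}(E)\cap F^n$ whenever $F\subset E$. And $(1)\Rightarrow(2)$ is merely the special case $E=F^{\alg}$. So the substance is $(2)\Rightarrow(3)$.

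For $(2)\Rightarrow(3)$ I would invoke the standard compactness criterion for quantifier elimination: a formula $\phi(x_1,\dots,x_n)$ defining $\mathcal{X}$ is equivalent modulo $\Th_{\fields,R}$ to a quantifier-free formula if and only if, for all models $F,F'$ of $\Th_{\fields,R}$ and all tuples $\bar a\in F^n$, $\bar a'\in (F')^n$ having the same quantifier-free type over $R$, one has $\bar a\in\mathcal{X}(F)\iff\bar a'\in\mathcal{X}(F')$. Here ``same quantifier-free type over $R$'' means exactly that the evaluation maps $R[\bar x]\to F$ and $R[\bar x]\to F'$ have the same kernel, equivalently that there is an $R$-algebra isomorphism $R[\bar a]\to R[\bar a']$ carrying $\bar a$ to $\bar a'$ (both $R[\bar a]$ and $R[\bar a']$ are integral domains, being subrings of fields, so fraction fields exist). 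Proving the criterion is itself a short compactness argument, which I would cite rather than reproduce.

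So fix such $\bar a,\bar a'$ together with an $R$-algebra isomorphism $R[\bar a]\to R[\bar a']$ sending $\bar a$ to $\bar a'$. It extends first to an isomorphism of fraction fields and then to an isomorphism $\vartheta\colon K\to K'$ between algebraic closures $K=\overline{\mathrm{Frac}\,R[\bar a]}$ and $K'=\overline{\mathrm{Frac}\,R[\bar a']}$; both $K$ and $K'$ are models of $\Th_{\fields,R}$ via the induced maps from $R$, and $\vartheta$ is an isomorphism of such models. Choose embeddings $K\hookrightarrow F^{\alg}$ and $K'\hookrightarrow (F')^{\alg}$ extending the inclusions $R[\bar a]\subset F$ and $R[\bar a']\subset F'$. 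Now chain four equivalences. By hypothesis $(2)$, $\bar a\in\mathcal{X}(F)\iff\bar a\in\mathcal{X}(F^{\alg})$ and $\bar a'\in\mathcal{X}(F')\iff\bar a'\in\mathcal{X}((F')^{\alg})$. Since $K\subseteq F^{\alg}$ and $K'\subseteq (F')^{\alg}$ are inclusions of algebraically closed fields, and the theory of algebraically closed fields is model complete — a property unaffected by adjoining the constants $c_a$, which are interpreted identically in the smaller and larger field — these inclusions are elementary, whence $\bar a\in\mathcal{X}(F^{\alg})\iff\bar a\in\mathcal{X}(K)$ and $\bar a'\in\mathcal{X}((F')^{\alg})\iff\bar a'\in\mathcal{X}(K')$. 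Finally, $\mathcal{X}$, being definable, is preserved by the isomorphism $\vartheta$, so $\bar a\in\mathcal{X}(K)\iff\vartheta(\bar a)=\bar a'\in\mathcal{X}(K')$. Composing yields $\bar a\in\mathcal{X}(F)\iff\bar a'\in\mathcal{X}(F')$, which completes the proof.

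The genuinely content-bearing move is the passage to algebraic closures in the last step: hypothesis $(2)$ only asserts that $\mathcal{X}$ is determined by its behaviour over algebraically closed fields, and the key observation is that two tuples generating $R$-isomorphic subrings become, after taking algebraic closures of their fraction fields, isomorphic in a manner compatible with both ambient models — thereby upgrading ``$\mathcal{X}$ determined by the algebraic closure'' to ``$\mathcal{X}$ determined by the subring $R[\bar a]$'', i.e. quantifier-free definability. I do not expect a real obstacle, as this is a classical result; the only external inputs, both standard, are the compactness criterion for quantifier elimination and the model completeness (equivalently, quantifier elimination) of the theory of algebraically closed fields, and a fully self-contained account would spend its effort reproving precisely those two facts.
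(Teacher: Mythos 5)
Your proof is correct; the only substantive implication is $(2)\Rightarrow(3)$, and your argument for it is sound. It does, however, take a different route from the paper's, which is considerably shorter: the paper invokes quantifier elimination for algebraically closed fields in the language enriched by constants from $R$ to produce a quantifier-free definable set $\mathcal{Y}$ with $\mathcal{X}(E)=\mathcal{Y}(E)$ for every algebraically closed model $E$, and then concludes in one line using hypothesis (2) together with the already-established implication $(3)\Rightarrow(1)$ applied to $\mathcal{Y}$, namely $\mathcal{X}(F)=\mathcal{X}(F^{\alg})\cap F^n=\mathcal{Y}(F^{\alg})\cap F^n=\mathcal{Y}(F)$. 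You instead verify the semantic substructure-invariance criterion for quantifier-free definability directly: two tuples generating $R$-isomorphic subrings are transported into isomorphic algebraically closed fields sitting elementarily (by model completeness of ACF, which survives the added constants) inside $F^{\alg}$ and $(F')^{\alg}$, and hypothesis (2) closes the loop. The two arguments rest on the same underlying input --- quantifier elimination for ACF, or its consequence model completeness --- but your decomposition makes explicit the conjugacy mechanism that the paper's appeal to QE packages into a single citation, at the cost of also importing the compactness criterion as a black box. The paper's version buys brevity and produces the witnessing formula $\mathcal{Y}$ explicitly; yours is the more ``hands-on'' verification and would be the natural one to adapt if one only had model completeness rather than full QE available.
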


\begin{proof}
  The implications $(1) \Rightarrow (2)$ and $(3) \Rightarrow (1)$ are
  clear. We prove $(2) \Rightarrow (3)$. By elimination of quantifiers
  over algebraically closed fields \cv{with coefficients in $R$},
  there is a quantifier-free definable set $\mathcal{Y}$ such that
  $\mathcal{X}(E)=\mathcal{Y}(E)$ for every algebraically closed model
  $E$ of $\Th_{\fields,R}$. Let $F$ be a model of
  $\Th_{\fields,R}$. By $(2)$ and the implication $(3) \Rightarrow
  (1)$, applied to $\mathcal{Y}$, we obtain
  $\mathcal{X}(F)=\mathcal{X}(F^\alg) \cap F^n=\mathcal{Y}(F^\alg)
  \cap F^n = \mathcal{Y}(F)$, so $\mathcal{X}=\mathcal{Y}$.
\end{proof}

\begin{rem} \label{rem:criterion.q.f.perfect} We also use variants of
  Lemma~\ref{lem:criterion.for.q.f.} which characterize
  quantifier-free definable sets in (i) the theory
  $\Th_{\text{perf.-fields},p,R}$ of perfect fields of characteristic
  $p$ together with a homomorphism from~$R$ and (ii) the theory
  $\Th_{\mathrm{Hen},K,0}$ of Henselian valued fields of residue
  characteristic~$0$ together with a homomorphism from field~$K$; cf.\
  Definition~\ref{def:theories}.
  The proof proceeds in the same way.
\end{rem}

\begin{proof}[Proof of Proposition \ref{prop:grass.operations}]
  We show in each case that the functor or the graph of the natural
  transformation in question is definable.  In some cases, the
  formulae that we supply are polynomial, and hence quantifier-free.
  In the remaining cases, we explain how to apply
  Lemma~\ref{lem:criterion.for.q.f.} in order to obtain that the
  functor or graph in question is actually quantifier-free definable.

  Put $n = \dim \mathfrak{g}$ and fix an $R$-basis $e_1,\dots,e_n$
  of~$\mathfrak{g}$.  There are a finite Zariski open affine cover
  $(\mathcal{U}_\iota)_{\iota \in I}$ of $\Gr(\mathfrak{g})$,
  non-negative integers $(d_\iota)_{\iota \in I}$, and regular
  functions $x_{\iota,1},\ldots,x_{\iota,n} \colon \mathcal{U}_\iota
  \rightarrow \mathfrak{g}$, for $\iota \in I$, such that for each
  model $F$ of $\Th_{\fields,R}$,
  \begin{list}{$\circ$}{\setlength{\leftmargin}{\mylistleftmargin}
      \setlength{\labelwidth}{10pt} \setlength{\itemsep}{0pt}
      \setlength{\parsep}{1pt}}
  \item the dimension of every $h \in \mathcal{U}_\iota(F)$ is equal to
    $d_\iota$,
  \item for every $h \in \mathcal{U}_\iota(F)$, the elements
    $x_{\iota,1}(h),\ldots,x_{\iota,d_\iota}(h)$ yield a linear basis
    for $h$ and the elements $x_{\iota,1}(h),\ldots,x_{\iota,n}(h)$
    yield a linear basis for $\mathfrak{g}(F)$.
  \end{list}
  In order to prove that a subfunctor $\mathcal{X} \subset
  \Gr(\mathfrak{g})$ is definable, it is enough to show that
  $\mathcal{X} \cap \mathcal{U}_\iota$ is definable for every
  $\iota\in I$.  A similar claim holds for natural transformations.
  Fix therefore $\iota, \iota' \in I$ and write
  \[
  \mathcal{U} = \mathcal{U}_\iota, \, \mathcal{U}' =
  \mathcal{U}_{\iota'}, \quad d = d_\iota, \, d' = d_{\iota'}, \quad
  \text{and} \quad x_i = x_{\iota,i}, \, x'_i = x_{\iota',i} \text{
    for $i \in \{1,\ldots,n\}$.}
  \]

  \medskip

  (1) Consider the $\left(\binom{d+1}{2} \times n \right)$-matrix $A$
  whose first $d$ rows record the coordinates of the functions
  $x_1,\dots,x_d$ and whose last $\binom{d}{2}$ rows record the
  coordinates of the Lie brackets $[x_i,x_j]$, $1\leq i<j \leq a$, all
  with respect to the basis~$e_1, \dots, e_n$.  The functor
  $\Grass(\mathfrak{g})\cap \mathcal{U}$ is definable by the polynomial
  condition $\rk A = d$.

  The functor $\Grass^\mathrm{nilp}(\mathfrak{g}) \cap \mathcal{U}$ is
  definable by the conjunction of the previous condition and the
  polynomial condition that all products of the $x_i$ of length $N$
  vanish, i.e., that $\prod_{j=1}^N x_{i_j}=0$ for all
  $(i_1,\dots,i_N)\in\{1,\dots,n\}^N$.

  \medskip

  (2) Let $F$ be a model of $\Th_{\fields,R}$ and $h \in
  (\Grass(\mathfrak{g}) \cap \mathcal{U})(F)$. Recall that an element
  $X\in h$ is in the solvable radical $\Rad({h})$ of $h$ if and only
  if the Lie ideal $[X,h]$ generated by $X$ in $h$ is solvable.  As an
  $F$-vector space, $[X,h]$ is spanned by the set $\mathcal{S}$
  consisting of all elements $[X,x_{i_1}(h),\dots,x_{i_{d-1}}(h)]$,
  where $i_1,\dots,i_{d-1} \in \{1,\dots,d\}$.  The Lie words $w_i$,
  $i \in \N$, defining the terms of the derived series are
  $w_1(z_1,z_2) = [z_1,z_2]$ and $w_i(z_1,\ldots,z_{2^i}) =
  [w_{i-1}(z_1,\ldots,z_{2^{i-1}}),
  w_{i-1}(z_{2^{i-1}+1},\ldots,z_{2^i})]$ for $i \geq 2$.  By
  linearity, $[X,h]$ is solvable if and only if
  $w_n(Z_1,\ldots,Z_{2^n}) = 0$ for all $Z_1,\dots,Z_{2^n} \in
  \mathcal{S}$.

  Using (1), we deduce that the functor
  \begin{equation} \label{eq:radical} F \mapsto \left\{ (h,X)\in
      (\Grass(\mathfrak{g}) \cap \mathcal{U})(F) \times
      \mathfrak{g}(F) \mid \text{$X\in \Rad(h)$}\right\}
  \end{equation}
  is quantifier-free definable.  Clearly, we can express nilpotency of
  an element $X \in h$ by a polynomial formula.  Consequently, also
  the functor
  \[
  F \mapsto \left\{ (h,X) \in (\Grass(\mathfrak{g}) \cap
    \mathcal{U})(F) \times \mathfrak{g}(F) \mid \text{$X\in \Rad(h)$
      is nilpotent}\right\}
  \]
  is quantifier-free definable.  Using quantifiers, we deduce that the
  functor
  \begin{equation}\label{equ:radical2}
    F \mapsto \left\{
      \begin{array}{l}
        (h,k) \in (\Grass(\mathfrak{g}) \cap \mathcal{U})(F)
        \times (\Grass^\mathrm{nilp}(\mathfrak{g}) \cap \mathcal{U}')(F) \mid \\
        \quad \text{$k$
          is the collection of nilpotent elements in $\Rad(h)$}
      \end{array} \right\}
  \end{equation}
  is definable and thus the graph of a natural transformation,
  namely the one we are interested in.

  It remains to prove that the functor~\eqref{equ:radical2} is
  \emph{quantifier-free} definable.  By
  Lemma~\ref{lem:criterion.for.q.f.}, it suffices to consider the
  following.  Let $F \subset E$ be models of $\Th_{\fields,R}$, and
  for subalgebras $h,k \subset \mathfrak{g}(F)$ write $h_E = h
  \otimes_F E$ and $k_E = k \otimes_F E$.  We claim: if $k_E$ consists
  of the nilpotent elements of $\Rad(h_E)$, then $k$ consists of the
  nilpotent elements of $\Rad(h)$.

  Clearly, $X \in h$ is nilpotent as an element of $h_E$ if and only
  if it is nilpotent as an element of $h$.  Moreover, because the
  functor~\eqref{eq:radical} is quantifier-free definable, we have
  $\Rad(h_E) \cap \mathfrak{g}(F) = \Rad(h)$.  This proves the claim.

  \medskip

  (3) We first show that the functor $\mathcal{N}$ given by
  \[
  F \mapsto \left\{ (h,k) \in \Gr(\mathfrak{g})(F)^2 \mid \forall Y\in
    \mathfrak{g}(F) \colon \left( Y \in k \; \leftrightarrow \; \left(
        \forall X \in h \colon [X,Y]\in h \right) \right) \right\}
  \]
  is definable.  As before, it is enough to prove this Zariski locally
  around fixed elements $h \in \mathcal{U}(F)$ and~$k \in
  \mathcal{U}'(F)$, of dimensions $d$ and~$d'$.  The intersection
  $\mathcal{N} \cap (\mathcal{U} \times \mathcal{U}')$ is given by the
  formula
  \begin{multline} \label{equ:normalizer.qfd}
    \forall a_1,\ldots,a_n  \\
    \left( \left( \bigwedge\nolimits_{i=1}^d \exists b_1,\dots,b_d
        \left( \left[x_i,\sum\nolimits_{j=1}^n a_j x_j' \right] =
          \sum\nolimits_{m=1}^d b_m x_m \right) \right) \;
      \longleftrightarrow \; a_{d'+1} = \ldots = a_n = 0 \right).
  \end{multline}
  We now show that this formula is equivalent to a quantifier-free
  formula.  To this end, consider first the $(d(d'+1) \times
  n)$-matrix $A$ whose first $d$ rows record the coordinates of
  $x_1,\dots,x_d$ and whose last $dd'$ rows record the coordinates of
  the Lie brackets $[x_i,x_j']$, where $1\leq i\leq d$ and $1\leq j
  \leq d'$, all with respect to the basis~$e_1, \ldots, e_n$.  The
  polynomial condition $\rk(A) = d$ ensures that $\langle
  x_1',\dots,x_{d'}' \rangle$ is contained in the Lie normalizer of
  $\langle x_1\dots,x_d \rangle$.  Consider now, for $1 \leq i \leq d$
  and coordinates $a_{d'+1},\dots,a_n$, the $((d+1) \times n)$-matrix
  $B_i(a_{d'+1},\dots,a_n)$ whose first $d$ rows record the
  coordinates of $x_1,\dots,x_d$ and whose last row records the
  coordinates of $[x_i,\sum_{j=d'+1}^n a_j x_j']$.
  Formula~\eqref{equ:normalizer.qfd} is equivalent to
  \begin{multline*}
    \left( \rk(A) = d \right) \, \wedge \\
    \forall a_{d'+1},\dots,a_n \left( \left(
        \bigwedge\nolimits_{i=1}^d \rk(B_i(a_{d'+1},\dots,a_n))=d
      \right) \; \longleftrightarrow \; a_{d'+1} = \dots = a_n = 0
    \right).
  \end{multline*}
  We claim that this is equivalent to a polynomial condition in the
  coordinate functions $x_1, \ldots, x_d$, $x_1', \ldots, x_{d'}'$.
  Indeed, for every $i \in \{1,\ldots,d\}$, the condition
  $\rk(B_i(a_{d'+1},\dots,a_n)) = d$ is linear in $a_{d'+1},\dots,a_n$
  and polynomial in $x_1, \ldots, x_d$, $x_1', \ldots, x_{d'}'$.  The
  condition that the resulting system of linear equations for
  $a_{d'+1},\dots,a_n$ only has the trivial solution is polynomial in
  $x_1, \ldots, x_d$, $x_1', \ldots, x_{d'}'$.

  \medskip (4) The theory of modular Lie algebras, and in particular
  the classification of semi-simple Lie algebras, is rather more
  involved than in characteristic~$0$.  However, there exists $p_0 \in
  \N$ such that the classification of semi-simple Lie algebras of
  dimension at most $n$ over every algebraically closed field of
  characteristic at least $p_0$ is completely analogous to the
  well-known classification in characteristic~$0$: the Lie algebras
  are of classical type and parametrized by suitable root systems.
  One can deduce this, for instance, from Robinson's Principle,
  according to which a first order statement in the language of fields
  is true in algebraically closed fields of characteristic~$0$ if and
  only if it is true in algebraically closed fields of sufficiently
  large characteristic.  We may thus restrict attention to fields $F$
  with $\charac(F) = 0$ or $\charac(F) \geq p_0$.

  Since the absolute root system of a Lie algebra does not change
  under extension of scalars, it suffices, by
  Lemma~\ref{lem:criterion.for.q.f.}, to produce a \emph{definable}
  function with the required properties.

  For every root system $\Phi$, let $\mathfrak{s}_\Phi$ be the split
  Lie algebra, given in terms of a corresponding Chevalley basis, with
  root system~$\Phi$; see~\cite[Chapter~IV]{Jac}.  For every
  subalgebra $h$ of $\mathfrak{gl}_N(F)$, the algebra $h / \Rad(h)$ is
  semi-simple and has rank at most~$N$.  By adjoining the
  characteristic roots of basis elements of a Cartan subalgebra of $h$
  in the adjoint action, one obtains a splitting field $E$ for~$h$.
  Clearly, the degree of such a field $E$ over $F$ is bounded in terms
  of~$N$.  Therefore, it suffices to show that, for any given $m \in
  \N$, the functor
  \begin{equation*}
    F \mapsto \left\{
      \begin{array}{l}
        h \in \Grass(\mathfrak{g})(F)
        \mid \\
        \quad \exists \text{ field ext.\ $F \subset E$ of degree $m$ such
          that } (h/\Rad(h)) \otimes_F E \cong
        \mathfrak{s}_\Phi (E)
      \end{array}
    \right\}
  \end{equation*}
  is definable.  Using Boolean combinations of formulae, it is enough
  to show that the functor
  \begin{equation} \label{eq:functor.Phi} F \mapsto \left\{
      \begin{array}{l}
        h \in \Grass(\mathfrak{g})(F) \mid \\
        \quad \text{$\exists$ field ext.\ $F \subset E$ of degree $m$
          and an embedding $\mathfrak{s}_\Phi(E) \rightarrow h
          \otimes_F E$}
      \end{array}
    \right\}
  \end{equation}
  is definable.

  It is enough to check definability on an open
  neighborhood~$\Grass(\mathfrak{g}) \cap \mathcal{U}$ of an element
  $h \in \Grass(\mathfrak{g})(F)$.  Field extensions $F \subset E$ of
  degree $m$ can be modelled on the vector space $F^m$ via a set of
  structure constants $\underline{c} = (c_{ij}^k)_{i,j,k=1}^{m}$
  in~$F$.  The latter supply a binary operation
  \[
  F^m \times F^m \rightarrow F^m, \quad (a_1,\dots,a_m) \ast
  (b_1,\dots,b_m) = \left( \sum\nolimits_{i,j=1}^m a_ib_j c_{ij}^k
  \right)_{k=1}^m.
  \]
  The condition that the multiplication $\ast$, together with vector
  addition, defines a field extension of $F$ is a first-order
  condition on~$\underline{c}$.  Furthermore, for every
  $\underline{c}$ giving rise to an extension $F \subset E$, an
  $E$-linear map $T \colon \mathfrak{s}_\Phi(E) \rightarrow h
  \otimes_F E$ can be described, locally, by a $(\dim
  \mathfrak{s}_\Phi \times d)$-matrix over $F^m$, with respect to the
  Chevalley basis of $\mathfrak{s}_\Phi$ and the basis
  $x_1(h),\ldots,x_d(h)$ of~$h$.  The condition that the map $T$ is an
  embedding and preserves Lie brackets is polynomial in the entries
  of the corresponding $(\dim \mathfrak{s}_\Phi \times d \times
  m)$-array~$\underline{t}$ over~$F$.  Thus the
  functor~\eqref{eq:functor.Phi} is indeed definable.

  \medskip

  (5) Let $\mathcal{X}$ be the functor
  \[
  F \mapsto \left\{ (g,h) \in \mathbf{G}(F) \times
    \Grass(\mathfrak{g})(F) \mid \Ad(g)h=h \right\} .
  \]
  The functor $\mathcal{X} \cap (\mathbf{G} \times
  (\Grass(\mathfrak{g})\cap \mathcal{U})))$ is definable by the
  formula
  \begin{equation}\label{equ:5.qfd}
    (\forall a_1,\ldots,a_d) (\exists b_1,\ldots,b_d)
    \left( \Ad (g)
      \left(\sum\nolimits_{j=1}^d a_j x_j(h) \right) =
      \sum\nolimits_{k=1}^d b_k x_k(h) \right),
  \end{equation}
  where the operation of $\Ad(g)$ is given by polynomial expressions
  involving the entries of the matrix~$g$.  To see that the functor is
  quantifier-free definable, consider the $(2d \times n)$-matrix $A =
  A(g,h)$, whose first $d$ rows record the coordinates of
  $x_1,\dots,x_d$ and whose last $d$ rows record the coordinates of
  $\Ad(g) x_1,\dots,\Ad(g) x_d$, all with respect to the
  basis~$e_1,\ldots,e_n$.  The polynomial condition $\rk(A(g,h)) = d$
  is equivalent to~\eqref{equ:5.qfd}.
\end{proof}

The following proposition can be found, for example,
in~\cite[Th\'eor\`eme~6.4]{Cha} or~\cite[Main Theorem]{CvdDM}.

\begin{prop} \label{prop:size.definable.set} Suppose that $\phi(x,y)$
  is a first-order formula in the language of rings, where $x =
  (x_1,\ldots,x_m)$ and $y=(y_1,\ldots,y_n)$.  There is a constant $C
  \in \R$ such that, for every finite field $\F_q$ and every
  $a \in \F_q^n$, there is a natural number $d$ such that the
  size of the set $\left\{ x \in \F_q^m \mid \phi(x,a) \textup{
      holds in } \F_q \right\}$ is either $0$, or between
  $C^{-1} q^d$ and $Cq^d$.
\end{prop}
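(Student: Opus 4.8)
The plan is to deduce this uniform point-counting estimate --- the theorem of Chatzidakis, van den Dries and Macintyre recorded in~\cite{CvdDM} and~\cite{Cha} --- from the Lang--Weil bounds, using the model theory of finite fields. The starting point is Ax's theorem: every non-principal ultraproduct of finite fields is a pseudo-finite field, and the first-order theory of the class of finite fields coincides with that of the class of pseudo-finite fields. I would exploit this through a compactness argument: were the conclusion false, there would be a sequence $(\F_{q_i},a_i)$ with $q_i \to \infty$ witnessing the failure, and the ultraproduct $K = \prod_i \F_{q_i}/\mathcal{U}$ would be a pseudo-finite field carrying a parameter $a_\infty \in K^n$; a contradiction would then be extracted from the structure (dimension and ``measure'') of the definable set $\phi(K, a_\infty) \subseteq K^m$. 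Alternatively --- and this is the route that keeps the argument self-contained --- one carries out the geometric construction below with explicit, field-independent bounds, which directly yields the uniform estimate.

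The key reduction is to replace the definable family by geometric data via the Galois stratification procedure of Fried and Sacerdote (see also Fried--Jarden, \emph{Field Arithmetic}). Applied to $\phi(x,y)$ over $\mathbb{A}^{m+n}$ and then specialised in the parameter, it yields a finite partition of $\mathbb{A}^m$ --- over $\F_q$, with defining coefficients that are polynomials of bounded degree in $a$ --- into locally closed pieces $C_1,\dots,C_r$, each equipped with a finite Galois cover $\widetilde{C}_j \to C_j$ with geometric Galois group $G_j$ and a conjugation-stable subset $\mathrm{Con}_j \subseteq G_j$, such that for $x \in C_j(\F_q)$ the sentence $\phi(x,a)$ holds exactly when the Frobenius conjugacy class $(\Frob_x)$ is contained in $\mathrm{Con}_j$. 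The part I expect to be the main obstacle is the uniformity: one must ensure that $r$, the dimensions and degrees of the $C_j$, and the covers $\widetilde{C}_j$ have complexity bounded purely in terms of $\phi$, independently of $q$ and of $a$. This is exactly what the field-independent (primitive-recursive) bounds in the stratification algorithm furnish, and it is also what the ultraproduct reformulation above is designed to deliver cheaply.

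With the uniform stratification in hand I would finish by counting rational points. For each $j$ an effective Chebotarev-type estimate over finite fields --- a form of Lang--Weil for covers, with implied constant depending only on the bounded geometric invariants --- gives
\[
\bigl|\{\, x \in C_j(\F_q) : (\Frob_x) \subseteq \mathrm{Con}_j \,\}\bigr| \;=\; \mu_j\, q^{\dim C_j} \;+\; O\!\left( q^{\dim C_j - 1/2} \right),
\]
where $\mu_j \in \Q \cap [0,1]$ is a ratio of cardinalities of subsets of the groups $G_j$ and hence lies in a finite set determined by $\phi$; on strata where the cover fails to be regular this count can vanish identically, possibly depending on $q$ modulo a fixed integer. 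Summing over $j$ yields $|\phi(\F_q,a)| = \sum_j \mu_j q^{d_j} + O(q^{d - 1/2})$ with $d_j = \dim C_j \in \{0,\dots,m\}$ and $d = \max\{\, d_j : \mu_j \neq 0 \,\}$; if every $\mu_j q^{d_j}$ vanishes the set is empty --- the ``either $0$'' alternative --- and otherwise the leading coefficient $\mu = \sum_{d_j = d} \mu_j$ is one of finitely many positive rationals. Taking $C$ large enough to bound all these $\mu$, their reciprocals, and all the implied constants, and to absorb the finitely many small $q$, gives $C^{-1} q^d \le |\phi(\F_q,a)| \le C q^d$ as required.
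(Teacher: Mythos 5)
Your proposal is correct in substance, but note that the paper does not prove this proposition at all: it is quoted verbatim from Chatzidakis--van den Dries--Macintyre (and Chatzidakis's notes), so what you have written is a reconstruction of the proof of the cited theorem rather than an alternative to an argument in the paper. Your outline follows the standard route: uniform Galois stratification of $\mathbb{A}^{m+n}$ relative to the parameter projection (with bounds on the number, dimensions and degrees of the strata and covers depending only on $\phi$), followed by the effective Chebotarev/Lang--Weil estimate on each stratum. The one point worth making explicit, since the whole ``either $0$ or between $C^{-1}q^d$ and $Cq^d$'' dichotomy hinges on it, is that on each stratum the count is not merely ``main term $\mu_j q^{d_j}$ plus an $O(q^{d_j-1/2})$ error'': one needs that whenever the main term vanishes the count vanishes \emph{exactly}. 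This holds because the Frobenius class of any $\F_q$-point of $C_j$ necessarily lies in the coset $G_j^{\mathrm{geom}}\tau$ of the geometric monodromy group determined by the arithmetic Frobenius $\tau$, so if $\mathrm{Con}_j$ misses that coset the stratum contributes no points at all; without this, a top-dimensional stratum with zero main term could contribute up to $O(q^{d-1/2})$ stray points and destroy the lower bound. Your parenthetical about non-regular covers gestures at exactly this, so with that step spelled out the argument is complete; the remaining uniformity claims (field-independent bounds in the stratification algorithm, finiteness of the set of possible densities $\mu_j$, absorption of small $q$ into $C$) are standard and correctly invoked.
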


\begin{lem} \label{lem:size.group} Let $\mathbf{G}$ be an affine
  algebraic group over a finite field $\F_q$ with at most $C$
  connected components, and let $\mathfrak{g}$ be the Lie algebra
  of~$\mathbf{G}$.
  \begin{list}{}{\setlength{\leftmargin}{\myenumilistleftmargin}
      \setlength{\labelwidth}{20pt} \setlength{\itemsep}{0pt}
      \setlength{\parsep}{1pt}}
  \item[\textup{(1)}] Writing $D_1 = C \, 2^{\dim \mathbf{G}}$, the
    estimates $D_1^{\, -1} \lvert \mathfrak{g}(\F_{q^n}) \rvert <
    \lvert \mathbf{G}(\F_{q^n}) \rvert < D_1 \lvert
    \mathfrak{g}(\F_{q^n}) \rvert$ hold for every finite extension
    $\F_q \subset \F_{q^n}$.
  \item[\textup{(2)}] Suppose that $\mathbf{G}$ acts on a variety
    $\mathbf{X}$ in such a way that the stabilizer $\mathbf{H}$ of a
    point $x \in \mathbf{X}(\F_{q^n})$ in~$\mathbf{G}$ has less than
    $C$ connected components.  Writing $\mathfrak{h}$ for Lie algebra
    of $\mathbf{H}$ and $D_2 = C^2 2^{\dim \mathbf{G}}$, the following
    estimates hold:
    \[
    D_2^{\, -1} \frac{\lvert \mathfrak{g}(\F_{q^n}) \rvert}{\lvert
      \mathfrak{h}(\F_{q^n}) \rvert} \leq \lvert \mathbf{G}(\F_{q^n})x
    \rvert \leq D_2 \frac{\lvert \mathfrak{g}(\F_{q^n}) \rvert}{\lvert
      \mathfrak{h}(\F_{q^n}) \rvert}.
    \]
  \end{list}
\end{lem}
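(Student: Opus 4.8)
The plan is to derive both estimates from orbit--stabiliser counting together with the elementary point-count bound $2^{-\dim\mathbf{H}}q^{\dim\mathbf{H}}\le\lvert\mathbf{H}(\F_q)\rvert\le 2^{\dim\mathbf{H}}q^{\dim\mathbf{H}}$ for \emph{connected} algebraic groups $\mathbf{H}$ over $\F_q$ (the same input already used in~\eqref{equ:Nori}; see~\cite[Lemma~3.5]{N}), supplemented by a count of connected components.

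For part~(1) I would first pass to the identity component. The subgroup $\mathbf{G}^\circ$ is defined over $\F_q$, has $\dim\mathbf{G}^\circ=\dim\mathbf{G}$, and the component group $\mathbf{G}/\mathbf{G}^\circ$ has order at most~$C$. Every connected component of $\mathbf{G}$ carrying an $\F_{q^n}$-rational point is a translate $g\mathbf{G}^\circ$ by such a point and therefore contains exactly $\lvert\mathbf{G}^\circ(\F_{q^n})\rvert$ rational points, whence $\lvert\mathbf{G}^\circ(\F_{q^n})\rvert\le\lvert\mathbf{G}(\F_{q^n})\rvert\le C\,\lvert\mathbf{G}^\circ(\F_{q^n})\rvert$. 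Since $\mathfrak{g}$ is an $\F_q$-vector space of dimension $\dim\mathbf{G}$ one has $\lvert\mathfrak{g}(\F_{q^n})\rvert=q^{n\dim\mathbf{G}}$, and applying the displayed point count to the connected group $\mathbf{G}^\circ$ over $\F_{q^n}$ gives $2^{-\dim\mathbf{G}}\lvert\mathfrak{g}(\F_{q^n})\rvert\le\lvert\mathbf{G}^\circ(\F_{q^n})\rvert\le 2^{\dim\mathbf{G}}\lvert\mathfrak{g}(\F_{q^n})\rvert$. Concatenating the two chains of inequalities yields~(1) with $D_1=C\,2^{\dim\mathbf{G}}$.

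For part~(2), let $\mathbf{H}$ be the scheme-theoretic stabiliser of $x$ in $\mathbf{G}$, so that $\{g\in\mathbf{G}(\F_{q^n}):gx=x\}=\mathbf{H}(\F_{q^n})$ and hence $\lvert\mathbf{G}(\F_{q^n})x\rvert=\lvert\mathbf{G}(\F_{q^n})\rvert/\lvert\mathbf{H}(\F_{q^n})\rvert$ by orbit--stabiliser counting in the finite group $\mathbf{G}(\F_{q^n})$. Since $\mathbf{H}\subseteq\mathbf{G}$ one has $\dim\mathbf{H}\le\dim\mathbf{G}$, and by hypothesis $\mathbf{H}$ has fewer than $C$ connected components; applying part~(1) to $\mathbf{G}$ and to $\mathbf{H}$, recalling $\lvert\mathfrak{g}(\F_{q^n})\rvert/\lvert\mathfrak{h}(\F_{q^n})\rvert=q^{n(\dim\mathbf{G}-\dim\mathbf{H})}$, and dividing the two estimates confines $\lvert\mathbf{G}(\F_{q^n})x\rvert$ to within a factor $D_2=C^2\,2^{\dim\mathbf{G}}$ of $\lvert\mathfrak{g}(\F_{q^n})\rvert/\lvert\mathfrak{h}(\F_{q^n})\rvert$ (using $\dim\mathbf{H}\le\dim\mathbf{G}$ to absorb the powers of $2$). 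Equivalently, one can argue more directly by applying the point count of~\cite[Lemma~3.5]{N} to the orbit $\mathbf{G}^\circ x$ and to the orbit $\mathbf{G}x\cong\mathbf{G}/\mathbf{H}$, which are of dimension $\dim\mathbf{G}-\dim\mathbf{H}$ and have at most $C$ connected components.

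No step is conceptually hard. The only points requiring care are: that $\mathbf{G}$ is not assumed connected, which forces the detour through $\mathbf{G}^\circ$ and an honest count of the $\F_{q^n}$-rational components (a component with a rational point being a translate of $\mathbf{G}^\circ$); that the stabiliser in $\mathbf{G}(\F_{q^n})$ must be identified with $\mathbf{H}(\F_{q^n})$, which is automatic once $\mathbf{H}$ is taken to be the scheme-theoretic stabiliser; and that all estimates are demanded uniformly in the exponent $n$, which causes no trouble because~\cite[Lemma~3.5]{N} already has precisely this uniform shape. If anything plays the role of ``the hard part'', it is merely the bookkeeping of the constants in~(2).
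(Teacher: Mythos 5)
Your proof is correct and follows essentially the same route as the paper: part (1) by combining the point-count bound of \cite[Lemma~3.5]{N} for the connected group $\mathbf{G}^\circ$ with the count of rational components, and part (2) by the orbit--stabiliser theorem applied to $\mathbf{H}(\F_{q^n})=\Stab_{\mathbf{G}(\F_{q^n})}(x)$ together with two applications of part~(1). The only quibble is that dividing those two instances of (1) literally yields the factor $C^2\,2^{\dim\mathbf{G}+\dim\mathbf{H}}\leq C^2\,4^{\dim\mathbf{G}}$ rather than the stated $D_2=C^2\,2^{\dim\mathbf{G}}$ (so ``absorbing the powers of $2$'' via $\dim\mathbf{H}\leq\dim\mathbf{G}$ does not quite land on the advertised constant), a harmless discrepancy that the paper's own one-line proof of (2) shares and that is irrelevant to every use of the lemma.
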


\begin{proof} We use the
  inequality
  \[
  2^{-\dim \mathbf{G}} q^{n \dim \mathbf{G}} \leq \lvert
  \mathbf{G}^\circ(\F_{q^n}) \rvert \leq 2^{\dim \mathbf{G}}
  q^{n \dim \mathbf{G}}
  \]
  for the connected $\F_q$-algebraic group $\mathbf{G}^\circ$;
  cf.~\cite[Lemma~3.5]{N}.  Observing that $\lvert \mathbf{G}^\circ
  (\F_{q^n}) \rvert \leq \lvert \mathbf{G}(\F _{q^n}) \rvert \leq
  \lvert \mathbf{G}/\mathbf{G}^\circ|\cdot|\mathbf{G}^\circ(\F _{q^n})
  \rvert$ and $\lvert \mathfrak{g}(\F_{q^n}) \rvert = q^{n \dim
    \mathbf{G}}$, we deduce (1).  Claim (2) follows from~(1), by
  applying the orbit stabilizer theorem.
\end{proof}

\subsection{Valued Fields} \label{sec:valued.fields}

We use the Denef--Pas language of valued fields; see, for example
\cite[Section~2]{CL}.  It is a three-sorted, first-order language.
The three sorts are the valued field sort $\mathsf{F}$, the residue
field sort $\mathsf{k}$, and the value group sort~$\mathsf{\Gamma}$.
The function symbols are
\begin{align*}
  +_{\val},\times_{\val} &&& \text{from pairs of valued field sort
    variables to one valued field sort variable,} \\
  +_{\res},\times_{\res} &&& \text{from pairs of residue field sort
    variables to one residue field sort variable,} \\
  +_{\gr} &&& \text{from pairs of value group sort variables to one
    value group sort variable,} \\
  \val &&& \text{from one valued field sort to one value group sort,} \\
  \ac &&& \text{from one valued field sort to one residue field sort.}
\end{align*}
In addition there is one binary relation symbol, $<$, between two
value group sort variables.

For us, the important structures for the language of valued fields
come from discrete valuation fields.  Given a discrete valuation field
$F$ with a uniformizer $\varpi$, we interpret the valued field sort
$\mathsf{F}$ as $F$, the residue field sort $\mathsf{k}$ as the
residue field of $F$, and the value group sort $\mathsf{\Gamma}$ as
the value group of $F$ which we identify with~$\mathbb{Z}$.  The
functions $+_{\val},\times_{\val},+_{\res},\times_{\res},+_{\gr}$, and
the relation $<$ are interpreted as the usual operations and order
relation.  Finally, the function symbol $\val$ is interpreted as the
valuation map, and the function symbol $\ac$ is interpreted as the
angular component map
\[
\ac(x) \equiv x \varpi^{-\val(x)} \pmod{\varpi} \qquad \text{for $x \in
  F \smallsetminus \{0\}$.}
\]
The values of $\val(0)$
and $\ac(0)$ are chosen to be $\infty$ and~$0$.

We will only use theories for which $\mathsf{F}$ is a valued field
with residue field $\mathsf{k}$ and value group
$\mathsf{\Gamma}$. Definable sets and functions are introduced
similarly as for languages with only one variable sort. The definable
set $\{x \in \mathsf{F} \mid \val(x) \geq 0\}$ is denoted
by~$\mathcal{O}$. We let $\red: \mathcal{O} \rightarrow \mathsf{k}$
denote the reduction modulo the maximal ideal, i.e.\ the definable map
\[
\red(x) = \begin{cases} \ac(x), & \textrm{ if }\val(x)=0, \\ 0, &
  \textrm{ if }\val(x)>0.\end{cases}
\]
When several valuation rings are involved, we sometimes use subscripts
to distinguish between the various realizations of the reduction
map. We use the function symbol $\red(\cdot)$ also to denote the
componentwise reduction of a matrix or a tuple. In the latter case we
also write $\red^{\times n}$ to highlight the $n$-arity. Likewise, we
write $\ac^{\times n}(x)$ and $\val^{\times n}(x)$ when we apply the
map $\ac$ or $\val$ coordinatewise to a tuple~$x \in \mathsf{F}^n$.

For every discrete valuation field~$F$, the set $\mathcal{O}(F)$ is
the valuation ring~$O_F$.  Every $O_F$-scheme $\mathbf{X}$ gives rise
to three definable sets in the Denef--Pas language augmented by
constants from~$O_F$.  Indeed, let $x = (x_1,\ldots,x_n)$ denote an
$n$-tuple of variables and suppose that $\mathbf{X}$ is given as the
vanishing set of polynomials $f_1(x),\ldots,f_m(x) \in O_F[x]$.  The
first definable set is the set $\mathbf{X}_\mathsf{F}$ of all zeros of
$f_i(x)$, $1 \leq i \leq m$, in $\mathsf{F}^n$.  The second is the set
$\mathbf{X}_\mathsf{k}$ of zeros of the reductions of $f_i(x)$, $1
\leq i \leq m$, in $\mathsf{k}^n$.  The third is
$\mathbf{X}_\mathcal{O} = \mathbf{X}_\mathsf{F} \cap \mathcal{O}^n$.
For instance, if $\mathbf{G}$ is an affine group scheme defined
over~$O_F$, and $F \subset E$ is a finite extension of discrete
valuation fields, with ring of integers $O_E$ and residue field
$\mathbb{F}_q$, then
\[
\mathbf{G}_\mathsf{F}(E) = \mathbf{G}(E), \qquad \mathbf{G}_\mathsf{k}(E) =
\mathbf{G}(\mathbb{F}_q), \qquad \mathbf{G}_\mathcal{O}(E) =
\mathbf{G}(O_{E}).
\]

The constructions of Section~\ref{subsec:def.fun} can be applied to
definable sets of sorts $\mathsf{F}$ and $\mathsf{k}$. For example,
let $F$ be a valued field and let $\mathfrak{g}$ be a Lie algebra
scheme over $O_F$. Applying Proposition~\ref{prop:grass.operations} to
the quantifier-free definable set $\mathfrak{g}_{\mathsf{k}}$, we get
a quantifier-free definable set $\Grass(\mathfrak{g}_{\mathsf{k}})$
such that, for every extension $F \subset E$, the set
$\Grass(\mathfrak{g}_{\mathsf{k}})(E)$ is the collection of all Lie
subalgebras of $\mathfrak{g}_{\mathsf{k}}(E)=\mathfrak{g}
\otimes_{O_F} \mathsf{k}(E)$. The assertions of
Proposition~\ref{prop:grass.operations} carry over to analogous
statements for definable sets of sorts $\mathsf{F}$ and $\mathsf{k}$.

\begin{defn}\label{def:theories}
  Let $\Th_{\mathrm{Hen},0}$ be the theory of Henselian valued fields
  of residue characteristic~$0$, that is the theory generated by the
  axioms stating that a discrete valuation field is Henselian (i.e.,
  the valuation ring satisfies the conclusions of Hensel's Lemma), and
  that its residue characteristic is not equal to $p$, for every
  rational prime~$p$.  Furthermore, given a field $K$ of
  characteristic~$0$, we consider also the Denef--Pas language
  enriched by constant symbols $c_a$ of valued field sort for all $a
  \in K$ and $\Th_{\mathrm{Hen},K,0}$ denotes the theory of Henselian
  valued fields of residue characteristic~$0$ together with the
  statements $c_a \cdot c_b = c_{ab}$, $c_a+c_b = c_{a+b}$ for all
  $a,b\in K$; cf.\ Definition~\ref{exa:theory}.
\end{defn}

The theory $\Th_{\mathrm{Hen},0}$ admits partial elimination of
quantifiers; see~\cite[Theorem~4.1]{Pas} or~\cite{HK}.  By a standard
argument, the same holds for the theory $\Th_{\mathrm{Hen},K,0}$ in
every extended language, as discussed above. \cv{We record this fact
  as follows.}

\begin{thm} \label{thm:Pas} Every Denef--Pas formula $\phi$ is
  $\Th_{\mathrm{Hen},0}$-equivalent to a formula $\psi$ without valued
  field quantifiers.  For every field $K$ of characteristic~$0$, the
  analogous statement holds true for $\Th_{\mathrm{Hen},K,0}$.
\end{thm}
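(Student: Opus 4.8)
The plan is to read off the first assertion from the cited literature and then to obtain the second by the routine device of naming parameters by constants. For $\Th_{\mathrm{Hen},0}$ itself, the statement is precisely the partial elimination of quantifiers for the Denef--Pas language over Henselian valued fields of residue characteristic zero: every Denef--Pas formula is $\Th_{\mathrm{Hen},0}$-equivalent to one in which no quantifier ranges over the valued field sort $\mathsf{F}$ (quantifiers over the residue field sort $\mathsf{k}$ and the value group sort $\mathsf{\Gamma}$ remain allowed). I would invoke \cite[Theorem~4.1]{Pas} for this; the alternative reference \cite{HK} applies as well.

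For the enriched theory $\Th_{\mathrm{Hen},K,0}$ I would argue as follows. Let $\phi$ be a formula in the language of $\Th_{\mathrm{Hen},K,0}$, with free variables $\bar x$. Since $\phi$ is a finite string, only finitely many of the new constant symbols, say $c_{a_1},\dots,c_{a_m}$ with $a_1,\dots,a_m\in K$, occur in it. Replacing each $c_{a_i}$ by a fresh variable $y_i$ of valued field sort produces an honest Denef--Pas formula $\phi^{*}(\bar x,\bar y)$ with the same quantifier structure as $\phi$ --- the substitution neither introduces nor removes a quantifier. Applying the first assertion to $\phi^{*}$, now with $\bar y$ among the free variables (which is permitted, as elimination statements hold for formulae with arbitrarily many free valued-field variables), yields a formula $\psi^{*}(\bar x,\bar y)$ without valued field quantifiers such that $\Th_{\mathrm{Hen},0}\vdash \forall\bar x\,\forall\bar y\,(\phi^{*}\leftrightarrow\psi^{*})$. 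Substituting $c_{a_i}$ back for $y_i$ gives $\psi(\bar x):=\psi^{*}(\bar x,c_{a_1},\dots,c_{a_m})$, a formula in the enriched language with no valued field quantifiers. Finally, every model of $\Th_{\mathrm{Hen},K,0}$ becomes, upon forgetting the interpretations of the new constants, a model of $\Th_{\mathrm{Hen},0}$; hence the equivalence $\forall\bar x\,\forall\bar y\,(\phi^{*}\leftrightarrow\psi^{*})$ holds there for all values of $\bar y$, and instantiating $y_i$ by the interpretation of $c_{a_i}$ gives $\Th_{\mathrm{Hen},K,0}\vdash\forall\bar x\,(\phi\leftrightarrow\psi)$, as required. (Consistency of $\Th_{\mathrm{Hen},K,0}$ is clear, for instance by equipping $K$ with the trivial valuation, but is not needed for the syntactic conclusion.)

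I do not expect a genuine obstacle: this is the standard fact that relative quantifier elimination is preserved when one adjoins constants naming the elements of a substructure. The only point that requires a modicum of care is bookkeeping about sorts --- the constants $c_a$ are declared of valued field sort, so replacing them by fresh valued-field variables is exactly the move that makes \cite[Theorem~4.1]{Pas} (which eliminates valued-field quantifiers while tolerating arbitrarily many free valued-field variables) applicable verbatim.
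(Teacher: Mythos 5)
Your proposal is correct and matches the paper's treatment: the paper likewise cites \cite[Theorem~4.1]{Pas} (or \cite{HK}) for $\Th_{\mathrm{Hen},0}$ and dismisses the passage to $\Th_{\mathrm{Hen},K,0}$ as "a standard argument," which is precisely the constants-to-fresh-variables substitution you spell out. No gaps.
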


Consider a number field $L$.  None of the local fields $L_\fq$, where
$\fq$ ranges over the primes of $O_L$, is a model
for~$\Th_{\mathrm{Hen},0}$.  Nevertheless, we will use theorems proved
in $\Th_{\mathrm{Hen},K,0}$ in the following way.  Suppose that
$\mathcal{X}$ and $\mathcal{Y}$ are two definable sets, and that
$\mathcal{X} = \mathcal{Y}$ holds in~$\Th_{\mathrm{Hen},K,0}$.  Then
the equivalence of $\mathcal{X}$ and $\mathcal{Y}$ can be proved by
only finitely many axioms of~$\Th_{\mathrm{Hen},K,0}$.  Hence,
$\mathcal{X}(E) = \mathcal{Y}(E)$ is true is true for $K \subset E$,
assuming only that the valued field $E$ is Henselian and that the
characteristic of the residue field of $E$ is greater than some
constant (depending on $\mathcal{X}$ and $\mathcal{Y}$).  In
particular, we deduce that $\mathcal{X}(L_\fq) = \mathcal{Y}(L_\fq)$
for almost all primes $\fq$ of~$O_L$.


\section{Parametrizing Representations} \label{sec:parameterize} In
this section, we consider an affine group scheme $\mathbf{G} \subset
\GL_N$ over the ring of integers $O_K$ of a number field $K$ whose
generic fiber is semi-simple.  Let $\mathfrak{g} \subset
\mathfrak{gl}_N$ be the Lie algebra of~$\mathbf{G}$.  We consider
$\mathbf{G}$ and $\mathfrak{g}$ as quantifier-free definable sets over
the first-order language of valued fields enriched by adding constant
symbols for the elements of~$K$, and work in the theory
$\Th_{\mathrm{Hen},K,0}$ of Henselian fields over~$K$ with residue
characteristic~$0$.  Our aim is to prove the following result.

\begin{thm} \label{thm:int.approx} Let $K$ be a number field with ring
  of integers~$O_K$, and let $\mathbf{G} \subset \GL_N$ be an affine
  group scheme over $O_K$ whose generic fiber is semi-simple. There
  are a $\left( \dim \mathbf{G}+1 \right)$-dimensional quantifier-free
  definable set $\scZ \subset \mathcal{O}^{\dim \mathbf{G}+1}$,
  quantifier-free definable functions $f_1,f_2 \colon \scZ \to
  \mathsf{\Gamma}$, and a constant $C\in\R$ such that, for every
  finite field extension $K \subset L$ and almost all primes~$\fq$
  of~$O_L$,
  \[
  \zeta_{\mathbf{G}(O_{L,\fq })}(s) - \zeta_{\mathbf{G}(O_L/\fq)}(s)
  \sim_C \int_{\scZ(L_\fq )} \lvert O_L/\fq \rvert^{f_1(z) - f_2(z) s}
  d\lambda(z),
  \]
  where $\lambda$ is the additive Haar measure on $L_{\fq}^{\dim
    \mathbf{G}+1}$ normalized so that $\lambda(O_{L,\fq}^{\dim
    \mathbf{G}+1}) = 1$.
\end{thm}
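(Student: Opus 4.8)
The plan is to decompose $\zeta_{\mathbf{G}(O_{L,\fq})}$ according to representations that factor through a congruence quotient and those that do not, and then to realize the second class via the orbit method as a $p$-adic integral. Concretely, pick the first principal congruence subgroup $\mathbf{G}^{(1)}(O_{L,\fq}) = \ker(\mathbf{G}(O_{L,\fq}) \to \mathbf{G}(O_L/\fq))$. For almost all $\fq$ (namely those of residue characteristic exceeding $[L:\Q]N^2$, by Lemma~\ref{lem:good.subgroup}), this is a good pro-$p$ group with Lie lattice $\mathfrak{g}^{(1)} = \log \mathbf{G}^{(1)}(O_{L,\fq}) = \fq\cdot\mathfrak{g}(O_{L,\fq})$, and Proposition~\ref{prop:orbit.method} applies. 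Writing $G = \mathbf{G}(O_{L,\fq})$ and $N = \mathbf{G}^{(1)}(O_{L,\fq})$, every $\rho \in \Irr(G)$ lies over a unique $G$-orbit of characters of $N$; the orbit of the trivial character contributes exactly $\zeta_{\mathbf{G}(O_L/\fq)}$ (by Lemma~\ref{lem:zeta.relative.quotient}, since the relevant Schur multiplier is prime to $q$ for large residue characteristic by Lemma~\ref{lem:bdd.Schur}, so $\zeta_{G\mid \mathbf{1}} = \zeta_{G/N} = \zeta_{\mathbf{G}(O_L/\fq)}$). Hence $\zeta_{\mathbf{G}(O_{L,\fq})} - \zeta_{\mathbf{G}(O_L/\fq)}$ is the sum over \emph{nontrivial} orbits.

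Next I would apply Lemma~\ref{lem:zeta.to.relative} to this nontrivial part and then describe $\zeta_{G\mid \Omega(\theta)}$ for a nontrivial $\theta \in (\mathfrak{g}^{(1)})^\vee$. Using Pontryagin duality and the uniformizer, identify $\theta$ with an element $y\in\mathfrak{g}(O_{L,\fq})$ (up to $\fq$-powers) via a fixed nondegenerate invariant form / trace pairing, so that $\val(y)$ records the "depth" of $\theta$. The co-adjoint stabilizer of $\theta$ in $G$ is, up to bounded index and after passing to Lie algebras, cut out by the condition that the matrix $\ad(x)$ applied against $y$ is small — a quantifier-free definable condition in $(x,y)$ in the Denef--Pas language, using Proposition~\ref{prop:grass.operations} to handle the Grassmannian of subalgebras, the radical, and the associated root system. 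The key arithmetic input is that $\dim\Omega(\theta) = |\Ad^*(G)(\theta)|^{1/2}$ and $|\Ad^*(G)(\theta)| = |G|/|\mathrm{Stab}_G(\theta)|$, and by Lemma~\ref{lem:size.group} both $|G|$ and $|\mathrm{Stab}_G(\theta)|$ are, up to a bounded factor, powers of $q$ whose exponents are the dimensions of the ambient group and of a definably-varying stabilizer subscheme. Assembling: the contribution of the orbit of $\theta$ to the Dirichlet series is $\lvert O_L/\fq\rvert^{f_1 - f_2 s}$ for definable functions $f_1, f_2$ of the relevant parameters, the "$1$" weight in $f_1$ accounting for the measure of the orbit on which we integrate and the shift coming from the $|G|/|\mathrm{Stab}|$ factors; summing over orbits becomes integration over a parameter space $\scZ \subset \mathcal{O}^{\dim\mathbf{G}+1}$ (the extra coordinate encoding the depth $\val(y)$), with Haar measure reproducing the orbit counting up to the constant $C$. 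The tower structure in Lemma~\ref{lem:zeta.to.relative} is used to reduce to a single stratum where stabilizer dimensions are constant; finitely many strata glue via a $\lesssim_C$/$\gtrsim_C$ argument absorbing bounded combinatorial factors.

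The two steps I expect to be genuinely delicate are: (i) showing that the set of $\theta$ (equivalently $y$) whose orbit data $(\dim\Omega(\theta), \dim\mathrm{Stab})$ take given values is \emph{quantifier-free} definable uniformly in $L$ and $\fq$ — this is where Theorem~\ref{thm:Pas} (partial quantifier elimination over $\Th_{\mathrm{Hen},K,0}$) and the quantifier-free results of Proposition~\ref{prop:grass.operations} are essential, and where one must be careful that the stabilizer, as a scheme, has boundedly many components so that Lemma~\ref{lem:size.group} gives clean power-of-$q$ estimates; and (ii) controlling the relative zeta functions $\zeta_{G\mid\Omega(\theta)}$ for $\theta$ of positive depth, i.e.\ the \emph{inner} representation-theoretic layer. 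For this one iterates the congruence filtration: passing from $N = \mathbf{G}^{(1)}$ down to $\mathbf{G}^{(r)}$ for growing $r$, the characters of bounded depth are handled by finitely many applications of the good-pro-$p$ orbit method, while the tail (characters trivial on some $\mathbf{G}^{(r)}$, nontrivial on $\mathbf{G}^{(r-1)}$, $r$ large) is governed by the uniform expression~\eqref{eq:AKOV} for $\zeta_{\mathbf{G}^{(r)}(O_{L,\fq})}$ from \cite{AKOV}, whose quantifier-free definable integrand already has the required shape. The main obstacle is reconciling these two descriptions — the orbit-method parametrization valid on $\mathbf{G}^{(1)}$ versus the Weil-integral description valid only on deep congruence subgroups — into one definable integral over a \emph{single} space $\scZ$ of bounded dimension $\dim\mathbf{G}+1$, without the dimension of the parameter space growing with the depth; resolving this requires that the depth parameter $\val(y)$ be a single $\mathsf{\Gamma}$-valued coordinate and that the orbit-weight exponents depend on it definably, which is exactly the content packaged into $f_1, f_2 \colon \scZ \to \mathsf{\Gamma}$.
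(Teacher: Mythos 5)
Your opening moves match the paper: the subtraction $\zeta_{\mathbf{G}(O_{L,\fq})}-\zeta_{\mathbf{G}(O_L/\fq)}$ is indeed realized by separating off the representations lying over the trivial character of $\mathbf{G}^{(1)}(O_{L,\fq})$ (the Schur-multiplier argument via Lemmas~\ref{lem:bdd.Schur}, \ref{lem:p.power} and~\ref{lem:zeta.relative.quotient}), the remaining characters of $\mathfrak{g}^{(1)}(O_{L,\fq})$ are parametrized by a fixed $(\dim\mathbf{G}+1)$-dimensional space $\scX=\mathfrak{g}_{\mathcal{O}}\times(\mathcal{O}\smallsetminus\{0\})$ with the extra coordinate encoding the depth, and Lemma~\ref{lem:zeta.to.relative} converts the orbit sum into an integral. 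Your worry about the parameter space growing with depth is moot in this setup: all depths are covered at once by the pair $(A,z)$.

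The genuine gap is in step (ii), the control of the relative zeta functions $\zeta_{G\mid\Omega(\theta)}$. Your proposed mechanism --- iterate the congruence filtration $\mathbf{G}^{(r)}$ in $r$ and absorb the deep tail via the integral formula~\eqref{eq:AKOV} --- does not close the argument. Formula~\eqref{eq:AKOV} computes $\zeta_{\mathbf{G}^{(r)}(O_{L,\fq})}$, i.e.\ it counts representations \emph{of the congruence subgroup itself}; it says nothing about which representations of the full group lie over a given deep character, which is exactly what $\zeta_{G\mid\Omega(\theta)}$ measures. Passing between the two by Clifford theory (Lemma~\ref{lem:fin.index.BK}) costs a factor controlled by the index $\lvert G:\mathbf{G}^{(r)}(O_{L,\fq})\rvert\asymp q^{r\dim\mathbf{G}}$, which is unbounded in $q$ --- fatal for a $\sim_C$ estimate with $C$ independent of $\fq$. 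The paper's actual mechanism is different and does not iterate in the depth direction at all: it builds a \emph{stabilizer tower} (Theorem~\ref{thm:partial.int}), at each stage replacing the current group by the inertia group of the current character and the current nilpotent Lie algebra by the nilpotent radical of the new stabilizer's Lie algebra. The crucial points are (a) this tower terminates after a number of steps bounded independently of $L$, $\fq$ and the character (by monotonicity of the dimensions $\dim\mathcal{R}_n$, $\dim\mathcal{L}_n$ together with the uniform bound on connected components from Proposition~\ref{prop:stab.conn.components}), and (b) at the terminal stage the relative zeta function is literally the zeta function of a connected \emph{reductive group over the residue field}, $Q_x(O_L/\fq)$, which is then approximated uniformly by $q^{\dim Z(Q_x)}(1+q^{\rk\Phi_x-\lvert\Phi_x^+\rvert s})$ using Deligne--Lusztig theory (Corollary~\ref{cor:finite.reductive.zeta.approx}). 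This last input --- the finite-group-of-Lie-type approximation of Section~\ref{sec:BC.finite} --- is entirely absent from your proposal, yet it is what produces the factor $(1+q^{\rk\Phi_x-\lvert\Phi_x^+\rvert s})$ and hence the doubling $\scZ=\scX\times\{0,1\}$ in the final definable set. Without (a) and (b) the inner layer remains uncontrolled and the theorem does not follow.

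A secondary, lesser issue: establishing that the stabilizer data are \emph{quantifier-free} definable is not a routine application of Theorem~\ref{thm:Pas} plus Proposition~\ref{prop:grass.operations}. The paper needs the auxiliary cover $\scY$ of $\scX$ carrying a diagonalization of $\ad(A)$ (Definition~\ref{defn:Y}, Lemma~\ref{lem:submodule}) to show the Lie algebra of the stabilizer is quantifier-free (Proposition~\ref{prop:q.f.Lie.algebras.of.stabilizers}), and a Lang--Weil/Greenberg-functor argument (Propositions~\ref{prop:stabilizer.is.algebraic} and~\ref{prop:stab.conn.components}) to see that the reduced stabilizer is an algebraic group with boundedly many components. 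You correctly flag this as delicate but do not supply the mechanism.
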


Throughout the proof of Theorem~\ref{thm:int.approx}, there are places
where we omit finitely many primes $\fq$ of~$O_L$.
Observation~\ref{obs:almost-all-q} collects most of the restrictions
that we impose.  In addition, we exclude in the proof of
Proposition~\ref{prop:stab.conn.components} and all consequences
thereof finitely many primes that are not specified explicitly; this
is due to partial elimination of quantifiers. In the actual proof of Theorem~\ref{thm:int.approx} in
Section~\ref{subsec:proof.int.approx}, an application of
Proposition~\ref{prop:grass.operations} also requires us to disregard
finitely many primes. Throughout we collectively write ``for almost
all primes'' to refer to these restrictions.  The choice of primes we
omit may depend on~$L$.  However, we emphasize that the definable set
$\scZ$ and the definable functions $f_1,f_2$ in
Theorem~\ref{thm:int.approx} do not depend on the choice of omitted
primes.

\subsection{Relative Orbit Method}\label{subsec:rel.orbit.method}

We continue to use the notation set up to formulate
Theorem~\ref{thm:int.approx}.

\begin{defn} \label{defn:X} Let $\scX$ be the quantifier-free definable
  set $\mathfrak{g}_\mathcal{O}\times(\mathcal{O}\smallsetminus\{0\})$.
\end{defn}

Throughout we fix a non-degenerate, invariant and
$\Ad(\mathbf{G})$-invariant bilinear form $\langle \cdot, \cdot
\rangle$ on~$\mathfrak{g}$, e.g., the Killing form, and we consider
finite extensions~$K \subset L$.  For $\fq \in \Spec(O_L)$ lying above
a rational prime~$p$, we denote by $\mathfrak{g}(O_{L,\fq })^\vee$ the
Pontryagin dual of the abelian pro-$p$ group $\mathfrak{g}(O_{L,\fq
})$.

\begin{obs} \label{obs:almost-all-q} By omitting finitely many primes
  of $O_L$, we may concentrate in the proof of
  Theorem~\ref{thm:int.approx} on $\fq \in \Spec(O_L)$ such that the
  following conditions hold.
  \begin{list}{$\circ$}{\setlength{\leftmargin}{\mylistleftmargin}
      \setlength{\labelwidth}{10pt} \setlength{\itemsep}{0pt}
      \setlength{\parsep}{1pt}}
  \item $p\Z = \fq \cap \Z$ is unramified in $\Q \subset L$, i.e., the
    valuation of the integer $p$ in $L_ \mathfrak{q}$ is 1;
    furthermore, $p > N+1$, and $p \nmid \lvert
    H^2(\mathbf{H}(\F_q),\C^\times) \rvert$ for every finite field
    $\F_q$ of characteristic $p$ and every connected reductive
    $\F_q$-algebraic group $\mathbf{H}$ with $\dim \mathbf{H} \leq
    \dim \mathbf{G}$; see Lemma~\ref{lem:bdd.Schur}.
  \item The form $\langle \cdot , \cdot \rangle$ is non-degenerate
    on~$\mathfrak{g}(O_{L,\fq})$.
  \item There is a surjective map $\Pi_{\fq} \colon \scX(L_\fq ) \to
    \mathfrak{g}(O_{L,\fq })^\vee$, taking the pair $x = (A_x,z_x)$ to
    the homomorphism of abelian groups
    \[
    \Pi_\fq(x) \colon \mathfrak{g}(O_{L,\fq}) \rightarrow \C^\times,
    \quad B \mapsto \exp\left(2\pi i\cdot \textup{Tr}_{L_\fq \vert
        \Q_p} \left(\frac{\langle A_x, B \rangle}{z_x}\right)\right).
    \]
  \item Every pro-nilpotent Lie subring of $\mathfrak{g}(O_{L,\fq})$
    containing the $1$st principal congruence Lie sublattice
    $\mathfrak{g}^{(1)}(O_{L,\fq})$ \cv{yields} a pro-$p$ subgroup of
    $\mathbf{G}(O_{L,\fq})$ via the exponential map, and the Kirillov
    orbit method applies to pro-$p$ subgroups of
    $\mathbf{G}(O_{L,\fq})$ as described in
    Proposition~\ref{prop:orbit.method}.  (For instance, $p > [L:\Q]
    N^2$ suffices.)
  \end{list}
\end{obs}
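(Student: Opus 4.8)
The plan is to prove the observation by treating the four bulleted conditions separately and showing that, for each of them, the set of primes $\fq$ of $O_L$ at which it fails is finite; the observation then follows because a finite intersection of cofinite subsets of $\Spec(O_L)$ is again cofinite. The only genuinely non-formal input needed is Lemma~\ref{lem:bdd.Schur}; the rest is bookkeeping against results already established in Section~\ref{sec:preliminaries}, so I do not anticipate a real obstacle, only the need to be careful that the threshold on $p$ in each clause is absolute (it may, and does, depend on $L$, as the statement already warns).

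First I would handle the three requirements in the first bullet. The primes $\fq$ that ramify over $\Q$ are exactly those dividing the discriminant of $L$, hence finite in number; the rational primes $p$ with $p \le N+1$ are finite in number, so only finitely many $\fq$ lie above them; and for the Schur-multiplier clause I would invoke Lemma~\ref{lem:bdd.Schur}, which for each root system $\Phi$ supplies a constant $c(\Phi)\in\R$ such that $\lvert H^2(\mathbf{H}(\F_q),\C^\times)\rvert$ is prime to $q$ whenever $\charac\F_q = p > c(\Phi)$ and $\mathbf{H}$ is connected reductive over $\F_q$ with absolute root system $\Phi$. Since a connected reductive group $\mathbf{H}$ with $\dim\mathbf{H}\le\dim\mathbf{G}$ has absolute root system of rank at most $\dim\mathbf{G}$, only finitely many $\Phi$ occur, and $c = \max_\Phi c(\Phi)$ works for all such $\mathbf{H}$; thus only the $\fq$ above primes $p\le c$ are excluded. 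For the second bullet I would fix an $O_K$-basis of $\mathfrak{g}$ and let $\Delta\in O_K\smallsetminus\{0\}$ be the Gram determinant of $\langle\cdot,\cdot\rangle$ in that basis, nonzero because the form is non-degenerate over $K$; the induced $O_{L,\fq}$-bilinear form on $\mathfrak{g}(O_{L,\fq}) = \mathfrak{g}\otimes_{O_K}O_{L,\fq}$ is non-degenerate (indeed unimodular) exactly when $\val_\fq(\Delta) = 0$, which fails only for the finitely many $\fq$ dividing $\Delta$.

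For the third bullet I would restrict to those $\fq$ lying over an unramified rational prime $p$ and satisfying the second bullet — already a cofinite condition by the above — and then construct $\Pi_\fq$ from the standard additive character $\psi = e_p\circ\mathrm{Tr}_{L_\fq\vert\Q_p}$ of $L_\fq$, where $e_p\colon\Q_p\to\C^\times$ has kernel $\Z_p$. Unramifiedness makes the different of $L_\fq\vert\Q_p$ trivial, so $\psi$ is trivial on $O_{L,\fq}$ and nontrivial on $\varpi^{-1}O_{L,\fq}$ for a uniformizer $\varpi$. Then $(A,z)\mapsto\bigl(B\mapsto\psi(\langle A,B\rangle/z)\bigr)$ is a continuous homomorphism $\mathfrak{g}(O_{L,\fq})\to\C^\times$, by continuity of the form, the trace, and $\psi$, so it lies in $\mathfrak{g}(O_{L,\fq})^\vee$; this is the asserted map. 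Because $\langle\cdot,\cdot\rangle$ is unimodular on $\mathfrak{g}(O_{L,\fq})$ and $\psi$ has conductor $O_{L,\fq}$, the pairing $(A,B)\mapsto\psi(\langle A,B\rangle)$ identifies $\mathfrak{g}(L_\fq)/\mathfrak{g}(O_{L,\fq})$ with $\mathfrak{g}(O_{L,\fq})^\vee$, and every coset there is represented by some $A/z$ with $A\in\mathfrak{g}(O_{L,\fq})$ and $z=\varpi^k$ (clearing denominators coordinatewise), giving surjectivity of $\Pi_\fq$. Finally, for the fourth bullet I would cite Lemma~\ref{lem:good.subgroup} and Proposition~\ref{prop:orbit.method}, which apply to all pro-$p$ subgroups of $\mathbf{G}(O_{L,\fq})\subset\GL_N(O_{L,\fq})$ once $p > [L:\Q]N^2$, together with the elementary remark that for such $p$ the $\exp$ and $\log$ series converge and are mutually inverse on $\varpi\,\mathfrak{gl}_N(O_{L,\fq})\supset\mathfrak{g}^{(1)}(O_{L,\fq})$ with no $p$-divisible denominators in the Hausdorff formula, so $\exp$ carries any pro-nilpotent Lie subring $\mathfrak{h}$ with $\mathfrak{g}^{(1)}(O_{L,\fq})\subset\mathfrak{h}\subset\mathfrak{g}(O_{L,\fq})$ bijectively onto a pro-$p$ subgroup $\exp(\mathfrak{h})$ of $\mathbf{G}(O_{L,\fq})$; only finitely many $\fq$ lie above primes $p\le[L:\Q]N^2$. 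Combining the four cases finishes the proof.
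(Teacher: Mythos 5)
Your proposal is, in substance, the justification the paper intends: the statement is an \emph{Observation} which the paper does not prove explicitly, but supports by exactly the references you use (Lemma~\ref{lem:bdd.Schur} for the Schur-multiplier clause, and Lemma~\ref{lem:good.subgroup} together with Proposition~\ref{prop:orbit.method} behind the parenthetical ``$p>[L:\Q]N^2$ suffices''). Your treatment of the first three bullets -- finiteness of ramified primes and of primes $p\le N+1$, reduction of the Schur-multiplier condition to finitely many root systems of rank at most $\dim\mathbf{G}$, the Gram-determinant argument for unimodularity of $\langle\cdot,\cdot\rangle$ on $\mathfrak{g}(O_{L,\fq})$, and the construction of $\Pi_\fq$ from the character $e_p\circ\mathrm{Tr}_{L_\fq\vert\Q_p}$ with surjectivity via the perfect pairing -- is precisely the routine argument being suppressed, and it is correct.

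The one soft spot is your justification of the fourth bullet. A pro-nilpotent Lie subring $\mathfrak{h}$ with $\mathfrak{g}^{(1)}(O_{L,\fq})\subset\mathfrak{h}\subset\mathfrak{g}(O_{L,\fq})$ is in general \emph{not} contained in $\varpi\,\mathfrak{gl}_N(O_{L,\fq})$: it contains elements $X$ whose reduction modulo $\fq$ is a nonzero nilpotent matrix (this is exactly the situation for the subrings $\widetilde{\mathcal{R}}_x$ used later, whose reductions lie in $\Grass^{\mathrm{nilp}}(\mathfrak{g}_{\mathsf{k}})$). For such $X$ the convergence and mutual inverseness of $\exp$ and $\log$, and the integrality of the Hausdorff series, do not follow from the congruence estimate on $\varpi\,\mathfrak{gl}_N(O_{L,\fq})$ that you quote; your ``so'' is a non sequitur as written. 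The fix is short: since the reduction of $X$ is nilpotent, $X^N\in\fq\,\mathfrak{gl}_N(O_{L,\fq})$, so for $p>N+1$ (already imposed in the first bullet) the series converge and one checks $\exp(X)\in\mathbf{G}(O_{L,\fq})$; alternatively, and more in the spirit of the paper, invoke the saturability results of [K], [GK] underlying Lemma~\ref{lem:good.subgroup}, which for $p>[L:\Q]N^2$ give the exp--log correspondence between such Lie sublattices and pro-$p$ subgroups of $\mathbf{G}(O_{L,\fq})$. With that repair the proposal is complete and coincides with the paper's intended argument.
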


In particular, by restricting one of the homomorphisms $\Pi_\fq(x)$ to
$\mathfrak{g}^{(1)}(O_{L,\fq }) = \fq \cdot \mathfrak{g}(O_{L,\fq})$
and applying the orbit method map $\Omega$, we get an irreducible
character $\Xi_\fq(x) = \Omega ( \Pi_\fq(x)
\vert_{\mathfrak{g}^{(1)}(O_{L,\fq})})$ of the $1$st principal
congruence subgroup~$\mathbf{G}^{(1)}(O_{L,\fq })$.  When the prime
$\fq$ is clear from the context, it may be dropped from the notation.

For every finite extension $K \subset L$ and every $\fq \in
\Spec(O_L)$, the set $\scX(L_\fq)$ is an open subset of $L_\fq^{\dim
  \mathbf{G}+1}$.  We normalize the additive Haar measure on $L_\fq$
so that the ring of integers $O_{L,\fq}$ has measure~$1$, and denote
by $\lambda$ the product measure on $L_\fq ^{\dim \mathbf{G}+1}$.
In~\cite[Lemma~4.1 and Corollary~4.6]{Jai}, Jaikin--Zapirain proved
the following result.

\begin{thm}\label{thm:Jaikin} 
  There exist quantifier-free definable functions $\phi_1,\phi_2 \colon \scX
  \rightarrow \mathsf{\Gamma}$ such that, for every finite extension
  $K \subset L$, almost all $\fq \in \Spec(O_L)$, and every $x\in
  \scX(L_\fq)$,
  \begin{list}{}{\setlength{\leftmargin}{\myenumilistleftmargin}
      \setlength{\labelwidth}{20pt} \setlength{\itemsep}{0pt}
      \setlength{\parsep}{1pt}}
  \item[\textup{(1)}] $\lambda(\Pi_\fq^{-1}(\Pi_\fq(x))) = \lvert
    O_L/\fq \rvert^{\phi_1(x)}$,
  \item[\textup{(2)}] $\dim \Xi_\fq(x) = \lvert
    \Ad^*(\mathbf{G}^{(1)}(O_{L,\fq})) (\Pi_\fq (x)) \rvert^{1/2} =
    \lvert O_L/\fq \rvert^{\phi_2(x)}$.
  \end{list}
\end{thm}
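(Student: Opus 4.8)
The plan is to make both assertions explicit via the trace-form identification of $\mathfrak{g}(O_{L,\fq})^\vee$ with a quotient of $\mathfrak{g}(L_\fq)$ together with the Kirillov orbit method, and then to recognise the resulting quantities as quantifier-free definable functions of $x$ by inspection. Throughout one works under the hypotheses of Observation~\ref{obs:almost-all-q}: for almost all $\fq$ (lying over an unramified, large rational prime $p$) the form $\langle\cdot,\cdot\rangle$ makes $\mathfrak{g}(O_{L,\fq})$ self-dual, the trace $\mathrm{Tr}_{L_\fq\vert\Q_p}$ carries the $O_{L,\fq}$-dual of $O_{L,\fq}$ onto $\Z_p$, and the orbit method applies to pro-$p$ subgroups of $\mathbf{G}(O_{L,\fq})$ as in Proposition~\ref{prop:orbit.method}. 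Combining these, the pairing $(v,B)\mapsto\exp\bigl(2\pi i\,\mathrm{Tr}_{L_\fq\vert\Q_p}\langle v,B\rangle\bigr)$ identifies $\mathfrak{g}(O_{L,\fq})^\vee$ with $\mathfrak{g}(L_\fq)/\mathfrak{g}(O_{L,\fq})$, and under this identification $\Pi_\fq(x)$ corresponds to the class of $A_x/z_x$.

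For (1): the previous identification shows that $\Pi_\fq(x)=\Pi_\fq(x')$ if and only if $A_x/z_x-A_{x'}/z_{x'}\in\mathfrak{g}(O_{L,\fq})$, so the fibre $\Pi_\fq^{-1}(\Pi_\fq(x))$ is the quantifier-free definable subset of $\scX$ consisting of those $(A,z)$ with $A\in\mathfrak{g}_{\mathcal O}$, $z\neq 0$, and $\val\bigl(z_x A_i-z (A_x)_i\bigr)\geq\val(z)+\val(z_x)$ for every coordinate index $i$. Its $\lambda$-measure is computed slice by slice in $\val(z)$: the slice over a fixed value is either empty or a coset of $z\,\mathfrak{g}_{\mathcal O}$ inside $\mathfrak{g}_{\mathcal O}$, of measure $\lvert O_L/\fq\rvert^{-\dim\mathbf{G}\cdot\val(z)}$, and the emptiness is governed by the inequality $\val(z)\geq\val(z_x)-\min_i\val((A_x)_i)$. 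Summing the resulting geometric series gives $\lambda(\Pi_\fq^{-1}(\Pi_\fq(x)))=\lvert O_L/\fq\rvert^{\phi_1(x)}$, where $\phi_1$ is assembled from $\val$, finite minima and integer constants (essentially $-\dim\mathbf{G}$, or $-(\dim\mathbf{G}+1)$, times $\max\{0,\val(z_x)-\min_i\val((A_x)_i)\}$, the conductor exponent of $\Pi_\fq(x)$); crucially $\phi_1$ does not depend on $L$ or $\fq$.

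For (2): the first equality is Proposition~\ref{prop:orbit.method}(2) applied to the good pro-$p$ group $H=\mathbf{G}^{(1)}(O_{L,\fq})$, with Lie lattice $\mathfrak{h}=\mathfrak{g}^{(1)}(O_{L,\fq})=\fq\,\mathfrak{g}(O_{L,\fq})$ and $\theta=\Pi_\fq(x)\vert_{\mathfrak{h}}$: indeed $\Xi_\fq(x)=\Omega(\theta)$ by definition, and $\dim\Omega(\theta)=\lvert\Ad^*(H)(\theta)\rvert^{1/2}$. For the second equality one writes the coadjoint orbit size as the index $\lvert H:\Stab_H(\theta)\rvert$; using the $\Ad(\mathbf{G})$-invariance of $\langle\cdot,\cdot\rangle$, an element $g$ stabilises $\theta$ exactly when $\langle\Ad(g)A_x-A_x,B\rangle/z_x$ lies in the conductor for all $B\in\mathfrak{h}$, a congruence condition on $\Ad(g)A_x$. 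Passing through $\log/\exp$ (legitimate by saturability), $\Stab_H(\theta)$ corresponds to the radical of the alternating form $B_\theta(X,Y)=\langle[X,Y],A_x\rangle/z_x$ on $\mathfrak{h}$, so the orbit size equals $\lvert\mathfrak{h}:\Rad(B_\theta)\rvert=\lvert O_L/\fq\rvert^{r(x)}$ with $r(x)$ the rank of $B_\theta$, readable off from the valuation of a suitable minor of its Gram matrix and hence quantifier-free definable. One then sets $\phi_2(x)=r(x)/2$, an integer because coadjoint orbits in the orbit method have square order.

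The real content of the lemma is uniformity in $L$ and $\fq$ — that the $\phi_i$ are genuinely quantifier-free definable functions $\scX\to\mathsf{\Gamma}$ over the fixed base $K$, independent of $L$ and $\fq$ — and this is where I expect the genuine difficulty to lie. Every quantity above (the conductor exponent, the rank of $B_\theta$, and the boundary-level corrections in the orbit count that I have suppressed) is described by finitely many valuation inequalities and matrix ranks in the coordinates of $x$ and in the reductions of the structure constants of $\mathfrak{g}/O_K$; partial elimination of valued-field quantifiers in $\Th_{\mathrm{Hen},K,0}$ (Theorem~\ref{thm:Pas}) then yields quantifier-free formulas valid over every Henselian $L_\fq$ of large residue characteristic, the finitely many omitted primes being those where a hypothesis of Observation~\ref{obs:almost-all-q} or an instance of quantifier elimination fails. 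The genuine obstacle concentrates in (2): one must match the intrinsically group-theoretic orbit size $\lvert\Ad^*(H)(\theta)\rvert$ with the lattice invariant $\lvert\mathfrak{h}:\Rad(B_\theta)\rvert$ \emph{uniformly} over all $L$ and $\fq$, and it is precisely the saturability of the principal congruence subgroups together with the $p$-adic Kirillov theory packaged in Proposition~\ref{prop:orbit.method} that makes this identification available.
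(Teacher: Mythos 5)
First, a point of reference: the paper does not prove this statement at all --- it is quoted verbatim from Jaikin--Zapirain (\cite[Lemma~4.1 and Corollary~4.6]{Jai}), with only a one-line remark afterwards recording the explicit choices $\phi_1(x)=\dim\mathbf{G}\cdot\val(z_x)$ and $\phi_2(x)=\tfrac12\val(\alpha(x))$, and a later remark sketching part (1) via the equivalence $\Pi_\fq(B,w)=\Pi_\fq(A_x,z_x)\iff \val(A_xw-Bz_x)\geq\val(z_xw)$. Your reconstruction follows the same route (self-duality of $\mathfrak{g}(O_{L,\fq})$ under the trace form, explicit fiber computation, Kirillov orbit sizes via the commutator form), so the overall strategy is sound; but two steps as you have written them do not close.

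In part (1), your slice-by-slice integration over $\val(z)$ does not produce a pure power of $q=\lvert O_L/\fq\rvert$: summing $q^{-(\dim\mathbf{G}+1)k}(1-q^{-1})$ over $k\geq c$ yields $\frac{1-q^{-1}}{1-q^{-(\dim\mathbf{G}+1)}}\,q^{-(\dim\mathbf{G}+1)c}$, and the rational prefactor cannot be absorbed into $q^{\phi_1(x)}$ for a $\mathsf{\Gamma}$-valued $\phi_1$. Your hedge (``essentially $-\dim\mathbf{G}$, or $-(\dim\mathbf{G}+1)$, times \dots'') signals this unresolved discrepancy, and your candidate $\phi_1$ (a function of the conductor of $\Pi_\fq(x)$) also disagrees with the paper's stated $\phi_1(x)=\dim\mathbf{G}\cdot\val(z_x)$, which depends only on $\val(z_x)$. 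So either the fiber you are measuring is not the one intended in \cite{Jai} (e.g.\ the $z$-coordinate should be held fixed, in which case the measure is exactly $q^{-\dim\mathbf{G}\cdot\val(z_x)}$), or an exact equality is simply not what your computation delivers; as it stands the step ``summing the resulting geometric series gives $q^{\phi_1(x)}$'' is false. Secondly, your uniformity argument leans on Theorem~\ref{thm:Pas}, but that theorem eliminates only \emph{valued-field} quantifiers: the output may still contain residue-field and value-group quantifiers, so it cannot by itself certify that $\phi_1,\phi_2$ are quantifier-free in the sense this paper requires (this is precisely why Section~\ref{sec:parameterize} proves quantifier-freeness by hand via Lemma~\ref{lem:submodule} and its relatives rather than by quantifier elimination). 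The correct justification is that the explicit formulas --- valuations of fixed polynomials over $K$, finite minima, and, for $\phi_2$, valuations of the minors/elementary divisors of the Gram matrix of $B_\theta(X,Y)=\langle[X,Y],A_x\rangle/z_x$ (Jaikin's function $\alpha$) --- are quantifier-free by inspection; relatedly, in part (2) the rank of $B_\theta$ on the lattice $\mathfrak{h}$ is read off from the full collection of minors (elementary divisors), not ``a suitable minor''.
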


\begin{rem}
  More explicitly, one can take $\phi_1(x) = \dim \mathbf{G} \,
  \val(z_x)$, and $\phi_2(x) = \frac{1}{2} \val(\alpha(x))$, where
  $\alpha$ is essentially the function appearing
  in~\cite[Corollary~4.6]{Jai}.  According to its definition
  in~\cite{Jai}, the function~$\alpha$, and hence $\phi_2$, is
  quantifier-free.
\end{rem}

We need a generalization of the construction leading to
Theorem~\ref{thm:Jaikin}.  Employing the notation introduced in
Sections \ref{subsec:def.fun} and~\ref{sec:valued.fields} (compare, in particular, Definition~\ref{def:grassmannian}), the
definable set  $\Grass(\mathfrak{g}_\mathsf{k})$ is the
Grassmannian of Lie subalgebras of~$\mathfrak{g}_\mathsf{k}$, and the
definable set $\Grass^\mathrm{nilp}(\mathfrak{g}_\mathsf{k})$ is the
subset of $\Grass(\mathfrak{g}_\mathsf{k})$ parametrizing Lie
subalgebras that consist of nilpotent matrices.  By applying
Proposition~\ref{prop:grass.operations}, part (1), we see that both
sets are, in fact, quantifier-free definable with respect to the
theory~$\Th_{\mathrm{fields},O_K}$.

Suppose that $\mathcal{R} \colon \scX \to
\Grass^\mathrm{nilp}(\mathfrak{g}_\mathsf{k})$ is a definable
function.  We denote by $\widetilde{\mathcal{R}} \subset \scX \times
\mathfrak{g}_\mathcal{O}$ the definable set of tuples $(x,X)$ such
that the reduction of $X$ to $\mathfrak{g}_\mathsf{k}$ is in
$\mathcal{R}(x)$.  Recall that~$\mathbf{G} \subset \GL_N$.  By
Observation~\ref{obs:almost-all-q}, we may assume that the residue
field characteristic $p$ satisfies $p > N$ so that, over the residue
field, one can evaluate without problems the logarithm series up to
its $N$th term.  We define $\exp \mathcal{R} \subset \scX \times
\mathbf{G}_\mathsf{k}$ to be the definable set of pairs $(x,g)$ such
that $g$ is unipotent and $\log g \in \mathcal{R}(x)$.  By
Observation~\ref{obs:almost-all-q}, we may assume that the residue
field characteristic $p$ is unramified in $L$ and satisfies~$p-1 > N$.
In this case, a result of Lazard implies that $\log(g)$ converges for
elements $g$ of any pro-$p$ subgroup of $\mathbf{G}(O_{L,\fq})$,
cf.~\cite[Lemma~B.1]{K}.  Denoting by $\widetilde{\mathcal{R}}_x$ the
fiber of $\widetilde{\mathcal{R}}$ at $x$, we define
$\exp\widetilde{\mathcal{R}} \subset \scX \times
\mathbf{G}_\mathcal{O}$ to consist of all pairs $(x,g)$ such that the
reduction of $g$ to $\mathbf{G}_\mathsf{k}$ is unipotent and $\log(g)
\in \widetilde{\mathcal{R}}_x$.  More generally, if $\mathcal{S}
\subset \scX \times \mathbf{G}_\mathsf{k}$ is a definable family over
$\scX$, let $\widetilde{\mathcal{S}} \subset \scX \times
\mathbf{G}_\mathcal{O}$ be the definable set of all pairs $(x,g)$ such
that the reduction of $g$ to $\mathbf{G}_\mathsf{k}$ lies in
$\mathcal{S}_x$. We observe that $\exp \widetilde{\mathcal{R}} =
\widetilde{\exp \mathcal{R}}$ and will use the lighter notation.

By Observation~\ref{obs:almost-all-q}, for every finite extension $K
\subset L$, for almost all primes $\fq$ of $O_L$, and for every $x \in
\scX(L_\fq)$, the additive group $\widetilde{\mathcal{R}}_x(L_\fq)$ is
closed under Lie commutators and it is the Lie ring associated to the
pro-$p$ group $\exp\widetilde{\mathcal{R}}_x(L_\fq)$.

\begin{defn}\label{def:Pi.Xi}
  Denote by $\Pi_{\mathcal{R},\fq}(x)$ the restriction of
  $\Pi_{\fq}(x)$ to $\widetilde{\mathcal{R}}_x(L_\fq)$.  For almost
  all primes~$\fq$, the orbit method map applied to
  $\Pi_{\mathcal{R},\fq}(x)$ yields an irreducible character
  $\Xi_{\mathcal{R},\fq}(x)$ of the group
  $\exp\widetilde{\mathcal{R}}_x(L_\fq)$.  When the prime $\fq$ is
  clear from the context, it may be dropped from the notation.
\end{defn}

Note that, if $\mathcal{R} \colon \scX \to
\Grass^\mathrm{nilp}(\mathfrak{g}_\mathsf{k})$ is the constant
function with common value~$\{0\}$, then the irreducible character
$\Xi_{\left\{ 0 \right\} ,\fq}(x)$ coincides with the previously
defined~$\Xi_{\fq}(x)$.  For general~$\mathcal{R}$, we are interested
in possible extensions of the character $\Xi_{\mathcal{R},\fq}(x)$ to
its stabilizer in the normalizer $N_{\mathbf{G}(O_{L,\fq})}(\exp
\widetilde{\mathcal{R}}_x(L_\fq))$.  This stabilizer is known as the
inertia group of~$\Xi_{\mathcal{R},\fq}(x)$.

We summarize the described set-up for $x \in \scX (L_\fq)$ in the
following diagram.
\[
\xymatrix@C-5pt{ {\text{\underline{characters}}} &
  {\text{\underline{groups}}} & & & & &
  {\text{\underline{Lie lattices}}} & {\text{\underline{functionals}}} \\
  {} & {\mathbf{G}(O_{L,\fq})} & {\bullet} \ar@{-}[d] & & & {\bullet}
  \ar@{-}[d] & {\mathfrak{g}(O_{L,\fq})} &
  {\Pi_{\fq}(x)} \\
  {\Xi_{\mathcal{R},\fq}(x)} & {\exp \widetilde{\mathcal{R}}_x(L_\fq)}
  & {\bullet} \ar@{<.>}[rrr]^{\text{exp-log}}_{\text{correspondence}}
  & & & {\bullet} & {\widetilde{\mathcal{R}}_x(L_\fq)} &
  {\Pi_{\mathcal{R},\fq}(x)} \\
  {\Xi_\fq(x)} & {\mathbf{G}^{(1)}(O_{L,\fq})} & {\bullet}
  \ar@{-}[u] & & & {\bullet} \ar@{-}[u] &
  {\mathfrak{g}^{(1)}(O_{L,\fq})} & {\Pi_{\fq}(x)
    \vert_{\mathfrak{g}^{(1)}(O_{L,\fq})}} }
\]


\subsection{The Stabilizer of $\Xi_\mathcal{R} $}
\label{subsec:the.stabilizer}

Suppose now that $\mathcal{R} \colon \scX \rightarrow
\Grass^\mathrm{nilp}(\mathfrak{g}_\mathsf{k})$ is a
\emph{quantifier-free} definable function with respect to the
theory~$\Th_{\mathrm{Hen},K,0}$.  This means that $\mathcal{R}$ can be
described by a quantifier-free definable formula for all models
$L_\fq$ with sufficiently large residue field characteristic.  We
remark that many of the results in this section do not yet depend
essentially on $\mathcal{R}$ being quantifier-free, but it is
convenient to focus on this situation in preparation of
Section~\ref{subsec:Lie.stabilizer}, where the extra condition plays a
crucial role. By
Proposition~\ref{prop:grass.operations}, parts (3) and~(5), there are
quantifier-free definable sets $\mathcal{N}_{\mathcal{R},\text{Lie}}
\subset \scX \times \mathfrak{g}_\mathsf{k}$ and
$\mathcal{N}_\mathcal{R} \subset \scX \times \mathbf{G}_\mathsf{k}$
whose fibers over a point $x \in \scX(L_\fq)$ are the stabilizers of
$\mathcal{R}(x)$ under the adjoint actions in the Lie algebra and in
the group respectively.

By Proposition~\ref{prop:orbit.method}, part (3), the stabilizer of
$\Xi_{\mathcal{R},\fq}(x)$ in $\mathbf{G}(O_{L,\fq})$ is
equal to the stabilizer in $\mathbf{G}(O_{L,\fq})$ of the
$\exp \widetilde{\mathcal{R}}_x(L_\fq )$-orbit of
$\Pi_{\mathcal{R},\fq}(x)$.  Hence, this stabilizer is the
product of $\exp\widetilde{\mathcal{R}}_x(L_\fq)$ and the
stabilizer of $\Pi_{\mathcal{R},\fq}(x)$ in
$\mathbf{G}(O_{L,\fq})$.  Writing
\[
N_{\mathbf{G}(O_{L,\fq})}
(\widetilde{\mathcal{R}}_x(L_\fq)) = \{g\in
\mathbf{G}(O_{L,\fq}) \mid \Ad(g) (
\widetilde{\mathcal{R}}_x(L_\fq) )
=\widetilde{\mathcal{R}}_x(L_\fq)\},
\]
we have
\begin{multline*}
  \Stab_{\mathbf{G}(O_{L,\fq})} (
  \Pi_{\mathcal{R},\fq}(x) ) = \\
  \left\{g\in N_{\mathbf{G}(O_{L,\fq})}
    (\widetilde{\mathcal{R}}_x(L_\fq)) \mid \forall Y \in
    \widetilde{\mathcal{R}}_x(L_\fq) \colon
    \Pi_{\mathcal{R},\fq}(x)(Y) = \Pi_{\mathcal{R},\fq}(x)(\Ad(g)Y)
  \right\}.
\end{multline*}

In the following we consider a prime $\fq$ of $O_L$, lying above a
prime $\fp$ of $O_K$ and different from the previously-omitted primes;
see Observation~\ref{obs:almost-all-q}.  In addition we fix an element
$x = (A_x,z_x) \in \scX(L_\fq)$. Recall that $\red_{\mathcal{O} \vert \mathsf{k}}:\mathcal{O} \to \mathsf{k}$ denotes the reduction map (applied component-wise to entries of a matrix or vector).

Let
$\widehat{\mathcal{S}} = \widehat{\mathcal{S}}_{\mathcal{R},\fq,x}
\subset \mathbf{G}_\mathcal{O}$ be the definable group, in
$\Th_{\mathrm{Hen},K,0}$, given by the formula
\begin{equation}\label{equ:phi(x,g)}
\phi(x,g) =_\text{def} \, \red_{\mathcal{O} \vert \mathsf{k}}(g) \in
(\mathcal{N}_\mathcal{R})_x \, \wedge \, \left( \forall Z\in
  \widetilde{\mathcal{R}}_x \big( \val\left(\langle A_x^g-A_x,Z
      \rangle\right) > \val(z_x) \big) \right),
\end{equation}
where we think of $x$ as a parameter and $g$ as a ``free
variable''.

We denote by $\mathcal{S} = \mathcal{S}_{\mathcal{R},\fq,x}$ the
reduction of $\widehat{\mathcal{S}}$ modulo the maximal ideal, i.e.,
the definable subgroup of $\mathbf{G}_\mathsf{k}$, in
$\Th_{\mathrm{Hen},K,0}$, given by the formula
\begin{equation} \label{equ:psi(x,y)}
\psi(x,h) =_\text{def} \, \exists g \in \mathbf{G}_\mathcal{O} \left(
  \red_{\mathcal{O} \vert \mathsf{k}}(g) = h \, \wedge \,
  \phi(x,g) \right),
\end{equation}
where again we think of $x$ as a parameter and $h$ as a ``free
variable''.

Denote the characteristic of the residue field $O_K/ \fp$ by~$p$.  The
functor of $p$-Witt vectors
\[
\F \mapsto \Witt(\F) = \varprojlim \Witt_n(\F)
\]
associates to every perfect field $\F$ of characteristic~$p$
canonically a strict $p$-ring $\Witt(\F)$ with residue field~$\F$;
see~\cite[Chapter~II]{Se}.  The integral domain $\Witt(\F)$ is
complete and Hausdorff with respect to the $p$-adic topology and
$\Witt_n(\F) \cong \Witt(\F) / p^n\Witt(\F)$ is called the ring of
truncated Witt vectors of length~$n$.  For short we denote by
$\FWitt(\F)$ the field of fractions of~$\Witt(\F)$.

The functor $\Witt$ is pro-representable; the underlying set is
represented by $\prod_{i=1}^\infty \mathbb{A}^1$;
\nir{cf.~\cite{Gr}}.  Using this fact, one sees that there are a
pro-algebraic group scheme $\widehat{\mathbf{S}} =
\widehat{\mathbf{S}}_{\mathcal{R},\fq,x}$ and a definable group
$\overline{\mathcal{S}} = \overline{\mathcal{S}}_{\mathcal{R},\fq,x}$,
in $\Th_{\text{perf.-fields},p,O_L/\fq}$, such that, for every
(possibly infinite) perfect extension $\F$ of~$O_L / \fq$,
\begin{equation}\label{equ:pro-algebraic-scheme}
  \begin{split}
    \widehat{\mathbf{S}}(\F) & = \widehat{\mathcal{S}}(\FWitt(\F))
    \subset
    \mathbf{G}_\mathcal{O}(\FWitt(\F)), \\
    \overline{\mathcal{S}}(\F) & = \mathcal{S}(\FWitt(\F)) \subset
    \mathbf{G}_\mathsf{k}(\FWitt(\F)) \cong \mathbf{G}(\F).
  \end{split}
\end{equation}

Our next goal, Proposition~\ref{prop:stabilizer.is.algebraic}, is to
show that there exists an algebraic group $\mathbf{S}$ over $O_L/\fq$
such that, for every perfect extension $\F$ of~$O_L/\fq$, one has
$\mathbf{S}(\F) = \overline{\mathcal{S}}(\F)$.  The following table
summarizes some of the notation that will feature in this discussion.

\[ \xymatrix@C-10pt@R-20pt{
  {\text{\underline{$\Th_{\mathrm{Hen},K,0}$-def.}}} &
  {\text{\underline{$\Th_{\text{perf.-fields},p,O_L/\fq}$-def.}}}
  & {\text{\underline{$\mathrm{Witt}_n(\mathbb{F})$-algebraic}}} &
  {\text{\underline{$\mathbb{F}$-(pro-)algebraic}}} \\
  {\widehat{\mathcal{S}} \subset \mathbf{G}_\mathcal{O}} & &
  {\mathbf{S}_n} & {\widehat{\mathbf{S}} = \varprojlim
    \mathcal{F}_n(\mathbf{S}_n)} \\
  {\mathcal{S} \subset \mathbf{G}_\mathsf{k}} &
  {\overline{\mathcal{S}}} & & {\mathbf{S}} }
\]

We briefly recall further details regarding the functor Witt;
compare~\cite{Gr}, or~\cite[p.~276]{BoLuRa} for a summary.  As above,
let $\F$ be a perfect field of characteristic~$p > 0$ and fix $n \in
\N$.  For an $\F$-scheme $\mathbf{X}$, let $\mathcal{G}_n(\mathbf{X})$
be the locally ringed space whose underlying topological space is the
same as the topological space of $\mathbf{X}$ and whose sheaf of rings
is the sheaf of germs of morphisms $\mathbf{X} \rightarrow \Witt_n$.
For example,
\[
\mathcal{G}_n(\Spec(\F)) = \Spec(\Witt_n(\F)) \quad \text{and} \quad
\mathcal{G}_n(\Spec(\F[e]/ (e^2))) = \Spec(\Witt_n(\F)[e] / (e^2)),
\]
which we will shortly put to good use.  By the main theorem
of~\cite[\S4]{Gr}, there is a functor
\[
\mathcal{F}_n \colon (\text{$\Witt_n(\F)$-schemes of finite type})
\rightarrow (\text{$\F$-schemes of finite type}),
\]
the Greenberg functor of degree~$n$, such that, for every $\F$-scheme
$\mathbf{X}$ of finite type and every $\Witt_n(\F)$-scheme
$\mathbf{Y}$ of finite type, there is a natural bijection
\begin{equation} \label{eq:Greenberg.adjoint} \Hom_{\Spec(\F)}
  (\mathbf{X}, \mathcal{F}_n(\mathbf{Y}) ) \cong
  \Hom_{\Spec(\Witt_n(\F))} (\mathcal{G}_n(\mathbf{X}),\mathbf{Y}).
\end{equation}

Returning to the situation at hand, by
Observation~\ref{obs:almost-all-q}, we may suppose that $p\Z = \fq
\cap \Z$ is unramified in $\mathbb{Q} \subset L$.  For $n \in \N$,
there is an affine group scheme $\mathbf{S}_n$ over $\Witt_n(O_L/\fq)$
such that, for every perfect extension $\F$ of $O_L/ \fq$,
\begin{multline*}
  \mathbf{S}_n (\Witt_n(\F)) = \big\{ g \in \mathbf{G}(\Witt_n(\F))
  \mid \red_{\Witt_n(\F) \vert \F} (g) \in
  (\mathcal{N}_\mathcal{R})_x(\F) \, \wedge \\
  \forall Z\in \widetilde{\mathcal{R}}_x(\FWitt(\F)) \big( \val
  (\langle A_x^g-A_x,Z \rangle ) > \min \{ n,\val(z_x) \} \big)
  \big\}.
\end{multline*}
Here $\red_{\Witt_n(\F) \vert \F} \colon \Witt_n(\F) \rightarrow
\Witt_1(\F) \cong \F$ denotes the natural reduction map.  The
pro-algebraic group scheme $\widehat{\mathbf{S}} =
\widehat{\mathbf{S}}_{\mathcal{R},\fq,x}$ in
\eqref{equ:pro-algebraic-scheme} is the inverse limit of the $O_L/
\fq$-group schemes $\mathcal{F}_n(\mathbf{S}_n)$, $n \in \N$.

Next we discuss the pro-Lie algebra $\widehat{\mathbf{T}}$ of
$\widehat{\mathbf{S}}$.  We use the following consequence
of~\eqref{eq:Greenberg.adjoint}.  If $\mathbf{V} =
\Spec(\Witt_n(\F)[x_1,\ldots,x_d] / (f_1,\ldots,f_m))$ is an affine
$\Witt_n(\F)$-scheme and $v \colon \Spec(\Witt_n(\F)) \rightarrow
\mathbf{V}$ a $\Witt_n(\F)$-point, then the tangent space of
$\mathcal{F}_n(\mathbf{V})$ at $\mathcal{F}_n(v)$ is the affine
subspace of $\mathcal{F}_n(\mathbb{A}_{\Witt_n(\F)}^d) \cong
\mathbb{A}_{\F}^{nd}$ defined by the ``polynomials''
$\mathcal{F}_n(\mathrm{d} f_i(v))$, $i \in \{1,\ldots,m\}$.  This can
be seen from applying \eqref{eq:Greenberg.adjoint} to $\mathbf{X} =
\Spec(\F[e]/(e^2))$ and $\mathbf{Y} = \mathbf{V}$.  In particular, the
Lie algebra of $\mathcal{F}_n(\mathbf{S}_n)$ is isomorphic to
$\mathcal{F}_n(\mathbf{T}_n)$, where $\mathbf{T}_n$ denotes the
$\Witt_n(O_L / \fq)$-scheme satisfying, for every perfect extension
$\F$ of $O_L/ \fq$,
\begin{multline*}
  \mathbf{T}_n(\Witt_n(\F)) = \big\{ X \in \mathfrak{g}(\Witt_n(\F))
  \mid \red_{\Witt_n(\F) \vert \F}(X) \in
  (\mathcal{N}_{\mathcal{R},\text{Lie}})_x (\F) \, \wedge \\
  \forall Z \in \widetilde{\mathcal{R}}_x(\FWitt(\F)) \big( \val (
  \langle [A_x,X],Z \rangle) > \min \{ n, \val(z_x) \} \big) \big\}.
\end{multline*}
The pro-Lie algebra $\widehat{\mathbf{T}}$ is the inverse limit of
the $O_L/\fq$-Lie algebra schemes $\mathcal{F}_n(\mathbf{T}_n)$, $n
\in \N$.

Next we show that the definable group $\overline{\mathcal{S}}$ in
\eqref{equ:pro-algebraic-scheme} is actually an algebraic group.

\begin{prop} \label{prop:stabilizer.is.algebraic} Let $\fq \in
  \Spec(O_L)$, $x \in \scX(L_\fq)$ and $\overline{\mathcal{S}} =
  \overline{\mathcal{S}}_{\mathcal{R},\fq,x}$ be as above.  In
  particular, suppose that $\fq$ satisfies the conditions listed in
  Observation~\textup{\ref{obs:almost-all-q}}.  Then
  $\overline{\mathcal{S}}$ is (equivalent to) an algebraic group
  over~$O_L / \fq$.
\end{prop}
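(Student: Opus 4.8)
The goal is to show that $\overline{\mathcal{S}}$, which a priori is only a definable group in the theory $\Th_{\text{perf.-fields},p,O_L/\fq}$, is actually realized by an algebraic group over $O_L/\fq$. The strategy is to use the finite level approximations $\mathbf{S}_n$ together with the Greenberg functor $\mathcal{F}_n$, and to prove that the projective system $(\mathcal{F}_n(\mathbf{S}_n))_n$ stabilizes in a suitable sense. Concretely, I would show that there is an index $n_0$, which one may take to be any integer exceeding $\val(z_x)$, such that the reduction maps $\mathcal{F}_{n+1}(\mathbf{S}_{n+1}) \to \mathcal{F}_n(\mathbf{S}_n)$ are isomorphisms onto their image for $n \geq n_0$, and moreover that the image $\overline{\mathcal{S}}$ of $\widehat{\mathbf{S}}$ in $\mathbf{G}_\mathsf{k}$ equals the image of $\mathcal{F}_{n_0}(\mathbf{S}_{n_0})$ under the composite $\mathcal{F}_{n_0}(\mathbf{S}_{n_0}) \to \mathcal{F}_1(\mathbf{S}_1) = \mathbf{S}_1 \subset \mathbf{G}_\mathsf{k}$. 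Since $\mathcal{F}_{n_0}(\mathbf{S}_{n_0})$ is an honest $O_L/\fq$-scheme of finite type with a group structure (the Greenberg functor preserves products and hence group schemes, by the adjunction \eqref{eq:Greenberg.adjoint}) and the reduction map is a morphism of algebraic groups, the image $\mathbf{S} := \overline{\mathcal{S}}$ is a constructible subgroup, hence closed, hence an algebraic subgroup of $\mathbf{G}_\mathsf{k}$.

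**Key steps in order.**
First I would observe that the defining conditions for $\mathbf{S}_n(\Witt_n(\F))$ only involve $\val(z_x)$ through the truncation $\min\{n, \val(z_x)\}$; consequently, for $n \geq \val(z_x)$ the condition $\val(\langle A_x^g - A_x, Z\rangle) > \min\{n,\val(z_x)\} = \val(z_x)$ is \emph{independent of $n$}, and likewise the condition $\red_{\Witt_n(\F)\vert\F}(g) \in (\mathcal{N}_\mathcal{R})_x(\F)$ only refers to the reduction mod $p$. Hence for $n \geq \val(z_x)$ the group $\mathbf{S}_{n+1}(\Witt_{n+1}(\F))$ is precisely the preimage of $\mathbf{S}_n(\Witt_n(\F))$ under $\mathbf{G}(\Witt_{n+1}(\F)) \to \mathbf{G}(\Witt_n(\F))$, intersected with no further conditions beyond those already at level $n$. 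Second, I would unwind what $\overline{\mathcal{S}}(\F) = \mathcal{S}(\FWitt(\F))$ means: an element $h \in \mathbf{G}(\F)$ lies in $\overline{\mathcal{S}}(\F)$ iff there is a lift $g \in \mathbf{G}_\mathcal{O}(\FWitt(\F)) = \mathbf{G}(\Witt(\F))$ reducing to $h$ and satisfying $\phi(x,g)$; by the truncation remark this lift exists iff a lift to $\Witt_{n_0}(\F)$ exists satisfying the level-$n_0$ conditions (since higher conditions are vacuous given the lower ones hold, using that $\mathbf{G}(\Witt(\F)) \to \mathbf{G}(\Witt_{n_0}(\F))$ is surjective, $\mathbf{G}$ being smooth — here I invoke the running hypothesis that $p$ is large and the scheme is well-behaved at $\fq$). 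Third, via \eqref{eq:Greenberg.adjoint} applied to $\mathbf{X} = \Spec(\F)$, we get $\mathcal{F}_{n_0}(\mathbf{S}_{n_0})(\F) = \mathbf{S}_{n_0}(\Witt_{n_0}(\F))$ functorially in the perfect field $\F$, so the projection $\pi_{n_0,1} \colon \mathcal{F}_{n_0}(\mathbf{S}_{n_0}) \to \mathbf{S}_1 \hookrightarrow \mathbf{G}_\mathsf{k}$ has image (on $\F$-points, for all perfect $\F$) exactly $\overline{\mathcal{S}}(\F)$. Fourth, $\mathcal{F}_{n_0}(\mathbf{S}_{n_0})$ is an affine algebraic group over $O_L/\fq$ (Greenberg functor of a group scheme of finite type over $\Witt_{n_0}(O_L/\fq)$ is a group scheme of finite type over $O_L/\fq$), the map $\pi_{n_0,1}$ is a homomorphism of algebraic groups, so by Chevalley's theorem its image is a closed algebraic subgroup $\mathbf{S} \subset \mathbf{G}_\mathsf{k}$; this $\mathbf{S}$ satisfies $\mathbf{S}(\F) = \overline{\mathcal{S}}(\F)$ for all perfect $\F \supseteq O_L/\fq$, and by the variant of Lemma~\ref{lem:criterion.for.q.f.} recorded in Remark~\ref{rem:criterion.q.f.perfect} (quantifier-free definability over perfect fields is detected on the perfect/algebraic closure), the definable group $\overline{\mathcal{S}}$ is equivalent to $\mathbf{S}$.

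**Main obstacle.**
The step I expect to require the most care is controlling the surjectivity of the truncation maps on lifts: asserting that "a lift over $\Witt(\F)$ satisfying $\phi(x,-)$ exists iff a lift over $\Witt_{n_0}(\F)$ satisfying the truncated conditions exists" uses both smoothness of $\mathbf{G}$ over $O_{L,\fq}$ (to lift $\Witt_{n_0}$-points to $\Witt$-points, i.e. $\mathbf{G}(\Witt(\F)) \twoheadrightarrow \mathbf{G}(\Witt_{n_0}(\F))$) and the observation that, once a $\Witt_{n_0}$-lift satisfies the $\val(z_x)$-truncated compatibility condition, \emph{every} further lift automatically satisfies the untruncated condition (because the untruncated condition $\val(\langle A_x^g - A_x, Z\rangle) > \val(z_x)$ is already entirely visible at level $n_0 > \val(z_x)$, as $A_x^g$ depends on $g$ only mod $\fq^{n_0}$ up to the relevant precision). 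Making this precise — in particular verifying that $\widetilde{\mathcal{R}}_x(\FWitt(\F))$ and the pairing behave compatibly under truncation, and that the exponential/logarithm correspondence underlying $\exp\widetilde{\mathcal{R}}$ does not introduce obstructions — is the technical heart, and relies on the running assumptions of Observation~\ref{obs:almost-all-q} (unramifiedness of $p$, $p$ large, non-degeneracy of $\langle\cdot,\cdot\rangle$ mod $\fq$). Everything else is formal manipulation of the Greenberg functor and its adjunction.
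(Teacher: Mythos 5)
Your scaffolding (Greenberg functors, stabilization of the system $\mathbf{S}_n$ once $n$ exceeds $\val(z_x)$, reduction to representability of the image functor) matches the paper's setup, but the decisive step is missing. You write that, since $\pi_{n_0,1}\colon \mathcal{F}_{n_0}(\mathbf{S}_{n_0})\to\mathbf{G}_\mathsf{k}$ is a homomorphism of algebraic groups, Chevalley's theorem gives a closed subgroup $\mathbf{S}$ with $\mathbf{S}(\F)=\overline{\mathcal{S}}(\F)$ for all perfect $\F$. Chevalley's theorem only controls the image on geometric points: it gives $\mathbf{S}(\F^{\alg})=\pi_{n_0,1}\bigl(\mathcal{F}_{n_0}(\mathbf{S}_{n_0})(\F^{\alg})\bigr)$. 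By definition, $\overline{\mathcal{S}}(\F)$ is the image of the \emph{$\F$-points} of the source, and in general the image of $\F$-points under a homomorphism of algebraic groups is strictly smaller than the $\F$-points of the image variety (the squaring map on $\mathbb{G}_{\mathrm{m}}$ over a finite field is the standard example). So the inclusion $\mathbf{S}(\F)\subset\overline{\mathcal{S}}(\F)$ --- equivalently, the criterion $\overline{\mathcal{S}}(\F)=\overline{\mathcal{S}}(\F^{\alg})\cap\mathbf{G}(\F)$ that you invoke from the quantifier-free characterization --- is exactly what has to be proved, and your proposal offers no argument for it. The "main obstacle" you identify (compatibility of the conditions under truncation) is a real but comparatively routine point; the genuine difficulty lies elsewhere.

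What is needed, and what the paper's proof supplies, is a rational-point lifting statement: given $\overline{g}\in\mathbf{G}(\F)$ that admits a lift $\widetilde{g}$ over $\Witt(\F^{\alg})$ satisfying $\phi(x,\cdot)$, one must produce such a lift over $\Witt(\F)$. The paper does this by fixing an arbitrary lift $g_1$ over $\Witt(\F)$, setting $B=A_x^{g_1}$, and analyzing the set $\mathcal{Y}$ of correcting elements $g$ in the first congruence subgroup with $\val(\langle B^g-A_x,Z\rangle)\geq\gamma+1$ for all $Z\in\widetilde{\mathcal{R}}_x$. Using the expansion $B^g\equiv B+[B,\log(g)]$ to the relevant precision and the elementary divisor structure of the linear operator cut out by $\widetilde{\mathcal{R}}_x$, the defining conditions of $\mathcal{Y}$ become a system of \emph{linear} equations over $\F$ after applying the Greenberg functor; a linear system solvable over $\F^{\alg}$ is solvable over $\F$. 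Equivalently (Remark~\ref{rem:stabilizer.is.alg}), $\mathcal{Y}$ is a torsor under a connected unipotent group, whose first Galois cohomology vanishes. Note that Lang's theorem would not rescue your argument either, since the criterion must be verified for \emph{all} perfect extensions $\F$ of $O_L/\fq$, not only the finite ones. Without this linearization/torsor argument the proof is incomplete.
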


\begin{proof} Recall that $\mathbf{G} \subset \GL_N$.  We show that
  $\overline{\mathcal{S}}$ is quantifier-free in
  $\Th_{\text{perf.-fields},p,O_L/\fq}$, using
  Remark~\ref{rem:criterion.q.f.perfect}.  Since constructable
  subgroups are Zariski-closed, this implies that
  $\overline{\mathcal{S}}$ is (equivalent to) a Zariski-closed
  subgroup of the algebraic group~$\GL_N$ over $O_L / \fq$.

  We need to check that
  \begin{equation}\label{eq:condition.q.f.stabilizer}
    \overline{\mathcal{S}}(\F) = \overline{\mathcal{S}}(\F^\alg) \cap
    \mathbf{G}(\F)
  \end{equation}
  for every (possibly infinite) perfect extension $\F$ of $O_L / \fq$.
  Fix such an extension~$\F$, set $\mathbb{O} = \Witt(\F)$ with
  residue field~$\mathbb{O} /p \mathbb{O} \cong \F$, and write
  $\mathbb{L} = \FWitt(\F)$.  Let $\mathbb{O}^\unr = \Witt(\F^\alg)$
  and $\mathbb{L}^\unr = \FWitt(\F^\alg)$ denote the maximal
  unramified extensions.  Since $L_\fq$ is unramified over~$\Q_p$, we
  have $L_\fq \subset \mathbb{L}$.

  The inclusion $\subset$ in \eqref{eq:condition.q.f.stabilizer} is
  clear.  To prove the other inclusion we consider $A \in
  \mathfrak{g}(\mathbb{O})$, $\overline{g}\in \mathbf{G}(\F)$, and
  $z\in \mathbb{O} \smallsetminus \left\{ 0 \right\}$.  Writing $x =
  (A,z) \in \scX(\mathbb{L})$ and $\gamma=\val(z)$, we suppose that
  there exists $\widetilde{g} \in \mathbf{G}(\mathbb{O}^\unr)$ such
  that $\red_{\mathcal{O} \vert \mathsf{k}}(\widetilde{g}) =
  \overline{g}$ and $\val( \langle A^{\widetilde{g}}-A,X \rangle )
  \geq \gamma+1$ for all $X \in
  \widetilde{\mathcal{R}}_x(\mathbb{L}^\unr)$.  The task is to produce
  $g \in \mathbf{G}(\mathbb{O})$ with the same properties as
  $\widetilde{g}$.  Clearly, there exists $g_1 \in
  \mathbf{G}(\mathbb{O})$ such that $\red_{\mathcal{O} \vert
    \mathsf{k}}(g_1) = \overline{g}$.  Put $B = A^{g_1} \in
  \mathfrak{g}(\mathbb{O})$ and consider the definable set
  \[
  \mathcal{Y} = \mathcal{Y}_{A,B,\mathcal{R}(x),\gamma} = \left\{g \in
    \mathbf{G}_\mathcal{O}^{(1)} \mid \forall Z \in
    \widetilde{\mathcal{R}}_x \left(\val( \langle B^g-A, Z \rangle )
      \geq \gamma+1 \right) \right\},
  \]
  where the labeling is permissible, because $\mathcal{R}(x)$
  determines $\widetilde{\mathcal{R}}_x$.  Clearly, $g_1^{-1}
  \widetilde{g} \in \mathcal{Y}(\mathbb{O}^\unr)$, and it suffices to
  show that $\mathcal{Y}(\mathbb{O})$ is not empty.  Furthermore, by
  forming the quotient of $\mathcal{Y}$ by the $(\gamma+1)$st
  principal congruence subgroup $\mathbf{G}_\mathcal{O}^{(\gamma+1)}$
  of $\mathbf{G}_\mathcal{O}$, we obtain a
  $\Witt_{\gamma+1}(O_L/\fq)$-scheme.  Using the Greenberg functor,
  this quotient can be identified with an algebraic variety
  $\mathbf{Y} = \mathbf{Y}_{A,B,\mathcal{R}(x),\gamma}$
  over~$O_L/\fq$.  Our aim $\mathcal{Y}(\mathbb{O}) \neq \varnothing$
  is equivalent to $\mathbf{Y}(\F) \neq \varnothing$.

  It is convenient to treat first the special case $\mathcal{R}(x) =
  \{ 0 \}$.  This means that $\widetilde{\mathcal{R}}_x$ gives the
  $1$st principal congruence Lie sublattice $\mathfrak{g}^{(1)}$.
  Since the form $\langle \cdot , \cdot \rangle$ is non-degenerate,
  the defining condition of $\mathcal{Y}$ is equivalent to $B^g
  \equiv A \pmod{p^\gamma}$.  By induction on $\gamma$, we may further
  assume that $B \equiv A \pmod{p^{\gamma-1}}$, that is $B = A +
  p^{\gamma-1}D$ for some $D \in \mathfrak{g}(\mathbb{O})$.  By
  Observation~\ref{obs:almost-all-q}, the logarithm map is a
  well-defined polynomial map on $\mathbf{G}^{(1)}(\mathbb{O}^\unr)$.
  For $g \in \mathbf{G}^{(1)}(\mathbb{O}^\unr)$, the formula
  \begin{equation} \label{equ:B^g-log-expansion} B^g = B +
    \sum_{i=1}^\infty \frac{1}{i!}
    [B,\underbrace{\log(g),\ldots,\log(g)}_{i}] \equiv B + [B,\log(g)]
    \pmod{p^{\delta +1}},
  \end{equation}
  where $\delta = \sup \{ d \in \N \mid [B,\log(g)] \equiv 0
  \pmod{p^d} \}$, shows that the defining condition of $\mathcal{Y}$
  can be replaced by
  \[
  [A,\log(g)] + p^{\gamma-1}D \equiv 0 \pmod{p^\gamma}.
  \]
  Passing to $\mathbf{Y}$, this translates into a system of linear
  equations
  over~$\F$.  Since $\mathbf{Y}(\F^\alg)$ is non-empty we deduce that
  $\mathbf{Y}(\F)$ is non-empty.

  Now we return to the general case.  Based on the trivial inclusion
  $\{0\} \subset \mathcal{R}(x)$, the special case yields $g_2 \in
  \mathbf{G}^{(1)}(\mathbb{O})$ such that $B^{g_2} \equiv A
  \pmod{p^\gamma}$.  Thus we may assume that $B$ itself is already of
  the form $B = A + p^\gamma E$ for some $E \in
  \mathfrak{g}(\mathbb{O})$.  By choosing a basis for the Lie lattice
  $\widetilde{\mathcal{R}}_x(\mathbb{L})$, the defining condition of
  $\mathcal{Y}$ can be phrased as $\ell(B^g - A) \equiv 0
  \pmod{p^{\gamma+1}}$ for a linear operator $\ell =
  \ell_{\mathcal{R}(x)} \colon \mathfrak{g} \rightarrow \mathfrak{g}$.
  The elementary divisors of $\ell$ are $1$ and $p$, with
  multiplicities $\dim \mathcal{R}(x)$ and $\dim
  \mathfrak{g}_\mathsf{k} - \dim \mathcal{R}(x)$.  Thus
  \eqref{equ:B^g-log-expansion} implies that the defining condition of
  $\mathcal{Y}$ can be replaced by
  \[
  \ell([A,\log(g)] + p^\gamma E) \equiv 0 \pmod{p^{\gamma+1}},
  \]
  because necessarily $[A,\log(g)] \equiv 0 \pmod{p^\gamma}$ and all
  higher terms vanish modulo $p^{\gamma+1}$.  Passing to $\mathbf{Y}$,
  this translates once more into a system of linear
  equations
  over~$\F$.  Since $\mathbf{Y}(\F^\alg)$ is non-empty we deduce that
  $\mathbf{Y}(\F)$ is non-empty.
\end{proof}

\begin{rem} \label{rem:stabilizer.is.alg}
  The proof of Proposition~\ref{prop:stabilizer.is.algebraic} admits
  the following short interpretation.  In the special case
  $\mathcal{R}(0) = \{0\}$, one can regard $\mathbf{Y} =
  \mathbf{Y}_{A,B,\mathcal{R}(x),\gamma}$ as a torsor of the connected
  unipotent group
  \[
  \mathcal{U}_{A,\gamma} = \left\{g\mathbf{G}_\mathcal{O} ^
    {(\gamma+1)}\in\mathbf{G}_\mathcal{O}
    ^{(1)}/\mathbf{G}_\mathcal{O} ^ {(\gamma+1)} \mid A^g-A \equiv 0
    \pmod{p^\gamma} \right\},
  \]
  and the logarithm map sets up a bijection between $\mathbf{Y}$ and
  the affine space
  \begin{equation*}
    \{ Z + \mathfrak{g}_\mathcal{O}^{(\gamma+1)} \in
    \mathfrak{g}_\mathcal{O}^{(1)} /
    \mathfrak{g}_\mathcal{O}^{(\gamma+1)} \mid [A,Z] \equiv 0
    \pmod{p^\gamma} \}.
  \end{equation*}
  It is known that connected unipotent groups have trivial first
  Galois cohomology groups, and thus every torsor over such a group
  has a rational point.
\end{rem}

We denote the algebraic group equivalent to $\overline{\mathcal{S}} =
\overline{\mathcal{S}}_{\mathcal{R},\fq,x}$ by $\mathbf{S} =
\mathbf{S}_{\mathcal{R},\fq,x}$ and refer to it as the stabilizer of
$\Xi_\mathcal{\mathcal{R},\fq}(x)$ modulo the $1$st principal
congruence subgroup.

\begin{prop}\label{prop:stab.conn.components} There is a constant
  $C \in \R$, depending only on $K$ and $\mathbf{G}$, such that, for
  every finite extension $K \subset L$, almost all primes $\fq$
  of~$O_L$, every quantifier-free definable function
  $\mathcal{R}\colon \scX \rightarrow
  \Grass^\mathrm{nilp}(\mathfrak{g}_\mathsf{k})$, and every $x =
  (A_x,z_x) \in \scX(L_\fq)$, the number of connected components of
  $\mathbf{S} = \mathbf{S}_{\mathcal{R},\fq,x}$ is less than~$C$.
\end{prop}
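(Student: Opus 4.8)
The plan is to reduce the claim to a single uniform point count, exploiting that the stabilizer depends on $\mathcal{R}$ only through the one nilpotent subalgebra $\mathcal{R}(x)$.

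\emph{Step 1: one definable family.} First I would note that $\mathbf{S}_{\mathcal{R},\fq,x}$ depends on $\mathcal{R}$ only through $\bar{\mathfrak{r}}:=\mathcal{R}(x)\in\Grass^{\mathrm{nilp}}(\mathfrak{g}_\mathsf{k})(L_\fq)$: in the defining formula~\eqref{equ:phi(x,g)} the fibre $\widetilde{\mathcal{R}}_x$ is simply $\{Z\in\mathfrak{g}_\mathcal{O}\mid\red_{\mathcal{O}\vert\mathsf{k}}(Z)\in\bar{\mathfrak{r}}\}$, while $(\mathcal{N}_\mathcal{R})_x=N_{\mathbf{G}_\mathsf{k}}(\bar{\mathfrak{r}})$ by Proposition~\ref{prop:grass.operations}, parts~(3) and~(5). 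Hence, over the \emph{fixed} quantifier-free definable parameter space $\mathcal{P}=\scX\times\Grass^{\mathrm{nilp}}(\mathfrak{g}_\mathsf{k})$ (independent of $\mathcal{R}$), there is one $\Th_{\mathrm{Hen},K,0}$-definable family of groups $\widehat{\mathcal{S}}_{x,\bar{\mathfrak{r}}}\subset\mathbf{G}_\mathcal{O}$, with reductions $\mathcal{S}_{x,\bar{\mathfrak{r}}}\subset\mathbf{G}_\mathsf{k}$ and associated $\Th_{\text{perf.-fields},p,O_L/\fq}$-definable groups $\overline{\mathcal{S}}_{x,\bar{\mathfrak{r}}}$, indexed by $(x,\bar{\mathfrak{r}})\in\mathcal{P}$, such that $\mathbf{S}_{\mathcal{R},\fq,x}=\mathbf{S}_{x,\mathcal{R}(x)}$. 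By Proposition~\ref{prop:stabilizer.is.algebraic} (whose proof depends on $\mathcal{R}$ only through $\mathcal{R}(x)$), each $\mathbf{S}_{x,\bar{\mathfrak{r}}}$ is an algebraic group over $\F_q:=O_L/\fq$ with $\mathbf{S}_{x,\bar{\mathfrak{r}}}(\F)=\overline{\mathcal{S}}_{x,\bar{\mathfrak{r}}}(\F)$ for all perfect $\F\supseteq\F_q$, and $\dim\mathbf{S}_{x,\bar{\mathfrak{r}}}\le\dim\mathbf{G}$ because $\overline{\mathcal{S}}_{x,\bar{\mathfrak{r}}}\subseteq\mathbf{G}_\mathsf{k}$. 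It remains to bound, uniformly in $L,\fq,x,\bar{\mathfrak{r}}$, the number of geometric connected components of $\mathbf{S}_{x,\bar{\mathfrak{r}}}$.

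\emph{Step 2: uniform point count.} By Observation~\ref{obs:almost-all-q} we may assume $L_\fq$ is unramified over $\Q_p$, so $L_\fq=\FWitt(\F_q)$ and the degree-$j$ unramified extension $L_\fq^{(j)}$ equals $\FWitt(\F_{q^j})$; hence by~\eqref{equ:pro-algebraic-scheme}
\[
\mathbf{S}_{x,\bar{\mathfrak{r}}}(\F_{q^j})=\overline{\mathcal{S}}_{x,\bar{\mathfrak{r}}}(\F_{q^j})=\mathcal{S}_{x,\bar{\mathfrak{r}}}(L_\fq^{(j)})=\{\,h\in\mathbf{G}(\F_{q^j})\mid\psi(x,\bar{\mathfrak{r}},h)\text{ holds in }L_\fq^{(j)}\,\}
\]
for every $j\ge 1$, where $\psi$ is the single formula~\eqref{equ:psi(x,y)} with parameters $(x,\bar{\mathfrak{r}})$ and residue-field variable $h$. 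Applying partial elimination of valued-field quantifiers (Theorem~\ref{thm:Pas}), legitimate over $L_\fq^{(j)}$ once the residue characteristic exceeds a bound depending only on $K$ and $\mathbf{G}$, $\psi$ becomes a Boolean combination — of size bounded in terms of $K$ and $\mathbf{G}$ — of value-group conditions on the valuations of polynomials in $x$ and of ring-language conditions on $h$ whose parameters lie among $\bar{\mathfrak{r}}$ and the residue-field images $\red(\cdot),\ac(\cdot)$ of the coordinates of $x$. For fixed $(x,\bar{\mathfrak{r}})$ over $L_\fq^{(j)}$ the value-group conditions are merely true or false, so $\mathbf{S}_{x,\bar{\mathfrak{r}}}(\F_{q^j})$ is the solution set in $\F_{q^j}$ of one of finitely many ring-language formulas of bounded complexity. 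Proposition~\ref{prop:size.definable.set} then supplies $C_0\in\R$, depending only on $K$ and $\mathbf{G}$, such that for all $j$ this set is empty or of size between $C_0^{-1}(q^j)^{d}$ and $C_0(q^j)^{d}$ for some $d\in\N$.

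\emph{Step 3: conclusion.} Since $\mathbf{S}_{x,\bar{\mathfrak{r}}}$ is a group it is non-empty; by~\cite[Lemma~3.5]{N} applied to $\mathbf{S}_{x,\bar{\mathfrak{r}}}^\circ$ one has $2^{-\dim\mathbf{G}}(q^j)^{\dim\mathbf{S}_{x,\bar{\mathfrak{r}}}}\le|\mathbf{S}_{x,\bar{\mathfrak{r}}}^\circ(\F_{q^j})|\le 2^{\dim\mathbf{G}}(q^j)^{\dim\mathbf{S}_{x,\bar{\mathfrak{r}}}}$, and by Lang's theorem the map $\mathbf{S}_{x,\bar{\mathfrak{r}}}(\F_{q^j})\to(\mathbf{S}_{x,\bar{\mathfrak{r}}}/\mathbf{S}_{x,\bar{\mathfrak{r}}}^\circ)(\F_{q^j})$ is surjective with fibres the cosets of $\mathbf{S}_{x,\bar{\mathfrak{r}}}^\circ(\F_{q^j})$. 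Letting $j\to\infty$ in Step~2 forces $d=\dim\mathbf{S}_{x,\bar{\mathfrak{r}}}$, whence $|(\mathbf{S}_{x,\bar{\mathfrak{r}}}/\mathbf{S}_{x,\bar{\mathfrak{r}}}^\circ)(\F_{q^j})|=|\mathbf{S}_{x,\bar{\mathfrak{r}}}(\F_{q^j})|/|\mathbf{S}_{x,\bar{\mathfrak{r}}}^\circ(\F_{q^j})|\le C_0\,2^{\dim\mathbf{G}}$ for all $j$. Taking $j$ a multiple of the order of the Frobenius acting on the finite group $(\mathbf{S}_{x,\bar{\mathfrak{r}}}/\mathbf{S}_{x,\bar{\mathfrak{r}}}^\circ)(\F_q^\alg)$ makes the left-hand side the number of geometric connected components of $\mathbf{S}_{x,\bar{\mathfrak{r}}}$, which is therefore less than $C:=C_0\,2^{\dim\mathbf{G}}+1$, a constant depending only on $K$ and $\mathbf{G}$.

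\emph{Main obstacle.} I expect the hard part to be the bookkeeping in Step~2: verifying that after partial quantifier elimination the condition defining $\mathbf{S}_{x,\bar{\mathfrak{r}}}(\F_{q^j})$ is genuinely the zero set of a ring-language formula over $\F_{q^j}$ whose complexity is bounded independently of $L,\fq,j,x,\bar{\mathfrak{r}}$ — so that Proposition~\ref{prop:size.definable.set} yields a single $C_0$ — together with controlling the finitely many primes that must be excluded and checking they depend only on $K$ and $\mathbf{G}$. The reduction in Step~1, that the a priori infinite collection of stabilizers forms one definable family indexed by $(x,\bar{\mathfrak{r}})$, is the conceptual point but is straightforward once noticed.
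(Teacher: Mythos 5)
Your proposal is correct and follows essentially the same route as the paper: partial elimination of valued-field quantifiers (Theorem~\ref{thm:Pas}) to exhibit $\mathcal{S}_{\mathcal{R},\fq,x}$ as the solution set of a ring-language formula of bounded complexity with parameters in the residue field, then the uniform point-count of Proposition~\ref{prop:size.definable.set} compared against the algebro-geometric growth of $\lvert \mathbf{S}(\F_{q^j})\rvert$ as $j\to\infty$. The only deviations are cosmetic: your Step~1 makes explicit the (correct) observation, implicit in the paper's formula $\phi_i(\ac^{\times m(i)}(H'_i(A_x)),\mathcal{R}(x),h)$, that the dependence on $\mathcal{R}$ factors through the parameter $\mathcal{R}(x)\in\Grass^{\mathrm{nilp}}(\mathfrak{g}_\mathsf{k})$; and your final comparison uses Nori's estimate together with Lang's theorem where the paper invokes the Lang--Weil bound.
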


\begin{proof}
  Recall that the notation $\ac^{\times n}$, respectively
  $\val^{\times n}$, indicates that the angular component map,
  respectively valuation map, is to be applied coordinatewise to
  vectors of length $n$.  We apply partial elimination of quantifiers
  in the theory $\Th_{\mathrm{Hen},K,0}$ (cf.\ Theorem~\ref{thm:Pas})
  by treating the entries of $x$, which involve elements of $L_\fq$,
  as parameters: after omitting finitely many primes, the formula
  \eqref{equ:psi(x,y)} defining $\mathcal{S} =
  \mathcal{S}_{\mathcal{R},\fq,x}$ is equivalent to a formula of the
  form
  \begin{multline*}
    \eta (x,h) =_\text{def} \\ \bigvee_{i=1}^M \big( H_i(A_x) = 0 \,
    \wedge \, \phi_i(\ac^{\times m(i)}(H'_i(A_x)),\mathcal{R}(x),h) \,
    \wedge \, \psi_i(\val^{\times n(i)}(H''_i(A_x)),\val(z_x)) \big),
  \end{multline*}
  where the $H_i$, $H'_i = (H'_{i,1}, \ldots, H'_{i,m(i)})$ and $H''_i
  = (H''_{i,1}, \ldots, H''_{i,n(i)})$ are polynomial functions over
  $K$ (of sort~$\mathsf{F}$), the $\phi_i$ are formulae in the
  language of rings (of sort~$\mathsf{k}$), and the $\psi_i$ are
  formulae in the language of ordered groups (of
  sort~$\mathsf{\Gamma}$).  This can be proved by induction on the
  length of the formula.

  It follows that $\mathcal{S}$ is a finite union of some of the
  definable sets $\left\{ h \mid \phi_i(\ac^{\times
      m(i)}(H'_i(A_x)),\mathcal{R}(x),h) \right\}$.  Write
  $\overline{\mathcal{S}} =
  \overline{\mathcal{S}}_{\mathcal{R},\fq,x}$ and $\F_q = O_L / \fq$.
  Since $\overline{\mathcal{S}}(\F_q)$ always contains the identity,
  it is non-empty.  Proposition~\ref{prop:size.definable.set} implies
  that there is a constant $C \in \R$ such that, for every unramified
  finite extension $L_\fq \subset M_\mathfrak{r}$ with residue field
  $\F _{q^r}$, there exists $d \in \N_0$ such that $C^{-1} q^{rd} \leq
  \lvert \overline{\mathcal{S}}(M_\mathfrak{r}) \rvert = \lvert
  \overline{\mathcal{S}}(\F_{q^r}) \rvert \leq C q^{rd}$.

  Using Proposition~\ref{prop:stabilizer.is.algebraic}, we regard
  $\overline{\mathcal{S}}$ as an algebraic variety~$\mathbf{S}$ over
  $\mathbb{F}_q$ and apply the Lang--Weil bound~\cite{LW}.  There are
  infinitely many $r \in \N$ such that all absolutely irreducible
  components of $\mathbf{S}$ are defined over $\F_{q^r}$, and $\lvert
  \mathbf{S}(\F_{q^r}) \rvert = (\lvert \mathbf{S} : \mathbf{S}^\circ
  \rvert + O(q^{-r/2})) q^{r \dim \mathbf{S}}$ for such~$r$.
  Comparing the two estimates for $\lvert
  \overline{\mathcal{S}}(\F_{q^r}) \rvert = \lvert
  \mathbf{S}(\F_{q^r}) \rvert$ as $r$ tends to infinity, we obtain
  $\lvert \mathbf{S} : \mathbf{S}^\circ \rvert \leq C$.
\end{proof}

Next we identify the Lie algebra of $\mathbf{S} =
\mathbf{S}_{\mathcal{R},\fq,x}$, where $\mathcal{R}$ and $x =
(A_x,z_x) \in \scX(L_\fq)$ are as above.  In analogy to
\eqref{equ:phi(x,g)}, let $\widehat{\mathcal{T}} =
\widehat{\mathcal{T}}_{\mathcal{R},\fq,x} \subset
\mathfrak{g}_\mathcal{O}$ be the definable set given by
\[
\xi(x,X) =_\text{def} \, \red_{\mathcal{O} \vert \mathsf{k}}(X) \in
(\mathcal{N}_{\mathcal{R},\mathrm{Lie}})_x \, \wedge \, \left( \forall
  Z \in \widetilde{\mathcal{R}}_x \left( \val( \langle [A_x,X],Z
    \rangle) > \val(z_x) \right) \right),
\]
where we think of $x$ as a parameter and $X$ as a free variable.
Furthermore, let $\mathcal{T} = \mathcal{T}_{\mathcal{R},\fq,x}$
denote the reduction of $\widehat{\mathcal{T}}$ modulo the maximal
ideal, a definable subset of $\mathfrak{g}_\mathsf{k}$;
compare~\eqref{equ:psi(x,y)}.  Arguing similarly as in the proof of
Proposition~\ref{prop:stabilizer.is.algebraic}, we see that for every
finite extension $K \subset L$, almost all primes $\fq$ of~$O_L$, and
every $x \in \scX(L_\fq)$, the set $\mathcal{T}(L_\fq)$ is
a linear space over the residue field of~$L_\fq $.  In fact, we give
an independent proof of this fact in
Corollary~\ref{cor:q.f.Lie.algebras.of.stabilizers}.

\begin{prop} \label{prop:conn.stab} For every finite extension $K
  \subset L$, almost all primes $\fq$ of~$O_L$, and every $x \in
  \scX(L_\fq)$, the definable set $\mathcal{T}_{\mathcal{R},\fq,x}$ is
  (equivalent to) the Lie algebra of $\mathbf{S}_{\mathcal{R},\fq,x}$.
\end{prop}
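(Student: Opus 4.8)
\textbf{Proof plan for Proposition~\ref{prop:conn.stab}.}
The statement asserts that the reduction $\mathcal{T}_{\mathcal{R},\fq,x}$ is (equivalent to) the Lie algebra of the algebraic group $\mathbf{S}_{\mathcal{R},\fq,x}$. Having already established in Proposition~\ref{prop:stabilizer.is.algebraic} that $\overline{\mathcal{S}}_{\mathcal{R},\fq,x}$ is equivalent to an algebraic group $\mathbf{S} = \mathbf{S}_{\mathcal{R},\fq,x}$ over $\F_q = O_L/\fq$, the plan is to identify $\Lie(\mathbf{S})$ functorially via the Greenberg/Witt machinery set up just above the statement. Recall that $\widehat{\mathbf{S}} = \varprojlim_n \mathcal{F}_n(\mathbf{S}_n)$ and that its pro-Lie algebra is $\widehat{\mathbf{T}} = \varprojlim_n \mathcal{F}_n(\mathbf{T}_n)$, where $\mathbf{T}_n$ is the explicitly described $\Witt_n(O_L/\fq)$-Lie algebra scheme. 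The key point, as recorded in the discussion preceding the statement, is the tangent-space computation coming from $\eqref{eq:Greenberg.adjoint}$: applying the Greenberg adjunction to $\mathbf{X} = \Spec(\F[e]/(e^2))$ identifies the Lie algebra of $\mathcal{F}_n(\mathbf{S}_n)$ with $\mathcal{F}_n(\mathbf{T}_n)$. So first I would make precise that $\Lie(\widehat{\mathbf{S}}) = \widehat{\mathbf{T}}$ as pro-algebraic Lie algebra schemes over $O_L/\fq$.

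Next I would pass from the pro-algebraic picture to the residual one. The group $\mathbf{S}$ is by construction the ``degree $1$'' piece of $\widehat{\mathbf{S}}$, i.e.\ the image of $\widehat{\mathbf{S}}$ under reduction modulo the first principal congruence subgroup; concretely $\mathbf{S}(\F) = \overline{\mathcal{S}}(\F) = \mathcal{S}(\FWitt(\F))$, the reduction mod the maximal ideal of $\widehat{\mathcal{S}}(\FWitt(\F))$. I would argue that this reduction map $\widehat{\mathbf{S}} \twoheadrightarrow \mathbf{S}$ is \emph{smooth}: its kernel is (a Greenberg-functor incarnation of) a successive extension of vector groups $\mathfrak{g}_\mathcal{O}^{(k)}/\mathfrak{g}_\mathcal{O}^{(k+1)}$, hence connected unipotent and smooth, so the differential of the projection is surjective onto $\Lie(\mathbf{S})$. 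Consequently $\Lie(\mathbf{S})$ is the image of $\Lie(\widehat{\mathbf{S}}) = \widehat{\mathbf{T}}$ under the corresponding reduction, which is exactly the reduction of $\widehat{\mathcal{T}}_{\mathcal{R},\fq,x}$ modulo the maximal ideal, i.e.\ $\mathcal{T}_{\mathcal{R},\fq,x}$. One has to check the smoothness claim holds after omitting finitely many primes (the residue characteristic must be large enough that the congruence filtration behaves as expected and the logarithm is polynomial, as in Observation~\ref{obs:almost-all-q}); this is where ``almost all primes'' enters.

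Alternatively — and this is probably the cleaner route to write up — I would give a direct computation mirroring the proof of Proposition~\ref{prop:stabilizer.is.algebraic}. For a perfect extension $\F$ of $O_L/\fq$, an $\F[e]/(e^2)$-point of $\mathbf{S}$ lifting the identity is, via the Witt/Greenberg correspondence, an element $g \in \mathbf{G}_\mathcal{O}$ over $\Witt(\F[e]/(e^2)) = \Witt(\F)[e]/(e^2)$ of the form $g = 1 + e\,X$ with $\red_{\mathcal{O}\vert\mathsf{k}}(g)$ in the tangent space of $(\mathcal{N}_\mathcal{R})_x$ — which by Proposition~\ref{prop:grass.operations}(3),(5) is cut out by the linear conditions defining $(\mathcal{N}_{\mathcal{R},\mathrm{Lie}})_x$ — and satisfying $\val(\langle A_x^g - A_x, Z\rangle) > \val(z_x)$ for all $Z \in \widetilde{\mathcal{R}}_x$. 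Expanding $A_x^g = A_x + e[A_x,X] + O(e^2) = A_x + e[A_x,X]$ (since $e^2 = 0$), the valuation condition becomes precisely $\val(\langle [A_x,X], Z\rangle) > \val(z_x)$, i.e.\ the condition defining $\widehat{\mathcal{T}}_{\mathcal{R},\fq,x}$; reducing mod the maximal ideal yields $\mathcal{T}_{\mathcal{R},\fq,x}$. Since $\mathbf{S}$ is smooth over $\F$ when $\F = O_L/\fq$ (again for almost all $\fq$, by the connected-unipotent-kernel argument), its Lie algebra is read off from these $\F[e]/(e^2)$-points, giving $\Lie(\mathbf{S}) = \mathcal{T}_{\mathcal{R},\fq,x}$.

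\textbf{Main obstacle.} The crux is the smoothness of $\mathbf{S}$ over the residue field (equivalently, that the reduction map from the pro-algebraic stabilizer to $\mathbf{S}$ is smooth with connected unipotent kernel) — only then is $\Lie(\mathbf{S})$ faithfully computed by the $\F[e]/(e^2)$-points as above, rather than being potentially smaller than the naive tangent space. This is essentially the same phenomenon that made Proposition~\ref{prop:stabilizer.is.algebraic} work: the obstruction lives in $H^1$ of a connected unipotent group and hence vanishes. I expect the write-up to consist mostly of quoting that argument and the tangent-space formula $\eqref{eq:Greenberg.adjoint}$, together with a remark that the exceptional primes are already excluded by Observation~\ref{obs:almost-all-q} and Proposition~\ref{prop:stab.conn.components}.
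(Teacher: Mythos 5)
Your first route is essentially the paper's proof: the paper identifies $\Lie(\mathcal{F}_n(\mathbf{S}_n))$ with $\mathcal{F}_n(\mathbf{T}_n)$ via the Greenberg adjunction applied to $\Spec(\F[e]/(e^2))$, and then shows the reduction map $\mathcal{F}_\gamma(\mathbf{S}_\gamma)\rightarrow\mathbf{S}$ is smooth (onto, hence flat, with fibers that are affine spaces by Remark~\ref{rem:stabilizer.is.alg}, hence smooth), so its differential at $1$ is surjective and $\mathcal{T}$ is identified with $\Lie(\mathbf{S})$ — exactly your connected-unipotent-kernel argument made precise via EGA. Your write-up is correct and matches the paper's approach.
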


\begin{proof} Write $x =(A_x,z_x)$ and $\gamma = \val(z_x)$.  We drop
  all subscripts $\mathcal{R},\fq,x$.  Recall the construction of the
  pro-algebraic group $\widehat{\mathbf{S}} = \varprojlim
  \mathcal{F}_n(\mathbf{S}_n)$ and its pro-Lie algebra
  $\widehat{\mathbf{T}} = \varprojlim \mathcal{F}_n(\mathbf{T}_n)$
  via Witt vectors.  The reduction map $\red_{\mathcal{O} \vert
    \mathsf{k}} \colon \widehat{\mathcal{S}} \rightarrow \mathcal{S}$
  translates into $\widehat{\mathbf{S}} \rightarrow \mathbf{S}$
  which factors through the homomorphism $f_\gamma \colon
  \mathcal{F}_\gamma (\mathbf{S}_\gamma) \rightarrow \mathbf{S}$ of
  $O_L/\fq$-algebraic groups.  By definition, this homomorphism is
  onto and therefore flat; see~\cite[Proposition~6.1.5]{EGA4_2}.  As
  indicated in Remark~\ref{rem:stabilizer.is.alg}, the fibers of
  $f_\gamma$, each isomorphic to the kernel of $f_\gamma$, are affine
  spaces, and hence smooth.  By~\cite[Theorem~17.5.1]{EGA4}, the map
  $f_\gamma$ is smooth, and so its differential at $1$ is surjective.
  It follows that $\mathcal{F}_\gamma(\mathbf{T}_\gamma)$ maps onto
  the Lie algebra $\mathbf{T}$ of $\mathbf{S}$.  In this way we can
  identify $\mathcal{T}$ with~$\mathbf{T}$.
\end{proof}

Using Proposition~\ref{prop:stab.conn.components} and
Lemma~\ref{lem:size.group}, we obtain the following consequence.

\begin{cor} \label{cor:size.stab.Lie} There is a constant $C \in
  \R$ such that, for every finite extension $K \subset L$, almost all
  primes $\fq$ of~$O_L$, and every $x \in \scX(L_\fq)$,
  \[
  C^{-1} \lvert \mathcal{T}_{\mathcal{R},\fq,x}(L_\fq ) \rvert \leq
  \lvert \mathcal{S}_{\mathcal{R},\fq,x}(L_\fq ) \rvert \leq C \lvert
  \mathcal{T}_{\mathcal{R},\fq,x}(L_\fq ) \rvert.
  \]
\end{cor}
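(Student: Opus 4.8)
The plan is to deduce this estimate directly from Propositions~\ref{prop:stabilizer.is.algebraic}, \ref{prop:stab.conn.components} and~\ref{prop:conn.stab}, combined with the elementary point count of Lemma~\ref{lem:size.group}(1); the only real work is careful bookkeeping to keep the constant uniform.

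First I would fix a finite extension $K \subset L$ and a prime $\fq$ of~$O_L$ avoiding the finitely many exceptions coming from Observation~\ref{obs:almost-all-q} and from the proofs of Propositions~\ref{prop:stabilizer.is.algebraic}, \ref{prop:stab.conn.components} and~\ref{prop:conn.stab}; fix also $x = (A_x,z_x) \in \scX(L_\fq)$, write $\F_q = O_L/\fq$, and drop the subscripts $\mathcal{R},\fq,x$. The first step is to identify the two sets in the statement with rational points of an algebraic group and its Lie algebra. By Observation~\ref{obs:almost-all-q} the rational prime $p$ below $\fq$ is unramified in~$L$, so $L_\fq$ is the absolutely unramified complete discretely valued field with residue field~$\F_q$; that is, $L_\fq = \FWitt(\F_q)$. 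Hence the identities in~\eqref{equ:pro-algebraic-scheme} give $\mathcal{S}(L_\fq) = \mathcal{S}(\FWitt(\F_q)) = \overline{\mathcal{S}}(\F_q)$, and by Proposition~\ref{prop:stabilizer.is.algebraic} the definable group $\overline{\mathcal{S}}$ is equivalent to an algebraic group $\mathbf{S} = \mathbf{S}_{\mathcal{R},\fq,x}$ over~$\F_q$, so $\mathcal{S}(L_\fq) = \mathbf{S}(\F_q)$. Likewise, by Proposition~\ref{prop:conn.stab}, $\mathcal{T}$ is equivalent to the Lie algebra $\mathfrak{s} = \Lie(\mathbf{S})$, whence $\lvert \mathcal{T}(L_\fq) \rvert = \lvert \mathfrak{s}(\F_q) \rvert$.

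Next I would bound the invariants of $\mathbf{S}$ uniformly. Since $\overline{\mathcal{S}} \subset \mathbf{G}_\mathsf{k}$, the group $\mathbf{S}$ is a closed subgroup of $\mathbf{G}_{\F_q}$, so $\dim \mathbf{S} \leq \dim \mathbf{G}$; and by Proposition~\ref{prop:stab.conn.components} the number of connected components of $\mathbf{S}$ is less than a constant $C_0$ depending only on $K$ and~$\mathbf{G}$, in particular independent of $L$, $\fq$, $\mathcal{R}$ and~$x$. Applying Lemma~\ref{lem:size.group}(1) to the affine algebraic group $\mathbf{S}$ over $\F_q$ with $n = 1$, and using $C_0 \cdot 2^{\dim \mathbf{S}} \leq C_0 \cdot 2^{\dim \mathbf{G}}$, one obtains, with $C = C_0 \cdot 2^{\dim \mathbf{G}}$,
\[
C^{-1} \lvert \mathfrak{s}(\F_q) \rvert \;\leq\; \lvert \mathbf{S}(\F_q) \rvert \;\leq\; C \lvert \mathfrak{s}(\F_q) \rvert .
\]
Translating this back through the identifications $\mathcal{S}(L_\fq) = \mathbf{S}(\F_q)$ and $\lvert \mathcal{T}(L_\fq) \rvert = \lvert \mathfrak{s}(\F_q) \rvert$ of the first step yields exactly the asserted two-sided bound, with a constant $C$ depending only on $K$ and~$\mathbf{G}$.

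I do not expect a genuine obstacle here: the substance is entirely contained in the three propositions already proved. The one point that needs attention is that the constant $C$ must be independent of $L$, $\fq$, $\mathcal{R}$ and~$x$, and this is precisely what the uniformity clause of Proposition~\ref{prop:stab.conn.components} supplies, once combined with the crude inequality $\dim \mathbf{S} \leq \dim \mathbf{G}$.
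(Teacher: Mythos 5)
Your proposal is correct and follows exactly the route the paper intends: the paper gives no written proof, merely stating that the corollary follows from Proposition~\ref{prop:stab.conn.components} and Lemma~\ref{lem:size.group}, and your argument is a faithful fleshing-out of that derivation (identifying $\mathcal{S}(L_\fq)$ and $\mathcal{T}(L_\fq)$ with the $\F_q$-points of $\mathbf{S}$ and its Lie algebra via Propositions~\ref{prop:stabilizer.is.algebraic} and~\ref{prop:conn.stab}, then applying the point-count lemma with the uniform component bound). The uniformity bookkeeping you highlight is exactly the right point to check, and it goes through as you say.
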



\subsection{The Lie Algebra Associated to the Stabilizer of $\Xi_\mathcal{R}$}
\label{subsec:Lie.stabilizer}

We continue to work in the set-up introduced in
Sections~\ref{subsec:rel.orbit.method}
and~\ref{subsec:the.stabilizer}.  In particular, we consistently omit
finitely many primes, as specified by
Observation~\ref{obs:almost-all-q} and in the proof of
Proposition~\ref{prop:stab.conn.components}.  It is easier, and more
transparent, to handle the Lie algebra associated to the stabilizer
$\mathbf{S}_{\mathcal{R},\fq,x}$ of $\Xi_{\mathcal{R},\fq}(x)$ modulo
the $1$st principal congruence subgroup, rather than the stabilizer
itself.  For this purpose, we introduce the following cover of~$\scX$.

\begin{defn} \label{defn:Y} Let $\scY \subset \scX \times
  (\mathsf{\Gamma} \cup \{\infty\})^{\dim \mathfrak{g}} \times
  (\Aut(\mathfrak{g})_\mathcal{O})^2$ be the quantifier-free definable
  set consisting of tuples $((A,z),
  (\gamma_1,\ldots,\gamma_{\dim\mathfrak{g}}), (U_1,U_2))$ such that,
  in the chosen standard basis of~$\mathfrak{g}$, the linear operator
  $T = U_1 \, (\ad A) \, U_2$ is diagonal and the valuations of the
  diagonal elements are given by
  $(\gamma_1,\ldots,\gamma_{\dim\mathfrak{g}})$.
\end{defn}

Let $\mathcal{R} \colon \scY \to
\Grass^\mathrm{nilp}(\mathfrak{g}_\mathsf{k})$ be a definable
function.  The following definitions are analogous to those in
Section~\ref{subsec:rel.orbit.method}.  Let $\widetilde{\mathcal{R}}
\subset \scY \times \mathfrak{g}_\mathcal{O}$ be the definable set of
tuples $(y,X)$ such that the reduction of $X$ to
$\mathfrak{g}_\mathsf{k}$ is in~$\mathcal{R}(y)$.  Recall that
$\mathbf{G} \subset \GL_N$ and that, by virtue of
Observation~\ref{obs:almost-all-q}, the residue field
characteristic~$p$ satisfies $p>N$.  We define $\exp \mathcal{R}
\subset \scY \times \mathbf{G}_\mathsf{k}$ to be the set of pairs
$(y,g)$ such that $g$ is unipotent and $\log g \in \mathcal{R}(y)$.
Finally, again based on Observation~\ref{obs:almost-all-q}, we define
$\exp \widetilde{\mathcal{R}}\subset \scY\times\mathbf{G}_\mathcal{O}$
to consist of all pairs $(y,g)$ such that the reduction of $g$ to
$\mathbf{G}_\mathsf{k}$ is unipotent and $\log
g\in\widetilde{\mathcal{R}}_y$.

We denote by $\pr_{\scY \downarrow \scX} \colon \scY \rightarrow \scX$
the natural projection.  For every finite extension $K \subset L$,
almost all primes $\fq$ of~$O_L$, and for every $y \in \scY(L_\fq)$,
the additive group $\widetilde{\mathcal{R}}_y(L_{\fq})$ is closed
under Lie commutators and it is the Lie ring of the pro-$p$
group~$\exp \widetilde{\mathcal{R}}_y(L_{\fq})$.  Given $y \in
\scY(L_\fq)$, we write $\Pi_{\mathcal{R},\fq}(y)$ for the restriction
of $\Pi_{\fq}(\pr_{\scY \downarrow \scX}(y))$ to
$\widetilde{\mathcal{R}}_y(L_{\fq})$ and we denote by
$\Xi_{\mathcal{R},\fq}(y)$ the resulting irreducible character
of~$\exp \widetilde{\mathcal{R}}_y(L_{\fq})$.  As in the analogous
Definition~\ref{def:Pi.Xi}, the subscript $\fq$ is sometimes
omitted. Similarly, we write $\mathbf{S}_{\mathcal{R},\fq,y} =
\mathbf{S}_{\mathcal{R},\fq,x}$, where $x= \pr_{\scY \downarrow
  \scX}(y)$.

The following lemma is evident.

\begin{lem} \label{lem:submodule} Let $\mathfrak{O}$ be a complete,
  discrete valuation ring with a uniformizer~$\varpi$.  Let $M =
  \mathfrak{O}^n$ be a free $\mathfrak{O}$-module of rank $n \in \N$,
  and let $\overline{N} \subset M/\varpi M$ be a linear subspace with
  pre-image $N$ in~$M$. Assume that $T$ is an endomorphism of $M$
  which, in the standard basis, is given by a diagonal matrix with
  diagonal entries $\varpi ^{\gamma_1},\ldots,\varpi ^{\gamma_n}$,
  where $\gamma_1, \ldots, \gamma_n \in \Z$ satisfy $0 \leq \gamma_1
  \leq \ldots \leq \gamma_n$.  For~$l \in \Z$, let $i(l) = \max (\{0\}
  \cup \{ i \mid \gamma_i \leq l \})$ and $j(l) = \min (\{n+1\} \cup
  \{ j \mid \gamma_j \geq l \})$. Then the following hold.
  \begin{list}{}{\setlength{\leftmargin}{\myenumilistleftmargin}
      \setlength{\labelwidth}{20pt} \setlength{\itemsep}{0pt}
      \setlength{\parsep}{1pt}}
  \item[\textup{(1)}] The pre-image $T ^{-1}(\varpi^l N)$ is equal to
   \begin{multline*}
     \Big\{(a_1,\ldots,a_n) \in M \mid \forall i \in \{1,\ldots,i(l)\}:
     \val(a_i) \geq l-\gamma_i \\ \textup{and } (\overline{
         \varpi ^{\gamma_1-l}a_1},\ldots,\overline{\varpi
         ^{\gamma_{i(l)}-l}a_{i(l)}},0,\ldots,0)\in \overline{N}
     \Big\}.
   \end{multline*}
 \item[\textup{(2)}] The reduction of $T^{-1}(\varpi^l N)$ modulo
   $\varpi$ is the set of all $(\overline{a_1},\ldots,\overline{a}_n)
   \in (\mathfrak{O}/ \varpi \mathfrak{O})^n$ such that
   $\overline{a_1} = \ldots = \overline{a_{j(l)-1}}=0$ and
   $(0,\ldots,0,\overline{a_{j(l)}},\ldots,\overline{a_{i(l)}},0,\ldots,0)
   \in \overline{N}$.
 \end{list}
\end{lem}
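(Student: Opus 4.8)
\textbf{Plan of proof for Lemma~\ref{lem:submodule}.}

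The statement is purely linear-algebraic: $T$ is diagonal, so it acts coordinatewise, and everything decouples into a product over the $n$ coordinates. First I would record the reduction. Fix $l\in\Z$. For a single coordinate $a_i\in\mathfrak{O}$, the condition $\varpi^{\gamma_i}a_i\in\varpi^l N$ is, after projecting $N\subset M$ onto the $i$th factor, simply a condition on where $a_i$ sits in the $\varpi$-adic filtration, coupled across coordinates by the subspace condition defining $\overline{N}$. The clean way is to split the index set $\{1,\dots,n\}$ according to the trichotomy $\gamma_i<l$, $\gamma_i=l$-ish, $\gamma_i>l$: since $0\le\gamma_1\le\cdots\le\gamma_n$, the indices $i$ with $\gamma_i\le l$ form an initial segment $\{1,\dots,i(l)\}$ and the indices $j$ with $\gamma_j\ge l$ form a final segment $\{j(l),\dots,n\}$; these overlap exactly in the block where $\gamma_i=l$.

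For part~(1): an element $(a_1,\dots,a_n)\in M$ satisfies $T(a_1,\dots,a_n)=(\varpi^{\gamma_1}a_1,\dots,\varpi^{\gamma_n}a_n)\in\varpi^l N$ if and only if, writing $N$ as the pre-image in $M$ of $\overline N\subset M/\varpi M$, the tuple $(\varpi^{\gamma_1-l}a_1,\dots,\varpi^{\gamma_n-l}a_n)$ lies in $N$ — provided all these entries are integral, i.e.\ $\val(a_i)\ge l-\gamma_i$ for every $i$. For $i>i(l)$ we have $\gamma_i>l$, so $\varpi^{\gamma_i-l}a_i\in\varpi M$ automatically for any $a_i\in\mathfrak{O}$; hence these coordinates impose no integrality constraint and reduce to $0$ modulo $\varpi$, contributing nothing to the $\overline N$-membership condition. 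For $i\le i(l)$ we have $\gamma_i\le l$, so integrality $\val(a_i)\ge l-\gamma_i$ is a genuine constraint, and modulo $\varpi$ the surviving entry is $\overline{\varpi^{\gamma_i-l}a_i}$. Assembling: membership in $N=\{$pre-image of $\overline N\}$ unwinds to $(\overline{\varpi^{\gamma_1-l}a_1},\dots,\overline{\varpi^{\gamma_{i(l)}-l}a_{i(l)}},0,\dots,0)\in\overline N$, which is exactly the asserted description. (One should note $N\subset M$ is by definition the full pre-image, so membership in $N$ is literally ``integral and reduces into $\overline N$''; no subtlety there.)

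For part~(2): I reduce the set from~(1) modulo $\varpi$. An element $\overline{a_i}\in\mathfrak{O}/\varpi\mathfrak{O}$ can be the reduction of some $a_i$ meeting the constraint $\val(a_i)\ge l-\gamma_i$ precisely when that constraint is vacuous at the level of residues, i.e.\ when $l-\gamma_i\le 0$, equivalently $\gamma_i\ge l$, equivalently $i\ge j(l)$; if $l-\gamma_i\ge 1$, i.e.\ $\gamma_i<l$, i.e.\ $i<j(l)$, then forcibly $\overline{a_i}=0$. So the first $j(l)-1$ residue coordinates vanish. For $j(l)\le i\le i(l)$ (the block where $\gamma_i=l$ exactly, since $\gamma_i\ge l$ and $\gamma_i\le l$ together), the factor $\varpi^{\gamma_i-l}=1$, so the surviving condition $\overline{\varpi^{\gamma_i-l}a_i}=\overline{a_i}$ feeds directly into the $\overline N$-membership. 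For $i>i(l)$ there is no constraint at all on $\overline{a_i}$, so these range freely, and the reduced set is $\{(\overline{a_1},\dots,\overline{a_n}) : \overline{a_1}=\cdots=\overline{a_{j(l)-1}}=0,\ (0,\dots,0,\overline{a_{j(l)}},\dots,\overline{a_{i(l)}},0,\dots,0)\in\overline N\}$, as claimed. The only place demanding a little care is the boundary bookkeeping — keeping straight that the ``$\gamma_i=l$'' block is exactly the interval $[j(l),i(l)]$, handling the degenerate cases $i(l)=0$ (empty initial segment) and $j(l)=n+1$ (empty final segment) gracefully, and confirming surjectivity of reduction onto the displayed set (which follows since $\mathfrak{O}\twoheadrightarrow\mathfrak{O}/\varpi\mathfrak{O}$ lifts residues and each lift can be adjusted by $\varpi M$ without leaving $T^{-1}(\varpi^lN)$ when the constraint is already satisfied). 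There is no real obstacle here; the ``hard part'' is merely notational discipline in the index-interval argument.
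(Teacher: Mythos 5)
Your argument for part (1) is correct and is the only reasonable one: since $T$ is diagonal, $T(a)\in\varpi^l N$ unwinds coordinatewise into the integrality conditions $\val(a_i)\ge l-\gamma_i$ (automatic for $i>i(l)$) together with membership of the reduced vector in $\overline{N}$. The paper gives no proof to compare against — it declares the lemma evident — so the only issue is whether your reasoning is sound.

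For part (2) there is a genuine gap, located exactly where you say that for $i<j(l)$ the coordinate ``contributes nothing'' and that only the block $j(l)\le i\le i(l)$ ``feeds into the $\overline{N}$-membership.'' By part (1), the vector tested for membership in $\overline{N}$ carries, in each position $i<j(l)$, the entry $\overline{\varpi^{\gamma_i-l}a_i}$, i.e.\ the residue of $a_i/\varpi^{l-\gamma_i}$. This entry is not $0$ and is not determined by $\overline{a_i}$ (which is forced to vanish); as $a_i$ ranges over $\varpi^{l-\gamma_i}\mathfrak{O}$ it sweeps out the whole residue field. Consequently the reduction of $T^{-1}(\varpi^l N)$ is
\[
\Bigl\{(\overline{a_1},\dots,\overline{a_n}) \mid \overline{a_1}=\dots=\overline{a_{j(l)-1}}=0 \text{ and }
(b_1,\dots,b_{j(l)-1},\overline{a_{j(l)}},\dots,\overline{a_{i(l)}},0,\dots,0)\in\overline{N}
\text{ for some } b_1,\dots,b_{j(l)-1}\Bigr\},
\]
so the middle block is constrained to lie in a coordinate \emph{projection} of $\overline{N}$, not in the slice $\{c \mid (0,\dots,0,c,0,\dots,0)\in\overline{N}\}$. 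Your argument establishes only the containment ``$\supseteq$'' of the displayed formula; ``$\subseteq$'' fails. Concretely, take $n=2$, $\gamma_1=0$, $\gamma_2=1$, $l=1$, and $\overline{N}$ the diagonal in $(\mathfrak{O}/\varpi\mathfrak{O})^2$: then $(\varpi c,c)\in T^{-1}(\varpi N)$ for every $c$, so the reduction is $\{0\}\times(\mathfrak{O}/\varpi\mathfrak{O})$, whereas the formula in (2) yields only $\{(0,0)\}$. The two descriptions do agree when $j(l)=1$, and for the paper's application only quantifier-free definability of the reduction matters (a projection of a linear subspace is again linear), but the identity as stated, and your justification of it, do not go through. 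A smaller point: your parenthetical claim that a lift ``can be adjusted by $\varpi M$ without leaving $T^{-1}(\varpi^l N)$'' is likewise false whenever some $\gamma_i<l$; the correct route to ``$\supseteq$'' is the specific lift with $a_i=0$ for $i<j(l)$, which your construction in effect uses.
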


\begin{prop} \label{prop:q.f.Lie.algebras.of.stabilizers} Let
  $\mathcal{R} \colon \scY \rightarrow
  \Grass^\mathrm{nilp}(\mathfrak{g}_\mathsf{k})$ be a quantifier-free
  definable function.  Then there is a quantifier-free definable
  function $\mathcal{L} \colon \scY \rightarrow
  \Grass(\mathfrak{g}_\mathsf{k})$ such that, for every finite
  extension $K \subset L$, almost all primes $\fq$ of~$O_L$, and every
  $y \in \scY(L_\fq)$, the Lie algebra of the stabilizer
  $\mathbf{S}_{\mathcal{R},\fq,y}$ is given by $\mathcal{L}(y)$.
\end{prop}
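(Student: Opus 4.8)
The plan is to reduce the statement to an explicit computation with the definable data already at hand, using Lemma~\ref{lem:submodule} as the combinatorial core. Recall from Proposition~\ref{prop:conn.stab} that, for almost all $\fq$ and all $x \in \scX(L_\fq)$, the Lie algebra of $\mathbf{S}_{\mathcal{R},\fq,x}$ is (equivalent to) the reduction $\mathcal{T}_{\mathcal{R},\fq,x}$ of the definable set $\widehat{\mathcal{T}}_{\mathcal{R},\fq,x}$ cut out inside $\mathfrak{g}_\mathcal{O}$ by the formula
\[
\xi(x,X) =_\text{def} \, \red_{\mathcal{O}\vert\mathsf{k}}(X) \in (\mathcal{N}_{\mathcal{R},\mathrm{Lie}})_x \, \wedge \, \big( \forall Z \in \widetilde{\mathcal{R}}_x \, ( \val(\langle [A_x,X],Z\rangle) > \val(z_x) ) \big).
\]
Passing from $\scX$ to the cover $\scY$ is exactly what makes this manageable: a point $y \in \scY(L_\fq)$ carries, in addition to $x = \pr_{\scY\downarrow\scX}(y)$, a diagonalizing pair $(U_1,U_2)$ of unimodular operators and the valuations $(\gamma_1,\dots,\gamma_{\dim\mathfrak{g}})$ of the diagonal entries of $T = U_1\,(\ad A_x)\,U_2$. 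So the plan is: (i) rewrite the condition $\val(\langle [A_x,X],Z\rangle) > \val(z_x)$ for all $Z$ in the nilpotent Lie lattice $\widetilde{\mathcal{R}}_x$ as a condition on the image of $X$ under the (now \emph{diagonal}) operator $T$, using the non-degeneracy of $\langle\cdot,\cdot\rangle$ on $\mathfrak{g}(O_{L,\fq})$ to convert the pairing-against-$\widetilde{\mathcal{R}}_x$ condition into a membership $[A_x,X] \in z_x \cdot \widetilde{\mathcal{R}}_x^{\perp} + \mathfrak{g}^{(1)}$-type statement, i.e.\ $T(U_2^{-1}X) \in \varpi^{\val(z_x)+1}\,(\text{a reduction-pullback subspace})$; (ii) invoke Lemma~\ref{lem:submodule}(2), with $\mathfrak{O} = O_{L,\fq}$, $\varpi$ a uniformizer, $l = \val(z_x)+1$, and $\overline{N}$ the quantifier-free definable subspace of $\mathfrak{g}_\mathsf{k}$ attached to $\widetilde{\mathcal{R}}_x^{\perp}$, to read off the reduction of $T^{-1}(\varpi^l N)$ modulo $\varpi$ as an \emph{explicit} subspace depending only on $\overline{N}$, on the cut-points $i(l), j(l)$ determined by the $\gamma_i$ (which are part of the $\scY$-datum), and on the change of basis $U_2 \bmod \varpi$; (iii) intersect with the linear reduction $(\mathcal{N}_{\mathcal{R},\mathrm{Lie}})_x$ of the normalizer condition, which by Proposition~\ref{prop:grass.operations}(3) is already quantifier-free definable in $\mathfrak{g}_\mathsf{k}$.

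\textbf{Assembling the definable function.} Each of the three ingredients is quantifier-free definable as a function of $y \in \scY$: the valuations $\gamma_i$ are coordinates of $y$, so the inequalities $\gamma_i \le l-1$ and $\gamma_j \ge l$ defining $i(l)$ and $j(l)$ are quantifier-free (of sort $\mathsf{\Gamma}$); the subspace $\overline{N}$ is obtained from $\mathcal{R}(y)$ and the Killing-form orthogonal complement, a linear-algebra operation producing a quantifier-free definable point of $\Gr(\mathfrak{g}_\mathsf{k})$; the reductions $U_1\bmod\varpi$, $U_2\bmod\varpi$ are quantifier-free; and $(\mathcal{N}_{\mathcal{R},\mathrm{Lie}})_x$ is quantifier-free by Proposition~\ref{prop:grass.operations}(3). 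Composing these via the \emph{explicit} formula of Lemma~\ref{lem:submodule}(2) — which is a Boolean-plus-linear recipe, involving only truncation of coordinates $1,\dots,j(l)-1$ to zero and a membership of a middle block of coordinates in $\overline{N}$, all conjugated back by $U_2\bmod\varpi$ — yields a quantifier-free definable function $\mathcal{L} \colon \scY \to \Gr(\mathfrak{g}_\mathsf{k})$. One then checks that its values land in $\Grass(\mathfrak{g}_\mathsf{k})$, i.e.\ are Lie subalgebras: this is automatic, since for almost all $\fq$ and all $y$ the value $\mathcal{L}(y)$ equals $\mathcal{T}_{\mathcal{R},\fq,y}$ by construction, and $\mathcal{T}_{\mathcal{R},\fq,y}$ is the Lie algebra of an algebraic group by Proposition~\ref{prop:conn.stab}; alternatively one restricts $\mathcal{L}$ to the quantifier-free definable subfunctor $\Grass(\mathfrak{g}_\mathsf{k}) \subset \Gr(\mathfrak{g}_\mathsf{k})$ (Proposition~\ref{prop:grass.operations}(1)) after observing the inclusion holds for almost all primes, hence identically by Lemma~\ref{lem:criterion.for.q.f.} / Remark~\ref{rem:criterion.q.f.perfect}.

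\textbf{The agreement ``for almost all $\fq$''.} It remains to verify that the concrete definable set just written down actually computes $\mathcal{T}_{\mathcal{R},\fq,y}$ for almost all primes. This is where the hypotheses collected in Observation~\ref{obs:almost-all-q} are spent: we need $\langle\cdot,\cdot\rangle$ non-degenerate on $\mathfrak{g}(O_{L,\fq})$ (so that step (i) is valid), $p$ unramified and large (so that $O_{L,\fq}$ is an unramified extension of $\Z_p$ and Lemma~\ref{lem:submodule} applies with $\varpi = p$ and the residue field equal to $O_L/\fq$), and the orbit method / logarithm machinery of Proposition~\ref{prop:orbit.method} in force (so that $\mathbf{S}_{\mathcal{R},\fq,y}$ is defined and Proposition~\ref{prop:conn.stab} holds). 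In addition, the passage of the formula $\xi(x,X)$ through partial elimination of quantifiers in $\Th_{\mathrm{Hen},K,0}$ (Theorem~\ref{thm:Pas}), exactly as in the proof of Proposition~\ref{prop:stab.conn.components}, costs finitely many further primes; but — and this is the point emphasized after the statement of Theorem~\ref{thm:int.approx} — the resulting quantifier-free function $\mathcal{L}$, being defined purely in $\Th_{\mathrm{Hen},K,0}$, does not depend on which primes are omitted.

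\textbf{Main obstacle.} The delicate step is (i): rewriting the ``$\forall Z \in \widetilde{\mathcal{R}}_x$'' pairing condition, originally phrased over the \emph{valued field} $L_\fq$ with $[A_x,X]$ not yet diagonalized, as a clean membership statement of the shape $T(U_2^{-1}X) \in \varpi^{l} N$ with $T$ diagonal and $N$ a reduction-pullback — because one must simultaneously (a) use the $\Ad(\mathbf{G})$-invariant non-degenerate form to turn ``pairs trivially (mod higher order) against $\widetilde{\mathcal{R}}_x$'' into ``lies in $\varpi^{\val(z_x)} \cdot (\widetilde{\mathcal{R}}_x)^{\perp}$ up to the first congruence layer'', and (b) absorb the two change-of-basis matrices $U_1, U_2$ correctly, keeping track that $\widetilde{\mathcal{R}}_x$ is only a \emph{lattice} (its reduction, not $\widetilde{\mathcal{R}}_x$ itself, is what enters $\overline{N}$ in Lemma~\ref{lem:submodule}), and that the normalizer condition on $\red_{\mathcal{O}\vert\mathsf{k}}(X)$ interacts with this. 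Once this bookkeeping is done correctly, Lemma~\ref{lem:submodule}(2) delivers the formula for $\mathcal{L}$ essentially verbatim, and the quantifier-freeness is a formal consequence of the quantifier-freeness of all inputs.
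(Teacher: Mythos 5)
Your proposal is correct and follows essentially the same route as the paper's proof: both reduce the claim to showing that the reduction modulo the maximal ideal of the set cut out by the pairing condition $\val(\langle [A_y,X],Z\rangle)$ against $\widetilde{\mathcal{R}}_y$ is quantifier-free, by using non-degeneracy of the invariant form to rewrite it as the preimage of $z_y\,U_{y,1}\bigl((\widetilde{\mathcal{R}^\perp})_y\bigr)$ under the diagonal operator $T_y$ carried by the $\scY$-datum, and then invoking Lemma~\ref{lem:submodule}(2) together with Proposition~\ref{prop:grass.operations}(3) for the normalizer part. The only cosmetic difference is that you enter through $\mathcal{T}_{\mathcal{R},\fq,y}$ and Proposition~\ref{prop:conn.stab}, whereas the paper phrases the target as the sum of $\mathcal{R}(y)$ and the stabilizer of $\Pi_{\mathcal{R},\fq}(y)$; the substance is identical.
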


\begin{proof} For every $y \in \scY(L_\fq)$, the Lie algebra of the
  stabilizer $\mathbf{S}_{\mathcal{R},\fq,y}$ is the sum of
  $\mathcal{R}(y)$ and the Lie algebra of the stabilizer of
  $\Pi_{\mathcal{R},\fq}(y)$ modulo the maximal ideal.  Thus it
  suffices to prove that there is a quantifier-free definable function
  $\mathcal{L} \colon \scY \rightarrow
  \Grass(\mathfrak{g}_\mathsf{k})$ such that the image
  $\mathcal{L}(y)$ of $y \in \scY(L_\fq)$ is the Lie algebra of the
  stabilizer of $\Pi_{\mathcal{R},\fq}(y)$ modulo the maximal ideal.
  For ~$y = ((A_y,z_y), (\gamma_{y,1},\ldots,\gamma_{y,\dim
    \mathfrak{g}}), (U_{y,1},U_{y,2})) \in \scY(L_\fq)$, this is the
  intersection of the normalizer of $\mathcal{R}(y)$, which is given
  by a quantifier-free function, and the reduction modulo the maximal
  ideal of
  \begin{align*}
    \mathcal{V}_y(L_\fq) & = \left\{Y \in
      \mathfrak{g}_\mathcal{O}(L_\fq) \mid \forall X \in
      (\widetilde{\mathcal{R}})_y(L_\fq) \;
      \big( \Pi_{\mathcal{R},\fq}(y)([X,Y]) = 1 \big) \right\} \\
    & = \left\{Y \in \mathfrak{g}_\mathcal{O}(L_\fq) \mid \forall X
      \in (\widetilde{\mathcal{R}})_y(L_\fq) \;
      \big( \val(\langle A_y, [X,Y] \rangle) \geq \val(z_y) \big) \right\} \\
    & = \left\{Y \in \mathfrak{g}_\mathcal{O}(L_\fq) \mid \forall X
      \in (\widetilde{\mathcal{R}})_y(L_\fq) \; \big( \val(\langle
      X,[A_y,Y] \rangle ) \geq \val(z_y) \big) \right\}.
  \end{align*}
  The set $\mathcal{V}_y$ can be interpreted as the fiber of a
  definable set $\cv{\mathcal{V}} \subset \scY \times
  \mathfrak{g}_\mathcal{O}$, and it remains to prove that the
  reduction of $\mathcal{V}_y$ is quantifier-free.  Let
  $\mathcal{R}^\perp(y)$ be the orthogonal subspace to
  $\mathcal{R}(y)$ in $\mathfrak{g}_\mathsf{k}$ with respect to the
  form $\langle \cdot , \cdot \rangle$.  Recall that by omitting
  finitely many $\fq$, we arranged that $\langle \cdot , \cdot
  \rangle$ is non-degenerate on $\mathfrak{g}_\mathsf{k}(L_\fq)$.  We
  observe that $\mathcal{V}_y(L_\fq)$ is the pre-image of $z_y \,
  \big( (\widetilde{\mathcal{R}^\perp})_y(L_\fq) \big)$ under the map
  $\ad(A_y)$. According to the definition of $\scY$, we have $\ad(A_y)
  = U_{y,1}^{-1} T_y U_{y,2}^{-1}$, where $U_{y,1}, U_{y,2} \in
  \Aut(\mathfrak{g})_\mathcal{O}(L_\fq)$ and $T_y$ is diagonal with
  respect to the standard basis.  Thus $U_{y,2}^{-1}(V(L_\fq))$ is the
  pre-image of $z_y \, U_{y,1} \big(
  (\widetilde{\mathcal{R}^\perp})_y(L_\fq) \big)$ under~$T_y$.  By
  Lemma~\ref{lem:submodule}, statement~(2), its reduction modulo the
  maximal ideal is quantifier-free.  Hence, the reduction of
  $\mathcal{V}_y$ is quantifier-free.
\end{proof}

\begin{cor} \label{cor:q.f.Lie.algebras.of.stabilizers} For every
  quantifier-free definable function $\mathcal{R} \colon \scX
  \rightarrow \Grass^\mathrm{nilp}(\mathfrak{g}_\mathsf{k})$, there is
  a quantifier-free definable function $\mathcal{L} \colon \scX
  \rightarrow \Grass(\mathfrak{g}_\mathsf{k})$ such that for every
  finite extension $K \subset L$, almost all primes $\fq$ of $O_L$,
  and every $x\in \scX(L_\fq)$, the Lie algebra of the stabilizer
  $\mathbf{S}_{\mathcal{R},\fq,x}$ is given by $\mathcal{L}(x)$.
\end{cor}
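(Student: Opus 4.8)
The plan is to reduce the statement over $\scX$ to the corresponding statement over the cover~$\scY$, which is precisely Proposition~\ref{prop:q.f.Lie.algebras.of.stabilizers}, by exhibiting a quantifier-free definable section-like map that allows us to transport the quantifier-free function $\mathcal{L}$ obtained over $\scY$ back down to $\scX$. The key point is that the extra data recorded by a point $y \in \scY$ lying over $x = (A_x,z_x) \in \scX$ — namely a simultaneous diagonalisation $T = U_1\,(\ad A_x)\,U_2$ of $\ad A_x$ together with the valuations of the diagonal entries — can always be found after omitting finitely many primes, and can be found \emph{quantifier-freely}.

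First I would observe that, by partial elimination of quantifiers in $\Th_{\mathrm{Hen},K,0}$ (Theorem~\ref{thm:Pas}) applied to the Smith normal form of the matrix $\ad A_x$ over the valuation ring, there is, for almost all primes $\fq$ of~$O_L$, a quantifier-free definable map $\sigma \colon \scX \to \scY$ with $\pr_{\scY\downarrow\scX}\circ\sigma = \mathrm{id}_{\scX}$: concretely, one produces definable unimodular matrices $U_1(x), U_2(x)$ over~$\mathcal{O}$ such that $U_1(x)\,(\ad A_x)\,U_2(x)$ is diagonal, reads off the valuations $\gamma_i(x)$ of the diagonal entries, and sets $\sigma(x) = \big((A_x,z_x),(\gamma_1(x),\ldots,\gamma_{\dim\mathfrak g}(x)),(U_1(x),U_2(x))\big)$. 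The existence of such a section reduces, via the argument behind Lemma~\ref{lem:criterion.for.q.f.} and Remark~\ref{rem:criterion.q.f.perfect}, to the fact that elementary divisor theory over a complete discrete valuation ring produces the diagonalising matrices polynomially (after a finite case distinction on the elementary-divisor pattern), exactly the sort of reasoning already used in the proof of Proposition~\ref{prop:q.f.Lie.algebras.of.stabilizers} via Lemma~\ref{lem:submodule}.

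Next I would pull back the given quantifier-free definable function $\mathcal{R}\colon\scX\to\Grass^{\mathrm{nilp}}(\mathfrak g_{\mathsf k})$ along $\pr_{\scY\downarrow\scX}$ to obtain a quantifier-free definable function $\mathcal{R}'\colon\scY\to\Grass^{\mathrm{nilp}}(\mathfrak g_{\mathsf k})$, apply Proposition~\ref{prop:q.f.Lie.algebras.of.stabilizers} to get a quantifier-free $\mathcal{L}'\colon\scY\to\Grass(\mathfrak g_{\mathsf k})$ computing the Lie algebra of $\mathbf{S}_{\mathcal{R}',\fq,y}$, and then set $\mathcal{L} = \mathcal{L}'\circ\sigma\colon\scX\to\Grass(\mathfrak g_{\mathsf k})$. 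This composite is quantifier-free definable because it is a composition of quantifier-free definable functions. Finally I would check that it computes the right object: for $x\in\scX(L_\fq)$ and $y=\sigma(x)$, the definition of $\mathbf{S}_{\mathcal{R},\fq,x}$ depends only on $x$ (not on the auxiliary data in $y$), so $\mathbf{S}_{\mathcal{R}',\fq,y} = \mathbf{S}_{\mathcal{R},\fq,\pr_{\scY\downarrow\scX}(y)} = \mathbf{S}_{\mathcal{R},\fq,x}$ by the remark following Proposition~\ref{prop:q.f.Lie.algebras.of.stabilizers}, and hence its Lie algebra (identified with $\mathcal{T}_{\mathcal{R},\fq,x}$ via Proposition~\ref{prop:conn.stab}) equals $\mathcal{L}'(y) = \mathcal{L}(x)$.

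The main obstacle is establishing the quantifier-free section $\sigma\colon\scX\to\scY$ — equivalently, that simultaneous diagonalisation of $\ad A_x$ over the valuation ring can be carried out uniformly and quantifier-freely after omitting finitely many primes. This is where partial elimination of quantifiers and a careful finite case analysis on the Smith normal form pattern of $\ad A_x$ enter; once this is in place the rest is a formal composition argument together with the already-established fact (proof of Proposition~\ref{prop:q.f.Lie.algebras.of.stabilizers}) that $\scY$ was introduced precisely so that Lemma~\ref{lem:submodule}~(2) makes the relevant reductions quantifier-free.
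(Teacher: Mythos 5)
Your overall architecture (pull $\mathcal{R}$ back to $\scY$, apply Proposition~\ref{prop:q.f.Lie.algebras.of.stabilizers} there, then descend to $\scX$) matches the paper's, but your descent mechanism rests on an unjustified and probably false claim: the existence of a \emph{quantifier-free definable section} $\sigma\colon \scX\to\scY$. Producing such a $\sigma$ amounts to a definable (indeed quantifier-free) choice of the diagonalising matrices $U_1,U_2$ in a Smith normal form of $\ad A_x$, uniformly over all models $L_\fq$ of $\Th_{\mathrm{Hen},K,0}$. This choice is highly non-canonical, and the theory in question (Henselian valued fields with unconstrained residue field, in the Denef--Pas language) is not known to admit definable Skolem functions, let alone quantifier-free ones; the entries of $U_1,U_2$ are certainly not piecewise-polynomial in the entries of $\ad A_x$ with quantifier-free pieces. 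Moreover, Theorem~\ref{thm:Pas} (partial elimination of quantifiers) only removes valued-field quantifiers from a given formula --- it does not manufacture sections, and the output may still contain residue-field and value-group quantifiers, so it cannot deliver the genuinely quantifier-free $\sigma$ your argument needs. Appealing to ``elementary divisor theory produces the diagonalising matrices polynomially'' conflates the existence of \emph{some} $U_1,U_2$ for each fixed $x$ with a uniform definable selection of them; only the former is true.

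The paper avoids this obstacle entirely. It uses only that $\pr_{\scY\downarrow\scX}$ is surjective on $F$-points for every model $F$ --- a pointwise existence statement requiring no definable choice --- to see that the a priori quantified formula ``$\exists y$ with $\pr_{\scY\downarrow\scX}(y)=x$ and $\mathcal{L}'(y)=v$'' defines a function $\mathcal{L}$ on $\scX$ (here one also uses that $\mathcal{L}'(y)$ depends only on $\pr_{\scY\downarrow\scX}(y)$). It then upgrades $\mathcal{L}$ to a quantifier-free definable function via the extension-invariance criterion of Lemma~\ref{lem:criterion.for.q.f.} (in the valued-field form of Remark~\ref{rem:criterion.q.f.perfect}): given $F\subset E$ and $x\in\scX(F)$, choose any $y\in\scY(F)\subset\scY(E)$ above $x$ and use that truth of the quantifier-free statement $\mathcal{L}'(y)=v$ is preserved both ways. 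You should replace your section $\sigma$ by this argument; the rest of your proposal (the pullback, the identification $\mathbf{S}_{\mathcal{R}',\fq,y}=\mathbf{S}_{\mathcal{R},\fq,x}$, and the identification of the Lie algebra) is fine.
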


\begin{proof} Pre-composing $\mathcal{R}$ with the projection
  $\pr_{\scY \downarrow \scX} \colon \scY \rightarrow \scX$ we get a
  quantifier-free definable function $\mathcal{R}' \colon \scY
  \rightarrow \Grass^\mathrm{nilp}(\mathfrak{g}_\mathsf{k})$.
  Applying Proposition~\ref{prop:q.f.Lie.algebras.of.stabilizers}, we
  obtain a quantifier-free definable function $\mathcal{L}' \colon
  \scY \rightarrow \Grass(\mathfrak{g}_\mathsf{k})$ such that, for all
  $y\in \scY(L_\fq)$, the vector space $\mathcal{L}'(y)$ is the Lie
  algebra of the stabilizer $\mathbf{S}_{\mathcal{R},\fq,y}$.  Since
  $\mathcal{L}'(y)$ depends only on $\pr_{\scY \downarrow \scX}(y)$,
  we get a definable function $\mathcal{L} \colon \scX \rightarrow
  \Grass(\mathfrak{g}_\mathsf{k})$ such that $\mathcal{L}(x)$ is the
  Lie algebra of the stabilizer $\mathbf{S}_{\mathcal{R},\fq,x}$ for
  all $x\in\scX(L_\fq)$.

  It suffices to show that $\mathcal{L}$ is quantifier-free definable.
  By an analogue of Lemma~\ref{lem:criterion.for.q.f.} for valued
  fields (cf.\ Remark~\ref{rem:criterion.q.f.perfect}), it is enough to
  show that if $F \subset E$ is an extension of valued fields, $x \in
  \scX(F)$, and $v \in \Grass(\mathfrak{g}_\mathsf{k})(F)$, then
  $\mathcal{L}(x) = v$ holds in $F$ if and only if $\mathcal{L}(x) =
  v$ holds in~$E$.  Since the map $\scY \rightarrow \scX$ is onto, we
  can choose $y \in \scY(F) \subset \scY(E)$ such that $\pr_{\scY
    \downarrow \scX}(y) = x$.  Then the following assertions are
  pairwise equivalent: $\mathcal{L}(x) = v$ holds in $F$;
  $\mathcal{L}'(y) = v$ holds in $F$; $\mathcal{L}'(y) = v$ holds in
  $E$ (because $\mathcal{L}'$ is quantifier-free); $\mathcal{L}(x) =
  v$ holds in~$E$.
\end{proof}

\begin{prop} \label{prop:size.coadjoint.orbit} Let
  $\mathcal{R}_1,\mathcal{R}_2 \colon \scX \rightarrow
  \Grass^\mathrm{nilp}(\mathfrak{g}_\mathsf{k})$ be quantifier-free
  definable maps and assume that, for every $x \in \scX$, the Lie
  algebra $\mathcal{R}_1(x)$ is normalized by $\mathcal{R}_2(x)$. Then
  there is a quantifier-free definable function $\phi \colon \scX
  \rightarrow \mathsf{\Gamma}$ such that, for every finite extension
  $K \subset L$, almost all primes $\fq$ of $L$, and every
  $x\in\scX(L_\fq )$,
  \[
  \left| \Ad^*(\exp
    \widetilde{\mathcal{R}_2}(x))(\Pi_{\mathcal{R}_1,\fq}(x)) \right|
  = \lvert O_L/\fq \rvert^{\phi(x)}.
  \]
\end{prop}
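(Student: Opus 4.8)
The plan is to realize the co-adjoint orbit $\Ad^*(\exp \widetilde{\mathcal{R}_2}(x))(\Pi_{\mathcal{R}_1,\fq}(x))$ as an orbit of an algebraic group acting on a linear space over the residue field, and then count points using the orbit–stabilizer theorem together with the Lang–Weil / definability machinery already in place. Concretely, fix $x = (A_x,z_x) \in \scX(L_\fq)$ and set $\gamma = \val(z_x)$. Since $\Pi_{\mathcal{R}_1,\fq}(x)$ is the restriction of $\Pi_\fq(x)$ to $\widetilde{\mathcal{R}_1}_x(L_\fq)$ and since $\Pi_\fq(x)(B) = \exp(2\pi i \operatorname{Tr}(\langle A_x, B\rangle / z_x))$, the character $\Pi_{\mathcal{R}_1,\fq}(x)$ is determined by the functional $B \mapsto \langle A_x, B\rangle \bmod \fq^\gamma$ on $\widetilde{\mathcal{R}_1}_x$. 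The co-adjoint action of $h = \exp(Y) \in \exp\widetilde{\mathcal{R}_2}_x(L_\fq)$ sends this functional to $B \mapsto \langle \Ad(h^{-1})A_x, B\rangle \bmod \fq^\gamma$. Because $\mathcal{R}_2(x)$ normalizes $\mathcal{R}_1(x)$ (and, by Observation~\ref{obs:almost-all-q}, the Hausdorff/exponential formalism is available), $\Ad(h^{-1})A_x - A_x$ lies in a controlled span of iterated brackets $[A_x, Y], [A_x,Y,Y], \dots$ modulo $z_x \mathfrak{g}$, so the size of the orbit equals $\lvert O_L/\fq\rvert^\gamma$ raised to the codimension of the annihilator of $\widetilde{\mathcal{R}_1}_x$ inside a suitable image space — equivalently, the index $\lvert \exp\widetilde{\mathcal{R}_2}_x : \Stab \rvert$.

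The key steps, in order. First I would set up, exactly as in the proof of Proposition~\ref{prop:stabilizer.is.algebraic} and the discussion preceding Proposition~\ref{prop:conn.stab}, a $\Witt_n(O_L/\fq)$-group scheme whose Greenberg realization is the quotient of $\exp\widetilde{\mathcal{R}_2}_x$ by the subgroup fixing $\Pi_{\mathcal{R}_1,\fq}(x)$; here $n$ can be taken to be $\gamma+1$ since everything is visible modulo $\fq^{\gamma+1}$. Using Corollary~\ref{cor:q.f.Lie.algebras.of.stabilizers} (applied to $\mathcal{R}_1$ — this gives the Lie algebra of the stabilizer of $\Pi_{\mathcal{R}_1,\fq}(x)$ modulo the maximal ideal as a quantifier-free definable function $\mathcal{L}(x)$) and the quantifier-free function $\mathcal{R}_2$, I obtain a quantifier-free description of the stabilizer $\mathbf{T}_x := \exp\widetilde{\mathcal{R}_2}_x \cap \Stab(\Pi_{\mathcal{R}_1,\fq}(x))$ together with its reduction. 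Second, I would invoke Proposition~\ref{prop:stab.conn.components} (and the analogue of the argument there for this stabilizer) to conclude that the relevant algebraic groups have a bounded number of connected components, uniformly in $L$, $\fq$, $x$; then Lemma~\ref{lem:size.group}(2) yields
\[
\left| \Ad^*(\exp \widetilde{\mathcal{R}_2}(x))(\Pi_{\mathcal{R}_1,\fq}(x)) \right|
\sim_C \frac{\lvert \widetilde{\mathcal{R}_2}_x / \widetilde{\mathcal{R}_2}_x^{(1)}\text{-stuff}\rvert}{\lvert \mathbf{T}_x\text{-stuff}\rvert}
\]
up to a bounded multiplicative constant — but in fact the orbit size is an \emph{exact} power of $\lvert O_L/\fq\rvert$, since the orbit is a torsor under a quotient of unipotent-type groups over the residue field, so the Lang–Weil error term is absent and equality on the nose holds. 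Third, to produce the exponent as a single quantifier-free definable function $\phi\colon\scX\to\mathsf{\Gamma}$, I would express $\log_q$ of the orbit size as the difference of $\val(z_x)\cdot\dim\mathcal{R}_2(x)$-type quantities and the dimension of the definable stabilizer Lie algebra; dimensions of quantifier-free definable families of linear spaces over the residue field are quantifier-free definable functions to $\mathsf{\Gamma}$ (this is where Lemma~\ref{lem:submodule}(2) and the structure of $\scY$ feed in, as in Proposition~\ref{prop:q.f.Lie.algebras.of.stabilizers}), and one checks the resulting combination is independent of the omitted primes.

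The main obstacle I anticipate is \emph{showing the orbit size is an exact power of $\lvert O_L/\fq\rvert$, with a uniform quantifier-free exponent}, rather than merely bounded between two such powers. The bounded-ratio statement follows formally from Lemma~\ref{lem:size.group} plus Proposition~\ref{prop:stab.conn.components}, but to get the clean identity $\lvert O_L/\fq\rvert^{\phi(x)}$ one must exploit that the acting group, after reduction, is (an extension by a bounded group of) a \emph{connected unipotent} group — as in Remark~\ref{rem:stabilizer.is.alg} — so that its orbits on the relevant affine space are themselves affine spaces and hence have cardinality an exact power of $q$. Verifying connectedness/unipotence here requires reusing the torsor-of-a-connected-unipotent-group argument from Proposition~\ref{prop:stabilizer.is.algebraic} in the relative setting where $\mathcal{R}_1(x)$ and $\mathcal{R}_2(x)$ are both nontrivial but $\mathcal{R}_2(x)$ normalizes $\mathcal{R}_1(x)$; the normalization hypothesis is precisely what makes the $\log$-expansion $\Ad(\exp Y)A_x = A_x + [A_x,Y] + \tfrac12[A_x,Y,Y]+\cdots$ descend to a well-defined polynomial (indeed, after applying the orthogonal projection dual to $\widetilde{\mathcal{R}_1}_x$, an affine-linear) map on the Greenberg quotient, so that the orbit map is a morphism of affine spaces. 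Once this structural point is nailed down, extracting $\phi$ and its quantifier-freeness is a routine repetition of the dimension-counting arguments already used for $\phi_1,\phi_2$ in Theorem~\ref{thm:Jaikin} and for $\mathcal{L}$ in Corollary~\ref{cor:q.f.Lie.algebras.of.stabilizers}.
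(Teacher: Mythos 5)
Your overall reduction---the orbit size equals the index of the stabilizer of $\Pi_{\mathcal{R}_1,\fq}(x)$ in $\exp\widetilde{\mathcal{R}_2}(x)$, computed at the Lie level and made quantifier-free via the diagonalizing cover $\scY$---is the right one, but the two mechanisms you propose for carrying it out respectively overshoot and undershoot, and the second computes the wrong number. The detour through the Greenberg functor, Proposition~\ref{prop:stab.conn.components}, Lemma~\ref{lem:size.group}(2) and Lang--Weil is unnecessary and, as you note yourself, only ever yields $\sim_C$; your proposed upgrade to exact equality via connectedness of unipotent stabilizers is neither established nor needed. Exactness is automatic for a much simpler reason: under the exp--log correspondence the stabilizer corresponds to $\widetilde{\mathcal{R}_2}(x)\cap \ad(A_x)^{-1}\bigl(z_x\,(\widetilde{\mathcal{R}_1^\perp})_x\bigr)$, which is an $O_{L,\fq}$-submodule of the lattice $\widetilde{\mathcal{R}_2}(x)$; the quotient is a finite $O_{L,\fq}$-module, so its cardinality is a power of $\lvert O_L/\fq\rvert$ with no appeal to algebraic groups over the residue field.

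The genuine gap is in your third step. The recipe ``$\val(z_x)\cdot\dim\mathcal{R}_2(x)$ minus the dimension of the definable stabilizer Lie algebra,'' extracted via Lemma~\ref{lem:submodule}(2) and Corollary~\ref{cor:q.f.Lie.algebras.of.stabilizers}, is false in general. Part~(2) of Lemma~\ref{lem:submodule} only records the reduction of the stabilizer modulo the maximal ideal, i.e.\ its residue-field dimension, whereas the index $\lvert \widetilde{\mathcal{R}_2}(x) : \Stab\rvert$ is governed by the elementary divisors of $\ad(A_x)$ truncated at $\val(z_x)$: by part~(1) of that lemma the $i$th coordinate of the stabilizer is constrained by $\val(a_i)\geq l-\gamma_i$, so the exponent is a sum of terms of the shape $\max(0,\,l-\gamma_i)$, corrected by the residue condition coming from $\overline{N}$---not $\val(z_x)$ times a codimension. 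Already for $\mathcal{R}_1=\mathcal{R}_2=\{0\}$ and $A_x$ whose nonzero $\ad$-eigenvalues have positive valuation, your formula overcounts (the true exponent is the quantity $\tfrac12\val(\alpha(x))$ of Theorem~\ref{thm:Jaikin}, which genuinely depends on those valuations). This is precisely why the paper's proof works on $\scY$ with the \emph{first} claim of Lemma~\ref{lem:submodule}, which gives the full integral description of $T^{-1}(\varpi^l N)$ and hence the index as a quantifier-free function $\phi_1$ on $\scY$, and then descends to $\scX$ by the surjectivity argument of Corollary~\ref{cor:q.f.Lie.algebras.of.stabilizers}. Repairing your argument amounts to discarding the algebraic-group machinery and replacing your step three by exactly this computation.
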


\begin{proof} Similarly to the proof of
  Proposition~\ref{prop:q.f.Lie.algebras.of.stabilizers}, the first
  claim of Lemma~\ref{lem:submodule} gives a quantifier-free function
  $\phi_1 \colon \scY \rightarrow \mathsf{\Gamma}$ such that, for
  every $y \in \scY(L_\fq)$,
  \[
  \left|\Ad^*({\exp
      \widetilde{\mathcal{R}_2}(\pr(y))})(\Pi_{\mathcal{R}_1
      \pr,\fq}(y))\right| = \lvert O_L/\fq \rvert^{\phi_1(y)},
  \]
  where $\pr = \pr_{\scY \downarrow \scX} \colon \scY \rightarrow
  \scX$ is the projection.  Since $\phi_1(y)$ depends only on the image
  of $y$ in $\scX$, we get a definable function $\phi_2\colon \scX
  \rightarrow \mathsf{\Gamma}$ such that
  \[
  \left|
    \Ad^*(\exp\widetilde{\mathcal{R}_2}(x))(\Pi_{\mathcal{R}_1,\fq}(x))\right|
  = \lvert O_L/\fq \rvert^{\phi_2(x)}.
  \]
  An argument similar to the one in
  Corollary~\ref{cor:q.f.Lie.algebras.of.stabilizers} shows that
  $\phi_2$ is a quantifier-free definable function, so we can
  take~$\phi=\phi_2$.
\end{proof}

\begin{rem}
  We can now indicate a proof of part~(1) of Theorem~\ref{thm:Jaikin}.

  We write $x=(A_x,z_x)$.  Then $\Pi_{\left\{
      0\right\}}^{-1}(\Pi_{\left\{ 0\right\}}(x))$ consists of the
  pairs $(B,w)$ such that, for all $X \in \mathfrak{g}^1(O_{L,\fq})$,
  \[
  \val\left( \left \langle \frac{A_x}{z_x}-\frac{B}{w}, X\right
    \rangle \right)>0,
  \]
  or, equivalently, such that $\val(A_x w - B z_x) \geq \val(z_x
  w)$.  The claim follows.
\end{rem}


\subsection{Proof of
  Theorem~\ref{thm:int.approx}}\label{subsec:proof.int.approx}

We continue to work in the same set-up as in the previous sections and
 recall that, if $\mathcal{S} \subset \scX \times
\mathbf{G}_\mathsf{k}$ is a definable family over $\scX$, then
$\widetilde{\mathcal{S}} \subset \scX \times \mathbf{G}_\mathcal{O}$
denotes the definable set of all pairs $(x,g)$ such that the reduction
of $g$ to $\mathbf{G}_\mathsf{k}$ lies in $\mathcal{S}_x$.

\begin{thm} \label{thm:partial.int} There are, for $n \in \N_ 0$,
  quantifier-free definable functions
  \[
  g_n,h_n \colon \scX \rightarrow \mathsf{\Gamma}, \qquad
  \mathcal{R}_n\colon \scX \rightarrow
  \Grass^\mathrm{nilp}(\mathfrak{g}_\mathsf{k}), \qquad \mathcal{L}_n\colon
  \scX \rightarrow \Grass(\mathfrak{g}_\mathsf{k}),
  \]
  constants $C_n\in\R$, and definable families $\mathcal{S}_n \subset
  \scX \times \mathbf{G}_\mathsf{k}$ of subgroups of
  $\mathbf{G}_\mathsf{k}$ such that the following hold.
  \begin{list}{}{\setlength{\leftmargin}{\myenumilistleftmargin}
      \setlength{\labelwidth}{20pt} \setlength{\itemsep}{0pt}
      \setlength{\parsep}{1pt}}
  \item[\textup{(1)}] $\mathcal{R}_0$ is the constant function
    $\{0\}$, $\mathcal{L}_0$ is the constant function
    $\mathfrak{g}_\mathsf{k}$, and $\mathcal{S}_0 = \scX \times
    \mathbf{G}_\mathsf{k}$.
  \item[\textup{(2)}] For $n \geq 1$, every finite extension $K
    \subset L$, almost all primes $\fq$ of $O_L$, and every $x \in
    \scX(L_\fq)$, the following hold:
    \begin{list}{$\circ$}{\setlength{\leftmargin}{\mylistleftmargin}
        \setlength{\labelwidth}{10pt} \setlength{\itemsep}{0pt}
        \setlength{\parsep}{1pt}}
    \item $(\widetilde{ \mathcal{S}_n})_x$ is the stabilizer of $\Xi_{
        \mathcal{R}_{n-1}}(x)$ in $(\widetilde{
        \mathcal{S}_{n-1}})_x$,
    \item $\mathcal{L}_n(x)$ is the Lie algebra of
      $(\mathcal{S}_n)_x$, and
    \item $ \mathcal{R}_n(x)$ is the Lie subalgebra of nilpotent
      matrices in the solvable radical of $ \mathcal{L}_n(x)$.
    \end{list}
  \item[\textup{(3)}] There exists $n_0 \in \N$ such that the
    sequences $g_n,h_n,\mathcal{R}_n,\mathcal{L}_n,\mathcal{S}_n$
    stabilize for $n \geq n_0$.
  \item[\textup{(4)}] For every $n$, every finite extension $K \subset
    L$ , and almost all primes $\fq$ of $O_L$,
    \[
    \zeta_{\mathbf{G}(O_{L,\fq})}(s) - \zeta_{\mathbf{G}(O_L/\fq)}(s)
    \sim_{C_n} \int_{\scX(L_\fq)} |O_L/\fq|^{g_n(x) - h_n(x)s}
    \zeta_{(\widetilde{\mathcal{S}_n})_x(L_\fq)|
      \Xi_{\mathcal{R}_n}(x)}(s) \, d \lambda(x).
    \]
  \end{list}
\end{thm}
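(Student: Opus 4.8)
The plan is to construct the data $g_n,h_n,\mathcal{R}_n,\mathcal{L}_n,\mathcal{S}_n$ by induction on~$n$, the construction being forced by the requirements in~(1) and~(2), and then to prove the integral formula in~(4) by induction on~$n$ as well, starting from the base case and performing one step of the relative orbit method at each stage. First I would set up the base case $n=0$: here $\mathcal{R}_0\equiv\{0\}$, $\mathcal{L}_0\equiv\mathfrak{g}_\mathsf{k}$, $\mathcal{S}_0=\scX\times\mathbf{G}_\mathsf{k}$, so that $(\widetilde{\mathcal{S}_0})_x(L_\fq)=\mathbf{G}(O_{L,\fq})$ and $\Xi_{\mathcal{R}_0}(x)=\Xi_\fq(x)$ is the character of $\mathbf{G}^{(1)}(O_{L,\fq})$ associated to $\Pi_\fq(x)\vert_{\mathfrak{g}^{(1)}(O_{L,\fq})}$. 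Applying Lemma~\ref{lem:zeta.to.relative} with $G=\mathbf{G}(O_{L,\fq})$, $H=K=\mathbf{G}^{(1)}(O_{L,\fq})$, together with Theorem~\ref{thm:Jaikin} to convert $\lvert\Ad^*(\cdots)(\theta)\rvert^{-1}$ and $(\dim\Omega(\theta))^{-s}$ into powers of $\lvert O_L/\fq\rvert$ with quantifier-free exponents $\phi_1,\phi_2$, one obtains that $\zeta_{\mathbf{G}(O_{L,\fq})}(s)$ is a sum over $\theta\in\mathfrak{h}^\vee$ which, by the surjectivity of $\Pi_\fq$ and Theorem~\ref{thm:Jaikin}(1) controlling fibre volumes, can be rewritten as an integral over $\scX(L_\fq)$ of $\lvert O_L/\fq\rvert^{g_0(x)-h_0(x)s}\,\zeta_{\mathbf{G}(O_{L,\fq})\mid\Xi_\fq(x)}(s)$; here $g_0,h_0$ are built from $\phi_1,\phi_2$ (essentially $g_0=-\phi_1$, $h_0=2\phi_2$). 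The term $\zeta_{\mathbf{G}(O_L/\fq)}(s)$ subtracted on the left corresponds precisely to the co-adjoint orbit of the zero functional (equivalently, $z_x$ of large valuation, so $\Pi_\fq(x)$ is trivial on $\mathfrak{g}^{(1)}$ and representations factor through $\mathbf{G}(O_L/\fq)$); I would check that removing that locus from $\scX(L_\fq)$ changes the integral only by a bounded factor, or absorb it into the $\sim_{C_0}$.

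For the inductive step I would take the identity in~(4) for~$n$ and expand the inner relative zeta function $\zeta_{(\widetilde{\mathcal{S}_n})_x(L_\fq)\mid\Xi_{\mathcal{R}_n}(x)}(s)$ by one further application of Lemma~\ref{lem:zeta.to.relative}: inside the pro-$p$ group $(\widetilde{\mathcal{S}_n})_x(L_\fq)$ one passes from the Lie lattice giving $\Xi_{\mathcal{R}_n}(x)$ to the larger good pro-$p$ subgroup $\exp\widetilde{\mathcal{L}_n}{}_x$-type object, using that $\mathcal{R}_{n}(x)$ is the nilradical-part of $\operatorname{Rad}(\mathcal{L}_n(x))$ — this is the Lie-algebra input guaranteeing that the relevant subgroups are good and that the orbit method of Proposition~\ref{prop:orbit.method} applies. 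Concretely, the second formula in Lemma~\ref{lem:zeta.to.relative} rewrites $\zeta_{G\mid\Omega(\tau)}$ as a sum over $\theta$ restricting to $\tau$ of orbit-size ratios times $\zeta_{G\mid\Omega(\theta)}$; the orbit-size ratios $\lvert\Ad^*(H)(\tau)\rvert/\lvert\Ad^*(G)(\theta)\rvert$ and the dimension ratios are turned into powers of $\lvert O_L/\fq\rvert$ by Proposition~\ref{prop:size.coadjoint.orbit} (applied to $\mathcal{R}_n$ and to the normalizing data $\mathcal{R}_{n+1}$) and Corollary~\ref{cor:size.stab.Lie}, which relates $\lvert\mathcal{S}_{\mathcal{R},\fq,x}\rvert$ to $\lvert\mathcal{T}_{\mathcal{R},\fq,x}\rvert$ with a uniform constant; these are the quantifier-free functions contributing the increments $g_{n+1}-g_n$, $h_{n+1}-h_n$. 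Defining $\mathcal{S}_{n+1}$ via $(\widetilde{\mathcal{S}_{n+1}})_x=\Stab_{(\widetilde{\mathcal{S}_n})_x}(\Xi_{\mathcal{R}_n}(x))$ — which by Proposition~\ref{prop:stabilizer.is.algebraic} and Proposition~\ref{prop:conn.stab} is (the $O_{L,\fq}$-points of a lift of) an algebraic group with Lie algebra $\mathcal{L}_{n+1}(x)$ given quantifier-freely by Corollary~\ref{cor:q.f.Lie.algebras.of.stabilizers} — and $\mathcal{R}_{n+1}(x)$ as its radical-nilradical via Proposition~\ref{prop:grass.operations}(2), all the definability requirements in~(1),(2) hold; the uniform bound on connected components (Proposition~\ref{prop:stab.conn.components}) and Lemma~\ref{lem:size.group} keep all the constants $C_n$ uniform in $L$ and~$\fq$.

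For stabilization~(3): the dimensions $\dim\mathcal{L}_n(x)$ form a non-increasing sequence of integers bounded below (since $\mathcal{S}_{n+1}\subset\mathcal{S}_n$), so along any point they stabilize; I would use a compactness/definability argument (the sequence of quantifier-free formulas defining $\mathcal{S}_n$ can only strictly decrease finitely often in a definable family) to get a single $n_0$ working uniformly, after which $\Xi_{\mathcal{R}_{n}}(x)$ is already stabilized by $(\widetilde{\mathcal{S}_n})_x$, forcing $\mathcal{S}_{n+1}=\mathcal{S}_n$ and likewise for the other data. I expect the main obstacle to be the bookkeeping in the inductive step: verifying that the two successive applications of Lemma~\ref{lem:zeta.to.relative} fit together so that the integrand at level $n+1$ is again of the required shape $\lvert O_L/\fq\rvert^{g_{n+1}(x)-h_{n+1}(x)s}\,\zeta_{(\widetilde{\mathcal{S}_{n+1}})_x\mid\Xi_{\mathcal{R}_{n+1}}(x)}(s)$, with all exponent functions quantifier-free and all implied constants uniform — in particular one must be careful that the passage from characters of $\exp\widetilde{\mathcal{R}_n}{}_x$ to characters of the full stabilizer introduces only factors that Proposition~\ref{prop:size.coadjoint.orbit} and Corollary~\ref{cor:size.stab.Lie} control, and that the finitely many primes excluded at each stage can be aggregated (over all $n\le n_0$) into a single finite set depending only on $L$.
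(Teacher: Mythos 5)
Your proposal follows essentially the same route as the paper's proof: the recursive construction of $\mathcal{R}_n,\mathcal{L}_n,\mathcal{S}_n$ is forced by (1)--(2), the base case and inductive step of (4) are carried out exactly as you describe via Lemma~\ref{lem:zeta.to.relative}, Theorem~\ref{thm:Jaikin}, Proposition~\ref{prop:size.coadjoint.orbit}, Corollary~\ref{cor:q.f.Lie.algebras.of.stabilizers} and Lemma~\ref{lem:size.group}, and stabilization comes from monotonicity of the dimensions. The one point where the paper is more explicit than you are is in (3): equality of the Lie algebras $\mathcal{L}_n(x)=\mathcal{L}_{n+1}(x)$ does not by itself force $\mathcal{S}_{n+1}(x)=\mathcal{S}_n(x)$, and the uniform bound $D(n)$ on the number of connected components from Proposition~\ref{prop:stab.conn.components} is exactly what makes your ``can only strictly decrease finitely often'' claim uniform in $x$, $L$ and $\fq$.
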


\begin{proof} We first construct $\mathcal{R}_n, \mathcal{L}_n,
  \mathcal{S}_n$ using recursion on~$n \in \N_0$.  Suitable functions
  $g_n, h_n$ will be obtained in a second step.  The functions
  $\mathcal{R}_0, \mathcal{L}_0$ and the family $\mathcal{S}_0$ are
  prescribed by~(1).  Suppose $\mathcal{R}_n, \mathcal{L}_n,
  \mathcal{S}_n$ have been constructed.  The discussion at the
  beginning of Section~\ref{subsec:the.stabilizer} implies that there
  is a definable family of subgroups of $\mathbf{G}_\mathcal{O}$ over
  $\scX$ whose fiber at any $x \in \scX$ is the stabilizer of $\Xi_{
    \mathcal{R}_n}(x)$ in $( \widetilde{ \mathcal{S}_n} )_x$.  Take
  $\mathcal{S}_{n+1}$ to be the reduction of this family modulo the
  maximal ideal.  Similarly, we get $\mathcal{L}_{n+1}$, using
  Corollary~\ref{cor:q.f.Lie.algebras.of.stabilizers}, and
  $\mathcal{R}_{n+1}$, using Proposition~\ref{prop:grass.operations}.
  Thus (2) is taken care of.

  Next, we show that the sequences $\mathcal{R}_n, \mathcal{L}_n$, and
  $\mathcal{S}_n$, $n \in \N_0$, stabilize as required by~(3).  Note
  that the sequence $\dim \mathcal{R}_n$, $n \in \N_0$, is
  (pointwise) non-decreasing and the sequence $\dim \mathcal{L}_n$,
  $n \in \N_0$, is non-increasing.
  Proposition~\ref{prop:stab.conn.components} implies that, for
  every~$n \in \N_0$, there is an upper bound $D(n)$ for the number of
  connected components of each of the groups~$(\mathcal{S}_n)_x$.

  We claim that, if $n_1 \in \N_0$ is such that $\dim
  \mathcal{R}_{n_1}(x) = \dim \mathcal{R}_{n_1 + D(n_1)}(x)$ and $\dim
  \mathcal{L}_{n_1}(x) = \dim \mathcal{L}_{n_1 + D(n_1)}(x)$, then the
  sequences $\mathcal{R}_n(x),\mathcal{L}_n(x)$, and
  $\mathcal{S}_n(x)$, $n \in \N_0$, stabilize for $n > n_1 + D(n_1)$.
  Indeed, suppose that~$n \in \N_0$ with $n_1 \leq n < n_1 + D(n_1)$.
  If $\dim \mathcal{L}_n(x) = \dim \mathcal{L}_{n+1}(x)$, then
  $\mathcal{L}_n(x) = \mathcal{L}_{n+1}(x)$ and similarly for
  $\mathcal{R}_n(x)$.  Since $(\mathcal{S}_{n+1})_x$ is a subgroup of
  $(\mathcal{S}_n)_x$ and they have the same Lie algebra, either
  $\mathcal{S}_n(x) = \mathcal{S}_{n+1}(x)$ or $\mathcal{S}_{n+1}(x)$
  has fewer connected components than $\mathcal{S}_n(x)$.  It follows
  that there is $n \in \N_0$ with $n_1 \leq n < n_1 + D(n_1)$ such
  that $\mathcal{S}_n(x) = \mathcal{S}_{n+1}(x)$.  It now follows that
  the sequences of functions $\mathcal{R}_n, \mathcal{L}_n$, and
  $\mathcal{S}_n$ stabilize for sufficiently large~$n \in \N_0$.  Once
  $\mathcal{R}_n$ and $\mathcal{S}_n$ stabilize, we can keep also the
  functions $g_n$ and $h_n$ unchanged.  Thus (3) is satisfied.

  It remains to construct $g_n$ and $h_n$ for $n \in \N_0$ so that (4)
  holds.  We start with $n=0$.  Fix a finite extension $K \subset L$,
  and consider primes $\fq$ of $O_L$ that satisfy, in particular, the
  conditions of Lemma~\ref{lem:good.subgroup}.  For short we write $X
  = \scX(L_\fq)$, and we put
  \[
  X' = \bigsqcup \left\{ X_\theta \mid \theta \in
    \mathfrak{g}^{(1)}(O_{L,\fq})^\vee \smallsetminus \{ 1 \}
  \right\},
  \]
  where $X_\theta = \{x \in X \mid \Pi_{\{0\},\fq}(x) = \theta \}$ for
  every homomorphism $\theta$ from the additive group of the $1$st
  principal congruence Lie sublattice $\mathfrak{g}^{(1)}(O_{L,\fq })$
  to $\C^\times$.  Summing over $\theta \in
  \mathfrak{g}^{(1)}(O_{L,\fq})^\vee \smallsetminus \{ 1 \}$, applying
  the orbit method map $\Omega$, and using
  Lemma~\ref{lem:zeta.to.relative}, we obtain
  \begin{align*}
    \lefteqn{\zeta_{\mathbf{G}(O_{L,\fq})}(s) -
      \zeta_{\mathbf{G}(O_L/\fq)}(s)} \\
    & = \sum_{\theta \neq 1} \frac{1}{\lvert \Ad^*(\mathbf{G}(O_{L,
        \fq }))(\theta) \rvert} \, (\dim \Omega (\theta))^{-s} \,
    \zeta_{\mathbf{G}(O_{L,\fq }) \vert \Omega(\theta)}(s) \\
    & = \sum_{\theta \neq 1} \int_{X_\theta}
    \frac{1}{\lambda(X_\theta)} \, \frac{1}{\lvert
      \Ad^*(\mathbf{G}(O_{L, \fq })) (\Pi_{\{ 0 \}} (x)) \rvert} \,
    \dim( \Xi_{\{ 0 \}}(x))^{-s} \, \zeta_{\mathbf{G}(O_{L,\fq}) \vert
      \Xi_{\{ 0
        \}}(x)}(s) \, d \lambda(x) \\
    & = \int_{X'} \frac{1}{\lambda(\Pi_{\{ 0 \}}^{-1}(\Pi_{\{ 0
        \}}(x)))} \, \frac{1}{\lvert \Ad^*(\mathbf{G}(O_{L, \fq }))
      (\Pi_{\{ 0 \}} (x)) \rvert} \, \dim( \Xi _{\{ 0 \} }(x))^{-s} \,
    \zeta_{\mathbf{G}(O_{L,\fq }) \vert \Xi_{\{ 0 \}}(x)}(s) \, d
    \lambda(x).
  \end{align*}

  By Theorem~\ref{thm:Jaikin} and
  Corollary~\ref{cor:q.f.Lie.algebras.of.stabilizers}, there are
  quantifier-free definable functions $\phi_1,\phi_2,\phi_3
  \colon \scX \rightarrow \mathsf{\Gamma}$ such that, for almost all
  primes $\fq$ of $O_L$,
  \begin{list}{}{\setlength{\leftmargin}{\myenumilistleftmargin}
      \setlength{\labelwidth}{20pt} \setlength{\itemsep}{0pt}
      \setlength{\parsep}{1pt}}
  \item[\textup{(1)}] $\lambda(\Pi_{\{ 0 \}}^{-1} (\Pi_{\{ 0 \}}(x)))
    = \lvert O_L/\fq \rvert^{\phi_1(x)}$,
  \item[\textup{(2)}] $\dim \Xi_{\{ 0 \}}(x) = \lvert
    \Ad^*(\mathbf{G}^{(1)}(O_{L, \fq })) (\Pi_{\{ 0 \}} (x))
    \rvert^{1/2} = \lvert O_L/\fq \rvert^{\phi_2(x)}$,
  \item[\textup{(3)}] $\dim \mathcal{L}_1(x) = \lvert O_L/\fq
    \rvert^{\phi_3(x)}$.
  \end{list}
  
  By Lemma~\ref{lem:size.group}, there is a constant $C \in \R$ such
  that, for every $x \in X$,
  \begin{multline*}
    \lvert \Ad^*(\mathbf{G}(O_{L,\fq })) (\Pi_{\{ 0 \}}(x)) \rvert = \\
    \lvert \Ad^*(\mathbf{G}^{(1)}(O_{L,\fq })) (\Pi_{\{ 0 \}}(x))
    \rvert
    \cdot \lvert \mathbf{G}(O_{L}/\fq)/(\mathcal{S}_1)_x(L_\fq) \rvert
    \sim_C \lvert O_L/\fq \rvert^{2 \phi_2(x) + \dim \mathfrak{g} - \phi_3(x)}.
 \end{multline*}
 Therefore,
  \begin{multline*}
    \zeta_{\mathbf{G}(O_{L,\fq})}(s) - \zeta_{\mathbf{G}(O_L/\fq)}(s)
    \sim_C \\
    \int_{X'} \lvert O_L/\fq \rvert^{(-\phi_1(x) - 2 \phi_2(x) +
      \phi_3(x) - \dim \mathfrak{g}) - \phi_2(x)s} \,
    \zeta_{\mathbf{G}(O_{L,\fq}) \vert \Xi_{\{ 0 \}}(x)}(s) \, d
    \lambda (x),
  \end{multline*}
  and we set $g_0(x) = -\phi_1(x) - 2 \phi_2(x) + \phi_3(x) - \dim
  \mathfrak{g}$, $h_0(x) = \phi_2(x)$, and $C_0 = C$.

  Finally, we suppose that $g_n,h_n$ are given for some $n \in \N_0$
  and we explain how to construct $g_{n+1},h_{n+1}$.  Again, fix a
  finite extension $K \subset L$, and consider primes $\fq$ of $O_L$
  that are different from any of the finitely many primes omitted.

  Let $\mathfrak{a} \subset \mathfrak{g}(O_L/\fq)$ be a nilpotent Lie
  algebra, and recall that $\widetilde{\mathfrak{a}}$ denotes the
  pre-image of $\mathfrak{a}$ under the map $\mathfrak{g}(O_{L,\fq})
  \rightarrow \mathfrak{g}(O_L/\fq)$; it is a Lie ring, and, in
  particular, an additive group.  Let $\tau$ be a homomorphism from
  $\widetilde{\mathfrak{a}}$ to $\C^\times$, and consider the set
  \[
  X_{\mathfrak{a},\tau} = \Pi_{\mathcal{R}_n}^{-1}(\tau) = \{x \in
  \scX(L_\fq) \mid \mathcal{R}_n(x)=\mathfrak{a}, \Pi_{
    \mathcal{R}_n}(x) = \tau \}.
  \]
  Inductively, we see that on $X_{\mathfrak{a},\tau}$, the values of
  $\mathcal{R}_{0}(x), \mathcal{L}_{0}(x), \dots, \mathcal{R}_n(x),
  \mathcal{L}_n(x)$ and thus the values of $(\mathcal{S}_n)_x(L_\fq)$,
  $(\mathcal{S}_{n+1})_x(L_\fq)$, and $\mathcal{R}_{n+1}(x)$ are
  constant.  Denote the latter by $S_n,S_{n+1},$ and $\mathfrak{b}$
  respectively.  Writing $m = \lvert \mathfrak{b}:\mathfrak{a}
  \rvert$, let $\theta_1,\ldots,\theta_m$ denote the homomorphisms
  from $\widetilde{\mathfrak{b}}$ to $\C^\times$ that extend $\tau$.
  We define, for $i \in \{1,\ldots,m\}$,
  \[
  X_{\mathfrak{a},\tau}^i = \Pi_{\mathcal{R}_{n+1}}^{-1}(\theta_i) =
  \{x \in X_{\mathfrak{a},\tau} \mid \Pi_{\mathcal{R}_{n+1}}(x) =
  \theta_i\}.
  \]
  These sets form a partition of $X_{\mathfrak{a},\tau}$ into $m$
  parts of equal measure.  Using Lemma~\ref{lem:zeta.to.relative} and
  Clifford theory, we deduce that, for every $x \in
  X_{\mathfrak{a},\tau}$,
  \begin{multline*}
    \zeta_{(\widetilde{\mathcal{S}_n})_x(L_\fq) \vert
      \Xi_{\mathcal{R}_n}(x)}(s) = \zeta_{\widetilde{S_n} \vert \Omega
      (\tau)}(s) = \lvert \widetilde{S_n}:\widetilde{S_{n+1}}
    \rvert^{-s} \zeta_{\widetilde{S_{n+1}} \vert \Omega
      (\tau)}(s) \\=
    \lvert \widetilde{S_n}:\widetilde{S_{n+1}} \rvert^{-s}
    \sum_{i=1}^m \frac{\lvert \Ad^*(\exp
      \widetilde{\mathcal{R}_{n+1}}) (\tau) \rvert}{\lvert
      \Ad^*(\widetilde{S_{n+1}})(\theta_i) \rvert} \left( \frac{\dim
        \Omega (\theta_i)}{\dim \Omega (\tau)} \right)^{-s}
    \zeta_{\widetilde{S_{n+1}} \vert \Omega (\theta_i)}(s).
  \end{multline*}
  Therefore,
  \begin{multline*}
    \int_{X_{\mathfrak{a},\tau}}
    \zeta_{(\widetilde{\mathcal{S}_n})_x(L_\fq) \vert
      \Xi_{\mathcal{R}_n}(x)}(s) \, d \lambda(x) \\
    = \lvert \widetilde{S_n}:\widetilde{S_{n+1}} \rvert^{-s}
    \sum_{i=1}^m \int_{X_{\mathfrak{a},\tau}^i}
    \underbrace{\frac{\lambda(X_{\mathfrak{a},\tau})
      }{\lambda(X_{\mathfrak{a},\tau}^i)}}_{=m} \, \frac{\lvert
      \Ad^*(\exp \widetilde{\mathcal{R}_{n+1}}) (\tau) \rvert}{\lvert
      \Ad^*(\widetilde{S_{n+1}})(\theta_i) \rvert} \left( \frac{\dim
        \Omega (\theta_i)}{\dim \Omega (\tau)} \right) ^{-s}
    \zeta_{\widetilde{S_{n+1}} \vert \Omega (\theta_i)}(s) \, d
    \lambda (x) \hfill \\
    = \sum_{i=1}^m \int_{X_{\mathfrak{a},\tau}^i} \lvert
    (\mathcal{S}_n)_x(L_\fq):(\mathcal{S}_{n+1})_x(L_\fq) \rvert^{-s}
    \, \underbrace{\lvert
      \mathcal{R}_{n+1}(x):\mathcal{R}_n(x)\rvert}_{=m} \cdot
    \hfill \\
    \frac{\lvert \Ad^*(\exp \widetilde{\mathcal{R}_{n+1}})
      (\Pi_{\mathcal{R}_n}(x)) \rvert}{\lvert
      \Ad^*((\widetilde{\mathcal{S}_{n+1}})_x(L_\fq))(\Pi_{\mathcal{R}_{n+1}}(x))
      \rvert} \, \left( \frac{\dim \Xi_{\mathcal{R}_{n+1}}(x)}{\dim
        \Xi_{\mathcal{R}_n}(x)} \right) ^{-s}
    \zeta_{(\widetilde{\mathcal{S}_{n+1}})_x(L_\fq) \vert
      \Xi_{\mathcal{R}_{n+1}}(x)}(s) \, d \lambda (x).
  \end{multline*}

  Now there are quantifier-free definable functions
  $\psi_1,\psi_2,\psi_3,\psi_4 \colon \scX \rightarrow
  \mathsf{\Gamma}$ and a constant $C \in \R$ -- all independent of $L$
  and $\fq$ -- such that, for almost all primes $\fq$ of $O_L$, and
  all $x\in\scX(L_\fq)$, the following hold:
  \begin{list}{}{\setlength{\leftmargin}{\myenumilistleftmargin}
      \setlength{\labelwidth}{20pt} \setlength{\itemsep}{0pt}
      \setlength{\parsep}{1pt}}
  \item[\textup{(1)}] $\lvert (\mathcal{S}_n)_x(L_\fq):
    (\mathcal{S}_{n+1})_x(L_\fq) \rvert \sim_C \lvert O_L/\fq
    \rvert^{\psi_1(x)}$, by
    Corollary~\ref{cor:q.f.Lie.algebras.of.stabilizers} and
    Lemma~\ref{lem:size.group} (1), because the number of connected
    components of $(\mathcal{S}_n)_x$ is bounded by $D(n)$,
  \item[\textup{(2)}] $\lvert \mathcal{R}_{n+1}(x):\mathcal{R}_n(x)
    \rvert = \vert O_L/\fq \vert ^{\psi_2(x)}$, as $\mathcal{R}_n$ and
    $\mathcal{R}_{n+1}$ are quantifier-free definable,
  \item[\textup{(3)}] $\lvert \Ad^*(\exp
    \widetilde{\mathcal{R}_{n+1}}) (\Pi_{\mathcal{R}_n}(x)) \rvert /
    \lvert
    \Ad^*((\widetilde{\mathcal{S}_{n+1}})_x(L_\fq))(\Pi_{\mathcal{R}_{n+1}}(x))
    \rvert \sim_C \vert O_L/\fq \vert ^{\psi_3(x)}$, as
    \begin{multline*}
      \lvert
      \Ad^*((\widetilde{\mathcal{S}_{n+1}})_x(L_\fq))(\Pi_{\mathcal{R}_{n+1}}(x))
      \rvert = \\
      \lvert
      \Ad^*(\exp\widetilde{\mathcal{R}_{n+1}}(x))(\Pi_{\mathcal{R}_{n+1}}(x))
      \rvert \cdot \lvert (\mathcal{S}_{n})_x(L_\fq):
      (\mathcal{S}_{n+1})_x(L_\fq) \rvert,
    \end{multline*}
    by part (1), and by Proposition~\ref{prop:size.coadjoint.orbit},
  \item[\textup{(4)}] $\dim \Xi_{\mathcal{R}_{n+1}}(x) / \dim
    \Xi_{\mathcal{R}_n}(x) \sim_C \vert O_L/\fq \vert ^{\psi_4(x)}$
    because, for example, $\dim \Xi_{\mathcal{R}_n}(x) = \lvert
    \Ad^*(\exp\widetilde{\mathcal{R}_{n}}(x))
    (\Pi_{\mathcal{R}_{n}}(x)) \rvert ^{1/2}$ and by
    Proposition~\ref{prop:size.coadjoint.orbit}.
  \end{list}
  
  Writing $\alpha_n = \psi_2 + \psi_3$ and $\beta_n = \psi_1 +
  \psi_4$, we obtain
  \[
  \int_{X_{\mathfrak{a},\tau}} \zeta_{(\widetilde{S_n})_x(L_\fq)
    \vert \Xi_{\mathcal{R}_n}(x)}(s) \, d \lambda(x) \sim_{C^3}
  \int_{X_{\mathfrak{a},\tau}} \lvert O_L/\fq \rvert ^{\alpha_n(x) -
    \beta_n(x)s} \, \zeta_{(\widetilde{\mathcal{S}_{n+1}})_x(\fq)
    \vert \Xi_{\mathcal{R}_{n+1}}(x)}(s) \, d \lambda (x).
  \]

  Defining $g_{n+1} = g_n + \alpha_n$ and $h_{n+1} = h_n+ \beta_n$ we
  obtain, from the corresponding properties of $g_n,h_n$,
  \begin{align*}
    \zeta_{\mathbf{G}(O_{L,\fq})}(s) - \zeta_{\mathbf{G}(O_L/\fq)}(s)
    & \sim_{C_n} \int_{\scX(L_\fq)} \lvert O_L/ \fq \rvert ^{g_n(x) -
      h_n(x)s} \, \zeta_{(\widetilde{\mathcal{S}_n})_x(L_\fq) \vert
      \Xi_{\mathcal{R}_n}(x)}(s) \, d \lambda(x) \\
    & = \quad \sum_{\mathfrak{a},\tau}
    \int_{X_{\mathfrak{a},\tau}} \lvert O_L/ \fq \rvert^{g_n(x) -
      h_n(x)s} \, \zeta_{(\widetilde{\mathcal{S}_n})_x(L_\fq) \vert
      \Xi_{\mathcal{R}_n}(x)}(s)\, d \lambda(x) \\
    & \sim_{C^3} \sum_{\mathfrak{a},\tau}
    \int_{X_{\mathfrak{a},\tau}} \lvert O_L/ \fq \rvert^{g_{n+1}(x)
      - h_{n+1}(x) s} \,
    \zeta_{(\widetilde{\mathcal{S}_{n+1}})_x(L_\fq) \vert
      \Xi_{\mathcal{R}_{n+1}}(x)}(s) \, d \lambda(x) \\
    & = \quad \int_{\scX(L_\fq)} \vert O_L/ \fq \vert ^{g_{n+1}(x) -
      h_{n+1}(x)s} \, \zeta_{(\widetilde{\mathcal{S}_{n+1}})_x(L_\fq)
      \vert \Xi_{\mathcal{R}_{n+1}}(x)}(s) \, d \lambda(x).
  \end{align*}
  Here $\mathfrak{a}$ ranges over the finite set of nilpotent Lie
  subalgebras of $\mathfrak{g}(O_L/\fq)$ and $\tau$ ranges over the
  countable set of characters of $\widetilde{\mathfrak{a}}$.  The
  required properties of $g_{n+1}, h_{n+1}$ hold for $C_{n+1} = C_n
  C^3$.
\end{proof}

We are now ready to prove Theorem~\ref{thm:int.approx}.

\begin{proof}[Proof of Theorem~\ref{thm:int.approx}] We continue to
  use the notation set up in this section.  In particular, let
  $g_n,h_n,\mathcal{R}_n,\mathcal{L}_n,\mathcal{S}_n,C_n$ be the
  sequences constructed in Theorem~\ref{thm:partial.int}.  Suppose
  $n_0 \in \N$ is sufficiently large so that
  $g_n,h_n,\mathcal{R}_n,\mathcal{L}_n,\mathcal{S}_n$ are stable for
  $n \geq n_0$.  Fix $n \geq n_0$.  By Theorem~\ref{thm:int.approx},
  for all finite extensions $K \subset L$ and almost all
  $\mathfrak{q}$,
  \begin{equation} \label{eq:pf.int.approx}
    \zeta_{\mathbf{G}(O_{L,\fq})}(s)-\zeta_{\mathbf{G}(O_L/\fq)}(s)
    \sim_{C_n} \int_{\scX(L_\fq)} \vert O_L/\fq \vert ^{g_n(x) -
      h_n(x)s}\cdot \zeta_{(\widetilde{\mathcal{S}_n})_x(L_\fq) \vert
      \Xi_{\mathcal{R}_n}(x)}(s) d \lambda(x).
  \end{equation}

  Recall that $(\mathcal{S}_n)_x \subset \mathbf{G}_{\mathsf{k}}$ is
  an algebraic group. Let $(\mathcal{S}_n)_x^\circ$ denote the
  connected component of the identity, and let
  $\widetilde{(\mathcal{S}_n)_x^\circ} \subset \mathbf{G}_\mathcal{O}$
  be the pre-image of $(\mathcal{S}_n)_x^\circ$ under the reduction
  map modulo the maximal ideal. It was shown in
  Proposition~\ref{prop:stab.conn.components} that there is a constant
  $D_1 \in \R$ such that, for almost all~$\fq$ and all $x \in
  \scX(L_{\fq})$,
  \[
   \vert (\widetilde{\mathcal{S}_n})_x(L_\fq) :
  \widetilde{(\mathcal{S}_n)_x^\circ}(L_\fq) \vert  \leq
   \vert (\mathcal{S}_n)_x: (\mathcal{S}_n)_x^\circ \vert  \leq D_1.
  \]
  By Lemma \ref{lem:fin.index.BK} (2), we get for almost all~$\fq$ and
  all $x \in \scX(L_{\fq})$,
  \[
  \zeta_{(\widetilde{ \mathcal{S}_n})_x(L_\fq)  \vert  \Xi_{
      \mathcal{R}_n}(x)} \sim_{D_1} \zeta_{\widetilde{
      (\mathcal{S}_n)_x^\circ}(L_\fq)  \vert  \Xi_{ \mathcal{R}_n}(x)}.
  \]
  For every prime $\fq$ and every $x\in\scX(L_\fq)$, the group $Q_x =
  (\mathcal{S}_n)_x^\circ / \exp \mathcal{R}_n(x)$ is a reductive
  group over the field $O_L/\fq$ of dimension at most~$\dim
  \mathbf{G}$.  Omitting finitely many primes $\fq$ of $O_L$ as
  specified in Observation~\ref{obs:almost-all-q}, none of the Schur
  multipliers of the groups
  $\widetilde{(\mathcal{S}_n)_x^\circ}(L_\fq)/ \exp
  \widetilde{\mathcal{R}_n(x)}(L_\fq) = Q_x (O_L/ \fq)$, for $x\in
  \scX(L_\fq)$, contains elements of order $\charac(O_L/ \fq)$.  Using
  Lemmas~\ref{lem:p.power} and~\ref{lem:zeta.relative.quotient}, we
  thus obtain
  \[
  \zeta_{(\widetilde{ \mathcal{S}_n)_x^\circ}(L_\fq) \vert \Xi_{
      \mathcal{R}_n}(x)}=\zeta_{Q_x(O_L/ \fq)}.  \] The Lie algebra of
  $Q_x$ is $\mathcal{L}_n(x)/\mathcal{R}_n(x)$. By
  Proposition~\ref{prop:grass.operations}, there is a finite,
  quantifier-free partition of $\scX$ such that, on each part, the
  absolute root system $\Phi_x$ of $Q_x/Z(Q_x)$ and the dimension of
  the center of $Q_x$ -- which can be read off from the Lie algebra of
  $Q_x$, cf.\ Proposition~\ref{prop:grass.operations} -- are
  constant. By Corollary~\ref{cor:finite.reductive.zeta.approx}, there
  is a constant $D_2 \in \R$ such that
  \[
  \zeta_{Q_x(O_L / \fq)}(s) \sim_{D_2} \vert O_L / \fq \vert ^{\dim
    Z(Q_x)}(1+ \vert O_L/\fq \vert ^{\rk \Phi_x - \vert \Phi_x^+ \vert
    s}).
  \]
  Setting $\mathscr{Z}=\scX \times \left\{ 0,1 \right\}$,
  \[
  f_1(z)=\begin{cases} g_n(x)+\dim Z(Q_x) &\text{ for }z=(x,0)\in\scZ,\\
    g_n(x)+\dim Z(Q_x)+\rk \Phi_x &\text{ for }z=(x,1)\in\scZ,
  \end{cases}
  \]
  and
  \[
  f_2(z)= \begin{cases} - h_n(x) &\text{ for } z=(x,0)\in\scZ, \\ -
    h_n(x) + \vert \Phi_x^+ \vert &\text{ for } z=(x,1)\in\scZ,
  \end{cases}
  \]
  we obtain, with $C = C_nD_1D_2$,
  \begin{align*}
    \lefteqn{\zeta_{\mathbf{G}(O_{L,\fq})}(s) -
      \zeta_{\mathbf{G}(O_L/\fq)}(s)} \quad \quad \\
    & \sim_C \int_{\scX(L_\fq)} \vert O_L/\fq \vert ^{g_n(x)+\dim
      Z(Q_x)+h_n(x)s}(1+ \vert O_L/\fq \vert ^{\rk \Phi_x - \vert
      \Phi_x^+ \vert s})d
    \lambda(x)\\
    & = \int_{\scZ(L_\fq)} \vert O_L/\fq \vert ^{f_1(z) - f_2(z)s}d
    \lambda(z).
  \end{align*}
  This completes the proof of Theorem~\ref{thm:int.approx}.
\end{proof}


\section{Quantifier-Free Integrals} \label{sec:q.f.integrals}

In this section we complete the proof of
Theorem~\ref{thm:zeta.approx}. The section's main result is
Theorem~\ref{thm:mot.int}, expressing the dependence of an integral
such as the one in Theorem~\ref{thm:int.approx} on the local field the
integral is interpreted at.  We state the precise result in
Section~\ref{subsec:uniform.formulae} and prove it in
Section~\ref{subsec:mot.int}.

Throughout this section, we fix a number field $K$ and work within the
first order language of valued fields together with a constant, of the
value field sort, for every element of $K$. We will use the theory
$\Th_{\mathrm{Hen},K,0}$ of Henselian valued fields over $K$ of
characteristic~$0$; cf.\
Definition~\ref{def:theories}.

\subsection{Uniform Formulae for Quantifier-Free
  Integrals} \label{subsec:uniform.formulae}

We make no notational
distinction between an algebraic variety and the corresponding functor
of points.

\begin{defn}
  Let $X$ be a smooth algebraic variety of dimension $n$ over $K$ and
  let $\omega$ be a regular differential $n$-form on $X$. For any
  local field $F$ containing $K$, the set $X(F)$ has the structure of
  a $p$-adic analytic manifold; cf.\ \cite[Part~II,
  Chapter~III]{Ser}. We define a measure $| \omega |_F$ on $X(F)$ as
  follows: given a compact open set $U \subset X(F)$, an open compact
  subset $W \subset F^n$, and an analytic diffeomorphism $f \colon
  U\rightarrow W$, we write
  \[
  f^* \omega = gd x_1 \wedge \cdots \wedge d x_n,
  \]
  for some function $g \colon W \rightarrow F$, and define
  \[
  \lvert \omega \rvert_F(U) = \int_W \lvert g(x) \rvert_Fd \lambda(x),
  \]
  where $\lvert \cdot \rvert_F$ is the normalized absolute value of
  $F$ and $\lambda$ is the Haar measure on $F^n$ normalized so that
  $\lambda(O_F^n)=1$. The assignment $U \mapsto \lvert \omega
  \rvert_F(U)$ extends uniquely to a non-negative (possibly infinite)
  Radon measure on $X(F)$, which we also denote by $\lvert \omega
  \rvert_F$. See \cite[Section~3.1]{AA} for further details.
\end{defn}

We now state the section's main theorem.

\begin{thm} \label{thm:mot.int} Let $K$ be a number field with ring of
  integers $O_K$, and let $X \subset \mathbb{A} ^M$ be a smooth affine
  $K$-variety, and $\omega$ a regular differential top form on
  $X$. Suppose that $\scZ \subset X \cap \mathcal{O} ^M$ is a
  quantifier-free definable set and that $f_1,f_2 \colon \scZ
  \rightarrow \mathsf{\Gamma}$ are quantifier-free definable
  functions. There exist $N \in \N$, quasi-affine $O_K$-schemes
  $\mathbf{W}_i$ and integers $\alpha_i,\beta_i\in\N_0$ and
  $n_i\in\N$, for $i \in \{1,\ldots,N\}$, and $A_{ij},B_{ij}\in\Z$,
  for $i \in \{1,\ldots,N\}$ and $j \in \{1,\ldots,n_i\}$, such that
  the following holds: for every finite extension $K \subset L$ and
  almost all primes $\mathfrak{q}$ of $O_L$,
    \begin{equation} \label{eq:thm.mot.int}
      \int_{\scZ(L_\mathfrak{q})} |O_L / \mathfrak{q} |^{f_1(z) -
        f_2(z)s} | \omega |_{L_\mathfrak{q}}=\sum_{i=1}^{N}
      |O_L/\mathfrak{q}|^{\alpha_i - \beta_i s}|\mathbf{W}_i(O_L/
      \mathfrak{q})| \cdot \prod_{j=1}^{n_i}\frac{| O_L / \mathfrak{q}
        |^{A_{ij}-B_{ij}s}}{1- | O_L / \mathfrak{q}
        |^{A_{ij}-B_{ij}s}},
  \end{equation}
  for every $s\in \mathbb{C}$ for which the integral on the left
  converges.
\end{thm}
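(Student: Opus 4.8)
The plan is to prove Theorem~\ref{thm:mot.int} by reducing the integral to a ``cell decomposition'' of the quantifier-free definable set $\scZ$ and then computing the contribution of each cell as an explicit geometric-type series. Since $\scZ$ and the functions $f_1, f_2$ are quantifier-free definable in $\Th_{\mathrm{Hen},K,0}$, by partial elimination of valued field quantifiers (Theorem~\ref{thm:Pas}) there is a finite set of axioms which suffices to describe them; hence for all primes $\fq$ of $O_L$ with residue characteristic large enough, the interpretation of $\scZ$ in $L_\fq$ is given uniformly by the same quantifier-free formula, built from polynomial equalities, conditions on angular components $\ac(h(z))$ of polynomials $h$ over $K$ (which live in the residue field $O_L/\fq$), and conditions on valuations $\val(h(z))$ of such polynomials (which live in the value group $\mathbb{Z}$). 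Likewise $f_1, f_2$ are $\mathbb{Z}$-linear combinations of such valuations on each piece of a finite quantifier-free partition.

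First I would use a uniform cell decomposition in the Denef--Pas style: after refining the partition of $\scZ$, each cell can be put in the form where, in suitable coordinates adapted to $\omega$, the relevant polynomials $h_k(z)$ have the shape $h_k(z) = u_k(z)\, \varpi^{c_k(\bar z)}$ with $u_k$ a unit whose reduction ranges over a constructible subset of the residue field cut out by the $\ac$-conditions, and where the exponents $c_k$ are controlled by the remaining free coordinates. The $\val$-conditions then become a finite system of linear inequalities in these exponent variables, cutting out a rational polyhedral cone $\Lambda \subset \mathbb{Z}^d_{\geq 0}$ (intersected with a box); the residue-field conditions contribute, for each lattice point, a factor $|\mathbf{W}(O_L/\fq)|$ for a quasi-affine $O_K$-scheme $\mathbf{W}$ recording the constructible locus (its point-count over $O_L/\fq$ is a polynomial-like quantity, but we keep it as $|\mathbf{W}(O_L/\fq)|$ as the statement permits); the measure $|\omega|_{L_\fq}$ together with the Haar measure on the free coordinates contributes a factor $|O_L/\fq|^{-(\text{linear form in the exponents})}$; and the integrand $|O_L/\fq|^{f_1(z) - f_2(z)s}$ contributes $|O_L/\fq|^{(\text{linear form}) - (\text{linear form})s}$. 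Summing over the lattice points of $\Lambda$ then yields
\[
\sum_{i=1}^N |O_L/\fq|^{\alpha_i - \beta_i s}\, |\mathbf{W}_i(O_L/\fq)| \prod_{j=1}^{n_i} \frac{|O_L/\fq|^{A_{ij} - B_{ij}s}}{1 - |O_L/\fq|^{A_{ij} - B_{ij}s}},
\]
by the standard fact that the generating function $\sum_{\lambda \in \Lambda \cap \mathbb{Z}^d_{\geq 0}} t^{\ell(\lambda)}$ of a linear form $\ell$ over a rational polyhedral cone, after a unimodular triangulation of $\Lambda$ into simplicial cones, is a finite sum of products of terms $t^{a}/(1 - t^{a})$; here $t$ plays the role of $|O_L/\fq|$ and the exponents become the affine-linear expressions $A_{ij} - B_{ij}s$ in $s$. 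The convergence caveat in the statement is exactly the condition that each $1 - |O_L/\fq|^{A_{ij} - B_{ij}s} \neq 0$, under which the geometric series identities are valid.

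The main obstacle will be carrying out the cell decomposition \emph{uniformly} in both $L$ and $\fq$ while simultaneously keeping the differential form $\omega$ under control: one must choose coordinates on each cell in which $\omega$ pulls back to a monomial times a unit, and verify that this can be done by a single construction (independent of $L$ and of all but finitely many $\fq$), using that $X$ is smooth over $K$ and $\omega$ regular, so that a resolution/normal-crossings argument over $O_K$ localizes after inverting finitely many primes. I would invoke Denef--Pas cell decomposition (in the form valid over $\Th_{\mathrm{Hen},K,0}$, which applies since the relevant local fields $L_\fq$ are models of finitely many of its axioms once the residue characteristic is large) to obtain the cells, and handle the form via a preliminary monomialization; the bookkeeping that the resulting data $\mathbf{W}_i, \alpha_i, \beta_i, A_{ij}, B_{ij}$ are genuinely independent of $L$ and $\fq$ is the part requiring care, but it follows from the finiteness built into quantifier elimination together with the fact that all schemes and forms in sight are defined over $O_K$.
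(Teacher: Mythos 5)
Your outline is correct and the computational core coincides with the paper's: reduce to a situation where the integrand, the measure, and the defining conditions of $\scZ$ are all monomial in suitable coordinates, integrate fiberwise over residue-field points to extract a point count $\lvert \mathbf{W}_i(O_L/\fq)\rvert$, and sum the resulting generating function of a rational polyhedral cone after decomposing it into (cosets of) free monoids, which produces exactly the products of terms $\tfrac{q^{A-Bs}}{1-q^{A-Bs}}$. Where you diverge is in the monomialization device. The paper does not use Denef--Pas cell decomposition at all: it first normalizes $f_1,f_2$ and $\scZ$ syntactically (every quantifier-free definable function to $\mathsf{\Gamma}$ is piecewise a $\Q$-linear combination of $\val\circ P$ for polynomials $P$ over $K$, and $\scZ$ is piecewise cut out by ``$\ac^{\times M'}(H(z))\in\mathbf{V}$ and $\val^{\times M'}(H(z))\in\mathcal{C}$'' for a regular map $H$, a quasi-affine scheme $\mathbf{V}$, and a cone $\mathcal{C}$), and then applies a single Hironaka resolution to the divisor of $P\cdot\omega$, where $P$ is the product of \emph{all} polynomials in sight; the étale charts $\xi_i$ of the normal-crossings model then simultaneously monomialize $\omega$, the exponents $f_1,f_2$, and the valuations and angular components of the $H_t$, after which the Special Case computation applies chart by chart. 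Your route via cell decomposition is a legitimate alternative (it is how Denef and Pas originally obtained uniform rationality), and it buys you independence from resolution of singularities as a black box; but as you note, it must still be combined with a separate argument putting $\lvert\omega\rvert$ into monomial form on the charts of $X$, and your proposal is vague about how the cells and the monomialization of $\omega$ are made compatible — in the paper this compatibility is automatic because one resolution handles everything. Two small points to tighten: the residue-field factor decouples from the cone sum only after splitting the coordinates into those vanishing on the relevant stratum and those that are units (the paper arranges $\mathcal{D}\subset\Q_{>0}^t\times\{0\}^{n-t}$ for this), and the convergence caveat is not merely that the denominators are nonzero but that $\lvert O_L/\fq\rvert^{A_{ij}-B_{ij}\mathrm{Re}(s)}<1$ so the geometric series actually converge.
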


Theorem~\ref{thm:mot.int} may possibly be deduced from
\cite[Proposition~4.5 and Proposition~10.10]{HK} in a similar way that
\cite[Theorem~1.3]{HK} is. In the next section we give a direct proof,
in terms of resolutions of singularities.

\subsection{Proof of Theorem~\ref{thm:mot.int}} \label{subsec:mot.int}

In this section we decorate the reduction, angular component and
valuation maps with a superscript to indicate the arity of their
domain and write, e.g., $\red^{\times n}$, $\ac^{\times n}$ and
$\val^{\times n}$ (cf. Section \ref{sec:valued.fields}).

\begin{defn} A rational polyhedral cone in $\mathbb{Q} ^n$ is the
  intersection of finitely many open or closed linear half-spaces,
  i.e.\ subsets of the form $\left\{ {x}\in \mathbb{Q} ^n \mid \langle
  {x},{v} \rangle > 0 \right\}$ or $\left\{{x}\in \mathbb{Q} ^n \mid
  \langle {x},{v} \rangle \geq 0 \right\}$, where ${v}\in \mathbb{Q}
  ^n$ and $\langle \,, \rangle$ denotes the usual inner product
  on~$\R^n$.
\end{defn}

We will reduce Theorem~\ref{thm:mot.int} to the following special
case:

\begin{case}
  There exist an $O_K$-scheme $\mathbf{X} \subset \mathbb{A}_{O_K}^M$
  such that the structure map $\mathbf{X} \rightarrow \Spec(O_K)$ is
  smooth and its non-empty fibers are irreducible of dimension $n$, an
  \'etale map
  $\underline{\xi}=(\underline{\xi}_1,\dots,\underline{\xi}_n) \colon
  \mathbf{X} \rightarrow \mathbb{A}_{O_K} ^n$, a rational polyhedral
  cone $\mathcal{D} \subset \mathbb{Q}_{\geq 0}^n$, an $O_K$-scheme
  $\mathbf{M} \subset (\mathbb{G}_\text{m}^n)_{O_K} \times
  \mathbf{X}$, and integers $a_1,\ldots,a_n,b_1,\ldots,b_n$ such that
  \begin{list}{}{\setlength{\leftmargin}{\myenumilistleftmargin}
      \setlength{\labelwidth}{20pt} \setlength{\itemsep}{0pt}
      \setlength{\parsep}{1pt}}
  \item[\textup{(1)}] $X=\mathbf{X} \times \Spec(K)$,
    i.e.\ $\mathbf{X}$ is an $O_K$-model of $X$,
  \item[\textup{(2)}] $\omega = \underline{\xi} ^*( d x_1 \wedge
    \cdots \wedge d x_n)$, where $x_1,\dots,\cv{x_n}$ are coordinates
    on $\mathbb{A}^n_{O_K}$,
  \item[\textup{(3)}] $f_1(z)=\sum_{i=1}^n a_i
    \val(\underline{\xi}_i(z))$,
  \item[\textup{(4)}] $f_2(z)=\sum_{i=1}^n b_i
    \val(\underline{\xi}_i(z))$,
  \item[\textup{(5)}] the quantifier-free definable set $\scZ$ is
    defined by the formula
    \[
    z \in X \cap \mathcal{O} ^M \,\wedge\, {\val}^{\times
      n}(\underline{\xi}(z))\in \mathcal{D} \,\wedge\, \left(
      {\ac}^{\times n}(\underline{\xi}(z)),{\red}^{\times M}(z)
    \right) \in \mathbf{M}_{\mathsf{k}}.
    \]
  \end{list}
\end{case}

\subsubsection{Proof of Theorem~\ref{thm:mot.int} in the Special Case}
Let $\mathbf{Z} \subset \mathbf{X}$ be the image of $\mathbf{M}$ under
the projection to $\mathbf{X}$. After reordering the coordinates of
the map $\underline{\xi}$ and passing, if necessary, to one of the
parts of a finite, quantifier-free partition of~$\mathbf{M}$, we can
assume that, for some $0 \leq t \leq n$, the functions
$\underline{\xi}_1,\ldots,\underline{\xi}_t$ vanish on $\mathbf{Z}
\times \Spec(K)$ and
$\underline{\xi}_{t+1},\ldots,\underline{\xi}_{n}$ are invertible on
$\mathbf{Z} \times \Spec(K)$. This implies that, for every finite
extension $K \subset L$ and almost all primes $\mathfrak{q}$ of $O_L$,
the functions $\underline{\xi}_1,\ldots,\underline{\xi}_t$ vanish on
$\mathbf{Z} \times \Spec(O_L/\mathfrak{q})$ and
$\underline{\xi}_{t+1},\ldots,\underline{\xi}_n$ are invertible on
$\mathbf{Z} \times \Spec(O_L/\mathfrak{q})$.  We may thus assume,
without loss of generality, that $\mathcal{D}\subset \mathbb{Q}_{>0}^t
\times \left\{ 0 \right\} ^{n-t}$.

For each $\mathrm{p} \in \mathbf{X}(O_L / \mathfrak{q})$, the map
$\underline{\xi}$ induces a measure-preserving diffeomorphism between
$\left( \left( {\red}^{\times M}\right) ^{-1} (\mathrm{p})\cap
  \mathbf{X}(L_\mathfrak{q}),| \omega |_{L_\mathfrak{q}}\right)$ and
$\left( \left( {\red}^{\times n}\right) ^{-1}
  (\underline{\xi}(\mathrm{p})), |d x_1 \wedge \cdots \wedge d
  x_n|_{L_\mathfrak{q}}\right)$. The image of $\left( {\red}^{\times
    M} \right) ^{-1} (\mathrm{p}) \cap\scZ(L_\mathfrak{q})$ under
$\underline{\xi}$ is the set of all $x = (x_1,\ldots,x_n)\in
O_{L,\mathfrak{q}}^n$ that satisfy
\begin{list}{}{\setlength{\leftmargin}{\myenumilistleftmargin}
    \setlength{\labelwidth}{20pt} \setlength{\itemsep}{0pt}
    \setlength{\parsep}{1pt}}
\item[\textup{(1)}] ${\val}^{\times n}(x)\in \mathcal{D} \cap
  \mathbb{Z}^n$,
\item[\textup{(2)}] ${\red}^{\times n}(x)=\underline{\xi}(\mathrm{p})$,  and
\item[\textup{(3)}] $\left( {\ac}^{\times n}(x),\mathrm{p} \right) \in
  \mathbf{M}(O_L / \mathfrak{q})$,
\end{list}
or, equivalently,
\begin{list}{}{\setlength{\leftmargin}{\myenumilistleftmargin}
    \setlength{\labelwidth}{20pt} \setlength{\itemsep}{0pt}
    \setlength{\parsep}{1pt}}
\item[\textup{(1)}'] ${\val}^{\times n}(x)\in \mathcal{D} \cap
  \mathbb{Z} ^n$,
\item[\textup{(2)}'] $\left( \ac(x_{t+1}),\ldots,\ac(x_{n})\right) =
  \left(
    \underline{\xi}_{t+1}(\mathrm{p}),\ldots,\underline{\xi}_{n}(\mathrm{p})
  \right)$,
\item[\textup{(3)}'] $\left( \ac(x_1),\ldots,\ac(x_t) \right) \in
  \mathbf{M}^{(\mathrm{p})}(O_L/\mathfrak{q})$,
\end{list}
where
\begin{multline*}
  \mathbf{M}^{(\mathrm{p})}(O_L/\mathfrak{q}) = \\
  \left\{(\cv{y_1,\dots,y_t})\in((O_L/\mathfrak{q})^\times)^t \mid (\cv{y_1,
    \dots, y_t}, \underline{\xi}_{t+1}(\mathrm{p}), \dots,
    \underline{\xi}_n(\mathrm{p}),
    \mathrm{p})\in\mathbf{M}(O_L/\mathfrak{q})\right\}.
\end{multline*}
Setting $\mathcal{W}(\mathrm{p}) \subset O_{L,\mathfrak{q}}^n$ to be
the set defined by the conjunction of these three conditions, we get
\begin{align*}
  \lefteqn{\int_{\left( {\red}^{\times M} \right) ^{-1} (\mathrm{p})}
    1_{\scZ(L_\mathfrak{q})}(z) | O_L / \mathfrak{q} |^{f_1(z) -
      f_2(z)s} |
    \omega |_{L_\mathfrak{q}}}\\
  &=\int_{\left( {\red}^{\times n} \right) ^{-1}
    (\underline{\xi}(\mathrm{p}))} 1_{\mathcal{W}(\mathrm{p})}(x) |
  O_L / \mathfrak{q} |^{\sum_{i=1}^n (a_i - b_is)\val(x_i)}| d x_1
  \wedge \cdots \wedge d x_n
  |_{L_\mathfrak{q}}\\
  & =\left| \mathbf{M}^{(\mathrm{p})}(O_L / \mathfrak{q}) \right|
  \cdot \sum_{\gamma \in \mathcal{D} \cap \mathbb{Z} ^n} | O_L /
  \mathfrak{q} |^{\sum_{i=1}^n (a_i - b_is)\gamma_i}.
\end{align*}
Therefore,
\begin{equation}
\begin{split}
  \lefteqn{\int_{\scZ(L_\mathfrak{q})} | O_L / \mathfrak{q} |^{f_1(z) -
      f_2(z)s} | \omega |_{L_\mathfrak{q}}} \\
  & =\int_{\mathbf{X}(O_{L,\mathfrak{q}})} 1_{\scZ(L_\mathfrak{q})}
  (z)| O_L / \mathfrak{q} |^{f_1(z) - f_2(z)s} | \omega
  |_{L_\mathfrak{q}} \\
  & =\sum_{\mathrm{p}\in \mathbf{X}(O_L/\mathfrak{q})} \int_{\left(
      \red ^{\times M} \right) ^{-1} (\mathrm{p})}
  1_{\scZ(L_\mathfrak{q})}(z) | O_L / \mathfrak{q} |^{f_1(z) -
    f_2(z)s} | \omega |_{L_\mathfrak{q}} \\
  &=\sum_{\mathrm{p} \in \mathbf{X}(O_L/ \mathfrak{q} )}
  \left|\mathbf{M}^{(\mathrm{p})}(O_L / \mathfrak{q})\right| \cdot
  \sum_{\gamma \in \mathcal{D}\cap \mathbb{Z} ^n} | O_L / \mathfrak{q}
  |^{\sum_{i=1}^n (a_i - b_is)\gamma_i}.
\end{split}  \label{equ:sumX}
\end{equation}
Let $\mathbf{W}$ be the fiber product $\mathbf{M}
\times_{(\mathbb{G}_\text{m}^{n-t})_{O_K} \times \mathbf{X}}
\mathbf{X}$, where the map $\mathbf{M} \rightarrow
(\mathbb{G}_\text{m}^{n-t})_{O_K} \times \mathbf{X}$ is the projection
onto the last $n-t+M$ coordinates, and the map $\mathbf{X} \rightarrow
(\mathbb{G}_\text{m}^{n-t})_{O_K} \times \mathbf{X}$ is
$(\underline{\xi}_{t+1},\ldots,\underline{\xi}_n) \times
\Id_{\mathbf{X}}$. As
\[
| \mathbf{W}(O_L/\mathfrak{q}) | =
\sum_{\mathrm{p}\in\mathbf{X}(O_L/\mathfrak{q})} |
\mathbf{M}^{(\mathrm{p})}(O_L/\mathfrak{q})|,
\]
\eqref{equ:sumX} implies that
\begin{equation}\label{equ:sum}
  \int_{\scZ(L_\mathfrak{q})} | O_L / \mathfrak{q} |^{f_1(z) - f_2(z)s} |
  \omega |_{L_\mathfrak{q}} =\left| \mathbf{W}(O_L /
    \mathfrak{q})\right| \cdot \sum_{\gamma \in \mathcal{D} \cap
    \mathbb{Z} ^n} | O_L / \mathfrak{q} |^{\sum_{i=1}^n
    (a_i - b_is)\gamma_i}.
\end{equation}
The set $\mathcal{D} \cap \mathbb{Z} ^n$ may be decomposed into a
finite, disjoint union of cosets of free monoids. We may thus replace
$\mathcal{D} \cap \mathbb{Z} ^n$ by such a coset. Sums as the ones
in~\eqref{equ:sum} over free monoids are just finite products of
geometric progressions of the form $\frac{|O_L/\mathfrak{q}|^{A -
    Bs}}{1-|O_L/\mathfrak{q}|^{A - BS}}$, for suitable numerical data
$A,B\in\Z$. Translating the monoid amounts to multiplying the relevant
product by a factor of the form $|O_L/\mathfrak{q}|^{\alpha - \beta s}$,
for suitable~$\alpha,\beta\in\Z$. This proves
Theorem~\ref{thm:mot.int} in the Special Case.

\subsubsection{Reduction to the Special Case} \label{subsec:red.s.c}
Let $X, \omega,\scZ,f_1,f_2$ be as in Theorem~\ref{thm:mot.int}. As $X$ is
smooth, the integral \eqref{eq:thm.mot.int} in the theorem is the sum
of the respective integrals over the irreducible components of
$X$. Hence, we can assume that $X$ is irreducible.

\begin{lem} \label{lem:q.f.definable.functions.to.Gamma} If $h \colon
  \mathcal{O} ^M \rightarrow \mathsf{\Gamma}$ is a quantifier-free
  definable function, then there exist $N\in\N$, a finite
  quantifier-free definable partition
  \[
  \mathcal{O}^M = \Omega_1 \sqcup \dots \sqcup \Omega_N,
  \]
  and, for each $i \in \{1,\dots,N\}$, finitely many polynomials
  $P_{i1},\ldots,P_{in_i}$ over $K$ and rational numbers
  $r_{i1},\ldots,r_{i{n_i}}$ such that $h|_{\Omega_i}=\sum_{j=1}^{n_i}
  r_{ij} \val \circ P_{ij}$.
\end{lem}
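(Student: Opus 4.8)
The plan is to extract the claimed description directly from a quantifier-free formula $\phi(z,\gamma)$ defining the graph of $h$ inside $\mathcal{O}^M\times\mathsf{\Gamma}$, where $z=(z_1,\dots,z_M)$ is of the valued-field sort and $\gamma$ of the value-group sort. The key structural input is the shape of terms in the Denef--Pas language: since the only value-group function symbol is $+_{\gr}$ and there is no value-group constant symbol for a nonzero element, every value-group term occurring in $\phi$ is a $\mathbb{Z}$-linear combination, \emph{with no additive constant}, of the variable $\gamma$ and of finitely many terms $\val(P_\ell(z))$ with $P_\ell\in K[z_1,\dots,z_M]$; likewise the residue-field atoms of $\phi$ are polynomial relations among finitely many $\ac(Q_r(z))$ with $Q_r\in K[z]$, and the valued-field atoms are polynomial equations $R_t(z)=0$ with $R_t\in K[z]$. (That $\phi$ may be taken quantifier-free is the hypothesis; it is also consistent with the partial elimination of quantifiers of Theorem~\ref{thm:Pas}.)

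First I would write $\phi$ in disjunctive normal form and split each atomic or negated-atomic constituent into those \emph{not} involving $\gamma$ and those involving $\gamma$, the latter being value-group conditions of the form $a\gamma+\sigma(z)\ (\text{$=$, $<$, or a negation})\ 0$ with $a\in\mathbb{Z}$ and $\sigma(z)$ a $\mathbb{Z}$-linear combination of the $\val(P_\ell(z))$. The finitely many $\gamma$-free atoms --- polynomial conditions on $z$, polynomial conditions on the $\ac(Q_r(z))$, and value-group conditions among the $\val(P_\ell(z))$ --- are themselves quantifier-free conditions on $z$, so their truth values cut $\mathcal{O}^M$ into finitely many quantifier-free definable pieces $\Omega$. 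On a fixed such $\Omega$ the formula $\phi(z,\gamma)$ is equivalent to a fixed Boolean combination $\chi(\gamma,\val(P_1(z)),\dots,\val(P_J(z)))$ of the $\gamma$-conditions; equivalently $\mathrm{graph}(h)\cap(\Omega\times\mathsf{\Gamma})$ is the pull-back along $z\mapsto(\val(P_1(z)),\dots,\val(P_J(z)))$ of the quantifier-free $\{+_{\gr},<\}$-definable set $\{(\gamma,e)\in\mathsf{\Gamma}^{1+J}\mid\chi(\gamma,e)\}\subset\mathbb{Z}^{1+J}$, and since $h$ is a total function each of its fibres over $\mathsf{\Gamma}^{J}$ (meeting the image of $\Omega$) is a single point.

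What remains is a purely combinatorial statement about the value group: a quantifier-free $\{+_{\gr},<\}$-definable family $C\subset\mathbb{Z}^J\times\mathbb{Z}$ all of whose fibres $C_e$ are singletons or empty is, after a further partition of $\mathbb{Z}^J$ into finitely many quantifier-free $\{+_{\gr},<\}$-definable pieces, the graph of a $\mathbb{Q}$-linear form $e\mapsto\sum_\ell q_\ell e_\ell$ on each piece. Writing $C$ as a finite union of sets cut out by homogeneous integer-coefficient linear equalities and inequalities (there being neither congruence predicates nor constants in the language), one sees that on the part of the base where one such piece accounts for $C_e$, the point $C_e$ is forced either by a linear equality $b\gamma+\mu(e)=0$ with $b\neq0$ --- giving $h(z)=-\mu(\val P_1(z),\dots,\val P_J(z))/b$ --- or by a pair of opposite inequalities whose ``width'' is a homogeneous linear form bounded above, hence identically zero, so that the enclosing interval degenerates to the same kind of point; in either case no additive constant can appear, precisely because the language supplies none, which is why the conclusion is a combination of terms $\val\circ P_{ij}$ with no constant term. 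Pulling the resulting partition of $\mathbb{Z}^J$ back along $z\mapsto(\val(P_\ell(z)))_\ell$ refines $\Omega$ into quantifier-free definable subsets $\Omega_i$ of $\mathcal{O}^M$ on each of which $h=\sum_j r_{ij}\,\val\circ P_{ij}$.

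I expect the main obstacle to be this last combinatorial step: arranging the pieces of the partition of $\mathbb{Z}^J$ so that they are genuinely quantifier-free and each pins $\gamma$ by a single linear equation, which is delicate because the naive ``which disjunct is active'' partition involves divisibility (equivalently value-group) conditions. The point to exploit is that $h$ is \emph{total}: wherever a disjunct pinning $\gamma$ via $b\gamma=\mu(e)$ is the relevant one, the divisibility $b\mid\mu(e)$ holds automatically, so it need not --- and must not --- be imposed by a further formula. (Alternatively one may induct on the complexity of $\phi$, or invoke the known description of Presburger-definable functions as piecewise $\mathbb{Q}$-affine and then check that the additive constant vanishes for functions into $\mathsf{\Gamma}$ defined without value-group constants.) Throughout, the standing convention of discarding finitely many primes --- as in Remark~\ref{rem:criterion.q.f.perfect} and the discussion in Section~\ref{sec:valued.fields} --- is harmless here, since all the partitions involved arise from finitely many quantifier-free conditions.
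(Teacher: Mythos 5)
Your proposal follows the same route as the paper's proof, which is in fact only a two-sentence sketch: unwind the quantifier-free graph of $h$ into its atomic constituents, observe that the value-group atoms are homogeneous $\mathbb{Z}$-linear relations among $\gamma$ and finitely many $\val(P_j(z))$ (with no additive constant, since the language has no value-group constants and $\val(c)=0$ for $c\in K^\times$ at almost all primes), partition $\mathcal{O}^M$ according to the truth values of the $\gamma$-free atoms, and then solve for $\gamma$. The paper stops at ``this implies the claim''; you go considerably further and, to your credit, isolate exactly the step the paper elides, namely that the partition witnessing piecewise linearity must itself be quantifier-free even though the naive ``which disjunct of the DNF is active'' decomposition is governed by divisibility conditions, which are not quantifier-free in the $\{+_{\gr},<\}$ fragment.

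Two points in your treatment of that final combinatorial step need repair. First, the assertion that a homogeneous linear form bounded above on the image of $z\mapsto\val^{\times J}(P(z))$ is ``hence identically zero'' is false as stated (e.g.\ $-\val(P_1(z))$ is bounded above by $0$ without vanishing); what you actually need is that an interval-type disjunct whose width is positive somewhere on the image must, by homogeneity and the cone-like structure of the image, eventually contain at least two integers, contradicting single-valuedness --- so such disjuncts contribute only degenerate intervals, and this requires a (short) argument about the asymptotic structure of the image rather than a vanishing statement. Second, totality does guarantee that the relevant divisibilities hold wherever an equality-disjunct $b\gamma=\mu(e)$ is the active one, but it does not by itself show that the resulting partition can be chosen quantifier-free: a priori two disjuncts carrying genuinely different linear forms could each be active on complementary congruence classes of $e$. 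To exclude this you should add the observation that if two equality-disjuncts are simultaneously nonempty on a set containing a coset of a finite-index subgroup of $\mathbb{Z}^J$ (which is Zariski-dense), single-valuedness forces their linear forms to coincide identically, so the forms that actually occur can be separated by linear (hence quantifier-free) conditions alone. With these two repairs your argument is complete and supplies precisely the detail the paper omits.
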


\begin{proof}
  By assumption, the graph of $h$ is a Boolean combination of
  quantifier-free formulae in $M+1$ variables $x_1,\ldots,x_M,\gamma$,
  where $x_i$ are valued field sort and $\gamma$ is value group
  sort. The quantifier-free formulae, in turn, are Boolean
  combinations of formulae of the form
  \[
  P(x)=0, \; Q(\ac(R_1(x)),\dots,\ac(R_{N'}(x)))=0, \text{ and } n_0
  \gamma +\sum_{j\in J} n_j \val(P_j(x))=0,
  \]
  where $x=(x_1,\dots,x_M)$, $J$ is a finite index set, $P$,
  $R_1,\dots,R_{N'}$, and $P_j$, $j\in J$, are polynomials over~$K$,
  $Q$ is a polynomial over
  $\ac(K)$, and $n_0$ and $n_j$, $j\in J$, are integers. This implies
  the claim.
\end{proof}

By Lemma~\ref{lem:q.f.definable.functions.to.Gamma}, we can assume
that the functions $f_1$ and $f_2$ have the form
\begin{equation*}
  f_1 =\sum_{k\in J} r_k \val \circ F_k, \quad f_2 =\sum_{k\in J}
  r_k' \val \circ F_k,
\end{equation*}
where $J$ is a finite indexing set, $r_k,r_k' \in \mathbb{Q}$, and
$F_k$ are regular functions on $X$. By definition, the definable set
$\scZ$ is a disjoint union of sets that are defined using formulas of
the form
\begin{equation} \label{eq:q.f.set} 
z \in X \cap \mathcal{O}^M \,\wedge\, \varphi({\ac}^{\times M'}(H(z)))
\,\wedge\, \psi({\val}^{\times M'}(H(z))) \,\wedge\, H'(z)=0,
\end{equation}
where $H,H' \colon X\rightarrow \mathbb{A}_K^{M'}$ are regular maps
defined over $K$, $\varphi$ is a quantifier-free formula in the
language of fields (in variables of sort $\mathsf{k}$), and $\psi$ is
a quantifier-free formula in the language of ordered groups (in
variables of sort $\mathsf{\Gamma}$). Hence, it is enough to prove the
theorem assuming that $\scZ$ is defined by a formula of the
form~\eqref{eq:q.f.set}.

If $H' \neq 0$, then the integral in \eqref{eq:thm.mot.int} is
zero. Hence, we can assume that $H'=0$. The definable set defined by
$\varphi$ is equivalent to the disjoint union of finitely many
quasi-affine varieties. Partitioning $\scZ$ according to these
varieties, we can assume that $\varphi$ defines a single quasi-affine
$K$-variety $V$. Since we are only interested in evaluating the
integral~\eqref{eq:thm.mot.int} over local fields with large residue
field characteristic, we may replace $V$ by one of its $O_K$-models,
which we denote by~$\mathbf{V}$. We apply a similar argument for
$\psi$: after passing to one of the parts of a finite, quantifier-free
partition of~$\scZ$, we can assume that $\psi$ defines a translation
of a rational polyhedral cone in $\mathbb{Q}_{\geq 0}^{M'}$ by a
vector of the form ${\val}^{\times M'}(e)$, where~$e\in K^{M'}$. For
every finite extension $K \subset L$ and almost all $\mathfrak{q} \in
\Spec(O_L)$, we have ${\val}^{\times M'}(e)=0$. We may thus assume
that $\psi$ defines a rational polyhedral cone~$\mathcal{C}\subset
\Q_{\geq 0}^{M'}$. In conclusion, we can assume that $\scZ$ is defined
by the formula
\begin{equation} \label{eq:q.f.set.2} z\in X \cap \mathcal{O} ^M
  \,\wedge\, {\ac}^{\times M'}(H(z)) \in \mathbf{V}\,\wedge\,
             {\val}^{\times M'}(H(z))\in \mathcal{C}.
\end{equation}

\subsubsection{Resolution of Singularities}\label{subsubsec:res.sing}
We write $H=(H_1,\dots,H_{M'})$, let $P = \prod_{i=1}^{M'}H_i \cdot
\prod_{k\in J}F_k $, and consider the divisor $D = \mathrm{div}(P\cdot
\omega)$, i.e.\ the union of the vanishing loci of $\omega$
and~$P$. By Hironaka's theorem on strict resolution of singularities
(cf.\ \cite[Definition~B.5.1]{AA}), applied to the divisor $D$, there
exist $m\in\N$ and a smooth variety $Y \subset X \times \mathbb{P}^m$
defined over $K$ such that the projection $\pi \colon Y \rightarrow X$
is birational, is an isomorphism above the complement of~$D$, and the
pullback of $D$ under $\pi$ is a divisor with normal crossings.

Denote the dimension of $X$ by $n$. By the definition of divisor with
normal crossings, there is an open cover $Y = \bigcup_{i\in I} U_i$ by
affine $K$-varieties $U_i$, for some finite index set~$I$, and, for
each~$i\in I$, there is an \'etale map $\xi_i \colon U_i \rightarrow
\mathbb{A} ^n$ such that $\pi ^{-1} (\supp(D)) \cap U_i$ is contained
in the pre-image under $\xi_i$ of the coordinate hyperplanes in
$\mathbb{A}^n$. The divisor of $\pi ^* \omega$ is supported on $\pi
^{-1} (D)$, and $\xi_i^*(d x_1 \wedge \cdots \wedge d x_n)$ is an
invertible top differential form. Hence, the function $\frac{\pi ^*
  \omega}{\xi_i ^*(d x_1 \wedge \cdots \wedge d x_n)}$ is regular and
its divisor is supported on $\pi ^{-1} (D)$. Therefore, there is a
regular function $\theta_{i}:U_i \rightarrow \mathbb{G}_\text{m}$ and
$a_{ij}\in \mathbb{Z}$, $j\in\{1,\dots,n\}$, such that, for $y\in
U_i$,
\[
\frac{\pi ^* \omega}{\xi_i ^*(d x_1 \wedge \cdots \wedge d
  x_n)}(y)=\theta_{i}(y)\cdot \prod_{j=1}^n (\xi_i(y))_j^{a_{ij}},
\]
For each $i\in I$, fix an embedding $U_i \subset \mathbb{A} ^N$ for
some $N$. For every finite extension $K \subset L$ and almost all
primes $\mathfrak{q}$ of $O_L$, the function $\theta_{i}$ is the
restriction of a polynomial in $N$ variables with coefficients in
$O_{L,\mathfrak{q}}$. In particular, its restriction to
$O_{L,\mathfrak{q}}^N$ has non-negative valuation. The same is true
for $1/\theta_{i}$ and so, for almost all $\mathfrak{q}$, the
restriction of $\val(\theta_{i})$ to $U_i(L_\mathfrak{q})\cap O_{L,\mathfrak{q}}^N$
is~0. Consequently, we obtain, for $y\in U_i(L_\mathfrak{q}) \cap
O_{L,\mathfrak{q}}^N$,
\[
\val \left( \frac{\pi ^* \omega}{\xi_i ^*(d x_1 \wedge \cdots \wedge d
  x_n)} (y) \right)=\sum_{j=1}^n a_{ij}\val(\xi_i(y)_j).
\]
Similarly there are, for $j\in \{1,\dots,n\}$ and
$t\in\{1,\dots,M'\}$, integers $b_{ij},c_{ij},d_{itj}$ and functions $\eta_{it}:U_i
\rightarrow\mathbb{G}_\text{m}$, such that, for $y\in U_i(L_\mathfrak{q})\cap O_{L,\mathfrak{q}}^N$,
\begin{equation*}
f\circ \pi(y)=\sum_{j=1}^n b_{ij}\val(\xi_i(y)_j),\quad g\circ
\pi(y)=\sum_{j=1}^n c_{ij}\val(\xi_i(y)_j),
\end{equation*}
and, for $t\in\{1,\dots,M'\}$,
\[
\val(H_t\circ\pi(y))=\sum_{j=1}^n d_{itj}\val(\xi_i(y)_j),\quad
\ac(H_t\circ \pi(y))=\ac(\eta_{it}(y)) \cdot \prod_{j=1}^n
\ac(\xi_i(y)_j)^{d_{itj}}.
\]

\subsubsection{Reduction Modulo $\mathfrak{q}$}

Fix a total ordering $<$ on the finite index set $I$. For $i\in I$ set
$Z_i=U_i \smallsetminus \bigcup_{j<i}U_j \hookrightarrow \mathbb{A}
^N$, so that $Y = \bigcup_{i\in I}U_i = \bigsqcup_{i\in I}Z_i$, the
latter union being disjoint. Further choose $O_K$-models $\mathbf{X}
\subset \mathbb{A}_{O_K}^M$, $\mathbf{Y} \subset \mathbf{X} \times
\mathbb{P}_{O_K}^m$, and $\mathbf{Z}_i \subset \mathbb{A}_{O_K}^N$ for
the varieties $X$, $Y$, and $Z_i$, for~$i\in I$.  We also fix
$O_K$-models for the maps $\eta_{it}$, for $t\in\{1,\dots,M'\}$, and
$\xi_i$ which -- by slight abuse of notation -- we continue to denote
by these letters.

\begin{lem} \label{lem:X}
  For every finite extension $K \subset L$ and almost all primes
  $\mathfrak{q}$ of~$O_L$, $$\pi ^{-1}
  (\mathbf{X}(O_{L,\mathfrak{q}})) = \bigsqcup_{i\in I} \red ^{-1}
  (\mathbf{Z}_i(O_L / \mathfrak{q})).$$
\end{lem}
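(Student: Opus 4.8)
\textbf{Proof plan for Lemma~\ref{lem:X}.}
The plan is to unpack both sides of the claimed equality over a fixed completion $L_\mathfrak{q}$ and check that they coincide for almost all primes $\mathfrak{q}$ of $O_L$. First I would recall that $\pi\colon \mathbf{Y}\to\mathbf{X}$ is a morphism of $O_K$-schemes which is proper (being the restriction of the projection $\mathbf{X}\times\mathbb{P}^m_{O_K}\to\mathbf{X}$) and an isomorphism over the complement of the divisor $D$. Since $\mathbf{X}$ is smooth over $O_K$, the valuative criterion of properness applies: for almost all $\mathfrak{q}$ and every point in $\mathbf{X}(O_{L,\mathfrak{q}})$, viewed as a morphism $\Spec(O_{L,\mathfrak{q}})\to\mathbf{X}$, the induced $L_\mathfrak{q}$-point of $\mathbf{Y}$ (obtained via the generic-fiber isomorphism, after discarding finitely many $\mathfrak{q}$ so that the point lands in the locus where $\pi$ is an isomorphism, or more robustly by properness) extends uniquely to an $O_{L,\mathfrak{q}}$-point of $\mathbf{Y}$. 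Thus $\pi^{-1}(\mathbf{X}(O_{L,\mathfrak{q}})) = \mathbf{Y}(O_{L,\mathfrak{q}})$ as subsets of $\mathbf{Y}(L_\mathfrak{q})$, for almost all $\mathfrak{q}$.

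Next I would analyze $\mathbf{Y}(O_{L,\mathfrak{q}})$ in terms of the cover. By construction $Y = \bigsqcup_{i\in I} Z_i$ over $K$, and for almost all $\mathfrak{q}$ this disjoint-union decomposition, together with the chosen $O_K$-models, reduces well modulo $\mathfrak{q}$: the $\mathbf{Z}_i$ form a stratification of $\mathbf{Y}$ whose special fibers $\mathbf{Z}_i\times\Spec(O_L/\mathfrak{q})$ partition $\mathbf{Y}\times\Spec(O_L/\mathfrak{q})$ (this requires excluding the finitely many $\mathfrak{q}$ where closure relations among the $U_i$ degenerate, which one can arrange since everything in sight is of finite type over $O_K$). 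Given an $O_{L,\mathfrak{q}}$-point $y$ of $\mathbf{Y}$, its reduction $\red(y)$ lands in exactly one stratum $\mathbf{Z}_i(O_L/\mathfrak{q})$; conversely, since each $\mathbf{U}_i$ is open in $\mathbf{Y}$ and $\mathbf{Z}_i\subset\mathbf{U}_i$, any $O_{L,\mathfrak{q}}$-point of $\mathbf{Y}$ whose reduction lies in $\mathbf{Z}_i(O_L/\mathfrak{q})\subset\mathbf{U}_i(O_L/\mathfrak{q})$ actually factors through $\mathbf{U}_i$ — openness of $\mathbf{U}_i$ means its complement is closed, so a point reducing into $\mathbf{U}_i$ cannot meet that complement — and hence defines a point of $\mathbf{U}_i(O_{L,\mathfrak{q}})$ lying in $\red^{-1}(\mathbf{Z}_i(O_L/\mathfrak{q}))$. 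This gives $\mathbf{Y}(O_{L,\mathfrak{q}}) = \bigsqcup_{i\in I}\red^{-1}(\mathbf{Z}_i(O_L/\mathfrak{q}))$, where the right-hand reduction maps are those attached to the embeddings $\mathbf{Z}_i\hookrightarrow\mathbb{A}^N_{O_K}$, and the union is disjoint because the strata are.

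Combining the two displays yields $\pi^{-1}(\mathbf{X}(O_{L,\mathfrak{q}})) = \bigsqcup_{i\in I}\red^{-1}(\mathbf{Z}_i(O_L/\mathfrak{q}))$ for almost all $\mathfrak{q}$, which is the assertion. I expect the main obstacle to be the bookkeeping around ``almost all primes'': one must collect the finitely many exceptional $\mathfrak{q}$ coming from (i) the locus where $\pi$ fails to be an isomorphism interacting badly with integral points — handled cleanly by properness rather than by the isomorphism locus, so this is mild; (ii) the requirement that the models $\mathbf{U}_i$, $\mathbf{Z}_i$, $\mathbf{Y}$ have good reduction and that the open-cover / disjoint-stratification structure is preserved modulo $\mathfrak{q}$; and (iii) ensuring $\mathbf{X}$ is smooth over $O_{L,\mathfrak{q}}$ (not merely over $K$) so the valuative criterion and Hensel's lemma behave as expected. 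Each of these excludes only finitely many $\mathfrak{q}$ because all the schemes and morphisms are of finite presentation over $O_K$, so the generic behavior spreads out; making this precise is routine but is where the real content of ``almost all'' is spent.
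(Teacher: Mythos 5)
Your proposal is correct and follows the same two-step structure as the paper's proof: first identify $\pi^{-1}(\mathbf{X}(O_{L,\mathfrak{q}}))$ with $\mathbf{Y}(O_{L,\mathfrak{q}})$ using properness of $\pi$ together with the fact that $\pi$ is defined over $O_{L,\mathfrak{q}}$ for almost all $\mathfrak{q}$, then decompose $\mathbf{Y}(O_{L,\mathfrak{q}})$ by reduction into the strata $\mathbf{Z}_i(O_L/\mathfrak{q})$. The only difference is methodological and minor: where you justify that the $\mathbf{Z}_i$ still partition $\mathbf{Y}$ modulo almost all $\mathfrak{q}$ by geometric spreading-out of the finite-type $O_K$-models, the paper transfers the partition property from $\mathbb{C}$ to $(O_L/\mathfrak{q})^{\mathrm{alg}}$ via Robinson's Principle and then descends to $O_L/\mathfrak{q}$ using that the $\mathbf{Z}_i$ are quantifier-free definable --- two interchangeable routes to the same fact.
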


\begin{proof}
  We first contend that $\pi ^{-1}
  (\mathbf{X}(O_{L,\mathfrak{q}}))=\mathbf{Y}(O_{L,\mathfrak{q}})$ for
  almost all~$\mathfrak{q}$.  One containment follows from the
  projectivity of $\pi$, the other follows from the fact that, for
  almost all~$\mathfrak{q}$, the map $\pi$ is defined
  over~$O_{L,\mathfrak{q}}$.  Next, we claim that, for almost
  all~$\mathfrak{q}$, the sets $\mathbf{Z}_i(O_L / \mathfrak{q})$ form
  a partition of $\mathbf{Y}(O_L / \mathfrak{q})$. Indeed, we know
  that $\mathbf{Z}_i(\mathbb{C})$ form a partition of
  $\mathbf{Y}(\mathbb{C})$, so the $\mathbf{Z}_i((O_L /
  \mathfrak{q})^{\alg})$, $i\in I$, form a partition of
  $\mathbf{Y}((O_L / \mathfrak{q})^{\alg})$ for almost
  all~$\mathfrak{q}$, using Robinson's Principle. Since $\mathbf{Z}_i$
  and $\mathbf{Y}_i$ are quantifier-free, we get that
  $\mathbf{Z}_i(O_L/ \mathfrak{q})$ form a partition of
  $\mathbf{Y}(O_L / \mathfrak{q})$ for these primes~$\mathfrak{q}$.
  Altogether this yields, for almost all $\mathfrak{q}$,
  \begin{multline*}
    \mathbf{Y}(O_{L,\mathfrak{q}}) = \bigsqcup_{\mathrm{p}\in
      \mathbf{Y}(O_L / \mathfrak{q})} \red ^{-1} (\mathrm{p}) =
    \bigsqcup_{i\in I} \bigsqcup_{\mathrm{p}\in
      \mathbf{Z}_i(O_L/\mathfrak{q})} \red ^{-1} (\mathrm{p}) =
    \bigsqcup_{i\in I} \red ^{-1} (\mathbf{Z}_i(O_L/\mathfrak{q})).
  \end{multline*}
\end{proof}
We deduce from Lemma~\ref{lem:X} that, for every finite extension $K
\subset L$ and almost all primes $\mathfrak{q}$ of~$O_L$,
\begin{align*}
  \lefteqn{\int_{\scZ(L_\mathfrak{q})} |O_L / \mathfrak{q}
    |^{f_1(z) - f_2(z)s} | \omega |_{L_\mathfrak{q}} =
    \int_{\mathbf{X}(O_{L,\mathfrak{q}})} 1_{\scZ(L_\mathfrak{q})}(x)
    |O_L / \mathfrak{q} |^{f_1(z) - f_2(z)s} | \omega |_{L_\mathfrak{q}}}\\
  &= \int_{\pi ^{-1} (\mathbf{X}(O_{L,\mathfrak{q}}))}
  1_{\scZ(L_\mathfrak{q})}(\pi (y)) |O_L / \mathfrak{q} |^{f_1(\pi
    (y)) - f_2(\pi (y))s} | \pi ^*\omega |_{L_\mathfrak{q}}\\ &=
  \sum_{i\in I} \int_{\red_\mathfrak{q} ^{-1} (\mathbf{Z}_i(O_L /
    \mathfrak{q}))} 1_{\pi ^{-1} (\scZ)(L_\mathfrak{q})}(y) |O_L /
  \mathfrak{q} |^{f_1(\pi (y)) - f_2(\pi (y))s} | \pi ^*\omega
  |_{L_\mathfrak{q}}.
\end{align*}
Recall from Section~\ref{subsec:red.s.c} the rational polyhedral cone
$\mathcal{C} \subset \mathbb{Q}_{\geq 0}^{M'}$ and the $O_K$-model
$\mathbf{V}$ of the quasi-affine $K$-variety $V$, featuring in the
definition~\eqref{eq:q.f.set.2} of the quantifier-free definable set
$\scZ$, as well as the various data defined in
Section~\ref{subsubsec:res.sing}. For each~$i\in I$, let
$\mathcal{D}_i \subset \mathbb{Q}^n_{\geq 0}$ be the rational
polyhedral cone defined by
\[
(\gamma_1,\ldots,\gamma_n)\in\mathcal{D}_i \iff \left( \sum_{j=1}^n
  d_{itj} \gamma_j \right)_{t=1}^{M'} \in \mathcal{C},
\]
let $\mathbf{M}_i \subset (\mathbb{G}_\text{m}^n)_{O_K}\times
\mathbf{Y}$ be the $O_K$-scheme defined by
\[
(x_1,\ldots,x_n,y)\in \mathbf{M}_i \iff \left(
  \eta_{it}(y)\prod_{j=1}^n x_j^{d_{itj}} \right)_{t=1}^{M'} \in
\mathbf{V},
\]
and let $\scZ_i$ be the definable set defined by
\[
y\in U_i \cap \mathcal{O} ^{M+m} \,\wedge\, \val^{\times n}(\xi_i(y))\in
\mathcal{D}_i \,\wedge\, \left( \ac^{\times n}(\xi_i(y)),\red^{\times (M+m)}(y) \right)\in
\left(\mathbf{M}_i\right)_{\mathsf{k}}.
\]
Then
\begin{multline*}
  \int_{\scZ(L_\mathfrak{q})} |O_L / \mathfrak{q} |^{f_1(z) - f_2(z)s}
  | \omega |_{L_\mathfrak{q}} =\\\sum_{i\in I}
  \int_{\scZ_i(L_\mathfrak{q})}| O_L / \mathfrak{q} |^{\sum_{j=1}^n
    (a_{ij}+b_{ij} - c_{ij}s)\val(\xi_i(y)_j)}| \xi_i ^* (d x_1 \wedge
  \cdots \wedge d x_n)|_{L_\mathfrak{q}},
\end{multline*}
and each summand on the right hand side is of the form covered by the
Special Case. This concludes the proof of Theorem~\ref{thm:mot.int}.

\subsection{Proof of
  Theorem~\ref{thm:zeta.approx}} \label{subsec:proof.zeta.approx}
Let $\Phi$ be the absolute root system of $\mathbf{G}$.  We show that
the assertions of the theorem hold for
\[
c(\mathbf{G}) = a(\Phi) \cup b(\mathbf{G}),
\]
where $a(\Phi) \in \mathcal{A}^+$ is the element constructed in
Theorem~\ref{thm:BC.finite.new} and $b(\mathbf{G})$ is obtained as
follows.  By Theorem~\ref{thm:int.approx}, there are a quantifier-free
definable set $\scZ$, quantifier-free definable functions $f_1,f_2
\colon \scZ \rightarrow \mathsf{\Gamma}$, and a constant $C_1 \in \R$
such that, for every finite extension $K \subset L$ and almost all
primes $\fq$ of $O_L$,
\[
\zeta_{\mathbf{G}(O_{L,\fq})}(s)-\zeta_{\mathbf{G}(O_L/ \fq)}(s)
\sim_{C_1} \int_{\scZ(L_\fq)}|O_L/ \fq |^{f_1(z) -
  f_2(z)s}d\lambda(z).
\]
Furthermore, Theorem~\ref{thm:mot.int} gives that, for almost all
primes~$\fq$,
\begin{equation} \label{eq:local.minus.finite.motivic}
  \zeta_{\mathbf{G}(O_{L,\fq})}(s)-\zeta_{\mathbf{G}(O_L/ \fq)}(s)
  \sim_{C_1} \sum_{i=1}^N
  |O_L/\mathfrak{q}|^{\alpha_i - \beta_i s}|\mathbf{W}_i(O_L/ \fq ) |
  \cdot \prod_{j=1}^{n_i} \frac{|O_L/ \fq |^{A_{ij}-B_{ij}s}}{1-|O_L/
    \fq|^{A_{ij}-B_{ij}s}},
\end{equation}
where the $\mathbf{W}_1, \ldots, \mathbf{W}_N$ are quasi-affine
$O_K$-schemes and $n_i$, $A_{ij}, B_{ij},\alpha_i,\beta_i$ are
integers specified in Theorem~\ref{thm:mot.int}.  We can assume that
the generic fiber of each $\mathbf{W}_i$ is non-empty and irreducible,
and we set
\[
b(\mathbf{G}) = \left\{ \left(\alpha_i+\dim \mathbf{W}_i +
\sum\nolimits_{j=1}^{n_i} A_{ij} , -\beta_i -
\sum\nolimits_{j=1}^{n_i} B_{ij} \right) \mid 1 \leq i \leq N \right\}
\in \mathcal{A}^+.
\]

Let $Q \subset \Spec(O_L)$ denote the set of all primes $\fq$ such
that \eqref{eq:local.minus.finite.motivic} holds.  By the Lang--Weil
estimates~\cite{LW}, there is a constant $C_2 \in \R$ such that, for
each $i \in \{1,\ldots,N\}$ and for almost all $\fq \in Q$, either
$\mathbf{W}_i(O_L/ \fq) = \varnothing$ or
\[
\frac{1}{2} \leq \frac{|\mathbf{W}_i(O_L/ \fq )|}{|O_L/ \fq |^{\dim
    \mathbf{W}_i}} \leq C_2.
\]

For each $\fq \in Q$, there exists $i \in \{1,\dots,N\}$ such that
$\mathbf{W}_i(O_L/\fq) \neq \varnothing$, and we define
\begin{align*} 
  \beta_\fq & = \max \left\{ A_{ij} / B_{ij} \mid 1 \leq i \leq N,\; 1
    \leq j \leq n_i ,\; \mathbf{W}_i(O_L/ \fq) \neq \varnothing,\;
    B_{ij}\neq 0 \right\} \in \mathbb{Q}_{>0}, \\
  b_\fq & = \left\{ \big( \alpha_i+\dim
      \mathbf{W}_i+\sum\nolimits_{j=1}^{n_i} A_{ij}, -\beta_i -
      \sum\nolimits_{j=1}^{n_i} B_{ij}\big) \mid 1 \leq i \leq N
    \text{, } \mathbf{W}_i(O_L/ \fq) \neq \varnothing \right\} \in
  \mathcal{A} ^+.
\end{align*}
We observe that, for each prime $\fq \in Q$, the abscissa of
convergence of $\zeta_{\mathbf{G}(O_{L,\fq})}$ is equal to $\beta_\fq$
by~\eqref{eq:local.minus.finite.motivic} and $b_\fq \subset
b(\mathbf{G})$.

Let $\epsilon \in \R_{>0}$. Since $\lvert O_L / \fq \rvert \geq 2$ for
all $\fq \in \Spec(O_L)$, there is a constant $\delta(\epsilon) \in
\R_{>0}$ such that, for each $i \in \{1,\ldots,N\}$, each $\fq \in Q$,
and all $\sigma \in \R$ with $\sigma > \beta_\fq + \epsilon$,
\[
\delta(\epsilon) < \prod\nolimits_j \left( 1-| O_L/\fq |^{A_{ij} -
    B_{ij}\sigma} \right) \leq 1.
\]
From \eqref{eq:local.minus.finite.motivic} it follows that there is a
constant $C_3(\epsilon) \in \R$ such that, for every $q \in Q$,
\begin{equation} \label{eq:full.minus.finite}
  \zeta_{\mathbf{G}(O_{L,\fq})}-\zeta_{\mathbf{G}(O_L/\fq)}
  \sim_{C_3(\epsilon)} \xi_{b_\fq,|O_L/ \fq |} \quad \text{for $\sigma
    > \beta_\fq +\epsilon$.}
\end{equation}
By Theorem~\ref{thm:BC.finite.new} and Remark~\ref{rem:BC.finite},
there is a constant $C_4 \in \R$ such that, for almost all primes $\fq
\in \Spec(O_L)$, there is $a_\fq \subset a(\Phi)$ such that
\begin{equation} \label{eq:finite.minus.1} \zeta_{\mathbf{G}(O_L/
    \fq)}-1 \sim_{C_4} \xi_{a_\fq,| O_L / \fq |}.
\end{equation}

Combining~\eqref{eq:full.minus.finite} and~\eqref{eq:finite.minus.1},
we get that, for almost all primes $\fq \in Q$, the following holds:
for every $\epsilon \in \R_{>0}$,
\[
\zeta_{\mathbf{G}(O_{L,\fq})}-1 \sim_{2 \max\{C_3(\epsilon),C_4\}}
\xi_{a_\fq \cup b_\fq,| O_L / \fq |} \quad \textrm{for $\sigma >
  \beta_\fq +\epsilon$.}
\]
These primes form a co-finite subset $T(L) \subset \Spec(O_L)$, and
this proves the assertion (1) of the theorem.

Assertion (2) of the theorem is derived from the argument above as
follows. The set
$R_1(L) =\{ \fq \in \Spec(O_L) \mid a_\fq = a(\Phi) \}$ is a
Chebotarev set by Corollary~\ref{cor:BC.finite.new.cbt}.  Moreover,
the Chebotarev Density Theorem implies that $\{\fq \in Q \mid \forall
i \in \{1,\ldots,N\}: \mathbf{W}_i(O_L/ \fq) \neq \varnothing \}$ is a
Chebotarev set.  Hence $R_2(L)= \{ \fq \in Q \mid b_\fq =
b(\mathbf{G}) \}$ is a Chebotarev set.  It follows that $R(L)=R_1(L)
\cap R_2(L)$ is a Chebotarev set, in particular of positive analytic
density. This concludes the proof
of Theorem~\ref{thm:zeta.approx}.




\begin{thebibliography}{WWW}


\bibitem{AA} Aizenbud, A.; Avni, N.; {\em Representation growth and
    rational singularities of the moduli space of local systems.}
  \nir{ To appear in Inventiones Math.,} preprint:
  \url{http://arxiv.org/abs/1307.0371}.

\bibitem{A} Avni, N.; {\em Arithmetic groups have rational
    representation growth.} Ann. of Math. 174 (2011), 1009--1056.

\bibitem{AKOV} Avni, N.; Klopsch, B.; Onn, U.; Voll, C.; {\em
    Representation zeta functions of compact $p$-adic analytic groups
    and arithmetic groups}. Duke Math. J. 163 (2013), 111--197.

\bibitem{AKOV2} Avni, N.; Klopsch, B.; Onn, U.; Voll, C.; \uri{{\em
      Similarity classes of integral $\mathfrak{p}$-adic matrices and
      representation zeta functions of groups of type
      $\mathsf{A}_2$}. To appear in Proc. London Math. Soc., preprint:
    \url{http://arxiv.org/abs/1410.4533}.}

\bibitem{BoLuRa} Bosch, S.; L\"utkebohmert, W.; Raynaud, M.; {\em
  N\'eron models.} Ergebnisse der Mathematik und ihrer Grenzgebiete,
  21. Springer-Verlag, Berlin Heidelberg, 1990.

\bibitem{Ca} Carter, R. W.; {\em Finite groups of Lie type. Conjugacy
  classes and complex characters.} Pure and Applied Mathematics (New
  York). A Wiley-Interscience Publication. John Wiley \& Sons, Inc.,
  New York, 1985.

\bibitem{CK} Chang, C. C.; Keisler, H. J.; {\em Model theory. Third
    edition.} Studies in Logic and the Foundations of Mathematics,
  73. North-Holland Publishing Co., Amsterdam, 1990.

\bibitem{Cha} Chatzidakis, Z.; {\em Th\'eorie des mod\`eles des corps
    finis et pseudo-finis}. Available at:
  \url{http://www.logique.jussieu.fr/~zoe/index.html}.

\bibitem{CvdDM} Chatzidakis, Z.; van den Dries, L.; Macintyre, A.;
  {\em Definable sets over finite fields.} J. reine angew. Math. 427
  (1992), 107--135.

\bibitem{CL} Cluckers, R.; Loeser, F.; {\em Constructable motivic
    functions and motivic integration.} Invent. Math. 173 (2008),
  23--121.

\bibitem{Atlas} Conway, J. H.; Curtis, R. T.; Norton, S. P.; Parker,
  R. A.; Wilson, R. A.; {\em Atlas of finite groups. Maximal subgroups
    and ordinary characters for simple groups. With computational
    assistance from J. G. Thackray.} Oxford University Press, Eynsham,
  1985.



\bibitem{De81} Deriziotis, D. I.; {\em Centralizers of semisimple
    elements in a Chevalley group.} Comm. Algebra 9 (1981),
  1997--2014.

\bibitem{DM} Digne, F.; Michel, J.; {\em Representations of finite
    groups of Lie type.} London Mathematical Society Student Texts,
  21. Cambridge University Press, Cambridge, 1991.


\bibitem{EGA4_2} Grothendieck, A.; {\em El\'ements de g\'eom\'etrie
    alg\'ebrique. IV. \'Etude locale des sch\'emas et des morphismes
    de sch\'emas. II.} Inst. Hautes \'Etudes Sci. Publ. Math. No. 24,
  1965.

\bibitem{EGA4} Grothendieck, A.; {\em El\'ements de g\'eom\'etrie
    alg\'ebrique. IV. \'Etude locale des sch\'emas et des morphismes
    de sch\'emas IV.} Inst. Hautes \'Etudes Sci. Publ. Math. No. 32,
  1967.

\bibitem{G} Gonz\'alez-S\'anchez, J.; {\em Kirillov's orbit method for
    $p$-groups and pro-$p$ groups.} Comm. Algebra 37 (2009),
  4476--4488.

\bibitem{GK} Gonz\'alez-S\'anchez, J.; Klopsch, B.; {\em Analytic
    pro-$p$ groups of small dimensions.} J. Group Theory 12 (2009),
  711--734.

\bibitem{Gr} Greenberg, M.J.; {\em Schemata over local rings.}
  Ann. Math. 73 (1961), 624--648.


\bibitem{HK} Hrushovski, E.; Kazhdan. D.; {\em Integration in valued
    fields}. Algebraic geometry and number theory, volume 253 of
  Progr. Math., Birkh\"auser Boston, Boston, MA, 2006.

\bibitem{Hu} Humphreys, J. E.; {\em Conjugacy classes in semisimple
    algebraic groups}. Mathematical Surveys and Monographs 43,
  American Mathematical Society, Providence, RI, 1995.

\bibitem{HuBl} Huppert, B.; Blackburn, N.; {\em Finite
    groups. II}. Grundlehren der Mathematischen Wissenschaften 242,
  Springer-Verlag, Berlin-New York, 1982.

\bibitem{Is} Isaacs, I. M.; {\em Character theory of finite groups.}
  AMS Chelsea Publishing, Providence, RI, 2006.

\bibitem{Jac} Jacobson, N.; {\em Lie algebras.} Dover Publications,
  Inc., New York, 1979.

\bibitem{Jai} Jaikin--Zapirain A.; {\em Zeta function of
    representations of compact $p$-adic analytic groups.}
  J. Amer. Math. Soc. 19 (2006), 91--118.

\bibitem{Kaz_MI} Kazhdan, D.; {\em Lecture notes in motivic
  integration.} available at:
  \url{http://www.math.huji.ac.il/~kazhdan/Notes/motivic/b.pdf}.

\bibitem{K} Klopsch B.; {\em On the Lie theory of $p$-adic analytic
    groups.} Math. Z. 249 (2005), 713--730.

\bibitem{LW} Lang, S.; Weil, A.; {\em Number of points of varieties in
    finite fields}. Amer. J. Math. 76 (1954), 819--827.

\bibitem{LL} Larsen, M.; Lubotzky, A.; {\em Representation growth of
    linear groups}. J. Eur. Math. Soc. 10 (2008), 351--390.


\bibitem{LS} Liebeck, M.W.; Shalev, A.; {\em Character degrees and
    random walks in finite groups of Lie type.} Proc. London
  Math. Soc. 90 (2005), 61--86.

\bibitem{LM} Lubotzky, A.; Martin, B.; {\em Polynomial representation
    growth and the congruence subgroup problem.} Israel J. Math. 144
  (2004), 293--316.

\bibitem{LuNi} Lubotzky, A.; Nikolov, N.; {\em Subgroup growth of
    lattices in semisimple Lie groups.} Acta. Math. 193 (2004),
  105--139.

\bibitem{Lu} Lusztig, G.; {\em On the representations of reductive
    groups with disconnected centre.} Asterisque 168 (1988), 157--166.

\bibitem{Margulis} Margulis, G. A.; {\em Discrete subgroups of
    semisimple Lie groups.}  Ergebnisse der Mathematik und ihrer
  Grenzgebiete (3), 17. Springer-Verlag, Berlin, 1991.


\bibitem{MT} Malle, G.; Testerman, D.; {\em Linear algebraic groups
    and finite groups of {L}ie type.} Cambridge Studies in Advanced
  Mathematics, 133. Cambridge University Press, Cambridge, 2011.

\bibitem{N} Nori, M.; {\em On subgroups of $\mathrm{GL}_n(\F_p)$.}
  Invent.\ Math.\ 88 (1987), 257--275.

\bibitem{Pas} Pas, J.; {\em Uniform $p$-adic cell decompositions and
    local zeta functions.}  J. reine angew. Math 399 (1989), 137--172.

\bibitem{PR} Platonov, V.; Rapinchuk, A.; {\em Algebraic groups and
    number theory.}  Pure and Applied Mathematics, 139. Academic
  Press, Inc., Boston, MA, 1994.

\bibitem{PrRa10} Prasad, G.; Rapinchuk, A.; \textit{Developments on
    the congruence subgroup problem after the work of Bass, Milnor and
    Serre}, in: Collected papers of John Milnor. V:~Algebra (ed.\
  H.~Bass and T.~Y.~Lam), Amer.\ Math.\ Soc., Providence, RI, 2010.

\bibitem{Ra} Raghunathan, M.S.; {\em The congruence subgroup problem.}
  Proc.\ Indian Acad.\ Sci.\ Math.\ 114 (2004), no. 4, 299--308.

\bibitem{Se} Serre, J.-P.; {\em Local fields.} Graduate Texts in
  Mathematics, 67. Springer-Verlag, New York, 1979.

\bibitem{Ser} Serre, J.-P.; {\em Lie algebras and Lie groups.}  1964
  lectures given at Harvard University. Corrected fifth printing of
  the second (1992) edition. Lecture Notes in Mathematics,
  1500. Springer-Verlag, Berlin, 2006.

\bibitem{Tits} Tits, J.; {\em Algebraic and abstract simple groups.}
  Ann.\ of Math.\ (2) 80 (1964), 313--329.

\bibitem{Wi} Witten, E.; {\em On quantum gauge theories in two
    dimensions.} Comm. Math. Phys. 141 (1991), no.~1, 153--209.
\end{thebibliography}
\end{document}